\numberwithin{equation}{section}
\newtheorem{letterthm}{Theorem}
\newtheorem{thm}{Theorem}[section]
\newtheorem{lem}[thm]{Lemma}
\newtheorem{cor}[thm]{Corollary}
\newtheorem{prop}[thm]{Proposition}
\theoremstyle{definition}
\newtheorem{rem}[thm]{Remark}
\newtheorem*{added}{Added in the proof}
\newtheorem{remark}[thm]{Remarks}
\newtheorem{example}[thm]{Examples}
\newtheorem*{notation}{Notation}
\newtheorem{nota}[thm]{Notation}
\newtheorem{df}[thm]{Definition}
\newtheorem*{claim}{Claim}
\newtheorem*{fact}{Fact}
\newcommand{\R}{\mathbf{R}}
\newcommand{\C}{\mathbf{C}}
\newcommand{\N}{\mathbf{N}}
\newcommand{\B}{\mathbf{B}}
\newcommand{\K}{\mathbf{K}}
\newcommand{\Ad}{\operatorname{Ad}}
\newcommand{\id}{\text{\rm id}}
\newcommand{\rL}{\mathord{\text{\rm L}}}
\newcommand{\rT}{\mathord{\text{\rm T}}}
\newcommand{\rE}{\mathord{\text{\rm E}}}
\newcommand{\tr}{\mathord{\text{\rm tr}}}
\newcommand{\Cao}{\mathcal C_{(\text{\rm AO})}}
\newcommand{\core}{\mathord{\text{\rm c}}}
\newcommand{\Diag}{\mathord{\text{\rm Diag}}}
\newcommand{\dom}{\mathord{\text{\rm dom}}}
\newcommand{\an}{\mathord{\text{\rm an}}}
\newcommand{\Tr}{\mathord{\text{\rm Tr}}}
\newcommand{\ovt}{\mathbin{\overline{\otimes}}}
\newcommand{\AC}{\mathord{\text{\rm AC}}}
\newcommand{\Bic}{\mathord{\text{\rm B}}}
\newcommand{\dpr}{^{\prime\prime}}
\begin{document}

\title[Unique prime factorization and bicentralizer problem]{Unique prime factorization and bicentralizer problem for a class of type III factors}

\begin{abstract}
We show that whenever $m \geq 1$ and $M_1, \dots, M_m$ are nonamenable factors in a large class  of von Neumann algebras that we call $\Cao$ and which contains all free Araki-Woods factors, the tensor product factor $M_1 \mathbin{\overline{\otimes}} \cdots \mathbin{\overline{\otimes}} M_m$ retains the integer $m$ and each factor $M_i$ up to stable isomorphism, after permutation of the indices. Our approach unifies the Unique Prime Factorization (UPF) results from \cite{OP03, Is14} and moreover provides new UPF results in the case when $M_1, \dots, M_m$ are free Araki-Woods factors. In order to obtain the aforementioned UPF results, we show that Connes's bicentralizer problem has a positive solution for all type ${\rm III_1}$ factors in the class $\mathcal C_{(\text{\rm AO})}$.
\end{abstract}

\dedicatory{Dedicated to the memory of Uffe Haagerup}

\author{Cyril Houdayer}
 \address{Laboratoire de Math\'ematiques d'Orsay\\ Universit\'e Paris-Sud\\ CNRS\\ Universit\'e Paris-Saclay\\ 91405 Orsay\\ France}
\email{cyril.houdayer@math.u-psud.fr}
\thanks{CH is supported by ANR grant NEUMANN and ERC Starting Grant GAN 637601}

\author{Yusuke Isono}
\address{RIMS, Kyoto University, 606-8502 Kyoto, Japan}
\email{isono@kurims.kyoto-u.ac.jp}
\thanks{YI is supported by JSPS Research Fellowship}

\subjclass[2010]{46L10, 46L36}
\keywords{Bicentralizer von Neumann algebras; Ozawa's condition (AO); Popa's intertwining techniques; Tensor product von Neumann algebras; Unique prime factorization}

\maketitle

\section{Introduction and statement of the main results}

The first Unique Prime Factorization (UPF) results in the framework of von Neumann algebras were discovered by Ozawa-Popa in \cite{OP03}. Among other things, they showed that whenever $m \geq 1$ and $\Gamma_1, \dots, \Gamma_m$ are icc nonamenable bi-exact discrete groups \cite{BO08} (e.g.\ Gromov word hyperbolic groups), the tensor product ${\rm II_1}$ factor $\rL(\Gamma_1) \ovt \cdots \ovt \rL(\Gamma_m)$ retains the integer $m$ and each ${\rm II_1}$ factor $\rL(\Gamma_i)$ up to unitary conjugacy and amplification, after permutation of the indices. Ozawa-Popa's strategy \cite{OP03} was based on a combination of Ozawa's C$^*$-algebraic techniques \cite{Oz03} {\em via} the Akemann-Ostrand (AO) property and Popa's intertwining techniques \cite{Po01, Po03}. Very recently, the second named author in \cite{Is14} further developed Ozawa-Popa's strategy for von Neumann algebras endowed with almost periodic states and obtained the first UPF results in the framework of type ${\rm III}$ factors. He showed that whenever $m \geq 1$ and $M_1, \dots, M_m$ are factors associated with free quantum groups, the tensor product factor $M_1 \ovt \cdots \ovt M_m$ retains the integer $m$ and each factor $M_i$ up to stable isomorphism, after permutation of the indices. For other UPF results in the framework of ${\rm II_1}$ factors, we refer the reader to \cite{CKP14, CSU11, Pe06, Sa09, SW11}.

In this paper, we prove UPF results for tensor product factors $M_1 \mathbin{\overline{\otimes}} \cdots \mathbin{\overline{\otimes}} M_m$ where $M_1, \dots, M_m$ are nonamenable factors belonging to the class $\Cao$ defined below. As we will see, our approach unifies the UPF results from \cite{OP03} and \cite{Is14} and moreover provides new UPF results in the framework of type ${\rm III}$ factors. Before stating our main results, we first introduce some terminology. We will say that a von Neumann subalgebra $M \subset \mathcal M$ is with {\em expectation} if there exists a faithful normal conditional expectation $\rE_M : \mathcal M \to M$. Two von Neumann algebras $M$ and $N$ are {\em stably isomorphic} if $M \ovt \mathbf B(\ell^2) \cong N \ovt \mathbf B(\ell^2)$. 

We next introduce the class $\Cao$ of von Neumann algebras as the smallest class that contains all the von Neumann algebras with separable predual satisfying the strong condition (AO) in the sense of Definition \ref{AO^++} and that is stable under taking von Neumann subalgebras with expectation. Without going into details, we simply point out that all known examples of von Neumann algebras satisfying Ozawa's condition (AO) of \cite{Oz03} actually do satisfy the strong condition (AO) of Definition \ref{AO^++}. Examples of von Neumann algebras satisfying the strong condition (AO) include amenable von Neumann algebras, von Neumann algebras associated with bi-exact discrete groups \cite{BO08}, von Neumann algebras associated with free quantum groups \cite{Is13b, VV05} and free Araki-Woods factors \cite{Sh96, Ho07}. The class $\Cao$ is also stable under taking free products with respect to arbitrary faithful normal states (see Example \ref{example}). By Ozawa's result \cite{Oz03}, any von Neumann algebra $M$ belonging to the class $\Cao$ is {\em solid} in the sense that for any diffuse subalgebra with expectation $A \subset M$, the relative commutant $A' \cap M$ is amenable (see also \cite{VV05}). In particular, any nonamenable factor $M$ belonging to the class $\Cao$ is {\em prime} in the sense that whenever $M = M_1 \ovt M_2$, there exists $i \in \{1, 2\}$ such that $M_i$ is a type ${\rm I}$ factor.

Our first main result is the following classification theorem for all tensor products of nonamenable factors belonging to the class $\Cao$ with (possibly trivial) amenable factors.

\begin{letterthm}\label{thmA}
Let $m, n \geq 1$ be any integers. Let $M_1, \dots, M_m, N_1, \dots, N_n$ be any nonamenable factors belonging to the class $\Cao$. Let $M_0$ and $N_0$ be any amenable factors (possibly trivial) with separable predual.

Then the tensor product factors $M_0 \ovt M_1\ovt \cdots \ovt M_m$ and $N_0 \ovt N_1\ovt \cdots \ovt N_n$ are stably isomorphic if and only if $m=n$, $M_0$ and $N_0$ are stably isomorphic and there exists a permutation $\sigma\in \mathfrak{S}_n$ such that $M_{\sigma(j)}$ and $N_j$ are stably isomorphic for all $1 \leq j \leq n$.
\end{letterthm}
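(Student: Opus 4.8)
The \emph{if} direction is routine: stable isomorphisms $M_0 \cong N_0$ and $M_{\sigma(j)} \cong N_j$ (all after tensoring with $\B(\ell^2)$) combine into a single isomorphism of the tensor products, using $\B(\ell^2) \ovt \B(\ell^2) \cong \B(\ell^2)$ to absorb the auxiliary type I factors. So I focus on the \emph{only if} direction. First I would reduce to a genuine isomorphism: writing $M = M_0 \ovt M_1 \ovt \cdots \ovt M_m$ and $N = N_0 \ovt N_1 \ovt \cdots \ovt N_n$, a stable isomorphism is an isomorphism $M \ovt \B(\ell^2) \cong N \ovt \B(\ell^2)$, and since $M_0 \ovt \B(\ell^2)$ and $N_0 \ovt \B(\ell^2)$ are again amenable factors with separable predual, I may absorb $\B(\ell^2)$ into $M_0, N_0$ and work with a genuine isomorphism $\pi : M \to N$. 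I would fix faithful normal states admitting expectations onto each tensor slot, so that Popa's intertwining $\preceq$ for subalgebras with expectation is available.

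The heart of the argument is a location step: for every $i$ there is an index $j$ with $\pi(M_i) \preceq_N N_j$. Each $M_i$ is a nonamenable factor in $\Cao$, hence solid and prime. Viewing $N$ as an iterated tensor product of the amenable slot $N_0$ and the strong-(AO) prime slots $N_1, \dots, N_n$, I would iterate the (AO)-type dichotomy over the tensor slots: the nonamenable subalgebra $\pi(M_i)$ must intertwine into a minimal subproduct, and its primeness and solidity collapse that subproduct to a single nonamenable slot $N_j$ (it cannot land in the amenable $N_0$, since $\pi(M_i) \preceq_N N_0$ would make $M_i$ amenable). This is exactly where the positive solution of Connes's bicentralizer problem for the type ${\rm III_1}$ slots in $\Cao$ enters: it furnishes states whose bicentralizer is trivial, which is what makes the amplification arguments underlying Popa's intertwining and the dichotomy run in the genuinely type ${\rm III}$ regime, replacing the almost periodicity hypothesis of \cite{Is14}. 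I expect this location step to be the main obstacle, and the reason the bicentralizer result is a prerequisite.

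With the location step in hand I would build the bijection. Uniqueness of $j = \sigma(i)$ holds because if $\pi(M_i)$ intertwined simultaneously into two distinct slots $N_j$ and $N_{j'}$, then, since $N_{j'}$ lies in the relative commutant of $N_j$, the factor $M_i$ would acquire a nontrivial commuting nonamenable subalgebra after cutting, contradicting its primeness and solidity; hence $\sigma : \{1,\dots,m\} \to \{1,\dots,n\}$ is well defined. Running the symmetric argument for $\pi^{-1}$ with the slots $N_1, \dots, N_n$ produces a map $\tau$ in the other direction, and the resulting two-sided intertwinings force $\sigma$ and $\tau$ to be mutually inverse, so $m = n$ and $\sigma \in \mathfrak{S}_n$.

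Finally I would upgrade the matching to stable isomorphism. For each $i$, the two-sided intertwining $\pi(M_i) \preceq_N N_{\sigma(i)}$ and $N_{\sigma(i)} \preceq_N \pi(M_i)$ promotes, by the standard passage from intertwining to unitary conjugacy of corners up to amplification, to a stable isomorphism $M_i \cong N_{\sigma(i)}$; factoriality and primeness guarantee that the relevant corner is full, and in the type ${\rm III}$ case the expectations together with the bicentralizer-trivial states ensure the amplifications are controlled. After conjugating $\pi$ so that it carries $M_1 \ovt \cdots \ovt M_m$ onto $N_1 \ovt \cdots \ovt N_n$ up to amplification, comparing relative commutants identifies the leftover amenable tensor factors and yields a stable isomorphism $M_0 \cong N_0$, which gives all the required conclusions.
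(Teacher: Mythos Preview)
Your proposal has the right global shape (reduce to an honest isomorphism, locate each $M_i$ in a single $N_j$, build a bijection, upgrade to stable isomorphism), but the location step as you describe it does not work, and your account of where the bicentralizer result enters is off the mark.

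The (AO)-type dichotomy you invoke is Theorem~\ref{location theorem}: for a \emph{finite} subalgebra $Q\subset M$ with expectation, either $Q'\cap M$ is amenable or $Q\preceq_M M_i^c$ for some $i$. You cannot feed $\pi(M_i)$ into this: when $M_i$ is a type~${\rm III}$ factor there is no reason it should satisfy the hypotheses, and the paper's whole point is to avoid passing to the continuous core. The actual mechanism is the reverse of what you wrote. One first uses Theorem~\ref{thmC} on the $N_j$'s (not the $M_i$'s): each nonamenable $N_j\in\Cao$ possesses a state with large centralizer, hence contains an irreducible ${\rm II_1}$ subfactor $L_j\subset N_j$ with expectation. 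Setting $L_n^c=L_0\ovt\cdots\ovt L_{n-1}$ (a \emph{finite} factor) one has $(L_n^c)'\cap M=N_n$ nonamenable, so Theorem~\ref{location theorem} applies to $L_n^c$ and yields $L_n^c\preceq_M M_{i_0}^c$ for some $i_0$. Only then does the relative-commutant flip (Lemma~\ref{relative commutants}) produce $M_{i_0}\preceq_M N_n$. So the bicentralizer result is not there to ``make amplification arguments run''; it is there to manufacture the finite subalgebras to which the dichotomy can be applied at all.

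Your ``collapse via primeness and solidity'' is also not how one passes from a subproduct to a single slot, and your upgrade step is missing a genuine ingredient. The paper proceeds by induction on $n$ (Lemmas~\ref{intermediate lemma0}--\ref{intermediate lemma2} and Theorem~\ref{main theorem}): after $M_{i_0}\preceq_M N_n$, Lemma~\ref{intertwining tensor} produces a partial isometry $v$ with $qN_nq=vM_{i_0}v^*\ovt P$ and $vM_{i_0}^cv^*=P\ovt qN_n^cq$, and one must show the ``defect'' factor $P$ is type~${\rm I}$. This uses that $M_1\ovt\cdots\ovt M_m$ is \emph{not McDuff} (Lemma~\ref{intermediate lemma2}) together with the inductive inequality $m\geq n$ (Lemma~\ref{intermediate lemma1}); neither primeness of a single $M_i$ nor a two-sided intertwining by itself forces $P$ to be atomic. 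Without this step you do not get $N_j\sim_M M_{\sigma(j)}$, and hence not the stable isomorphisms.
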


We point out that Theorem \ref{thmA} applies to all free Araki-Woods factors \cite{Sh96} and hence provides many new {\em non-isomorphism} results in the framework of type ${\rm III}$ factors. In particular, when $M = \Gamma(\rL^2(\R), \lambda_\R)\dpr$ is the type ${\rm III_1}$ free Araki-Woods factor associated with the left regular representation $\lambda_\R : \R \to \mathcal U(\rL^2(\R))$, we obtain that the tensor product type ${\rm III_1}$ factors $$M^{\ovt m} := \underbrace{M \ovt \cdots \ovt M}_{m \text{ times}}$$ 
are pairwise non-isomorphic for $m \geq 1$.

As we will see, Theorem \ref{thmA} will be a consequence of a general UPF result for tensor products of nonamenable $\Cao$ factors. We need to introduce further terminology. We will say that a $\sigma$-finite factor $M$ possesses a state with {\em large centralizer} if there exists a faithful normal state $\varphi \in M_\ast$ such that $(M^\varphi)' \cap M = \C 1$. Observe that by Connes's results on the classification of type ${\rm III}$ factors \cite{Co72}, all $\sigma$-finite type ${\rm III_\lambda}$ factors, with $0 < \lambda < 1$, always possess a state with large centralizer while $\sigma$-finite type ${\rm III_0}$ factors never do. Hence, a $\sigma$-finite factor that possesses a state with large centralizer is necessarily either of type ${\rm I}_n$, with $1 \leq n < +\infty$, or of type ${\rm II_1}$ or of type ${\rm III_\lambda}$ with $0 < \lambda \leq 1$. By Haagerup's result \cite[Theorem 3.1]{Ha85}, a type ${\rm III_1}$ factor with separable predual possesses a state with large centralizer if and only if $M$ has a {\em trivial bicentralizer}. It is an open problem, known as Connes's bicentralizer problem, to decide whether all type ${\rm III_1}$ factors with separable predual have a trivial bicentralizer \cite{Ha85}. We will provide in Section \ref{bicentralizer} a large new class of type ${\rm III_1}$ factors with separable predual and trivial bicentralizer (see Theorem \ref{semisolid}). 

\begin{notation}
For any von Neumann subalgebras with expectation $A, B \subset M$, we will write $A\sim_M B$ if there exist projections $p\in A$, $p'\in A'\cap M$, $q\in B$ and $q'\in B'\cap M$  and a nonzero partial isometry $v\in M$ such that $v^*v=pp'$, $vv^*=qq'$ and $vpApp'v^*=qBqq'$. 
\end{notation}

Our second main result is a UPF theorem for tensor products of nonamenable factors belonging to the class $\Cao$ under the assumption that each non type {\rm I} factor appearing in the `unknown' tensor product decomposition possesses a state with large centralizer. More precisely, we prove the following result.

\begin{letterthm}\label{thmB}
Let $m, n \geq 1$ be any integers. For each $1 \leq i \leq m$, let $M _i$ be any nonamenable factor belonging to the class $\Cao$. For each $1 \leq j \leq n$, let $N_j$ be any non type ${\rm I}$ factor that possesses a state with large centralizer. 
\begin{enumerate}
\item Assume that 
$$M_1\ovt \cdots \ovt M_m = N_1\ovt \cdots \ovt N_n.$$ 
Then we have $m\geq n$.
\item Assume that $m = n$ and $$M:= M_1\ovt \cdots \ovt M_n = N_1\ovt \cdots \ovt N_n.$$ 
Then there exists a unique permutation $\sigma \in \mathfrak S_n$ such that $N_j  \sim_M M_{\sigma(j)}$ for all $1 \leq j \leq n$. In particular, $M_{\sigma(j)}$ and $N_j$ are stably isomorphic for all $1 \leq j \leq n$.

\vspace{1mm}
\noindent
If $M_1, \dots, M_n$ are moreover type ${\rm III}$ factors, then $M_{\sigma(j)}$ and $N_j$ are unitarily conjugate inside $M$ for all $1 \leq j \leq n$.
\end{enumerate}
\end{letterthm}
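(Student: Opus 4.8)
The plan is to combine the tensor-product intertwining dichotomy coming from the strong condition (AO) with Popa's intertwining techniques, run an inductive ``coordinate-counting'' argument to obtain the inequality and the permutation, and finally upgrade the resulting intertwining to the relation $\sim_M$ and to unitary conjugacy. First I would fix good states on both sides. On the $M$-side, for each $i$ choose a faithful normal state $\varphi_i \in (M_i)_\ast$ with large centralizer: this is automatic when $M_i$ is of type ${\rm I}$, ${\rm II_1}$ or ${\rm III_\lambda}$ with $0<\lambda<1$, and in the remaining type ${\rm III_1}$ case it is exactly the content of Theorem \ref{semisolid} (trivial bicentralizer together with Haagerup's \cite{Ha85}). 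The product state $\varphi = \varphi_1 \otimes \cdots \otimes \varphi_m$ then has large centralizer on $M$, with $M^\varphi \supset M_1^{\varphi_1} \ovt \cdots \ovt M_m^{\varphi_m}$, while each $N_j$ carries a large-centralizer state $\psi_j$ by hypothesis. The role of these states is Isono's reduction \cite{Is14}: working inside the centralizers turns the type ${\rm III}$ intertwining problem into a tracial one, so that Popa's techniques and the (AO) dichotomy apply as in \cite{OP03}. Along the way one records that each $N_j$ must in fact be nonamenable, since $M$ is nonamenable and an amenable tensor factor would be absorbed by the dichotomy below; this is what makes the relevant relative commutants nonamenable.

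For part (1), writing $\widehat{M_S} = \ovt_{i\in S} M_i$ for $S \subseteq \{1,\dots,m\}$, I would invoke the key location statement established above: if $Q \subset M$ is a subalgebra with expectation whose relative commutant $Q'\cap M$ is nonamenable, then there is a proper nonempty subset $S \subsetneq \{1,\dots,m\}$ with $Q \preceq_M \widehat{M_S}$ and $Q'\cap M \preceq_M \widehat{M_{S^c}}$. Applying this to $Q = N_1$, whose commutant $N_1' \cap M = N_2 \ovt \cdots \ovt N_n$ is nonamenable, locates $N_1$ in the coordinates of some $S_1$ and $N_2 \ovt \cdots \ovt N_n$ in $\widehat{M_{S_1^c}}$. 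Passing to a corner of $\widehat{M_{S_1^c}}$ one repeats the argument with $N_2$ inside a tensor product of the strictly fewer $\Cao$ factors indexed by $S_1^c$, and so on. After $n$ steps this produces pairwise disjoint nonempty sets $S_1, \dots, S_n \subseteq \{1,\dots,m\}$, whence $m = \sum_j |S_j| \geq n$.

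For part (2), when $m = n$ the sets $S_j$ are forced to be singletons, so after relabeling by a permutation $\sigma$ we obtain $N_j \preceq_M M_{\sigma(j)}$ and $N_j' \cap M \preceq_M M_{\sigma(j)}' \cap M = \widehat{M_{\{\sigma(j)\}^c}}$ for every $j$. Since both decompositions are into factors, a commutant-matching argument (comparing $N_j' \cap M = \ovt_{k\neq j} N_k$ with $\ovt_{i\neq \sigma(j)} M_i$ and using solidity of the $M_i$ to rule out any nontrivial overlap) shows that the embedding of $N_j$ is onto a corner of $M_{\sigma(j)}$; this is precisely the relation $N_j \sim_M M_{\sigma(j)}$, and it yields stable isomorphism of the factors $N_j$ and $M_{\sigma(j)}$. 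This direction uses only the $\Cao$ structure of the $M_i$, not of the $N_j$. Uniqueness of $\sigma$ follows from primeness: two distinct coordinates $M_{\sigma(j)}$, $M_{\tau(j)}$ cannot both be $\sim_M$-related to the same $N_j$. Finally, if the $M_i$ (hence the $N_j$) are of type ${\rm III}$, every nonzero projection in each of the factors $N_j$, $\ovt_{k\neq j} N_k$, $M_{\sigma(j)}$, $\ovt_{i\neq\sigma(j)} M_i$ is Murray--von Neumann equivalent to $1$, so the projections $p,p',q,q'$ in the definition of $\sim_M$ can be taken equal to $1$ and the partial isometry extended to a unitary $u \in M$ with $u N_j u^* = M_{\sigma(j)}$.

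I expect the main obstacle to be the location statement used in part (1), namely proving the tensor-product dichotomy in the type ${\rm III}$, with-expectation setting without recourse to a trace. This is where the strong condition (AO) and the large-centralizer hypotheses are genuinely spent: one must align the product states $\varphi$ and $\psi$ (triviality of the bicentralizer being what permits this alignment) so as to descend to a common centralizer, and only there can the Akemann--Ostrand/bi-exactness input of \cite{Oz03} be combined with Popa's intertwining as in \cite{OP03, Is14}.
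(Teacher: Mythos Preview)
Your overall shape---use the (AO) dichotomy to locate tensor factors, exploit large centralizers to produce finite subalgebras, then induct---is correct, but the ``location statement'' you invoke is misstated and this breaks the argument. The paper's Theorem~\ref{location theorem} requires $Q$ to be \emph{finite}, so applying it to $Q=N_1$ (possibly type~${\rm III}$) is illegitimate; you must instead apply it to a product of the ${\rm II_1}$ centralizers $L_j = N_j^{\psi_j}$. More seriously, the theorem only yields $Q \preceq_M M_i^c$ for some single index $i$; passing to commutants via Lemma~\ref{relative commutants} then gives $M_i \preceq_M Q'\cap M$, which is the \emph{opposite} direction from your claimed $Q'\cap M \preceq_M \widehat{M_{S^c}}$. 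Consequently the paper runs the induction the other way around: it applies the location theorem to $L_n^c = L_0 \ovt \cdots \ovt L_{n-1}$ (whose commutant is $N_n$) to produce $M_{i_0} \preceq_M N_n$, and it is the $M$-coordinates that get peeled off one at a time, not the $N$-coordinates.

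The second gap is in your inductive step. ``Passing to a corner of $\widehat{M_{S_1^c}}$'' presumes that the remaining $N_j$'s actually sit inside such a corner, but a bare intertwining $\preceq_M$ does not give this. The paper supplies the missing ingredient in Lemma~\ref{intertwining tensor}: from $M_{i_0}\preceq_M N_n$ one extracts a partial isometry $v$ and projections so that $qN_nq = vM_{i_0}v^* \ovt P$ and $vM_{i_0}^cv^* = P \ovt qN_n^cq$ as genuine tensor decompositions; only then can the induction hypothesis be applied to $m-1$ factors on the $M$-side versus $n-1$ factors (plus the leftover $P$) on the $N$-side. The arguments that $P$ is amenable (Lemma~\ref{intermediate lemma1}) and in fact type~${\rm I}$ when $m=n$ (using that $M$ is not McDuff, Lemma~\ref{intermediate lemma2}) are what force the counting and yield $\sim_M$; your ``commutant-matching'' sketch does not capture this. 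Your treatment of the type~${\rm III}$ unitary-conjugacy endgame and of uniqueness of $\sigma$ is fine.
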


We refer to Theorem  \ref{main theorem} for a more general UPF result dealing also with tensor products with amenable factors with separable predual. The proof of Theorem \ref{thmB} follows Ozawa-Popa's original strategy \cite{OP03}. However, unlike \cite{Is14}, we do not appeal to the Connes-Takesaki decomposition theory and we work inside the tensor product factor $M = M_1 \ovt \cdots \ovt M_m$ rather than inside its continuous core $\core(M)$. This is the main novelty of our approach compared to the one developed in \cite{Is14}.

For this strategy to work, we prove a generalization of Popa's intertwining theorem \cite{Po01, Po03}  that allows us to `intertwine with expectation' finite von Neumann subalgebras with expectation $A \subset M$ into {\em arbitrary} von Neumann subalgebras with expectation $B \subset M$ inside $M$ (see Theorem \ref{intertwining for type III}). Using this new intertwining theorem and exploiting the strong condition (AO), we can then locate finite von Neumann subalgebras with expectation in $M$ that have a nonamenable relative commutant (see Theorem \ref{location theorem}). Using this `location' theorem, exploiting the fact that all the factors $N_1, \dots, N_n$ possess a state with large centralizer and reasoning by induction over $n \geq 1$, we can finally locate all the factors $N_1, \dots, N_n$ inside $M$ and prove Theorem \ref{thmB}.

Note that our approach consisting in working inside $M$ rather than inside its continuous core $\core(M)$ allows us to remove the almost periodicity assumption that was necessary in \cite[Theorem A]{Is14}. Therefore, Theorem~\ref{thmB} above generalizes and strengthens \cite[Theorem A]{Is14} and moreover provides new UPF results when $M_1, \dots, M_m$ are free Araki-Woods factors \cite{Sh96}, many of which have no almost periodic state (e.g.\ $\Gamma(\rL^2(\R), \lambda_\R)\dpr$). We strongly believe that this novel approach will have further applications in the structure theory of type ${\rm III}$ factors.

In order to apply Theorem \ref{thmB} and to obtain classification results for tensor products of nonamenable $\Cao$ factors as in Theorem \ref{thmA}, we show that any nonamenable type ${\rm III}$ factor belonging to the class $\Cao$ possesses a state with large centralizer. This is our third main result.

\begin{letterthm}\label{thmC}
Let $M$ be any nonamenable type ${\rm III}$ factor belonging to the class $\Cao$. Then $M$ possesses a state with large centralizer.
\end{letterthm}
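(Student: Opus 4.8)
The plan is to classify $M$ according to its Connes type and treat each case separately, the real content being to rule out type $\mathrm{III}_0$. Since $M$ belongs to $\Cao$, it is a von Neumann subalgebra with expectation of some algebra with separable predual, so $M$ itself has separable predual; this allows me to invoke both Connes's classification and Haagerup's theorem. By Connes's classification \cite{Co72}, a type $\mathrm{III}$ factor is of type $\mathrm{III}_\lambda$ with $0<\lambda<1$, of type $\mathrm{III}_1$, or of type $\mathrm{III}_0$. If $M$ is of type $\mathrm{III}_\lambda$ with $0<\lambda<1$, then by Connes there is a $\lambda$-periodic faithful normal state $\varphi$ whose centralizer $M^\varphi$ is a factor with $(M^\varphi)'\cap M=\C 1$, which is precisely a state with large centralizer. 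If $M$ is of type $\mathrm{III}_1$, I would apply Theorem \ref{semisolid} to conclude that $M$ has trivial bicentralizer --- here one must verify that $M$ meets the hypotheses of that theorem, which holds since $M\in\Cao$ --- and then Haagerup's \cite[Theorem 3.1]{Ha85} converts trivial bicentralizer into the existence of a state with large centralizer.

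It remains to exclude the type $\mathrm{III}_0$ case, and this is where solidity enters. Recall from the discussion following the definition of $\Cao$ that every $M\in\Cao$ is solid, i.e.\ any diffuse subalgebra with expectation has amenable relative commutant. Suppose toward a contradiction that $M$ is of type $\mathrm{III}_0$. By Connes's discrete decomposition \cite{Co72}, there is a lacunary faithful normal semifinite weight $\varphi$ realizing $M=P\rtimes_\theta\Z$, where $P=M^\varphi$ is semifinite with diffuse center $Z(P)$ and the generating automorphism $\theta$ acts ergodically and aperiodically on $Z(P)$ --- the aperiodicity being exactly what separates type $\mathrm{III}_0$ from type $\mathrm{III}_\lambda$ with $0<\lambda<1$. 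This crossed product carries the canonical faithful normal conditional expectation $E\colon M\to P$, and since $\Z$ is amenable the nonamenability of $M$ forces $P$ to be nonamenable. Exhibiting the diffuse abelian algebra $Z(P)$ as a subalgebra with expectation of $M$ whose relative commutant contains the nonamenable algebra $P$ then contradicts solidity, so $M$ cannot be of type $\mathrm{III}_0$.

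The main obstacle is twofold. The genuinely deep input, namely the trivial bicentralizer statement in the type $\mathrm{III}_1$ case, is isolated in Theorem \ref{semisolid} and assumed here; granting it, the delicate point left in the present argument is the type $\mathrm{III}_0$ exclusion --- specifically, producing the diffuse abelian subalgebra $Z(P)$ together with a faithful normal conditional expectation of $M$ onto it. When $P$ is of type $\mathrm{II}_\infty$ rather than finite, the center need not be the range of a conditional expectation, so I would first cut down by a finite-trace projection $z\in Z(P)$ to a type $\mathrm{II}_1$ corner $zPz\subset zMz$, check that this corner remains nonamenable (using ergodicity of $\theta$ on $Z(P)$) and that solidity localizes to the corner, and then run the relative commutant argument there with the center-valued trace of $zPz$ supplying the missing expectation.
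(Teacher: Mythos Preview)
Your reduction to the three Connes types and the treatment of the $\mathrm{III}_\lambda$ ($0<\lambda<1$) and $\mathrm{III}_1$ cases match the paper. The $\mathrm{III}_0$ exclusion, however, is handled differently: the paper dispatches it in one line by invoking \cite[Theorem~A]{HR14} (any nonamenable factor in $\Cao$ is full) together with \cite[Theorem~2.12]{Co74} (full factors are never of type $\mathrm{III}_0$). Your direct argument via solidity and the discrete decomposition avoids importing the fullness result, which is a legitimate alternative.

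That said, your proposed fix for the $\mathrm{II}_\infty$ obstacle does not work as written. In the discrete decomposition of a type $\mathrm{III}_0$ factor, the centralizer $P=M^\varphi$ of the lacunary weight is \emph{always} of type $\mathrm{II}_\infty$, and consequently there is no nonzero $z\in Z(P)$ with $Pz$ of type $\mathrm{II}_1$: every nonzero central projection still cuts out a $\mathrm{II}_\infty$ summand. The ``finite-trace projection $z\in Z(P)$'' you want simply does not exist. The correct repair is to take instead a nonzero \emph{finite} projection $p\in P$, not assumed central. Then $pPp$ is of type $\mathrm{II}_1$ with diffuse center $Z(pPp)=Z(P)p$, the composite expectation $pMp\to pPp\to Z(pPp)$ exists (the second arrow being the center-valued trace of the finite algebra $pPp$), and $pMp\in\Cao$ remains solid by Proposition~\ref{cao}(3). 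The ergodicity you allude to then does exactly what is needed: if $Pz$ were amenable for some nonzero central $z$, then every $P\theta^n(z)$ would be amenable and $\bigvee_{n\in\Z}\theta^n(z)=1$ by ergodicity, forcing $P$ itself amenable; hence $Pz_P(p)$, and therefore $pPp$, is nonamenable. Now $Z(P)p\subset pMp$ is a diffuse abelian subalgebra with expectation whose relative commutant contains the nonamenable $pPp$, contradicting solidity of $pMp$.
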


Observe that any nonamenable factor $M$ belonging to the class $\Cao$ is necessarily full by \cite[Theorem A]{HR14} and hence cannot be of type ${\rm III_0}$ by \cite[Theorem 2.12]{Co74}. Therefore, we may assume that $M$ is of type ${\rm III_1}$ in the statement of Theorem \ref{thmC}. In that case, we first show that $M$ has a trivial bicentralizer and then use Haagerup's result \cite[Theorem 3.1]{Ha85} to deduce that $M$ possesses a state with large centralizer (see Theorem \ref{semisolid} for a more general result). In order to show that $M$ has a trivial bicentralizer, we prove several new results regarding the structure of bicentralizer algebras.

First, for all $\sigma$-finite type ${\rm III_1}$ factors $(P, \varphi)$ endowed with a faithful normal state, we provide a very simple and yet very useful interpretation of the corresponding bicentralizer algebra $\Bic(P, \varphi)$ in terms of ultraproduct von Neumann algebras (see Proposition~\ref{bicentralizer-interpretation}). Then using Ando-Haagerup's result \cite[Proposition 4.24]{AH12}, we show in Theorem \ref{structure-bicentralizer} the following dichotomy result for the bicentralizer algebra $\Bic(P, \varphi)$, assuming moreover that $P$ has separable predual : either $\Bic(P, \varphi) = \C 1$ or $\Bic(P, \varphi) \subset P$ is a McDuff type ${\rm III_1}$ subfactor (globally invariant under the modular automorphism group $(\sigma_t^\varphi)$). Using this dichotomy result for bicentralizer algebras and Haagerup's result \cite[Theorem 2.3]{Ha85} on amenable type ${\rm III_1}$ factors, and exploting the fact that $M$ belongs to the class $\Cao$ and hence is solid, we prove Theorem \ref{thmC}. We point out that Theorem \ref{thmC} and the more general Theorem \ref{semisolid} provide many new examples of type ${\rm III_1}$ factors with trivial bicentralizer.

\subsection*{Acknowledgments}
The proof of Theorem \ref{intertwining for type III} stemmed from several thought-provoking discussions with Steven Deprez, Sven Raum and Yoshimichi Ueda. We are therefore very grateful to them for sharing their insights with us. We also thank Yoshimichi for his valuable comments. Last but not least, we thank the referees for carefully reading the paper and providing useful remarks.

\tableofcontents

\section{Preliminaries}\label{preliminaries}

For any von Neumann algebra $M$, we denote by $\mathcal Z(M)$ its center, by $\mathcal U(M)$ its group of unitaries and by $M_\ast$ its predual. We say that a von Neumann algebra $M$ is $\sigma$-{\em finite} if it possesses a faithful normal state $\varphi \in M_\ast$.


\subsection*{Ultraproduct von Neumann algebras}

Let $M$ be any $\sigma$-finite von Neumann algebra. Let $(I, \leq)$ be any nonempty directed set and $\mathcal U$ any {\em cofinal} ultrafilter on $I$, that is, for all $i_0 \in I$, we have $\{i \in I : i \geq i_0\} \in \mathcal U$. When $I = \N$, the cofinal ultrafilter $\mathcal U$ is also called a {\em nonprincipal} ultrafilter and we will use the standard notation $\mathcal U = \omega$.

Define
\begin{align*}
\mathcal I_{\mathcal U}(M) &= \left\{ (x_i)_{i} \in \ell^\infty(I, M) : x_i \to 0 \ast\text{-strongly as } i \to \mathcal U \right\} \\
\mathcal M^{\mathcal U}(M) &= \left \{ (x_i)_i \in \ell^\infty(I, M) :  (x_i)_i \, \mathcal I_{\mathcal U}(M) \subset \mathcal I_{\mathcal U}(M) \text{ and } \mathcal I_{\mathcal U}(M) \, (x_i)_i \subset \mathcal I_{\mathcal U}(M)\right\}.
\end{align*}

We have that the {\em multiplier algebra} $\mathcal M^{\mathcal U}(M)$ is a C$^*$-algebra and $\mathcal I_{\mathcal U}(M) \subset \mathcal M^{\mathcal U}(M)$ is a norm closed two-sided ideal. Following \cite{Oc85}, we define the {\em ultraproduct von Neumann algebra} $M^{\mathcal U}$ by $M^{\mathcal U} = \mathcal M^{\mathcal U}(M) / \mathcal I_{\mathcal U}(M)$. Observe that the proof given in \cite[5.1]{Oc85} for the case when $I = \N$ and $\mathcal U = \omega$ applies {\em mutatis mutandis}. We will simply denote the image of $(x_i)_i \in \mathcal M^{\mathcal U}(M)$ by $(x_i)^{\mathcal U} \in M^{\mathcal U}$.

For all $x \in M$, the constant net $(x)_i$ lies in the multiplier algebra $\mathcal M^{\mathcal U}(M)$. We will then identify $M$ with $(M + \mathcal I_{\mathcal U}(M))/ \mathcal I_{\mathcal U}(M)$ and regard $M \subset M^{\mathcal U}$ as a von Neumann subalgebra. The map $\rE_{\mathcal U} : M^{\mathcal U} \to M : (x_i)^{\mathcal U} \mapsto \sigma \text{-weak} \lim_{i \to {\mathcal U}} x_i$ is a faithful normal conditional expectation. For every faithful normal state $\varphi \in M_\ast$, the formula $\varphi^{\mathcal U} = \varphi \circ \rE_{\mathcal U}$ defines a faithful normal state on $M^{\mathcal U}$. Observe that $\varphi^{\mathcal U}((x_i)^{\mathcal U}) = \lim_{i \to {\mathcal U}} \varphi(x_i)$ for all $(x_i)^{\mathcal U} \in M^{\mathcal U}$.

Let $Q \subset M$ be any von Neumann subalgebra with faithful normal conditional expectation $\rE_Q : M \to Q$. Choose a faithful normal state $\varphi$ on $Q$ and still denote by $\varphi$ the faithful normal state $\varphi \circ \rE_Q$ on $M$. We have $\ell^\infty(I, Q) \subset \ell^\infty(I, M)$, $\mathcal I_{\mathcal U}(Q) \subset \mathcal I_{\mathcal U}(M)$ and $\mathcal M^{\mathcal U}(Q) \subset \mathcal M^{\mathcal U}(M)$. We will then identify $Q^{\mathcal U} = \mathcal M^{\mathcal U}(Q) / \mathcal I_{\mathcal U}(Q)$ with $(\mathcal M^{\mathcal U}(Q) + \mathcal I_{\mathcal U}(M)) / \mathcal I_{\mathcal U}(M)$ and regard $Q^{\mathcal U} \subset M^{\mathcal U}$ as a von Neumann subalgebra. Observe that the norm $\|\cdot\|_{(\varphi |_Q)^{\mathcal U}}$ on $Q^{\mathcal U}$ is the restriction of the norm $\|\cdot\|_{\varphi^{\mathcal U}}$ to $Q^{\mathcal U}$. Observe moreover that $(\rE_Q(x_i))_i \in \mathcal I_{\mathcal U}(Q)$ for all $(x_i)_i \in \mathcal I_{\mathcal U}(M)$ and $(\rE_Q(x_i))_i \in \mathcal M^{\mathcal U}(Q)$ for all $(x_i)_i \in \mathcal M^{\mathcal U}(M)$. Therefore, the mapping $\rE_{Q^{\mathcal U}} : M^{\mathcal U}\to Q^{\mathcal U} : (x_i)^{\mathcal U} \mapsto (\rE_Q(x_i))^{\mathcal U}$ is a well-defined conditional expectation satisfying $\varphi^{\mathcal U} \circ \rE_{Q^{\mathcal U}} = \varphi^{\mathcal U}$. Hence, $\rE_{Q^{\mathcal U}} : M^{\mathcal U} \to Q^{\mathcal U}$ is a faithful normal conditional expectation. 

We will be using the ultraproduct von Neumann algebra framework in Section \ref{bicentralizer} for $I = \N$ and in Section \ref{intertwining} for possibly uncountable directed sets $I$. For more on ultraproduct von Neumann algebras, we refer the reader to \cite{AH12, BO08, Oc85}.

\subsection*{Modular theory}
Let $M$ be any von Neumann algebra.  For any faithful normal semifinite weight $\Phi$ on $M$, put \begin{align*}
	\mathfrak{n}_{\Phi}&:= \left\{x \in M\mid \Phi(x^*x)< +\infty \right\},\\
	\mathfrak{m}_{\Phi}&:=(\mathfrak{n}_{\Phi})^*\mathfrak{n}_{\Phi}= \left\{\sum_{i=1}^nx_i^*y_i \mid n \geq 1, x_i, y_i\in \mathfrak{n}_{\Phi} \text{ for all } 1 \leq i \leq n \right\}.
\end{align*}
We will denote by $\rL^2(M, \Phi)$ the $\rL^2$-completion of $\mathfrak n_\Phi$ with respect to $\Phi$, by $\Lambda_\Phi : \mathfrak n_\Phi \to \rL^2(M, \Phi)$ the canonical injection, by $\Delta_\Phi$  the modular operator, by $J_\Phi$ the modular conjugation on $\rL^2(M, \Phi)$, and by $(\sigma^\Phi_t)$ the modular automorphism group associated with $\Phi$. We have $x \Lambda_\Phi(y) = \Lambda_\Phi(xy)$ for all $x \in M$ and all $y \in \mathfrak n_\Phi$. We will simply write $\|x\|_\Phi := \sqrt{\Phi(x^*x)}$ for all $x \in \mathfrak n_\Phi$. 
The Hilbert space $\rL^2(M,\Phi)$, together with its positive part $\mathfrak P_\Phi$ and its modular conjugation $J_\Phi$ is called the {\em standard form} of $M$ and is denoted by $(M, \rL^2(M, \Phi), J_\Phi, \mathfrak P_\Phi)$. The standard form is unique and does not depend on the choice of $\Phi$ thanks to \cite[Theorem 2.3]{Ha73}. The {\em centralizer algebra} $M^\Phi$ is defined as the fixed point algebra of the modular automorphism group $(\sigma_t^\Phi)$. For all $a\in M^\Phi$ and all $x\in\mathfrak{m}_\Phi$, we have $ax, xa\in \mathfrak{m}_\Phi$ and $\Phi(ax)=\Phi(xa)$. 

Let $A\subset M$ be any inclusion of $\sigma$-finite von Neumann algebras with faithful normal conditional expectation $\rE_A : M \to A$ and let $\varphi_A$ be any faithful normal state on $A$. Put $\varphi:=\varphi_A\circ \rE_A$. Then we have $\sigma_t^\varphi|_A=\sigma_t^{\varphi_A}$ for all $t \in \R$ and hence $\sigma_t^{\varphi}(A)=A$ and $\sigma_t^{\varphi}(A'\cap M)=A'\cap M$. Thus by \cite[Theorem IX.4.2]{Ta03}, the inclusion $A'\cap M\subset M$ is with expectation.

\subsection*{Basic construction}

Let $B \subset M$ be any inclusion of $\sigma$-finite von Neumann algebras with faithful normal conditional expectation $\rE_B : M \to B$. Fix a faithful normal semifinite weight $\Phi_B$ on $B$ and put $\Phi := \Phi_B \circ \rE_B$. We simply denote by $(M, H, J, \mathfrak P) = (M, \rL^2(M, \Phi), J_\Phi, \mathfrak P_\Phi)$ the standard form of the von Neumann algebra $M$. 

We have a canonical inclusion $\rL^2(B,\Phi_B) \subset \rL^2(M,\Phi)$. The {\em Jones projection} $e_B^{\Phi_B} \colon \rL^2(M,\Phi) \to \rL^2(B,\Phi_B)$ is defined by $e_B^{\Phi_B}\Lambda_{\Phi}(x) := \Lambda_{\Phi_B}(\rE_B(x))$ for $x\in \mathfrak{n}_{\Phi}$. By the condition $\Phi=\Phi_B\circ \rE_B$ and the inequality $\rE_B(x)^*\rE_B(x) \leq \rE_B(x^*x)$ for all $x\in M$, we have that $e_B^{\Phi_B} $ is well defined and satisfies $(e_B^{\Phi_B})^*=e_B^{\Phi_B}=(e_B^{\Phi_B})^2$ and $e_B^{\Phi_B} x e_B^{\Phi_B} = \rE_B(x)e_B^{\Phi_B} = e_B^{\Phi_B} \rE_B(x)$ for all $x\in M$. Observe that $e_B^{\Phi_B} : \rL^2(M, \Phi) \to \rL^2(B, \Phi_B)$ is nothing but the orthogonal projection from $\rL^2(M, \Phi)$ onto $\rL^2(B,\Phi_B)$. As we observe in Proposition \ref{prop-uniqueness-projection}, the Jones projection $e_B^{\Phi_B} : \rL^2(M, \Phi) \to \rL^2(B, \Phi_B)$ does not depend on the choice of the faithful normal semifinite weight $\Phi_B$ on $B$. Therefore, when no confusion is possible, we will simply denote the Jones projection by $e_B : \rL^2(M) \to \rL^2(B)$. 

The {\em basic construction} of the inclusion $B \subset M$ is defined by 
$$\langle M, B\rangle := \langle M, e_B\rangle = (J B J)' \cap \mathbf B(H).$$
The basic construction $\langle M, B\rangle$ is uniquely determined by $\mathfrak P$ and $\rE_B : M \to B$. We refer to Sections \ref{section-relative} and \ref{section-dimension} of the Appendix for more information on Jones basic construction and the dimension theory for semifinite von Neumann algebras.

\subsection*{Operator valued weights}
Let $M$ be any von Neumann algebra. 
The \textit{extended positive cone} $\widehat{M}^+$ of $M$ is defined as the set of all the lower semicontinuous functions $m\colon M_*^+\rightarrow [0,\infty]$ satisfying
\begin{itemize}
	\item $m(\varphi+\psi)=m(\varphi)+m(\psi)$ for all $\varphi,\psi\in M_*^+$,
	\item $m(\lambda \varphi)=\lambda m(\varphi)$ for all $\varphi\in M_*^+$ and all $\lambda\geq 0$. 
\end{itemize}

	Let $B\subset M$ be any von Neumann subalgebra. Recall that a map $\rT \colon M^+\rightarrow \widehat{B}^+$ is an \textit{operator valued weight} from $M$ to $B$ if it satisfies the following three conditions:
\begin{itemize}
	\item $\rT(\lambda x)=\lambda \rT(x)$ for all $x\in M^+$ and all $\lambda \geq 0$, 
	\item $\rT(x+y)=\rT(x)+\rT(y)$ for all $x,y \in M^+$,
	\item $\rT(b^*xb)=b^*\rT(x)b$ for all $x\in M^+$ and all $b\in B$.
\end{itemize}

Let $\rT : M^+\rightarrow \widehat{B}^+$ be any operator valued weight. Put 
\begin{align*}
	\mathfrak{n}_{\rT}&:= \left\{x\in M\mid \|\rT(x^*x)\|_\infty<+\infty \right\},\\
	\mathfrak{m}_{\rT}&:=(\mathfrak{n}_{\rT})^*\mathfrak{n}_{\rT}= \left\{\sum_{i=1}^nx_i^*y_i \mid n \geq 1, x_i, y_i\in \mathfrak{n}_{\rT} \text{ for all } 1 \leq i \leq n \right\}.
\end{align*}
Then $\rT$ has a unique extension $\rT\colon \mathfrak{m}_{\rT}\rightarrow B$ that is $B$-$B$-bimodular. In particular $\rT$ extends to a conditional expectation if $\rT(1_M)=1_B$. The operator valued weight $\rT$ is said to be 
\begin{itemize}
\item \textit{faithful} if $\rT(x)=0$ $\Rightarrow$ $x=0$, ($x\in M^+$),
\item \textit{normal} if  $\rT(x_i)\nearrow \rT(x)$ \quad whenever $x_i\nearrow x$, ($x_i,x\in M^+$),
\item \textit{semifinite} if $\mathfrak{m}_{\rT}$ is $\sigma$-weakly dense in $M$.
\end{itemize}
In this paper, all the operator valued weights that we consider are assumed to be faithful, normal and semifinite. For more on operator valued weights, we refer the reader to \cite{Ha77a,Ha77b,Ko85,ILP96}. The following two lemmas are well-known.

\begin{lem}\label{operator valued weight1}
	Let $B\subset M$ be any inclusion of von Neumann algebras, $\rT$ any faithful normal semifinite operator valued weight from $M$ to $B$ and $\Phi$ any faithful normal semifinite weight on~$B$.
	\begin{enumerate}
		\item The composition $\Phi\circ \rT$ defines a faithful normal semifinite weight on $M$ that satisfies $\sigma^{\Phi\circ \rT}|_B=\sigma^\Phi$. 
		\item If $\rT : M \to B$ is moreover assumed to be a faithful normal conditional expectation, then there exists a canonical faithful normal semifinite operator valued weight $\rT_M$ from  $\langle M,B\rangle$ to $M$ given by $\rT_M(xe_Bx^*)=xx^*$, where $e_B$ is the Jones projection of the inclusion $B \subset M$. 
	\end{enumerate}
\end{lem}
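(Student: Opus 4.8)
The plan is to treat the two parts separately, relying in each case on Haagerup's theory of operator valued weights \cite{Ha77a, Ha77b}.

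For part (1), faithfulness and normality of $\Phi \circ \rT$ are immediate: if $(\Phi\circ\rT)(x^*x) = 0$ then $\Phi(\rT(x^*x)) = 0$, so faithfulness of $\Phi$ forces $\rT(x^*x) = 0$ and then faithfulness of $\rT$ forces $x = 0$; normality follows since $\rT$ preserves increasing nets into $\widehat B^+$ and the canonical extension of $\Phi$ to $\widehat B^+$ is normal. The delicate point is semifiniteness, where I would combine the two separate semifiniteness hypotheses. Choosing an increasing net of projections $(e_j)$ in $B$ with $e_j \nearrow 1$ and $\Phi(e_j) < +\infty$ (possible since $\Phi$ is semifinite on $B$) together with elements of $\mathfrak n_\rT$ approximating the identity (possible since $\mathfrak m_\rT$ is $\sigma$-weakly dense in $M$), one uses the $B$-bimodularity $\rT(b^*xb) = b^*\rT(x)b$ and a cutting argument with the $e_j$ to produce enough elements $x$ with $\Phi(\rT(x^*x)) < +\infty$. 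This shows $\mathfrak n_{\Phi\circ\rT}$ is $\sigma$-weakly dense, so $\Phi\circ\rT$ is a faithful normal semifinite weight on $M$.

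For the modular automorphism group identity $\sigma^{\Phi\circ\rT}_t|_B = \sigma^\Phi_t$, I would argue via the KMS characterization. The key structural fact, due to Haagerup, is that $\rT$ is invariant under $(\sigma^{\Phi\circ\rT}_t)$, so in particular $(\sigma^{\Phi\circ\rT}_t)$ leaves $B$ globally invariant. Once global invariance of $B$ is available, one checks that the restriction of $(\sigma^{\Phi\circ\rT}_t)$ to $B$ satisfies the KMS condition with respect to $\Phi$: since $\rT$ restricts to the identity on $B$, the weight $\Phi\circ\rT$ restricts to $\Phi$ on $B$, and one verifies the analytic continuation and boundary identities on a $\sigma$-weakly dense $*$-subalgebra of $B$. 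By Takesaki's uniqueness of the modular automorphism group satisfying the KMS condition \cite{Ta03}, this forces $\sigma^{\Phi\circ\rT}_t|_B = \sigma^\Phi_t$.

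For part (2), with $\rT = \rE_B$ a conditional expectation, I would construct $\rT_M$ as the dual of $\rE_B$ under Haagerup's correspondence between operator valued weights and their commutants. Working in the standard form $(M, H, J, \mathfrak P)$ with respect to $\Phi := \Phi_B \circ \rE_B$, one has $JMJ = M'$ and $J\langle M, B\rangle J = J (JBJ)' J = B'$, so $\rE_B : M \to B$ produces, by Haagerup's theorem, a canonical faithful normal semifinite operator valued weight from $B'$ to $M'$; transporting it back through $\Ad J$ yields a faithful normal semifinite operator valued weight $\rT_M$ from $\langle M, B\rangle$ to $M$. The defining relation $e_B x e_B = \rE_B(x) e_B$ and the $\sigma$-weak density of $\spn\{x e_B y : x,y \in M\}$ in $\langle M, B\rangle$ then let me identify $\rT_M$ on this core through the bimodule formula $\rT_M(x e_B y) = xy$, which on positive elements reads $\rT_M(x e_B x^*) = xx^*$. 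The main obstacle I anticipate is the semifiniteness in part (1), together with pinning down the precise $\sigma$-weakly dense cores on which the bimodularity and KMS computations are legitimate; the modular-group identity and the construction in part (2) are then largely formal consequences of Haagerup's correspondence once these domain issues are handled with care.
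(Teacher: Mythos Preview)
Your proposal is correct and essentially follows the standard route; the paper itself does not give a proof but simply refers to \cite[Proposition 2.3 and Theorem 4.7]{Ha77a} for part (1) and to \cite[Lemma 3.1]{Ko85} for part (2). Your sketch for (1) is exactly the content of Haagerup's results, and for (2) you construct $\rT_M$ via Haagerup's duality between operator valued weights on $M \supset B$ and on $B' \supset M'$ \cite{Ha77b}, whereas the paper points to Kosaki's more direct construction; both approaches yield the same canonical $\rT_M$, so there is no substantive difference.
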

\begin{proof}
	For $(1)$, see \cite[Proposition 2.3 and Theorem 4.7]{Ha77a}. For $(2)$, see \cite[Lemma 3.1]{Ko85}. 
\end{proof}

\begin{lem}\label{operator valued weight2}
	Let $B\subset M$ be any inclusion of von Neumann algebras, $\rT$ any faithful normal semifinite operator valued weight from $M$ to $B$. Let $p\in B'\cap M$ be any nonzero projection such that $\rT(p)\in B$. 
	
	Then there exists a projection $z\in \mathcal{Z}(B)$ such that $q:=zp\in \mathcal{Z}(B)p$ is nonzero, the element $qT(p)^{-1/2}$ is well defined and the map $qM^+q\ni qxq\mapsto q\rT(p)^{-1/2} \, \rT(qxq) \, qT(p)^{-1/2} \in B^+q$ extends to a faithful normal conditional expectation from $qMq$ onto $Bq$.
\end{lem}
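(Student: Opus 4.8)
\emph{The plan} is to renormalize $\rT$ by $\rT(p)^{-1/2}$ on a suitable central corner. First I would record that $\rT(p)\in\mathcal Z(B)$: since $p\in B'\cap M$ and $\rT$ is $B$-$B$-bimodular on $\mathfrak m_\rT$, for every $b\in B$ we get $b\,\rT(p)=\rT(bp)=\rT(pb)=\rT(p)\,b$, so $\rT(p)\in B'\cap B=\mathcal Z(B)$, and $\rT(p)\neq 0$ by faithfulness. Fixing $\varepsilon\in(0,\|\rT(p)\|)$ and letting $z:=\chi_{[\varepsilon,\infty)}(\rT(p))\in\mathcal Z(B)$ be the corresponding spectral projection, the element $z\rT(p)$ is invertible in $zB$, so $q\rT(p)^{-1/2}$ (interpreted via the inverse taken in $zB$) is a well-defined bounded element; set $q:=zp$ and $v:=q\rT(p)^{-1/2}=v^*$. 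Since $\rT(q)=z\rT(p)\neq 0$, faithfulness gives $q\neq 0$, and $q\in\mathcal Z(B)p$ is a projection lying in $B'\cap M$. Finally, as $q\in\mathfrak n_{\rT}$ and $\mathfrak n_{\rT}$ is a left ideal, every $qxq=(x^*q)^*q$ lies in $\mathfrak n_{\rT}^*\mathfrak n_{\rT}=\mathfrak m_{\rT}$, so $\rT(qxq)\in B$ is bounded and the proposed map $\mathcal E(qxq):=v\,\rT(qxq)\,v$ is well defined with values in $Bq$.

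Next I would check that $\mathcal E\colon qMq\to Bq$ is a normal conditional expectation. Writing $v=pc$ with $c:=z\rT(p)^{-1/2}\in\mathcal Z(B)$, all the required identities reduce to bimodularity of $\rT$ together with the centrality of $c$, $z$ and $\rT(p)$, the key computation being $c\,z\rT(p)\,c=z$. One gets $\mathcal E(q)=v\,z\rT(p)\,v=q$, so $\mathcal E$ is unital; for $b,b'\in B$ one has $q(bxb')q=b(qxq)b'$, whence $\mathcal E\bigl((bq)(qxq)(b'q)\bigr)=b\,\mathcal E(qxq)\,b'$, i.e.\ $\mathcal E$ is $Bq$-bimodular; and taking $x=q$ gives $\mathcal E(bq)=bq$, so $\mathcal E$ restricts to the identity on $Bq$ and is in particular surjective. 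Positivity is immediate from $\mathcal E(y)=v\rT(y)v$ and positivity of $\rT$. Normality follows because $\rT$ is a normal operator valued weight: for a bounded increasing net $y_i\nearrow y$ in $(qMq)^+$ one has $\rT(y_i)\nearrow\rT(y)$ $\sigma$-weakly, hence $\mathcal E(y_i)=v\rT(y_i)v\to v\rT(y)v=\mathcal E(y)$.

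The main obstacle is \emph{faithfulness}, where the renormalization makes the argument slightly delicate. Suppose $y\in qMq$ with $\mathcal E(y^*y)=0$. From $v\rT(y^*y)v=0$ and $\rT(y^*y)\geq 0$ I would first deduce $\rT(y^*y)^{1/2}v=0$, hence $\rT(y^*y)v=0$; multiplying on the right by $\rT(p)^{1/2}$ and using $qz=q$ gives $\rT(y^*y)q=0$. Since $y^*y\leq\|y\|^2 q$ we have $\rT(y^*y)\leq\|y\|^2 z\rT(p)$, so $\rT(y^*y)$ is supported under $z$ and therefore $\rT(y^*y)q=0$ upgrades to $\rT(y^*y)p=0$. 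Now the crucial point: set $w:=y^*y$ and let $f:=s(\rT(w))\in B$ be the support projection of $\rT(w)$. As $\rT(w)$ commutes with $p$, the relation $\rT(w)p=0$ forces $fp=0$, i.e.\ $p=(1-f)p=p(1-f)$, while $w\leq\|y\|^2 q\leq\|y\|^2 p$ gives $w=pwp=(1-f)w(1-f)$. Applying $\rT$ and using bimodularity over $1-f\in B$, I obtain $\rT(w)=(1-f)\rT(w)(1-f)$; combined with $\rT(w)=f\rT(w)f$ this yields $\rT(w)=0$, and faithfulness of $\rT$ gives $w=y^*y=0$, i.e.\ $y=0$. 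This establishes that $\mathcal E$ is faithful and completes the proof.
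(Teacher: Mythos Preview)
Your proof is correct and follows essentially the same route as the paper: identify $\rT(p)\in\mathcal Z(B)$, cut by a spectral projection $z$ of $\rT(p)$ to make $z\rT(p)^{-1/2}$ bounded, and verify that the renormalized map is a faithful normal conditional expectation. The only noteworthy difference is in the faithfulness step: the paper argues via the central support $z_{B'}(p)\in\mathcal Z(B)$, observing that $p\rT(qxq)=0$ forces $z_{B'}(p)\rT(qxq)=0$ and hence $\rT(qxq)=\rT(z_{B'}(p)qxq)=0$; you instead use the support projection $f=s(\rT(y^*y))\in B$ to show $w=(1-f)w(1-f)$ and then apply $\rT$. Both arguments exploit the same underlying mechanism (that $\rT(w)p=0$ with $w\le\|y\|^2 p$ forces $\rT(w)=0$), and neither is materially simpler than the other.
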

\begin{proof}
Observe that we have $\rT(p) \in \mathcal Z(B)$. The spectral projection $z \in \mathcal Z(B)$ of $\rT(p) \in \mathcal Z(B)$ corresponding to the bounded interval $[\frac12 \|\rT(p)\|_\infty,\|\rT(p)\|_\infty]$ is nonzero. Then $z\rT(p)^{-1/2} \in \mathcal Z(B)$ is well defined and $pz\rT(p)^{-1/2}$ is nonzero since 
$$\rT(pz \rT(p)^{-1/2})= \rT(p)z \rT(p)^{-1/2} = z \rT(p)^{1/2}\neq 0.$$ 
Put $q:=pz$ and observe that $q \neq 0$ since $q \rT(p)^{-1/2} \neq 0$.

Denote by $\rE : qM^+q \to B^+q$ the map as in the statement of the lemma. Then $\rE$ is well-defined and bounded by $q$, since for all $x \in M^+$, we have
$$\rE(qxq)\leq q\rT(p)^{-1/2} \, \rT(q\|qxq\|_\infty q) \, q\rT(p)^{-1/2}=\|qxq\|_\infty \, q\rT(p)^{-1/2} \,  \rT(p) \, q\rT(p)^{-1/2}=\|qxq\|_\infty \, q.$$
By construction, $\rE$ is a normal operator valued weight from $qMq$ to $Bq$. Denote by $z_{B'}(p) \in B$ the central support in $B'$ of the projection $p \in B' \cap M$. Note that $z_{B'}(p) p = p$ and hence $z_{B'}(p)q = z_{B'}(p) pz = pz = q$. The map $\rE : qM^+q \to B^+q$ is faithful since for all $x\in M^+$, we have
$$\rE(qxq)=0 \Rightarrow p\rT(qxq)=0 \Rightarrow z_{B'}(p)\rT(qxq)= \rT(z_{B'}(p) qxq) = \rT(qxq)=0.$$
Finally since $\rE(q)=q$, it is extended to a faithful normal conditional expectation from $qMq$ onto $Bq$. This finishes the proof of Lemma \ref{operator valued weight2}.
\end{proof}

\begin{rem}\label{remark expectation}
Let $B \subset M$ be any unital inclusion of von Neumann algebras with faithful normal conditional expectation $\rE_B : M \to B$. The proof of Lemma \ref{operator valued weight2} above shows that for any nonzero projection $p \in B' \cap M$, there exists an increasing sequence $(z_n)_n$ of nonzero projections in $\mathcal Z(B)p$ (defined by $z_n:= \mathbf 1_{[\frac{1}{n + 1}, 1]}(\rE_B(p))p \in \mathcal Z(B)p$) such that $z_n \to p$ $\sigma$-strongly and the inclusion $B z_n \subset z_n M z_n$ is with expectation for all $n \in \N$. 
\end{rem}

We will also need the following useful fact.

\begin{rem}\label{remark expectation bis}
Let $B \subset M$ be any unital inclusion of $\sigma$-finite von Neumann algebras with expectation. Then for every nonzero central projection $z \in \mathcal Z(B' \cap M)$, the unital inclusion $Bz \subset zMz$ is with expectation. Indeed, choose a faithful normal state $\varphi \in M_\ast$ such that $B$ is globally invariant under the modular automorphism group $(\sigma_t^\varphi)$. Observe that $B' \cap M$ is also globally invariant under the modular automorphism group $(\sigma_t^\varphi)$. This implies that $\sigma_t^\varphi(z) = z$ for every $t \in \R$ whenever $z \in \mathcal Z(B' \cap M)$. Define $\varphi_z = \frac{\varphi(z \, \cdot \, z)}{\varphi(z)} \in (zMz)_\ast$. Then we have $\sigma_t^{\varphi_z}(Bz) = \sigma_t^\varphi(B) z = Bz$ for every $t \in \R$ by \cite[Lemme 3.2.6]{Co72}. This implies that $Bz \subset zMz$ is with expectation. Observe that since $\mathcal Z(M) \subset \mathcal Z(B' \cap M)$, the above result also applies to $z \in \mathcal Z(M)$.
\end{rem}

\begin{added}
Since the first version of this paper has been posted on the arXiv, there have been some  developments related to Remarks \ref{remark expectation} and \ref{remark expectation bis}. Indeed, it is showed in \cite[Proposition 2.2]{HU15} that for any inclusion of $\sigma$-finite von Neumann algebra $B \subset M$ with expectation and any nonzero projection $p \in B' \cap M$, the inclusion $Bp \subset pMp$ is still with expectation. This result improves and supersedes the observations contained in Remarks \ref{remark expectation} and \ref{remark expectation bis}. In the rest of the paper and for the sake of clarity and exposition, we will use the result from \cite[Proposition 2.2]{HU15} whenever we need it.
\end{added}

We finally recall the {\em push down} lemma originally due to Pimsner-Popa in the type ${\rm II_1}$ setting \cite[Lemma 1.2]{PP84}. This will play an important role in order to extend Popa's intertwining techniques to the type ${\rm III}$ setting in Section \ref{intertwining}.

\begin{lem}\label{push down lemma}
Let $B\subset M$ be any inclusion of von Neumann algebras with faithful normal conditional expectation $\rE_B : M \to B$. Denote by $\rT_M$ the canonical faithful normal semifinite operator valued weight from $\langle M,B\rangle$ to $M$ and denote by $e_B$ the Jones projection of the inclusion $B \subset M$. 
	
Then for all $x\in \mathfrak{n}_{\rT_M}$, we have $e_Bx=e_B\rT_M(e_Bx)$.
\end{lem}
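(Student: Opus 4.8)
The plan is to reduce everything to the fundamental algebraic relations of the basic construction, verify the identity on a $\sigma$-weakly dense subalgebra by a direct computation, and then pass to a general element of $\mathfrak n_{\rT_M}$ by an approximation argument.

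First I would record the relations to be used. Fix a faithful normal semifinite weight $\Phi_B$ on $B$ and put $\Phi=\Phi_B\circ\rE_B$, so that $e_B$ acts on $\rL^2(M,\Phi)$. From the definition of the Jones projection one has $e_Bxe_B=\rE_B(x)e_B=e_B\rE_B(x)$ for all $x\in M$, and in particular $e_B$ commutes with $B$. By Lemma~\ref{operator valued weight1}(2) one has $\rT_M(ae_Ba^{*})=aa^{*}$ for $a\in M$, so after polarization $\rT_M(ae_Bb)=ab$ for all $a,b\in M$; in particular $\rT_M(e_B)=1$ and $\rT_M(e_Ba)=a$ for all $a\in M$. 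Since $\rT_M(e_B^{*}e_B)=\rT_M(e_B)=1$ we have $e_B\in\mathfrak n_{\rT_M}$, and as $e_B=e_B^{*}$ it follows that $e_Bx=e_B^{*}x\in\mathfrak n_{\rT_M}^{*}\,\mathfrak n_{\rT_M}\subset\mathfrak m_{\rT_M}$ for every $x\in\mathfrak n_{\rT_M}$. Hence $\rT_M(e_Bx)\in M$ is well defined and both sides of the asserted identity make sense.

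Next I would check the identity on the $\sigma$-weakly dense unital $*$-subalgebra $\mathcal A_0:=\operatorname{span}\{ae_Bb:a,b\in M\}\subset\mathfrak m_{\rT_M}$. For $x=ae_Bb$ one computes $e_Bx=e_Bae_Bb=\rE_B(a)e_Bb$ and, using $\rT_M(ce_Bb)=cb$ with $c=\rE_B(a)$, $\rT_M(e_Bx)=\rE_B(a)b$; since $e_B$ commutes with $\rE_B(a)\in B$ this gives $e_B\rT_M(e_Bx)=e_B\rE_B(a)b=\rE_B(a)e_Bb=e_Bx$. By linearity the identity $e_Bx=e_B\rT_M(e_Bx)$ holds for all $x\in\mathcal A_0$; note also that $\mathcal A_0\subset\mathfrak n_{\rT_M}$, since $\rT_M((ae_Bb)^{*}(ae_Bb))=b^{*}\rE_B(a^{*}a)b\in M$.

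Finally, for a general $x\in\mathfrak n_{\rT_M}$ I would approximate. Set $y:=\rT_M(e_Bx)\in M$; using $\rT_M(e_By)=y$ one gets $\rT_M\big(e_B(x-y)\big)=0$ and $e_B\cdot e_B(x-y)=e_B(x-y)$, so the claim is equivalent to $e_B(x-y)=0$. The strategy is to choose a bounded net $(x_\lambda)$ in $\mathcal A_0$ with $x_\lambda\to x$ $\sigma$-weakly, and to transfer convergence through $\rT_M$ by the Cauchy–Schwarz inequality for operator valued weights: applying $\rT_M(e_B^{*}v)^{*}\rT_M(e_B^{*}v)\le\|\rT_M(e_B)\|_\infty\,\rT_M(v^{*}v)$ to $v=x-x_\lambda\in\mathfrak n_{\rT_M}$ yields $\|\rT_M(e_Bx_\lambda)-y\|_\infty^{2}\le\|\rT_M((x-x_\lambda)^{*}(x-x_\lambda))\|_\infty$. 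If the right-hand side can be driven to $0$, then $e_Bx_\lambda=e_B\rT_M(e_Bx_\lambda)\to e_By$ in norm while $e_Bx_\lambda\to e_Bx$ $\sigma$-weakly, and uniqueness of $\sigma$-weak limits forces $e_Bx=e_By$, which is the assertion. The main obstacle is precisely this last convergence: one must produce approximants from $\mathcal A_0$ that converge simultaneously $\sigma$-weakly to $x$ and in the quadratic $\rT_M$-seminorm $v\mapsto\|\rT_M(v^{*}v)\|_\infty^{1/2}$. This amounts to a density (core) property of $\mathcal A_0$ for the semicyclic representation of the dual weight $\widehat\Phi=\Phi\circ\rT_M$, together with a reconciliation of the $\widehat\Phi$-$\rL^2$ topology with the norm topology on $M$. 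I expect this to be the technical heart, most cleanly handled by realizing $\rL^2(M,\Phi)$ as the $\langle M,B\rangle$-submodule $\overline{\Lambda_{\widehat\Phi}(Me_B)}$ of $\rL^2(\langle M,B\rangle,\widehat\Phi)$ via the isometry $\Lambda_\Phi(a)\mapsto\Lambda_{\widehat\Phi}(ae_B)$, and then exploiting density of $\mathcal A_0$ in this module.
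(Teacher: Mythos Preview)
The paper does not give its own argument here; it simply refers to \cite[Proposition~2.2]{ILP96}. So the comparison is with your write-up as a self-contained proof.

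Your Steps~1 and~2 are correct and clean: polarizing $\rT_M(ae_Ba^{*})=aa^{*}$ to get $\rT_M(ae_Bb)=ab$, and then checking $e_Bx=e_B\rT_M(e_Bx)$ on $\mathcal A_0=\operatorname{span}\{ae_Bb:a,b\in M\}$, is exactly the right algebraic core, and your verification that $e_Bx\in\mathfrak m_{\rT_M}$ for $x\in\mathfrak n_{\rT_M}$ is fine.

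The gap is in Step~3. You need approximants $x_\lambda\in\mathcal A_0$ with $\|\rT_M((x-x_\lambda)^{*}(x-x_\lambda))\|_\infty\to 0$, i.e.\ density of $\mathcal A_0$ in $\mathfrak n_{\rT_M}$ for the \emph{Hilbert $M$-module} norm $v\mapsto\|\rT_M(v^{*}v)\|_\infty^{1/2}$. This is strictly stronger than density in the $\rL^2(\langle M,B\rangle,\widehat\Phi)$-norm that you invoke at the end (the latter is $\Phi(\rT_M(v^{*}v))^{1/2}$, which is dominated by $\|\rT_M(v^{*}v)\|_\infty^{1/2}$ only after multiplying by a state; the reverse inequality fails). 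Your isometry $\Lambda_\Phi(a)\mapsto\Lambda_{\widehat\Phi}(ae_B)$ gives $\rL^2$-density, which would let you conclude $e_Bx=e_By$ as operators on a dense subspace tested in the $\widehat\Phi$-inner product, but it does not by itself give the uniform operator convergence $\rT_M(e_Bx_\lambda)\to y$ in $M$ that your Cauchy--Schwarz estimate aims for. As written, the argument does not close.

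Two ways to repair this. Either (i) stay in the Hilbert-module picture and actually prove that the closure of $\mathcal A_0$ in the $\rT_M$-norm is all of $\mathfrak n_{\rT_M}$ (this is true and follows from the description of $\rT_M$ via a quasi-basis $\sum_i m_ie_Bm_i^{*}=1$ with convergence controlled in the right sense, but it is a genuine lemma, not a remark); or (ii) abandon uniform approximation and argue pointwise on $\rL^2(M,\Phi)$: both $e_Bx$ and $e_B\rT_M(e_Bx)$ lie in $(JBJ)'$, so it suffices to show they agree on vectors $\Lambda_\Phi(a)$ with $a\in\mathfrak n_\Phi$, and this equality of vectors can be checked inside $\rL^2(\langle M,B\rangle,\widehat\Phi)$ using your isometry together with the fact that $e_B\Lambda_{\widehat\Phi}(z)=\Lambda_{\widehat\Phi}(e_Bz)$ for $z\in\mathfrak n_{\widehat\Phi}$. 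Route~(ii) is closer to what \cite{ILP96} does and avoids the strong density claim altogether.
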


\begin{proof}
See the proof of \cite[Proposition 2.2]{ILP96}. We point out that the factoriality assumption on $B$ and $M$ in the statement of \cite[Proposition 2.2]{ILP96} is actually unnecessary.
\end{proof}

\subsection*{A strengthening of Ozawa's condition (AO)}

Recall from \cite{Oz03} that a von Neumann algebra $\mathcal M\subset \mathbf{B}(H)$ satisfies \textit{condition} (AO) if there exist $\sigma$-weakly dense unital C$^*$-subalgebras $A\subset \mathcal M$ and $B\subset \mathcal M'$ such that $A$ is locally reflexive and such that the multiplication map $\nu : A\otimes_{\rm alg}B \to \mathbf{B}(H)/\mathbf{K}(H) : a\otimes b\mapsto ab + \mathbf K(H)$ is continuous with respect to the minimal tensor norm. 

In order to show Theorem \ref{thmB}, we will need to introduce a stronger notion than condition (AO) that behaves well with respect to taking tensor products. We will use the following terminology.

\begin{df}\label{AO^++}
	Let $\mathcal M$ be any von Neumann algebra and $(\mathcal M, H, J, \mathfrak P)$ a standard form for $\mathcal M$. We say that $\mathcal M$ satisfies the \textit{strong condition} $\rm (AO)$ if there exist unital C$^*$-subalgebras $A\subset \mathcal M$ and $\mathcal{C}\subset \B(H)$ such that 
	\begin{itemize}
		\item $A$ is exact and is $\sigma$-weakly dense in $\mathcal M$,
		\item $\mathcal{C}$ is nuclear and contains $A$ and
		\item The commutators $[\mathcal{C}, JAJ] := \left\{[c, JaJ] : c \in \mathcal C, a \in A \right\}$ are contained in $\K(H)$.
	\end{itemize}
\end{df}

\begin{remark}
We point out the following observations.
\begin{enumerate}
\item If $B\subset \B(H)$ is a nuclear C$^*$-algebra, then $B+\K(H)$ is also a nuclear C$^*$-algebra, since it is an extension of $B/(B\cap \K(H))$ by $\K(H)$, both of which are nuclear C$^*$-algebras. Hence in the definition above, we can always assume that $\mathcal{C}$ contains $\K(H)$.
\item It is not difficult to show that the strong condition $\rm (AO)$ of Definition \ref{AO^++} implies Ozawa's condition (AO). In fact, by the last condition in Definition \ref{AO^++}, the multiplication map $\nu\colon \mathcal{C}\otimes_{\rm alg} JAJ\rightarrow \B(H)/\K(H)$ is a well-defined $\ast$-homomorphism. It follows that $\nu$ is continuous with respect to the maximal tensor norm and hence with respect to the minimal tensor norm by nuclearlity of $\mathcal{C}$. The restriction of $\nu$ to $A\otimes_{\rm min}JAJ$ gives condition (AO) for $\mathcal M\subset \B(H)$. 
\item Under the additional assumptions that
\begin{enumerate}
\item $\mathcal{C}$ is separable and
\item $[\mathcal{C},J\mathcal{C}J]\subset \K(H)$,
\end{enumerate}
we can show that the strong condition $\rm (AO)$ implies the condition $\rm (AO)^+$ introduced in \cite[Definition 3.1.1]{Is13a}. Indeed, we obtain continuity of the multiplication map $\nu$ on $\mathcal{C}\otimes_{\rm min} J\mathcal{C}J$, which is separable and nuclear, and hence by the lifting theorem \cite{CE76}, there is a ucp lift from $\mathcal{C}\otimes_{\rm min} J\mathcal{C}J$ into $\B(H)$. Assumptions (a) and (b) are easily verified for all the examples in Examples \ref{example} below.
\end{enumerate}
\end{remark}

\begin{example}\label{example}
We observe that all known examples of von Neumann algebras that satisfy Ozawa's condition (AO) actually do satisfy the strong condition (AO) from Definition \ref{AO^++}.
\begin{enumerate}
	\item Any amenable von Neumann algebra $\mathcal M$ (with separable predual) is AFD \cite{Co75}, and hence we can find a $\sigma$-weakly dense nuclear C$^*$-subalgebra $A\subset \mathcal M$. Obviously, $\mathcal M$ satisfies the strong condition $\rm (AO)$. 
	\item Any bi-exact discrete group $\Gamma$ gives rise to the group von Neumann algebra $\rL(\Gamma)$ that satisfies the strong condition (AO). This follows from \cite[Proposition 15.2.3 (2)]{BO08} (see also the proof of \cite[Lemma 3.1.4]{Is13b}). 
	\item Any free quantum group gives rise to a von Neumann algebra that satisfies the strong condition (AO) \cite{VV08, VV05} (see \cite[Subsection 3.1]{Is13b}). More precisely, any quantum group in the class $\mathcal{C}$ in \cite[Definition 2.2.1 and Proposition 2.2.2]{Is14} gives rise to a von Neumann algebra that satisfies the strong condition (AO). 
	\item Any free Araki-Woods factor \cite{Sh96} satisfies the strong condition $\rm (AO)$ (see Theorem \ref{thm-FAW-AO}).
	\item The strong condition $\rm (AO)$ is stable under taking free products with respect to arbitrary faithful normal states. This follows from \cite[Section 3]{Oz04} (see also \cite[Section 4]{GJ07} and \cite[Proposition 3.2.5]{Is13b}).
\end{enumerate}
\end{example}

Let $m \geq 1$. For all $1 \leq i \leq m$, let $\mathcal M_i$ be any von Neumann algebra with standard form $(\mathcal M_i, H_i, J_i, \mathfrak P_i)$ that satisfies the strong condition (AO) with corresponding C$^*$-algebras $A_i$ and $\mathcal{C}_i$. Assume that the von Neumann algebra $\mathcal M_0$ with standard form $(\mathcal M_0, H_0, J_0, \mathfrak P_0)$ and separable predual is amenable and hence AFD by \cite{Co75}, and assume that $A_0=\mathcal{C}_0\subset \mathcal{M}_0$ is a $\sigma$-weakly dense nuclear C$^*$-algebra. Write $(\mathcal M, H, J, \mathfrak P)$ for the standard form of $\mathcal M:= \mathcal M_0 \ovt \cdots \ovt \mathcal M_m$, $A:=A_0\otimes_{\rm min} \cdots \otimes_{\rm min} A_m$, $\mathcal{C}:=\mathcal{C}_0\otimes_{\rm min} \cdots \otimes_{\rm min} \mathcal{C}_m$ and $\mathcal{J}:=\sum_{i=1}^m\mathcal{K}_i$, where $\mathcal{K}_i$ is given by
	$$\mathbf{B}(H_0)\otimes_{\rm min}\cdots\otimes_{\rm min}\mathbf{B}(H_{i-1})\otimes_{\rm min} \mathbf{K}(H_i)\otimes_{\rm min}\mathbf{B}(H_{i+1})\otimes_{\rm min}\cdots\otimes_{\rm min}\mathbf{B}(H_m).$$
Since $\mathcal K_i$ is a norm closed two-sided ideal in the C$^*$-algebra $\mathbf{B}(H_0)\otimes_{\rm min}\cdots\otimes_{\rm min}\mathbf{B}(H_m)$, it follows that $\mathcal J$ is a C$^*$-algebra. The next proposition will be used in the proof of Theorem \ref{location theorem} (see also \cite[Lemma 10]{OP03} and \cite[Proposition 3.1.2]{Is14} for a similar statement).

\begin{prop}\label{condition AO for tensor product}
Denote by $\mathcal M (\mathcal{J})\subset \B(H)$ the multiplier algebra of $\mathcal{J}$. The following assertions are true: 
\begin{enumerate}
\item The C$^*$-algebra $\mathcal C$ is unital and nuclear and the C$^*$-algebra $A$ is unital, exact and $\sigma$-weakly dense in $\mathcal M$. 	
\item We have 	$[\mathcal C, JAJ] \subset \mathcal J$.
\item The multiplication map $\nu\colon A\otimes_{\rm alg}JAJ \rightarrow \mathcal M (\mathcal{J})/\mathcal{J}$ is continuous with respect to the minimal tensor norm.
\end{enumerate}
\end{prop}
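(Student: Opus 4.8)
The plan is to establish the three assertions in turn, exploiting the factorization $J = J_0 \otimes \cdots \otimes J_m$ of the modular conjugation on the standard form of $\mathcal M = \mathcal M_0 \ovt \cdots \ovt \mathcal M_m$, together with the fact that $\mathcal J = \sum_{i=1}^m \mathcal K_i$ is a norm-closed two-sided ideal in $\mathcal D := \mathbf B(H_0) \otimes_{\rm min} \cdots \otimes_{\rm min} \mathbf B(H_m)$, so that $\mathcal D \subset \mathcal M(\mathcal J)$. Assertion (1) is checked factor by factor: each $\mathcal C_i$ is unital and nuclear and the minimal tensor product of unital nuclear C$^*$-algebras is again unital and nuclear, whence $\mathcal C$ is unital and nuclear; similarly each $A_i$ is unital and exact and exactness is preserved under minimal tensor products, whence $A$ is unital and exact. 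For $\sigma$-weak density, since $A_i$ is $\sigma$-weakly dense in $\mathcal M_i$ for each $i$, the algebraic tensor product $A_0 \otimes_{\rm alg} \cdots \otimes_{\rm alg} A_m$ is $\sigma$-weakly dense in the von Neumann tensor product $\mathcal M$, and this algebraic tensor product is contained in $A$.

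For assertion (2), I would note that $JAJ = (J_0 A_0 J_0) \otimes_{\rm min} \cdots \otimes_{\rm min} (J_m A_m J_m)$ with $J_i A_i J_i \subset \mathcal M_i'$, and then reduce to elementary tensors $c = c_0 \otimes \cdots \otimes c_m$ and $JaJ = J_0 a_0 J_0 \otimes \cdots \otimes J_m a_m J_m$, since $\mathcal J$ is norm-closed and the commutator is bilinear and norm-continuous. Setting $X_i := c_i$ and $Y_i := J_i a_i J_i$, a telescoping expansion gives
\begin{equation*}
[c, JaJ] = \sum_{k=0}^m \Bigl( \bigotimes_{i<k} Y_i X_i \Bigr) \otimes [X_k, Y_k] \otimes \Bigl( \bigotimes_{i>k} X_i Y_i \Bigr).
\end{equation*}
The $k = 0$ summand vanishes because $c_0 \in A_0 \subset \mathcal M_0$ commutes with $J_0 a_0 J_0 \in \mathcal M_0'$, while for $k \geq 1$ the strong condition (AO) for $\mathcal M_k$ gives $[X_k, Y_k] \in \mathbf K(H_k)$, so the $k$-th summand lies in $\mathcal K_k$. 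Hence $[c, JaJ] \in \sum_{k=1}^m \mathcal K_k = \mathcal J$.

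For assertion (3), the idea is to obtain maximal-tensor continuity first and then upgrade it using nuclearity of $\mathcal C$. Since $\mathcal C \subset \mathcal D \subset \mathcal M(\mathcal J)$ and $JAJ \subset \mathcal D \subset \mathcal M(\mathcal J)$, composing with the quotient $\mathcal M(\mathcal J) \to \mathcal M(\mathcal J)/\mathcal J$ gives two $\ast$-homomorphisms whose ranges commute by assertion (2). The resulting multiplication map $\tilde\nu : \mathcal C \otimes_{\rm alg} JAJ \to \mathcal M(\mathcal J)/\mathcal J$ is therefore a $\ast$-homomorphism, hence continuous for the maximal tensor norm; as $\mathcal C$ is nuclear, $\mathcal C \otimes_{\rm max} JAJ = \mathcal C \otimes_{\rm min} JAJ$, so $\tilde\nu$ is continuous for the minimal tensor norm. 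Restricting along the isometric inclusion $A \otimes_{\rm min} JAJ \hookrightarrow \mathcal C \otimes_{\rm min} JAJ$ (injectivity of the minimal tensor norm, using $A \subset \mathcal C$) yields the desired min-continuity of $\nu$.

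The routine content lies in assertion (1) and in the verification that $\mathcal D \subset \mathcal M(\mathcal J)$. I expect the main obstacle to be the bookkeeping in assertion (2): one must separate variables via $J = \bigotimes_i J_i$, treat the amenable factor $\mathcal M_0$ as the degenerate case where the commutator vanishes identically (which is precisely why $\mathcal J$ carries no $\mathbf K(H_0)$ slot), and confirm that each nonzero telescoping summand sits in a single ideal $\mathcal K_k$ rather than merely in $\mathcal D$. With assertion (2) established, assertion (3) reduces to the standard Ozawa--Popa device of factoring the multiplication map through the nuclear algebra $\mathcal C$.
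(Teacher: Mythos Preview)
Your proposal is correct and follows essentially the same approach as the paper: assertion (1) is handled factor by factor, assertion (3) uses nuclearity of $\mathcal C$ to pass from maximal to minimal tensor norm before restricting to $A$, and for assertion (2) your explicit telescoping expansion of the commutator on elementary tensors is just a concrete unpacking of the paper's observation that $[\mathcal C_i, J A_j J] = 0$ for $i \neq j$ and $[\mathcal C_i, J A_i J] \subset \mathcal K_i$ (with $\mathcal K_0 = 0$), together with the fact that $\mathcal J$ is a norm-closed two-sided ideal.
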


\begin{proof}
$(1)$ It is clear that $\mathcal C$ is a nuclear unital C$^*$-algebra and $A \subset \mathcal C$ is a unital, exact C$^*$-algebra that is $\sigma$-weakly dense in $\mathcal M$. 

$(2)$  For all $0 \leq i, j \leq m$, we have $[\mathcal C_i, JA_jJ] = 0$ if $i \neq j$ and $[\mathcal C_i, JA_jJ] \subset \mathcal K_i \subset \mathcal J$ if $i = j$, where $\mathcal{K}_0:=0$. Since the norm closed two-sided ideal generated by all $[\mathcal{C}_i,JA_jJ]$ in $\mathcal{M}(\mathcal{J})$ contains $[\mathcal{C}, JAJ]$ and since $\mathcal J \subset \mathcal M(\mathcal J)$ is a norm closed two-sided ideal, we obtain $[\mathcal C, JAJ] \subset \mathcal J$.

$(3)$ Finally, the multiplication map $\mathcal C \otimes_{\rm alg} JAJ \rightarrow \mathcal M (\mathcal{J})/\mathcal{J}$ is continuous with respect to the maximal tensor norm and hence with respect to the minimal tensor norm since $\mathcal C$ is nuclear. By restriction, we obtain that the multiplication map $\nu\colon A\otimes_{\rm alg}JAJ \rightarrow \mathcal M (\mathcal{J})/\mathcal{J}$ is continuous with respect to the minimal tensor norm.
\end{proof}

Recall that the class of von Neumann algebras $\Cao$ is defined as the smallest class that contains all the von Neumann algebras with separable predual satisfying the strong condition (AO) in the sense of Definition \ref{AO^++} and that is stable under taking von Neumann subalgebras with expectation. Observe that using item $(5)$ in Examples \ref{example}, it is easy to see that the class $\Cao$ is also stable under taking free products with respect to arbitrary faithful normal states.

\begin{prop}\label{cao}
Let $M$ be any von Neumann algebra. The following assertions are true:
\begin{enumerate}
\item If $M$ belongs to the class $\Cao$, then $M \otimes \mathbf M_n$ and $M \ovt \B(\ell^2)$ belong to the class $\Cao$ for all $n \geq 1$.
\item If $M \ovt \B(\ell^2)$ belongs to class $\Cao$, then $M$ belongs to the class $\Cao$.
\item If $M$ is a factor belonging to the class $\Cao$, then $p Mp$ belongs to the class $\Cao$ for any nonzero projection $p \in M$.
\end{enumerate} 
\end{prop}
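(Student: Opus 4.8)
The plan is to exploit a convenient description of the class $\Cao$. Since the composition of two faithful normal conditional expectations is again one, the class of all von Neumann algebras that are von Neumann subalgebras with expectation of some separable-predual algebra satisfying the strong condition (AO) already contains the base class and is stable under taking subalgebras with expectation; hence it coincides with $\Cao$. Thus $M \in \Cao$ if and only if there is a separable-predual algebra $\mathcal M$ satisfying the strong condition (AO) together with a faithful normal conditional expectation $\rE : \mathcal M \to M$, and every member of $\Cao$ has separable predual. The technical heart is then an amplification lemma: if $\mathcal M$ satisfies the strong condition (AO), so do $\mathcal M \otimes \mathbf{M}_n$ and $\mathcal M \ovt \B(\ell^2)$. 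I would prove both uniformly. Realizing $\B(\ell^2)$ (resp. $\mathbf{M}_n$) in its standard form on the Hilbert--Schmidt space $K$, the algebra $\mathcal M \ovt \B(\ell^2)$ acts in standard form on $H \otimes K$ with modular conjugation $J \otimes J_0$. Starting from the witnessing pair $A \subset \mathcal C$ for $\mathcal M$, I would set $A' := (A \otimes_{\rm min} \K(\ell^2)) + \C 1$ and $\mathcal C' := (\mathcal C \otimes_{\rm min} \K(\ell^2)) + \C 1$; then $\mathcal C'$ is the unitization of a nuclear C$^*$-algebra, hence nuclear, $A' \subset \mathcal C'$ is exact and $\sigma$-weakly dense in $\mathcal M \ovt \B(\ell^2)$ because $A \otimes_{\rm min} \K(\ell^2)$ already is. The step I expect to be the main obstacle is the commutator estimate $[\mathcal C', (J\otimes J_0)A'(J\otimes J_0)] \subset \K(H \otimes K)$: on elementary tensors the identity $[x_1 \otimes x_2, y_1 \otimes y_2] = [x_1,y_1] \otimes x_2 y_2 + y_1 x_1 \otimes [x_2, y_2]$ splits the commutator of $c \otimes k$ with $JaJ \otimes J_0 l J_0$ into two pieces; the second vanishes because $k \in \K(\ell^2)$ acts on $K$ by left multiplication while $J_0 l J_0$ acts by right multiplication, and the first lies in $\K(H) \otimes_{\rm min} \K(K) = \K(H \otimes K)$ since $[c,JaJ] \in \K(H)$ and $k(J_0 l J_0)$ is compact. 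The delicate point is that one must use $\K(\ell^2)$ rather than $\B(\ell^2)$ in the tensor factor, adjoining only the global unit and never $1_{\ell^2}$, so that the commutators land in $\K(H \otimes K)$ and not merely in the larger ideal $\mathcal J$ of Proposition~\ref{condition AO for tensor product}; for $\mathbf{M}_n$ everything is finite-dimensional and the estimate is automatic.

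Assertion (1) is then immediate: if $\rE : \mathcal M \to M$ is a faithful normal conditional expectation with $\mathcal M$ separable and strong (AO), then $\rE \otimes \id$ and $\rE \ovt \id$ are faithful normal conditional expectations onto $M \otimes \mathbf{M}_n$ and $M \ovt \B(\ell^2)$ from the separable strong (AO) algebras $\mathcal M \otimes \mathbf{M}_n$ and $\mathcal M \ovt \B(\ell^2)$, so both amplifications lie in $\Cao$. For assertion (2) I would sidestep corners entirely and argue through a relative commutant. Writing $X := M \ovt \B(\ell^2) \in \Cao$, the algebra $X$ and hence $M$ have separable predual, so $M$ carries a faithful normal state $\varphi$; the slice map $\varphi \ovt \id$ is a faithful normal conditional expectation of $X$ onto $Q := \C 1 \ovt \B(\ell^2)$, so $Q \subset X$ is with expectation. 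By the relative-commutant result recalled in the Preliminaries (via \cite[Theorem IX.4.2]{Ta03}), the inclusion $Q' \cap X \subset X$ is also with expectation, and the commutation theorem for tensor products gives $Q' \cap X = M \ovt \C 1 \cong M$. Hence $M$ is a von Neumann subalgebra with expectation of $X \in \Cao$, so $M \in \Cao$.

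Finally, for assertion (3), let $M$ be a factor in $\Cao$ and $p \in M$ a nonzero projection. Then $R := M \ovt \B(\ell^2)$ is a properly infinite factor in which $p \otimes 1 = \sum_i p \otimes e_{ii}$ is a properly infinite projection of central support $1$, hence Murray--von Neumann equivalent to $1_R$ by standard comparison theory; consequently $pMp \ovt \B(\ell^2) = (p \otimes 1) R (p \otimes 1) \cong R = M \ovt \B(\ell^2)$, which belongs to $\Cao$ by (1). Applying (2) with $pMp$ in place of $M$ then yields $pMp \in \Cao$. The overall difficulty is thus concentrated in the amplification lemma of the first paragraph; once the commutator estimate is secured, parts (2) and (3) are formal consequences of the stability of $\Cao$ under subalgebras with expectation together with the relative-commutant and stable-isomorphism facts.
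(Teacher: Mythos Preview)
Your proof is correct and follows essentially the same approach as the paper: the amplification in (1) uses precisely the unitized C$^*$-algebras $(A \otimes_{\rm min} \K) + \C 1 \subset (\mathcal C \otimes_{\rm min} \K) + \C 1$ that the paper uses, and (3) is argued identically via $pMp \ovt \B(\ell^2) \cong M \ovt \B(\ell^2)$ combined with (1) and (2). For (2) the paper simply observes directly that $M \subset M \ovt \B(\ell^2)$ is with expectation (e.g.\ via a slice map $\id_M \ovt \psi$), which is the same conclusion your relative-commutant detour reaches.
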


\begin{proof}
$(1)$ Let $\mathcal M$ be a von Neumann algebra satisfying the strong condition (AO) and such that the inclusion $M \subset \mathcal M$ is with expectation. Let $A\subset \mathcal{M}$ and $A\subset\mathcal{C}$ be C$^*$-algebras as in the definition of the strong condition (AO) for $\mathcal{M}$. Let $H$ be any Hilbert space and Tr a canonical trace on $\B(H)$. 
Then since $\K(H)J_{\mathrm{Tr}}\K(H)J_{\mathrm{Tr}}\subset \K(\rL^2(\B(H),\mathrm{Tr}))$, it is easy to see that $A\otimes_{\rm min} \K(H)+\C 1_{\mathcal{M}\ovt \B(H)}$ and $\mathcal{C}\otimes_{\rm min}\K(H)+\C1_{\mathcal{M}\ovt\B(H)}$ satisfy the strong condition (AO) for $\mathcal M \ovt \B(H)$. Moreover, the inclusion $M \ovt \B(H) \subset \mathcal M \ovt \B(H)$ is with expectation. This shows that $M \ovt \B(H)$ belong to the class $\Cao$ for any separable Hilbert space $H$.

$(2)$ Let $\mathcal M$ be a von Neumann algebra with separable predual satisfying the strong condition (AO) and such that the inclusion $M \ovt \B(\ell^2) \subset \mathcal M$ is with expectation. Since the inclusion $M \subset M \ovt \B(\ell^2)$ is with expectation, we have that $M$ belongs to the class $\Cao$.

$(3)$ Let $p \in M$ be any nonzero projection. Observe that $p Mp \ovt \mathbf B(\ell^2) \cong M \ovt \mathbf B(\ell^2)$. Since $M$ belongs to the class $\Cao$, so does $M \ovt \mathbf B(\ell^2)$ by item $(1)$. Since $p Mp \ovt \mathbf B(\ell^2)$ belongs to the class $\Cao$, so does $pMp$ by item $(2)$.
\end{proof}

\section{Structure of bicentralizer von Neumann algebras}\label{bicentralizer}

In this section, we show that the bicentralizer algebra as defined by Connes (see \cite{Ha85}) has a simple interpretation in terms of ultraproduct von Neumann algebras. While this result is very elementary, it enables us to provide in Theorem \ref{semisolid} a new class of type ${\rm III_1}$ factors with separable predual and trivial bicentralizer.

\subsection*{Bicentralizer von Neumann algebras in the ultraproduct framework}

\begin{df}
Let $M$ be any $\sigma$-finite von Neumann algebra and $\varphi \in M_*$ any faithful normal state. Let $\omega \in \beta(\N) \setminus \N$ be any nonprincipal ultrafilter on $\N$.

We define the {\em asymptotic centralizer} (resp.\ $\omega$-{\em asymptotic centralizer}) of $\varphi$ by
\begin{align*}
\AC(M, \varphi) &:= \left\lbrace (x_n)_n \in \ell^\infty(\N, M) \mid \lim_{n \to \infty} \|x_n \varphi - \varphi x_n\| = 0 \right\rbrace ,\\
\AC_\omega(M, \varphi) & := \left\lbrace (x_n)_n \in \ell^\infty(\N, M) \mid \lim_{n \to \omega} \|x_n \varphi - \varphi x_n\| = 0 \right\rbrace.
\end{align*}
Here, for all $a,b\in M$, the normal form $a\varphi b\in M_*$ is given by $(a\varphi b)(x):=\varphi(bxa)$ for all $x\in M$. 
We define the {\em bicentralizer} (resp.\ $\omega$-{\em bicentralizer}) of $\varphi$ by
\begin{align*}
\Bic(M, \varphi) &=  \left\lbrace a \in M \mid \lim_{n \to \infty} \|a x_n - x_n a\|_\varphi = 0, \forall (x_n)_n \in \AC(M, \varphi) \right\rbrace \\
\Bic_\omega(M, \varphi) &=  \left\lbrace a \in M \mid \lim_{n \to \omega} \|a x_n - x_n a\|_\varphi = 0, \forall (x_n)_n \in \AC_\omega(M, \varphi) \right\rbrace.
\end{align*}
\end{df}

The following proposition shows that the bicentralizer $\Bic(M, \varphi)$ as defined by Connes coincides with the $\omega$-bicentralizer $\Bic_\omega(M, \varphi)$ for all $\omega \in \beta(\N) \setminus \N$.

\begin{prop}\label{same-bicentralizer}
Let $M$ be any $\sigma$-finite von Neumann algebra and $\varphi \in M_*$ any faithful normal state. Then for every $\omega \in \beta(\N) \setminus \N$, we have
$$\Bic(M, \varphi) = \Bic_\omega(M, \varphi).$$
\end{prop}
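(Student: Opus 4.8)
The plan is to prove the two inclusions separately, both by an elementary subsequence extraction, after recording one preliminary observation. First I note that $\AC(M, \varphi) \subseteq \AC_\omega(M, \varphi)$: a bounded scalar sequence converging to $0$ in the usual sense also converges to $0$ along any nonprincipal ultrafilter. Moreover, for any fixed $a \in M$ and any bounded sequence $(x_n)_n$, the quantities $\|a x_n - x_n a\|_\varphi$ are uniformly bounded by $2\|a\| \sup_n \|x_n\|$, so every ordinary limit and every ultralimit appearing below is taken over a bounded scalar sequence and hence is well defined.

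For the inclusion $\Bic_\omega(M, \varphi) \subseteq \Bic(M, \varphi)$, I would argue by contraposition. Suppose $a \notin \Bic(M, \varphi)$; then there exist $(x_n)_n \in \AC(M, \varphi)$, a constant $\delta > 0$, and a subsequence $(n_k)_k$ with $\|a x_{n_k} - x_{n_k} a\|_\varphi \geq \delta$ for all $k$. Setting $y_k := x_{n_k}$, the sequence $(y_k)_k$ is bounded and satisfies $\lim_{k \to \infty}\|y_k \varphi - \varphi y_k\| = 0$, so $(y_k)_k \in \AC(M, \varphi) \subseteq \AC_\omega(M, \varphi)$; since $\|a y_k - y_k a\|_\varphi \geq \delta$ for every $k$, we get $\lim_{k \to \omega}\|a y_k - y_k a\|_\varphi \geq \delta > 0$, which exhibits $a \notin \Bic_\omega(M, \varphi)$.

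The reverse inclusion $\Bic(M, \varphi) \subseteq \Bic_\omega(M, \varphi)$ is where the ultrafilter genuinely does some work, and I expect it to be the step requiring the most care. Again by contraposition, suppose $a \notin \Bic_\omega(M, \varphi)$, witnessed by $(x_n)_n \in \AC_\omega(M, \varphi)$ with $\lim_{n \to \omega}\|a x_n - x_n a\|_\varphi = \delta > 0$. For each integer $k \geq 1$, consider the index set
$$T_k := \left\{ n \in \N : \|x_n \varphi - \varphi x_n\| < \tfrac1k \text{ and } \|a x_n - x_n a\|_\varphi > \tfrac{\delta}{2} \right\}.$$
Each defining condition cuts out a set in $\omega$ (the first because $(x_n)_n \in \AC_\omega(M, \varphi)$, the second because the ultralimit of the commutator norms equals $\delta$), so $T_k \in \omega$ and in particular $T_k$ is infinite, with $T_1 \supseteq T_2 \supseteq \cdots$. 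I would then pick indices $n_1 < n_2 < \cdots$ with $n_k \in T_k$ and set $y_k := x_{n_k}$. By construction $\|y_k \varphi - \varphi y_k\| < 1/k \to 0$, so $(y_k)_k \in \AC(M, \varphi)$, while $\|a y_k - y_k a\|_\varphi > \delta/2$ for all $k$ shows $\lim_{k \to \infty}\|a y_k - y_k a\|_\varphi$ is not $0$, whence $a \notin \Bic(M, \varphi)$.

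The only genuine subtlety is this last extraction: an element of $\AC_\omega(M, \varphi)$ controls $\|x_n \varphi - \varphi x_n\|$ only along $\omega$, but diagonalizing across the nested $\omega$-sets $T_k$ upgrades that to honest sequential convergence, producing a bona fide element of $\AC(M, \varphi)$ on which the commutator with $a$ stays bounded away from $0$. Everything else reduces to the elementary fact that a scalar sequence bounded below by a positive constant has all of its ultrafilter limits bounded below by the same constant.
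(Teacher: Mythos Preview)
Your argument is correct and, in fact, more elementary than the paper's own proof. Both inclusions go through exactly as you say: the diagonal extraction across the nested $\omega$-sets $T_k$ is the right way to upgrade an $\AC_\omega$-sequence to a genuine $\AC$-sequence while keeping the commutator bounded away from zero.

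The paper takes a different route. Rather than a direct subsequence extraction, it shows that membership in $\Bic_\omega(M,\varphi)$ is characterized by the condition $\bigcap_{\delta>0}\mathcal K_\varphi(a,\delta)=\{a\}$, where $\mathcal K_\varphi(a,\delta)$ is the $\sigma$-weak closed convex hull of $\{u^*au : u\in\mathcal U(M),\ \|u\varphi-\varphi u\|\leq\delta\}$. This characterization is visibly independent of $\omega$, and by \cite[Lemma~1.2]{Ha85} it also characterizes $\Bic(M,\varphi)$, yielding the equality. The paper's approach thus passes through a convexity/weak-compactness argument and quotes Haagerup's lemma, whereas your approach is self-contained and purely combinatorial, needing only that sets in a nonprincipal ultrafilter are infinite. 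The paper's route has the side benefit of recording the $\mathcal K_\varphi(a,\delta)$ characterization explicitly, but for the bare statement of the proposition your argument is shorter and more direct.
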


\begin{proof}
The proof is essentially the same as the one of \cite[Lemma 1.2]{Ha85}. We include it for the sake of completeness. Since $\AC_\omega(M, \varphi)$ is a unital C$^*$-algebra, it is generated by its group of unitaries given by
$$\mathcal U(\AC_\omega(M, \varphi)) = \left\lbrace (u_n)_n \in \ell^\infty(\N, M) \mid u_n \in \mathcal U(M), \forall n \in \N \text{ and } \lim_{n \to \omega} \|u_n \varphi - \varphi u_n\| = 0 \right\rbrace.
$$
It follows that 
$$\Bic_\omega(M, \varphi) =  \left\lbrace a \in M \mid \lim_{n \to \omega} \|a u_n - u_n a\|_\varphi = 0, \forall (u_n)_n \in \mathcal U(\AC_\omega(M, \varphi)) \right\rbrace
$$
and hence
$$\Bic_\omega(M, \varphi) =  \left\lbrace a \in M \mid \lim_{n \to \omega} \|u_n^*a u_n - a\|_\varphi = 0, \forall (u_n)_n \in \mathcal U(\AC_\omega(M, \varphi)) \right\rbrace.
$$

For all $a \in M$ and all $\delta > 0$, define the $\sigma$-weakly closed convex subset $\mathcal K_{\varphi}(a, \delta)$ of $M$ by
$$\mathcal K_\varphi(a, \delta) = \overline{{\rm co}}^w \lbrace u^* a u \mid u \in \mathcal U(M) \text{ and } \|u \varphi - \varphi u\| \leq \delta\rbrace.$$
Define 
$$\varepsilon(a, \delta) = \sup \lbrace \|u^* a u -  a\|_\varphi \mid u \in \mathcal U(M) \text{ and } \|u \varphi - \varphi u\| \leq \delta\rbrace.$$
Since the map $M \mapsto [0, +\infty) : x \mapsto \|x\|_\varphi$ is $\sigma$-weakly lower semicontinuous, we have $\|x - a\|_\varphi \leq \varepsilon(a, \delta)$ for all $x \in \mathcal K_\varphi(a, \delta)$.

Let $a \in \Bic_\omega(M, \varphi)$. Let $x \in \bigcap_{\delta > 0} \mathcal K_\varphi(a, \delta) = \bigcap_{n \in�\N} \mathcal K_\varphi(a, \frac{1}{n + 1})$. For every $n \in \N$, using the definition of $\varepsilon(a, \frac{1}{n + 1})$, we may choose a unitary $u_n \in \mathcal U(M)$ such that $\|u_n \varphi - \varphi u_n\| \leq \frac{1}{n + 1}$ and $\varepsilon(a, \frac{1}{n + 1}) \leq \|u_n^* a u_n - a\|_\varphi + \frac{1}{n + 1}$. Since $\omega \in \beta(\N) \setminus \N$ is nonprincipal, we have $\lim_{n \to \omega} \|u_n \varphi - \varphi u_n\| = 0$ and hence $(u_n)_n \in \mathcal U(\AC_\omega(M, \varphi))$. Since $a \in \Bic_\omega(M, \varphi)$, we have $\lim_{n \to \omega} \|u_n^* a u_n - a\|_\varphi = 0$ and hence $\lim_{n \to \omega} \varepsilon(a, \frac{1}{n + 1}) = 0$. Since $x \in \bigcap_{n \in�\N} \mathcal K_\varphi(a, \frac{1}{n + 1})$, we have $\|x - a\|_\varphi \leq \varepsilon(a, \frac{1}{n + 1})$ for all $n \in \N$. This implies that $\|x - a\|_\varphi \leq \lim_{n \to \omega} \varepsilon(a, \frac{1}{n + 1}) = 0$ and hence $x = a$. Therefore, $\bigcap_{\delta > 0} \mathcal K_\varphi(a, \delta) = \{a\}$.

Conversely, let $a \notin \Bic_\omega(M, \varphi)$. Then there exists a sequence of unitaries $(u_n)_n \in \mathcal U(\AC_\omega(M, \varphi))$ such that $\lim_{n \to \omega} \|u_n \varphi - \varphi u_n\| = 0$ and $\varepsilon := \lim_{n \to \omega} \|u_n^* a u_n - a\|_\varphi > 0$. Next, define $b := \sigma\text{-weak} \lim_{n \to \omega} u_n^* a u_n \in M$. For every $\delta > 0$, we have $\lbrace n \in \N : \|u_n \varphi - \varphi u_n\| \leq \delta \rbrace \in \omega$. Since $\lbrace n \in \N : \|u_n \varphi - \varphi u_n\| \leq \delta \rbrace \subset \lbrace n \in \N :  u_n^* a u_n \in \mathcal K_{\varphi}(a, \delta) \rbrace$, we have $\lbrace n \in \N :  u_n^* a u_n \in \mathcal K_{\varphi}(a, \delta) \rbrace \in \omega$. This implies that $b \in \mathcal K_\varphi(a, \delta)$ for all $\delta > 0$, that is, $b \in \bigcap_{\delta > 0} \mathcal K_\varphi(a, \delta)$. We next show that $b \neq a$. Indeed, observe that since $\lim_{n \to \omega} \|u_n \varphi - \varphi u_n\| = 0$, we have
$$\lim_{n \to \omega} \|u_n^* a u_n\|_\varphi^2 = \lim_{n \to \omega} \varphi(u_n^* a^* a u_n) = \varphi(a^* a) = \|a\|_\varphi^2.$$
Since $\Lambda_\varphi(u^*_n a u_n) \to \Lambda_\varphi(b)$ weakly, we then obtain
\begin{align*}
\Re \langle \Lambda_\varphi(b), \Lambda_\varphi(a) \rangle_\varphi &= \lim_{n \to \omega} \Re \langle \Lambda_\varphi(u_n^* a u_n), \Lambda_\varphi(a)\rangle_\varphi \\
&= \lim_{n \to \omega}  \frac12(\|u_n^* a u_n\|_\varphi^2 + \|a \|_\varphi^2 - \|u_n^* a u_n - a\|_\varphi^2) \\
&= \|a\|_\varphi^2 - \frac12 \varepsilon^2 < \|a\|_\varphi^2.
\end{align*}
Thus $b \neq a$ and hence $\bigcap_{\delta > 0} \mathcal K_\varphi(a, \delta) \neq \{a\}$.

We have proved that $a \in \Bic_\omega(M, \varphi)$ if and only if $\bigcap_{\delta > 0} \mathcal K_\varphi(a, \delta) = \{a\}$. Therefore $a \in \Bic_\omega(M, \varphi)$ if and only if $a \in \Bic(M, \varphi)$ by \cite[Lemma 1.2]{Ha85}. Thus $\Bic_\omega(M, \varphi) = \Bic(M, \varphi)$.
\end{proof}

From now on, we fix a nonprincipal ultrafilter $\omega \in \beta(\N) \setminus \N$. Recall that for all $(x_n)_n \in \AC_\omega(M, \varphi)$, we have $(x_n)_n \in \mathcal M^\omega(M)$ and $(x_n)^\omega \in (M^\omega)^{\varphi^\omega}$ (see e.g.\ \cite[Proposition 2.4 (2)]{Ho14}).

\begin{prop}\label{bicentralizer-interpretation}
Let $M$ be any $\sigma$-finite von Neumann algebra and $\varphi \in M_*$ any faithful normal state. We have $\Bic(M, \varphi) = ((M^\omega)^{\varphi^\omega})' \cap M$. In particular, $\Bic(M, \varphi) \subset M$ is a von Neumann subalgebra that is globally invariant under the modular automorphism group $(\sigma_t^{\varphi})$.
\end{prop}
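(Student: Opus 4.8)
The plan is to work throughout with the $\omega$-bicentralizer $\Bic_\omega(M,\varphi)$, which equals $\Bic(M,\varphi)$ by Proposition \ref{same-bicentralizer}, and to exploit the dictionary between asymptotically central sequences and the centralizer of the ultraproduct state. The single input I would isolate first is the identification
\[
(M^\omega)^{\varphi^\omega} = \left\{ (x_n)^\omega : (x_n)_n \in \AC_\omega(M,\varphi) \right\}.
\]
The inclusion $\supseteq$ is exactly the recalled fact that every $(x_n)_n \in \AC_\omega(M,\varphi)$ yields $(x_n)^\omega \in (M^\omega)^{\varphi^\omega}$; the reverse inclusion, that every element of the centralizer of $\varphi^\omega$ admits an asymptotically central representing sequence, is the standard description of the centralizer of an ultraproduct state (Connes; see also \cite{AH12, Ho14}). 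Writing $N := (M^\omega)^{\varphi^\omega}$, the proposition then reduces to proving $\Bic_\omega(M,\varphi) = N' \cap M$.

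For the inclusion $N' \cap M \subseteq \Bic_\omega(M,\varphi)$, I would take $a \in N' \cap M$ and any $(x_n)_n \in \AC_\omega(M,\varphi)$, so that $X := (x_n)^\omega \in N$. Since $a$ commutes with $X$, we have $(a x_n - x_n a)^\omega = [a, X] = 0$ in $M^\omega$, which forces $a x_n - x_n a \to 0$ $\ast$-strongly as $n \to \omega$ and in particular $\lim_{n\to\omega} \|a x_n - x_n a\|_\varphi = 0$. As $(x_n)_n$ was arbitrary, this is precisely the statement that $a \in \Bic_\omega(M,\varphi)$.

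The reverse inclusion $\Bic_\omega(M,\varphi) \subseteq N' \cap M$ is where the argument really happens, and the point I expect to be most delicate is bridging the \emph{one-sided} smallness built into the definition of the bicentralizer and the \emph{two-sided} ($\ast$-strong) vanishing needed to annihilate a commutator in $M^\omega$. Here I would let $a \in \Bic_\omega(M,\varphi)$ and $X \in N$ be arbitrary, and use the identification above to fix a representative $X = (x_n)^\omega$ with $(x_n)_n \in \AC_\omega(M,\varphi)$. The definition of $\Bic_\omega$ gives $\lim_{n\to\omega}\|a x_n - x_n a\|_\varphi = 0$, and therefore
\[
\varphi^\omega\bigl([a,X]^*[a,X]\bigr) = \lim_{n\to\omega} \varphi\bigl((a x_n - x_n a)^*(a x_n - x_n a)\bigr) = \lim_{n\to\omega}\|a x_n - x_n a\|_\varphi^2 = 0.
\]
The crucial observation that resolves the one-sided versus two-sided issue is that $\varphi^\omega$ is a \emph{faithful} normal state on $M^\omega$: from $\varphi^\omega([a,X]^*[a,X]) = 0$ we conclude $[a,X] = 0$ directly, with no need to control $\|(a x_n - x_n a)^*\|_\varphi$ separately. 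Since $X \in N$ was arbitrary, $a \in N' \cap M$.

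Finally, for the ``in particular'' statement, $\Bic(M,\varphi) = N' \cap M$ is a relative commutant inside $M$ and hence a von Neumann subalgebra. For invariance under $(\sigma_t^\varphi)$, I would use that $\sigma_t^{\varphi^\omega}$ restricts to $\sigma_t^\varphi$ on the constant sequences $M \subset M^\omega$ and that $N$, being the fixed-point algebra of $(\sigma_t^{\varphi^\omega})$, is globally invariant under it; a short computation with $\sigma_t^{\varphi^\omega}(a\sigma_{-t}^{\varphi^\omega}(Y)) $ for $a \in N'\cap M$ and $Y \in N$ then shows $\sigma_t^{\varphi^\omega}(N' \cap M) = N' \cap M$, which gives $\sigma_t^\varphi(\Bic(M,\varphi)) = \Bic(M,\varphi)$.
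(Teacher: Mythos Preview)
Your proof is correct and follows essentially the same route as the paper. The paper compresses the argument into a three-line chain of equalities (using Proposition~\ref{same-bicentralizer}, the identification of $(M^\omega)^{\varphi^\omega}$ with classes of sequences in $\AC_\omega(M,\varphi)$, and faithfulness of $\varphi^\omega$), whereas you spell out the two inclusions separately and make explicit the role of faithfulness in handling the one-sided versus two-sided issue; the content is the same.
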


\begin{proof}
Thanks to Proposition \ref{same-bicentralizer}, we have
\begin{align*}
\Bic(M, \varphi) &=  \left\lbrace a \in M \mid \lim_{n \to \omega} \|a x_n - x_n a\|_\varphi = 0, \forall (x_n)_n \in \AC_\omega(M, \varphi) \right\rbrace \\
& =\left\lbrace a \in M \mid \|a (x_n)^\omega - (x_n)^\omega a\|_{\varphi^\omega} = 0, \forall (x_n)^\omega \in (M^\omega)^{\varphi^\omega} \right\rbrace \\
& = ((M^\omega)^{\varphi^\omega})' \cap M. \qedhere
\end{align*}
\end{proof}

\begin{prop}\label{proposition-restriction}
Let $M$ be any $\sigma$-finite von Neumann algebra and $\varphi \in M_*$ any faithful normal state. Write $\psi = \varphi |_{\Bic(M, \varphi)}$. Then
$$\Bic(\Bic(M, \varphi), \psi) = \Bic(M, \varphi).$$
In other words, the von Neumann algebra $\Bic(M, \varphi)$ is equal to its own bicentralizer with respect to the state $\varphi |_{\Bic(M, \varphi)}$.
\end{prop}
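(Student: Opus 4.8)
The plan is to deduce everything from the ultraproduct description of the bicentralizer provided by Proposition~\ref{bicentralizer-interpretation}, so that the statement becomes an almost formal consequence of the compatibility of modular theory with conditional expectations. Throughout, write $B := \Bic(M, \varphi)$ and $\psi = \varphi|_B$.

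First I would record the basic structure of the inclusion $B \subset M$. By Proposition~\ref{bicentralizer-interpretation}, $B$ is globally invariant under the modular automorphism group $(\sigma_t^\varphi)$, so by \cite[Theorem IX.4.2]{Ta03} there is a unique $\varphi$-preserving faithful normal conditional expectation $\rE_B : M \to B$; moreover $\psi$ is a faithful normal state on $B$ with $\varphi = \psi \circ \rE_B$ and $\sigma_t^\psi = \sigma_t^\varphi|_B$. Passing to ultraproducts as in the Preliminaries, $B^\omega \subset M^\omega$ is a von Neumann subalgebra with $\varphi^\omega$-preserving faithful normal conditional expectation $\rE_{B^\omega}$, the state $\psi^\omega$ is the restriction of $\varphi^\omega$ to $B^\omega$, and $\sigma_t^{\psi^\omega} = \sigma_t^{\varphi^\omega}|_{B^\omega}$. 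In particular $B^\omega$ is globally invariant under $(\sigma_t^{\varphi^\omega})$.

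The key step is then the inclusion of centralizers
$$(B^\omega)^{\psi^\omega} = B^\omega \cap (M^\omega)^{\varphi^\omega} \subseteq (M^\omega)^{\varphi^\omega},$$
which follows immediately from the equality $\sigma_t^{\psi^\omega} = \sigma_t^{\varphi^\omega}|_{B^\omega}$: an element of $B^\omega$ fixed by $(\sigma_t^{\psi^\omega})$ is exactly an element of $B^\omega$ fixed by $(\sigma_t^{\varphi^\omega})$. With this in hand the conclusion is a short commutant manipulation. By Proposition~\ref{bicentralizer-interpretation} applied to $(B, \psi)$ we have $\Bic(B, \psi) = ((B^\omega)^{\psi^\omega})' \cap B$, which is automatically contained in $B$. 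Conversely, applying Proposition~\ref{bicentralizer-interpretation} to $(M, \varphi)$, any $a \in B = ((M^\omega)^{\varphi^\omega})' \cap M$ commutes with all of $(M^\omega)^{\varphi^\omega}$, hence with the subalgebra $(B^\omega)^{\psi^\omega}$, so $a \in ((B^\omega)^{\psi^\omega})' \cap B = \Bic(B, \psi)$. This yields $B \subseteq \Bic(B, \psi)$ and therefore equality.

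I expect the only genuine point requiring care to be the verification of the ultraproduct compatibilities in the first step, namely that the $\varphi$-preserving expectation $\rE_B$ induces a $\varphi^\omega$-preserving expectation $\rE_{B^\omega}$, that $\psi^\omega$ identifies with $\varphi^\omega|_{B^\omega}$, and the resulting modular relation $\sigma_t^{\psi^\omega} = \sigma_t^{\varphi^\omega}|_{B^\omega}$. All of these are supplied by the ultraproduct and modular-theory discussion in the Preliminaries, so once they are cited correctly the centralizer inclusion and the final commutant argument are formal. An alternative, more hands-on route to the centralizer inclusion would bypass ultraproducts: one checks directly that $\|x_n \varphi - \varphi x_n\|_{M_*} \leq \|x_n \psi - \psi x_n\|_{B_*}$ for $(x_n)_n$ in $B$ (using $\rE_B(y x_n) = \rE_B(y) x_n$ and $\varphi = \psi \circ \rE_B$), so that $\AC_\omega(B, \psi) \subseteq \AC_\omega(M, \varphi)$, and then invokes $\|b\|_\varphi = \|b\|_\psi$ for $b \in B$; but the modular-theoretic formulation above is shorter.
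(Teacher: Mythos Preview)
Your proof is correct. Interestingly, the paper's own argument is precisely the ``alternative, more hands-on route'' you sketch at the end: it shows directly that for $(x_n)_n \in \AC(\Bic(M,\varphi),\psi)$ one has $\|x_n\varphi - \varphi x_n\|_{M_*} \leq \|x_n\psi - \psi x_n\|_{\Bic(M,\varphi)_*}$ via the $\varphi$-preserving conditional expectation, and then concludes from the definition. Your primary route through the ultraproduct picture and the modular identity $\sigma_t^{\psi^\omega} = \sigma_t^{\varphi^\omega}|_{B^\omega}$ is a genuinely different packaging of the same underlying fact (namely that the asymptotic centralizer of $\psi$ in $B$ sits inside that of $\varphi$ in $M$). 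Your version has the virtue of making the statement a formal commutant computation once Proposition~\ref{bicentralizer-interpretation} is in hand; the paper's version is more self-contained and avoids invoking the ultraproduct modular compatibilities you list in your first step. Both are short, and neither is clearly superior.
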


\begin{proof}
Let $(x_n)_n \in \AC(\Bic(M, \varphi), \psi)$, that is, $\lim_{n \to \infty} \|x_n \psi - \psi x_n\|_{\Bic(M, \varphi)_\ast} = 0$. Denote by $\rE : M \to \Bic(M, \varphi)$ the unique $\varphi$-preserving conditional expectation. For all $x \in M$, we have
\begin{align*}
(x_n \varphi - \varphi x_n)(x) &= \varphi(x x_n - x_n x) \\
& = \varphi( \rE (x x_n - x_n x)) \\
&= \varphi (\rE(x) x_n -  x_n \rE(x)) \\
&= \psi (\rE(x) x_n -  x_n \rE(x)) \\
& = (x_n\psi - \psi x_n) (\rE(x)).
\end{align*}
Therefore $\|x_n \varphi - \varphi x_n\|_{M_*} \leq \|x_n \psi - \psi x_n\|_{\Bic(M, \varphi)_*}$ and so $\lim_{n \to \infty} \|x_n \varphi - \varphi x_n\|_{M_*} = 0$, that is, $(x_n)_n \in \AC(M, \varphi)$. For all $a \in \Bic(M, \varphi)$, we obtain $\lim_{n \to \infty} \|a x_n - x_n a\|_\varphi = 0$ and hence $a \in \Bic(\Bic(M, \varphi), \psi)$. This implies that $\Bic(\Bic(M, \varphi), \psi) = \Bic(M, \varphi)$.
\end{proof}

From the previous propositions, we deduce a very simple and yet very useful dichotomy result for bicentralizer von Neumann algebras. We refer to \cite[Theorem 2.9]{Co74} for the definition of the {\em asymptotic centralizer} $M_\omega$ of a $\sigma$-finite von Neumann algebra $M$.

\begin{thm}\label{structure-bicentralizer}
Let $M$ be any type ${\rm III_1}$ factor with separable predual and $\varphi \in M_*$ any faithful normal state. Then either $\Bic(M, \varphi) = \C 1$ or $\Bic(M, \varphi) \subset M$ is a McDuff type ${\rm III_1}$ subfactor with expectation.
\end{thm}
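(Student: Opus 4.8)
The plan is to set $B := \Bic(M,\varphi)$ and $\psi := \varphi|_B$, and to extract the three required properties — factoriality, type $\mathrm{III}_1$, and the McDuff property — from the ultraproduct description of the bicentralizer together with Ando--Haagerup's analysis of $(M^\omega)^{\varphi^\omega}$.

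First I would record that $B \subseteq M$ is globally invariant under $(\sigma_t^\varphi)$ by Proposition \ref{bicentralizer-interpretation}, hence is with expectation and satisfies $\sigma^\psi = \sigma^\varphi|_B$. The crucial structural input is the self-reproducing identity of Proposition \ref{proposition-restriction}: combining it with Proposition \ref{bicentralizer-interpretation} applied to $(B,\psi)$ gives $B = \Bic(B,\psi) = ((B^\omega)^{\psi^\omega})' \cap B$. Writing $P := (B^\omega)^{\psi^\omega}$, this says exactly that $B$ commutes with $P$ inside $B^\omega$, i.e. $P \subseteq B' \cap B^\omega$. Testing the identity against the constant sequences coming from $B^\psi \subseteq P$ shows that every element of $B$ commutes with $B^\psi$, so $B^\psi \subseteq \mathcal Z(B)$; since the reverse inclusion is automatic, $B^\psi = \mathcal Z(B)$.

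Next I would prove that $B$ is a factor. Here the input is Ando--Haagerup \cite[Proposition 4.24]{AH12}, which I would use in the form that for the type $\mathrm{III}_1$ factor $M$ the centralizer $N := (M^\omega)^{\varphi^\omega}$ of the Ocneanu ultrapower is a type $\mathrm{II}_1$ factor. By Proposition \ref{bicentralizer-interpretation} we have $B = N' \cap M$, so $B \subseteq N'$; on the other hand $\mathcal Z(B) \subseteq B^\psi = B \cap M^\varphi \subseteq M^\varphi \subseteq N$. Hence $\mathcal Z(B) \subseteq N \cap N' = \mathcal Z(N) = \C 1$, and $B$ is a factor. Combining this with the previous paragraph, $B^\psi = \mathcal Z(B) = \C 1$, so $B$ carries a faithful normal state with trivial centralizer. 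By Connes's classification, a factor other than $\C 1$ admitting such a state must be of type $\mathrm{III}_1$ (every type $\mathrm I_n$ factor with $n \geq 2$, every type $\mathrm{II}$ factor, and every type $\mathrm{III}_\lambda$ factor with $0 \leq \lambda < 1$ has nontrivial centralizer for every faithful normal state). Thus if $B \neq \C1$, then $B$ is a type $\mathrm{III}_1$ factor with separable predual, sitting in $M$ with expectation and globally invariant under $(\sigma_t^\varphi)$ by the first step.

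Finally, assuming $B \neq \C 1$, I would establish the McDuff property by applying \cite[Proposition 4.24]{AH12} to $B$ itself: since $B$ is now a type $\mathrm{III}_1$ factor with separable predual, $P = (B^\omega)^{\psi^\omega}$ is a type $\mathrm{II}_1$ factor. By the first step $P \subseteq B' \cap B^\omega$, so the relative commutant of $B$ inside the centralizer satisfies $(B^\omega)^{\psi^\omega} \cap B' = P$, a diffuse type $\mathrm{II}_1$ factor; in particular it contains a unital copy of the hyperfinite type $\mathrm{II}_1$ factor $\mathcal R$. The standard McDuff criterion for type $\mathrm{III}$ factors via the Ocneanu ultrapower — the existence of a unital embedding $\mathcal R \hookrightarrow (B^\omega)^{\psi^\omega} \cap B'$ — then yields $B \cong B \ovt \mathcal R$, so $B$ is McDuff. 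I expect the main obstacle to be the correct use of the ultraproduct factoriality statement \cite[Proposition 4.24]{AH12} together with the verification of the McDuff criterion in the type $\mathrm{III}$ setting, since these encapsulate the genuinely analytic content; by contrast, once $P$ is known to be a nontrivial type $\mathrm{II}_1$ factor commuting with $B$, the passage to $B \cong B \ovt \mathcal R$ is formal.
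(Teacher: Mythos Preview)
Your argument is correct and follows essentially the same route as the paper. The only differences are organizational: the paper shows $B^\psi = ((M^\omega)^{\varphi^\omega})' \cap M^\varphi \subset ((M^\omega)^{\varphi^\omega})' \cap (M^\omega)^{\varphi^\omega} = \C 1$ in one stroke (yielding factoriality and trivial centralizer simultaneously, then invoking \cite[Lemma 5.3]{AH12} for the type ${\rm III_1}$ conclusion), whereas you take the slight detour of first establishing $B^\psi = \mathcal Z(B)$ via Proposition~\ref{proposition-restriction} and then $\mathcal Z(B) = \C 1$; for the McDuff step the paper explicitly identifies $(B^\omega)^{\psi^\omega}$ with the asymptotic centralizer $B_\omega$ via \cite[Proposition 2.8]{Co74} and cites \cite[Theorem 2.2.1]{Co75}, which is exactly the content of your ``standard McDuff criterion'' since $(B^\omega)^{\psi^\omega} \cap B' = (B' \cap B^\omega)^{\psi^\omega} = B_\omega$ here.
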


\begin{proof}
Since $\Bic(M, \varphi)$ is globally invariant under the modular automorphism group $(\sigma_t^\varphi)$, it follows that $\Bic(M, \varphi) \subset M$ is with expectation. Since $\Bic(M, \varphi) = ((M^\omega)^{\varphi^\omega})' \cap M$ by Proposition \ref{bicentralizer-interpretation} and since $(M^\omega)^{\varphi^\omega}$ is a ${\rm II_1}$ factor by \cite[Proposition 4.24]{AH12}, we have 
$$\Bic(M, \varphi)^{\varphi |_{\Bic(M, \varphi)}} = ((M^\omega)^{\varphi^\omega})' \cap M^\varphi \subset ((M^\omega)^{\varphi^\omega})' \cap (M^\omega)^{\varphi^\omega} = \C 1.$$
Therefore, either $\Bic(M, \varphi) = \C 1$ or $\Bic(M, \varphi)$ is a type ${\rm III_1}$ factor by \cite[Lemma 5.3]{AH12}.

Assume that $N := \Bic(M, \varphi)$ is of type ${\rm III_1}$ and put $\psi := \varphi |_N$. By Proposition \ref{proposition-restriction}, we have $N = \Bic(N, \psi)$. This implies that $N = ((N^\omega)^{\psi^\omega})' \cap N$ and hence $(N^\omega)^{\psi^\omega} \subset N' \cap N^\omega$. Since $(N' \cap N^\omega)^{\psi^\omega} = N_\omega$ by \cite[Proposition 2.8]{Co74} (see also \cite[Proposition 4.35]{AH12}), we have $(N^\omega)^{\psi^\omega} \subset N_\omega$ and hence $(N^\omega)^{\psi^\omega} = N_\omega$. Since $(N^\omega)^{\psi^\omega}$ is a ${\rm II_1}$ factor by \cite[Proposition 4.24]{AH12}, $N_\omega$ is a ${\rm II_1}$ factor and hence $N$ is McDuff by \cite[Theorem 2.2.1]{Co75}, that is, $N \cong N \ovt R$ where $R$ is the unique AFD ${\rm II_1}$ factor.
\end{proof}

From the previous theorem, we deduce a new characterization of type ${\rm III_1}$ factors with separable predual and trivial bicentralizer in terms of the existence of a maximal abelian subalgebra with expectation.

\begin{cor}
Let $M$ be any type ${\rm III_1}$ factor with separable predual. The following conditions are equivalent.
\begin{enumerate}
\item $\Bic(M, \varphi) = \C 1$ for some or any faithful normal state $\varphi \in M_\ast$.
\item There exists a maximal abelian subalgebra $A \subset M$ with faithful normal conditional expectation $\rE_A : M \to A$.
\end{enumerate}
\end{cor}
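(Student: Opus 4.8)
The plan is to prove the two implications separately. The implication $(2) \Rightarrow (1)$ is the direction where the new machinery of this section does all the work, via the ultraproduct interpretation of Proposition \ref{bicentralizer-interpretation} together with the dichotomy of Theorem \ref{structure-bicentralizer}. The implication $(1) \Rightarrow (2)$ goes the other way and relies on Haagerup's solution of the bicentralizer problem under the existence of a state with large centralizer \cite[Theorem 3.1]{Ha85}; the genuine difficulty is concentrated there.

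For $(2) \Rightarrow (1)$, suppose $A \subset M$ is a maximal abelian subalgebra with faithful normal conditional expectation $\rE_A : M \to A$. Fix a faithful normal state $\varphi_0 \in A_\ast$ and set $\varphi := \varphi_0 \circ \rE_A$. Since $A$ is abelian, $\sigma_t^\varphi|_A = \sigma_t^{\varphi_0} = \id$, so $A \subset M^\varphi$. Viewing $M^\varphi \subset (M^\omega)^{\varphi^\omega}$ through constant sequences, we get $A \subset (M^\omega)^{\varphi^\omega}$, whence by Proposition \ref{bicentralizer-interpretation}
$$\Bic(M, \varphi) = ((M^\omega)^{\varphi^\omega})' \cap M \subset A' \cap M = A.$$
Thus $\Bic(M, \varphi)$ is abelian, so by the dichotomy of Theorem \ref{structure-bicentralizer} it cannot be a type ${\rm III_1}$ subfactor and must equal $\C 1$. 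Since the triviality of the bicentralizer is independent of the chosen faithful normal state \cite{Ha85}, this establishes $(1)$ for some, hence for any, $\varphi$.

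For $(1) \Rightarrow (2)$, assume $\Bic(M, \varphi) = \C 1$. By \cite[Theorem 3.1]{Ha85} the factor $M$ admits a faithful normal state $\psi$ with large centralizer, i.e.\ $(M^\psi)' \cap M = \C 1$; as $M$ is of type ${\rm III_1}$, the restriction $\psi|_{M^\psi}$ is a faithful normal trace and the triviality of the relative commutant forces $N := M^\psi$ to be a ${\rm II_1}$ factor. I would then choose a maximal abelian subalgebra $A \subset N$ and take $\rE_A : M \to A$ to be the composition of the $\psi$-preserving expectation $\rE_N : M \to N$ with the trace-preserving expectation $N \to A$, which is a faithful normal conditional expectation onto $A$. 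What remains, and what the whole statement hinges on, is to verify that $A$ is maximal abelian not merely in $N$ but in $M$, that is, $A' \cap M = A$.

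This last point is the main obstacle. Since $A \subset M^\psi$ is pointwise fixed by $(\sigma_t^\psi)$, the relative commutant $A' \cap M$ is globally $(\sigma_t^\psi)$-invariant (hence with expectation), and one computes directly that
$$(A' \cap M)^{\psi} = A' \cap M^\psi = A' \cap N = A, \qquad \mathcal Z(A' \cap M) = (A' \cap M)^{\psi} = A,$$
using that $A' \cap M \subset A'$ and that the center is always contained in the centralizer. The plan is to upgrade this to $A' \cap M = A$ by exploiting the \emph{strict} factoriality $(M^\psi)' \cap M = \C 1$ (note that an arbitrary MASA with expectation only yields the weaker $(M^\psi)' \cap M = \mathcal Z(M^\psi)$, so this strict condition is essential). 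The delicate feature is that any nonzero $\sigma^\psi$-eigenoperator $v \in A' \cap M$ with eigenvalue $\lambda \neq 1$ satisfies $v^*v, vv^* \in (A'\cap M)^\psi = A$ and implements a trace-scaling $A$-bimodular isomorphism between corners of $N$, placing $v$ in the groupoid normalizer of $N$ in $M$; I expect the hard part to be ruling out such eigenoperators for an appropriate choice of $A$, thereby forcing $A' \cap M = (A'\cap M)^\psi = A$. Controlling this relative commutant, rather than the routine construction of $A$ and $\rE_A$, is where the real work lies.
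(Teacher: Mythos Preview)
Your argument for $(2) \Rightarrow (1)$ is correct and matches the paper's proof exactly: one has $\Bic(M,\varphi) \subset A' \cap M = A$, which is abelian, so the dichotomy of Theorem \ref{structure-bicentralizer} forces $\Bic(M,\varphi) = \C 1$.

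For $(1) \Rightarrow (2)$ you have correctly identified both the setup and the obstacle, but you have not resolved it, and your proposed line of attack (analyzing $\sigma^\psi$-eigenoperators in $A' \cap M$ and trying to rule them out) is not the route the paper takes. The missing ingredient is a theorem of Popa \cite[Theorem 3.3]{Po81}: given a von Neumann algebra $M$ with a faithful normal state $\psi$ such that $(M^\psi)' \cap M = \C 1$, there exists a maximal abelian subalgebra $A \subset M$ with $A \subset M^\psi$. This is precisely the statement that some (carefully chosen) MASA of $N = M^\psi$ is already maximal abelian in $M$, and its proof is genuinely nontrivial. Once you invoke this, the expectation $\rE_A$ is obtained exactly as you describe, by composing the $\psi$-preserving expectation onto $M^\psi$ with the trace-preserving expectation onto $A$. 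Your intuition that one must choose $A$ ``appropriately'' is correct---an arbitrary MASA of $N$ need not be a MASA of $M$---but the construction of such an $A$ is the content of Popa's theorem, not something one reinvents here.
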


\begin{proof}
$(1) \Rightarrow (2)$ By \cite[Theorem 3.1]{Ha85}, there exists a faithful normal state $\varphi \in M_\ast$ such that $(M^\varphi)' \cap M = \C 1$. Then by \cite[Theorem 3.3]{Po81}, there exists a maximal abelian subalgebra $A \subset M$ such that $A \subset M^\varphi$.

$(2) \Rightarrow (1)$ Fix a faithful normal state $\tau \in A_\ast$ and put $\varphi = \tau \circ \rE_A \in M_\ast$. We have $A \subset M^\varphi$ and hence $\Bic(M, \varphi) \subset (M^\varphi)' \cap M \subset A' \cap M = A$. Applying Theorem \ref{structure-bicentralizer}, it follows that $\Bic(M, \varphi) = \C 1$. By \cite[Corollary 1.5]{Ha85}, we obtain that $\Bic(M, \psi) = \C 1$ for all faithful normal states $\psi \in M_\ast$.
\end{proof}

\subsection*{Semisolid type ${\rm III_1}$ factors have trivial bicentralizer}

Recall that a von Neumann algebra $M$ is {\em solid} if the relative commutant $Q' \cap M$ of any diffuse von Neumann subalgebra $Q \subset M$ with expectation is amenable. We say that a von Neumann algebra $M$ is {\em semisolid} if the relative commutant $Q' \cap M$ of any von Neumann subalgebra $Q \subset M$ with expectation and with no type ${\rm I}$ direct summand is amenable. Obviously, any solid von Neumann algebra is semisolid.

\begin{thm}\label{semisolid}
Let $M$ be any semisolid type ${\rm III_1}$ factor with separable predual. Then $\Bic(M, \varphi) = \C 1$ for any faithful normal state $\varphi \in M_\ast$.
\end{thm}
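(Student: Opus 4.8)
The plan is to argue by contradiction, using the dichotomy of Theorem \ref{structure-bicentralizer} together with the self-bicentralizing identity of Proposition \ref{proposition-restriction} and Haagerup's theorem on amenable type ${\rm III_1}$ factors. Fix a faithful normal state $\varphi \in M_\ast$ and set $N := \Bic(M, \varphi)$. If $N = \C 1$ there is nothing to prove, so suppose $N \neq \C 1$. Since $M$ is a type ${\rm III_1}$ factor with separable predual, Theorem \ref{structure-bicentralizer} forces $N \subset M$ to be a McDuff type ${\rm III_1}$ subfactor with expectation; write $\rE_N : M \to N$ for the associated faithful normal conditional expectation and $\psi := \varphi|_N$.

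The key move is to extract a hyperfinite ${\rm II_1}$ factor from the McDuff decomposition and feed it into the semisolidity hypothesis. Since $N$ is McDuff, there is an isomorphism $N \cong N \ovt R$ with $R$ the unique AFD ${\rm II_1}$ factor; transporting $1 \otimes R$ yields a subfactor $R_0 \subset N$ with expectation such that $R_0' \cap N \cong N$. Composing a faithful normal conditional expectation $N \to R_0$ with $\rE_N$ produces a faithful normal conditional expectation $M \to R_0$, so $R_0 \subset M$ is with expectation. As $R_0$ is a ${\rm II_1}$ factor it has no type ${\rm I}$ direct summand, so the semisolidity of $M$ gives that $R_0' \cap M$ is amenable.

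Next I would pull this amenability back to $N$ itself. One has $R_0' \cap N \subset R_0' \cap M$, and $\rE_N$ restricts to a faithful normal conditional expectation $R_0' \cap M \to R_0' \cap N$: for $x \in R_0' \cap M$ and $r \in R_0 \subset N$, bimodularity gives $\rE_N(x)\,r = \rE_N(xr) = \rE_N(rx) = r\,\rE_N(x)$, so $\rE_N(x) \in R_0' \cap N$. Since amenability passes to von Neumann subalgebras with expectation, $R_0' \cap N$ is amenable, whence $N \cong R_0' \cap N$ is an amenable type ${\rm III_1}$ factor with separable predual. By Haagerup's result \cite[Theorem 2.3]{Ha85} such a factor has trivial bicentralizer, so $\Bic(N, \psi) = \C 1$. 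But Proposition \ref{proposition-restriction} yields $\Bic(N, \psi) = \Bic(M, \varphi) = N$, forcing $N = \C 1$ and contradicting that $N$ is of type ${\rm III_1}$. Hence $\Bic(M, \varphi) = \C 1$, and since this was run for an arbitrary state (or by \cite[Corollary 1.5]{Ha85}) the conclusion holds for every faithful normal state.

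I expect the main obstacle to lie in the second paragraph: one must carefully produce the copy $R_0$ of the hyperfinite ${\rm II_1}$ factor \emph{with} a conditional expectation onto it and with relative commutant in $N$ isomorphic to $N$, and then verify that $\rE_N$ genuinely restricts to an expectation onto $R_0' \cap N$, so that the amenability manufactured by semisolidity can be transported back to $N$. Everything else is a clean assembly of the dichotomy of Theorem \ref{structure-bicentralizer}, the self-bicentralizer identity of Proposition \ref{proposition-restriction}, and Haagerup's amenable ${\rm III_1}$ theorem; note that the ultraproduct interpretation of Proposition \ref{bicentralizer-interpretation} is what makes the dichotomy and the identity available in the first place.
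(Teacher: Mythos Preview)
Your proof is correct and follows essentially the same strategy as the paper: the dichotomy of Theorem~\ref{structure-bicentralizer} forces the bicentralizer to be McDuff type~${\rm III_1}$, a copy of $R$ inside it is fed into semisolidity to force amenability, and then Haagerup's theorem combined with Proposition~\ref{proposition-restriction} closes the contradiction. The only organizational difference is that the paper first observes that semisolidity passes to $N = \Bic(M,\varphi)$ (since $N \subset M$ is with expectation) and then replaces $M$ by $N$, whereas you keep the ambient $M$ throughout and apply its semisolidity directly to $R_0$, then push the resulting amenability of $R_0' \cap M$ down to $R_0' \cap N$ via the restriction of $\rE_N$; your verification of that restriction is precisely what underlies the paper's passage of semisolidity to $N$, so the two arguments are really the same computation viewed from opposite ends.
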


\begin{proof}
By contradiction, assume that there exists a faithful normal state $\varphi \in M_\ast$ such that $\Bic(M, \varphi) \neq \C 1$. Since $\Bic(M, \varphi) \subset M$ is with expectation and $M$ is semisolid, $\Bic(M, \varphi)$ is semisolid. By Theorem~\ref{structure-bicentralizer}, $\Bic(M, \varphi)$ is a McDuff type ${\rm III_1}$ subfactor with expectation. Therefore, we may replace $M$ by $\Bic(M, \varphi)$ and assume that $M$ is a semisolid McDuff type ${\rm III_1}$ factor with separable predual satisfying $M = \Bic(M, \varphi)$ (see Proposition \ref{proposition-restriction}).

We have $M \cong M \ovt R$ where $R$ is the unique AFD ${\rm II_1}$ factor. Since $M$ is semisolid, we obtain that $M \cong M \otimes \C1 = (\C 1 \otimes R)' \cap (M \ovt R)$ is amenable. Hence $M$ is an amenable type ${\rm III_1}$ factor with separable predual and nontrivial bicentralizer since $M = \Bic(M, \varphi)$. This however contradicts \cite[Theorem 2.3]{Ha85}.
\end{proof}

Based on the $14 \varepsilon$-type lemma \cite[Lemme 4.1]{Va04}, it was showed in \cite{Ho08} that all the free Araki-Woods factors associated with separable orthogonal representations have a trivial bicentralizer. We point out that Theorem \ref{semisolid} above gives a new and more conceptual proof of this result and more generally shows that all the type ${\rm III_1}$ factors that belong to the class $\Cao$ and hence are solid, have a trivial bicentralizer as well. We can now prove Theorem \ref{thmC}.

\begin{proof}[Proof of Theorem \ref{thmC}]
Any nonamenable factor $M$ that belongs to the class $\Cao$ is necessarily full by \cite[Theorem A]{HR14} and hence cannot be of type ${\rm III_0}$ by \cite[Theorem 2.12]{Co74}. Therefore, using  \cite[Th\'eor\`eme 4.2.6]{Co72}, we may further assume that $M$ is of type ${\rm III_1}$. Since $M$ belongs to the class $\Cao$, $M$ is solid by \cite{Oz03, VV05} and hence semisolid. Combining Theorem \ref{semisolid} with \cite[Theorem 3.1]{Ha85}, we deduce that $M$ possesses a state with large centralizer.
\end{proof}

\section{Popa's intertwining techniques for type ${\rm III}$ von Neumann algebras}\label{intertwining}

To fix notation, let $M$ be any $\sigma$-finite von Neumann algebra, $1_A$ and $1_B$ any nonzero projections in $M$, $A\subset 1_AM1_A$ and $B\subset 1_BM1_B$ any von Neumann subalgebras. Popa introduced his powerful {\em intertwining-by-bimodules techniques} in \cite{Po01} in the case when $M$ is finite and more generally in \cite{Po03} in the case when $M$ is endowed with an almost periodic faithful normal state $\varphi \in M_\ast$ for which $1_A, 1_B  \in M^\varphi$, $A \subset 1_A M^\varphi 1_A$ and $B \subset 1_B M^\varphi 1_B$. It was showed in \cite{HV12,Ue12} that Popa's intertwining techniques extend to the case when $B$ is semifinite and with expectation in $1_B M 1_B$ and $A \subset 1_A M 1_A$ is any von Neumann subalgebra.

In this section, we investigate a new setting in which $A \subset 1_A M 1_A$ is any finite von Neumann subalgebra with expectation and $B \subset 1_B M 1_B$ is any von Neumann subalgebra with expectation. This situation is technically more challenging than the one studied in \cite{HV12, Ue12} since $B$ can possibly be of type ${\rm III}$ and hence the basic construction $\langle M,B\rangle$ may no longer carry a faithful normal semifinite trace. Since we can no longer use the fact that $B$ is semifinite as in \cite[Proposition 3.1]{Ue12}, we use instead, as in the proof of \cite[Theorem A.1]{Po01}, the canonical faithful normal semifinite operator valued weight from $\langle M,B\rangle$ to $M$ and exploit the fact that $A$ is a finite von Neumann algebra.

\subsection*{Main result}

We will use the following terminology throughout this section.

\begin{df}\label{definition intertwining}
Let $M$ be any $\sigma$-finite von Neumann algebra, $1_A$ and $1_B$ any nonzero projections in $M$, $A\subset 1_AM1_A$ and $B\subset 1_BM1_B$ any von Neumann subalgebras with faithful normal conditional expectations $\rE_A : 1_A M 1_A \to A$ and $\rE_B : 1_B M 1_B \to B$ respectively.  

We will say that $A$ {\em embeds with expectation into} $B$ {\em inside} $M$ and write $A \preceq_M B$ if there exist projections $e \in A$ and $f \in B$, a nonzero partial isometry $v \in eMf$ and a unital normal $\ast$-homomorphism $\theta : eAe \to fBf$ such that the inclusion $\theta(eAe) \subset fBf$ is with expectation and $av = v \theta(a)$ for all $a \in eAe$.
\end{df}

It is important to observe that in the setting of Definition \ref{definition intertwining}, both of the inclusions $eAe \, vv^* \subset vv^* M vv^*$ and $\theta(eAe) \, v^*v = v^* \, eAe \, v \subset v^*v M v^*v$ are with expectation thanks to \cite[Proposition 2.2]{HU15}.

Fix a standard form $(M, H, J, \mathfrak P)$ for the von Neumann algebra $M$. Put $\widetilde{B}:=B\oplus \C(1_M-1_B)$ and extend $\rE_B : 1_B M 1_B \to B$ to a faithful normal conditional expectation $\rE_{\widetilde{B}} : M \to \widetilde{B}$. Denote by $e_{\widetilde{B}}$ the Jones projection of the inclusion $\widetilde B \subset M$, $\langle M,\widetilde{B} \rangle = (J \widetilde B J)' \cap \mathbf B(H)$ the basic construction and $\rT_M$ the canonical faithful normal semifinite operator valued weight from $\langle M,\widetilde{B} \rangle$ to $M$ given by $\rT_M(xe_{\widetilde{B}}x^*)=xx^*$ for all $x \in M$. Choose a faithful normal state $\varphi \in M_\ast$ such that $\varphi = \varphi \circ \rE_{\widetilde B}$. 

Write $B = B_1 \oplus B_2$ where $B_1$ is the semifinite direct summand of $B$ and $B_2$ is the type ${\rm III}$ direct summand of $B$. Fix a faithful normal semifinite trace $\Tr_{B_1}$ on $B_1$ and denote by $\Tr$ the unique trace on $\langle M,\widetilde{B}\rangle J1_{B_1}J$ satisfying $\Tr( (x^*e_{\widetilde B}x) J1_{B_1}J) =  \Tr_{B_1}( \rE_B(1_{B_1}xx^*1_{B_1}))$ for all $x \in M$ as explained in Section \ref{section-dimension} of the Appendix.

\begin{rem}\label{remark intertwining}
We observe the following basic facts for the embedding $\preceq$. Keep the notation $A,B,M,\rE_A,$ and $\rE_B$ as in Definition \ref{definition intertwining}.
\begin{enumerate}
	\item In the definition of $A\preceq_MB$, we may relax the requirement that the nonzero element $v\in eMf$ is a partial isometry. Indeed, for $e,f, \theta$ as in Definition \ref{definition intertwining}, assume that there exists a nonzero element $x\in eMf$ such that $ax=x\theta(a)$ for all $a\in eAe$. Write $x=v|x|$ for the polar decomposition of $x \in eMf$. Then for all $a\in \mathcal{U}(eAe)$, we have $v|x| = x=ax\theta(a^*)=av\theta(a^*)|x|$ and hence $v=av\theta(a^*)$ by uniqueness of the polar decomposition. Thus, $v \in e M f$ is a nonzero partial isometry such that $av=v\theta(a)$ for all $a\in eAe$.
	\item Let $p\in A$ or $p \in A'\cap 1_AM1_A$ and $q\in B$ or $q \in B'\cap 1_BM1_B$ be any nonzero projections. Then $pAp\subset pMp$ and $qBq\subset qMq$ are with expectation (see \cite[Proposition 2.2]{HU15}). Moreover, $pAp\preceq_MqBq$ implies that $A\preceq_MB$. 

\vspace{1mm}
\noindent
Let $z_i\in \mathcal{Z}(A)$ and $w_j\in\mathcal{Z}(B)$ be any nonzero central projections such that $1_A=\sum_iz_i$ and $1_B=\sum_jw_j$. Then $A\preceq_MB$ if and only if there exist $i, j$ such that $Az_i\preceq_M Bw_j$. Indeed, if $A\preceq_MB$ with $e,f,v, \theta$ as in Definition \ref{definition intertwining}, then there exist $i, j$ such that $z_ivw_j\neq0$ and hence $\theta(ez_i)w_j\neq0$. Observe that the unital inclusion $\theta(eAez_i)w_j \subset \theta(ez_i) Bw_j \theta(ez_i)$ is with expectation by \cite[Proposition 2.2]{HU15}. Then $ez_i, \theta(ez_i) w_j, z_ivw_j, \theta(\, \cdot\, z_i)w_j$ together with item (1) above witness the fact that $Az_i\preceq_M Bw_j$.
	\item Assume $A\preceq_MB$ and take $e,f,v, \theta$ as in Definition \ref{definition intertwining}. Define the unital normal $\ast$-homomorphism $\psi\colon eAe\to \theta(eAe)v^*v : a \mapsto \theta(a)v^*v = v^*av$ and let $z\in\mathcal{Z}(eAe)=\mathcal{Z}(A)e$ be the unique projection such that $\ker(\psi)=eAe(e - z)$. Then up to replacing $e$ by $ez$ and $\theta$ by $\theta|_{eAez}$ ({\em n.b.}~the unital inclusion $\theta(eAez) \subset \theta(ez)  B  \theta(ez)$ is with expectation), we may assume that $\psi$ is injective and moreover $e_0v\neq0$ for any nonzero subprojection $e_0\leq e$ in $A$. In this case, we have $e_0Ae_0\preceq_MB$ for any nonzero subprojection $e_0\leq e$ in $A$ ({\em n.b.}~the unital inclusion $\theta(e_0Ae_0) \subset \theta(e_0)  B  \theta(e_0)$ is with expectation). 
	\item Let $p\in A$ be any projection such that $z_A(p)=1_A$ where $z_A(p)$ denotes the central support in $A$ of the projection $p \in A$. Then $A\preceq_MB$ if and only if $pAp\preceq_MB$. Indeed, assume that $A\preceq_MB$ and take $e,f,v, \theta$ as in item (3). Since $z_A(p)=1$, there exist nonzero  subprojections $e_0\leq e$ and $p_0\leq p$ that are equivalent in $A$. Since $e_0Ae_0\preceq_MB$, we have $p_0Ap_0\preceq_MB$ and hence $pAp\preceq_MB$. A similar statement for $q\in B$ with central support equals to $1_B$ will be proved in Remark \ref{central support for B}.
	\item Obviously, the condition $A\preceq_MB$ does not depend on the choices of $\rE_A$ and $\rE_B$. In Theorem \ref{intertwining for type III} below, when $A$ is finite, we give various characterizations of $A\preceq_MB$, in which we use explicitely the faithful normal conditional expectation $\rE_B$ and a fixed faithful normal semifinite trace $\Tr_{B_1}$ on the semifinite direct summand $B_1$ of $B$. However, since the definition of $A\preceq_MB$ depends neither on $\rE_B$ nor on $\Tr_{B_1}$, all the characterizations in Theorem \ref{intertwining for type III} hold true for any faithful normal conditional expectation $\rE_B : M \to B$ and any faithful normal semifinite trace $\Tr_{B_1}$ on the semifinite direct summand $B_1$ of $B$.
\end{enumerate}
\end{rem}

Keep the same notation as in Definition \ref{definition intertwining} and moreover assume that $A$ is finite. We fix the following setup. 

Put $\widetilde{A}:=A\oplus \C(1_M-1_A)$ and extend $\rE_A : 1_A M 1_A \to A$ to a faithful normal conditional expectation $\rE_{\widetilde{A}} : M \to \widetilde{A}$. Choose a faithful normal trace $\tau_{\widetilde{A}} \in (\widetilde{A})_\ast$ and put $\Psi := \tau_{\widetilde{A}}\circ \rE_{\widetilde{A}}\circ \rT_M$. Observe that $\Psi$ is a faithful normal semifinite weight on $\langle M, \widetilde B\rangle$, $1_A \in \langle M, \widetilde B\rangle^\Psi$ and $A \subset 1_A\langle M, \widetilde B\rangle^\Psi 1_A$. The main result of this section is the following generalization of \cite[Theorem A.1]{Po01}.

\begin{thm}\label{intertwining for type III}
Keep the same notation as in Definition \ref{definition intertwining} and assume that $A$ is finite. Then the following conditions are equivalent.
	\begin{enumerate}
		\item $A \preceq_M B$.
		\item At least one of the following conditions holds:
		\begin{enumerate}
\item There exist a projection $e \in A$ and a finite trace projection $f \in B_1$, a nonzero partial isometry $v \in eMf$ and a unital normal $\ast$-homomorphism $\theta : eAe \to fB_1f$ such that $av = v \theta(a)$ for all $a \in eAe$.
\item There exist projections $e \in A$ and $f \in B_2$, a nonzero partial isometry $v \in eMf$ and a unital normal $\ast$-homomorphism $\theta : eAe \to fB_2f$ such that the inclusion $\theta(eAe) \subset fB_2f$ is with expectation and $av = v \theta(a)$ for all $a \in eAe$.
\end{enumerate}
\item There exist $n \geq 1$, a projection $q \in B \otimes \mathbf M_n$, a nonzero partial isometry $w\in (1_A\otimes e_{1,1})(M\otimes\mathbf{M}_{n})q$ and a unital normal $\ast$-homomorphism $\pi\colon A \rightarrow q(B\otimes\mathbf{M}_n)q$ such that the inclusion $\pi(A) \subset q (B \otimes \mathbf M_n) q$ is with expectation and $(a\otimes 1_n)w = w\pi(a)$ for all $a\in A$, where $(e_{i,j})_{1 \leq i,j \leq n}$ is a fixed matrix unit in $\mathbf{M}_n$. 
\item There exists no net $(w_i)_{i \in I}$ of unitaries in $\mathcal U(A)$ such that $\rE_{B}(b^*w_i a)\rightarrow 0$ in the $\sigma$-$\ast$-strong topology for all $a,b\in 1_AM1_B$.
\item For any $\sigma$-weakly dense subset $X \subset M$, there exists no net $(w_i)_{i \in I}$ of unitaries in $\mathcal U(A)$ such that $\rE_{B}(b^*w_i a)\rightarrow 0$ in the $\sigma$-strong topology for all $a,b\in 1_AX1_B$.
\item There exists a nonzero positive element $d\in A' \cap 1_A \langle M,\widetilde{B}\rangle1_A$ such that $$d \, 1_A \, J1_{B}J=d, \quad \Tr(d \, J1_{B_1}J)< +\infty \quad \text{and} \quad \rT_M(d \, J1_{B_2}J)\in 1_AM1_A.$$
\end{enumerate}
	\end{thm}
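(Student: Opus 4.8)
The plan is to establish all six conditions as equivalent by first disposing of the two ``soft'' equivalences and then running the cycle $(2)\Leftrightarrow(6)\Leftrightarrow(4)$. For $(1)\Leftrightarrow(3)$ I would use the standard amplification trick: an intertwiner $w$ into a corner $q(B\otimes\mathbf M_n)q$ can be compressed against the matrix unit $e_{1,1}$ to produce a nonzero partial isometry in $M$ and a $\ast$-homomorphism into a corner of $B$, and conversely any intertwiner into $B$ is the case $n=1$; the ``with expectation'' clauses are preserved by \cite[Proposition 2.2]{HU15}. Writing $B=B_1\oplus B_2$ and invoking Remark \ref{remark intertwining}(2), the relation $A\preceq_M B$ splits over the two central summands, and on the semifinite summand one cuts $B_1$ by a finite trace projection, where every subalgebra is automatically the range of a trace preserving conditional expectation; this gives $(1)\Leftrightarrow(2)$. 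Finally $(4)\Leftrightarrow(5)$ is purely functional analytic: passing from $\sigma$-strong to $\sigma$-$\ast$-strong convergence uses $\rE_B(a^*w_i^*b)=\rE_B(b^*w_ia)^*$ together with $\|w_i\|\leq 1$, while passing from a $\sigma$-weakly dense set $X$ to all of $1_AM1_B$ uses the same uniform bound and the norm contractivity of $\rE_B$.

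For $(2)\Rightarrow(6)$ I would produce the witness explicitly: set $d:=v e_{\widetilde B}v^*\in 1_A\langle M,\widetilde B\rangle 1_A$. Using $av=v\theta(a)$ (hence $v^*a=\theta(a)v^*$) and $[\theta(a),e_{\widetilde B}]=0$, one checks $ad=da$ for all $a\in eAe$, so $d\geq 0$ lies in $A'\cap 1_A\langle M,\widetilde B\rangle 1_A$. The defining identities $\rT_M(xe_{\widetilde B}x^*)=xx^*$ and $\Tr((x^*e_{\widetilde B}x)J1_{B_1}J)=\Tr_{B_1}(\rE_B(1_{B_1}xx^*1_{B_1}))$ then give $\rT_M(d\,J1_{B_2}J)=vv^*\in 1_AM1_A$ in case $(2b)$ and $\Tr(d\,J1_{B_1}J)\leq\Tr_{B_1}(f)<+\infty$ in case $(2a)$, which is $(6)$. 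Conversely, $(6)\Rightarrow(4)$ follows from the usual ``constant versus vanishing'' dichotomy: for $d$ as in $(6)$ and $\xi_0:=\Lambda_\varphi(1_A)$, centrality gives $\langle d\,w_i\xi_0,w_i\xi_0\rangle=\langle d\xi_0,\xi_0\rangle$ for every $w_i\in\mathcal U(A)$, whereas approximating $d$ by finite sums $\sum_j x_je_{\widetilde B}y_j$ and using $e_{\widetilde B}y_jw_ie_{\widetilde B}=\rE_{\widetilde B}(y_jw_i)e_{\widetilde B}$ expresses the same inner products through $\rE_B(\,\cdot\,w_i\,)$; a net with $\rE_B(b^*w_ia)\to 0$ would force $\langle d\xi_0,\xi_0\rangle=0$ and hence $d=0$, so $(4)$ holds.

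The heart of the theorem, and the step I expect to be the main obstacle, is $(4)\Rightarrow(6)$. In Popa's original finite setting \cite{Po01} one takes the unique element of minimal $\|\cdot\|_2$-norm in the weak closure of $\operatorname{co}\{we_Bw^*:w\in\mathcal U(A)\}$; here $\langle M,\widetilde B\rangle$ carries no semifinite trace once $B$ is of type ${\rm III}$, and the remedy is to replace the trace by the weight $\Psi=\tau_{\widetilde A}\circ\rE_{\widetilde A}\circ\rT_M$ and to exploit the finiteness of $A$. Since $1_A\in\langle M,\widetilde B\rangle^\Psi$ and $A\subset 1_A\langle M,\widetilde B\rangle^\Psi 1_A$, every $w\in\mathcal U(A)$ lies in the centralizer of $\Psi$, so conjugation $x\mapsto wxw^*$ is a $\|\cdot\|_\Psi$-isometry; moreover $\rT_M(1_Ae_{\widetilde B}1_A)=1_A$ gives $\Psi(we_{\widetilde B}w^*)=\tau_{\widetilde A}(1_A)<+\infty$, so these elements all lie in $\mathfrak m_\Psi$ with equal finite $\Psi$-value. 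I would then let $d$ be the unique element of minimal $\|\cdot\|_\Psi$-norm in the $\|\cdot\|_\Psi$-closed convex hull of $\{we_{\widetilde B}w^*:w\in\mathcal U(A)\}$; by uniqueness and isometric invariance, $d$ is fixed by all conjugations, so $d\in A'\cap 1_A\langle M,\widetilde B\rangle 1_A$ and $d\geq 0$. Hypothesis $(4)$ is exactly what keeps $0$ out of the closure, so $d\neq 0$. The three properties in $(6)$ are then read off from $\Psi(d)\leq\Psi(e_{\widetilde B})<+\infty$: the support condition $d\,1_A\,J1_BJ=d$ comes from restricting to the $B$-part of $\widetilde B$, finiteness of $\Psi(d)$ forces $\Tr(d\,J1_{B_1}J)<+\infty$ on the semifinite corner, and because $\Psi(d)=\tau_{\widetilde A}(\rE_{\widetilde A}(\rT_M(d)))$ with $A$ finite, the type ${\rm III}$ corner satisfies $\rT_M(d\,J1_{B_2}J)\in 1_AM1_A$.

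It then remains to extract an intertwiner from $d$, namely $(6)\Rightarrow(2)$, and here I would treat the two summands separately using the pushdown Lemma \ref{push down lemma}. On the corner $J1_{B_1}J$ the element $d\,J1_{B_1}J$ has finite trace $\Tr$, so one runs Popa's finite argument: a suitable spectral projection of $d$ is approximated in $\|\cdot\|_\Psi$ by sums $\sum_j x_je_{\widetilde B}y_j$, and the identity $e_{\widetilde B}x=e_{\widetilde B}\rT_M(e_{\widetilde B}x)$ of Lemma \ref{push down lemma} produces a nonzero $v$ with $av=v\theta(a)$ and $\theta(eAe)\subset fB_1f$, which is $(2a)$. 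On the corner $J1_{B_2}J$ the condition $\rT_M(d\,J1_{B_2}J)\in 1_AM1_A$ lets me apply Lemma \ref{operator valued weight2} to produce, after cutting by a central projection, a corner of $B_2$ that is the range of a faithful normal conditional expectation; combined once more with the pushdown lemma this yields $v$ and $\theta$ with $\theta(eAe)\subset fB_2f$ \emph{with expectation}, which is $(2b)$. Since $(2)\Rightarrow(1)$ is immediate, the cycle closes. The genuinely delicate points throughout are the use of $\Psi$ in place of a trace and, crucially, the appeal to the finiteness of $A$ to guarantee that $\rT_M$ returns a \emph{bounded} element of $M$ on the type ${\rm III}$ part rather than merely an affiliated operator.
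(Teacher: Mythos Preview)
Your overall architecture is close to the paper's, but there are two genuine gaps, and they are linked.

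First, the implication $(4)\Rightarrow(5)$ is not ``purely functional analytic'' in the way you suggest. The set $X$ is only $\sigma$-weakly dense, not norm dense, so you cannot approximate an arbitrary $a\in 1_AM1_B$ by elements of $1_AX1_B$ uniformly in $i$: for $a_n\to a$ $\sigma$-strongly the differences $\rE_B(b^*w_i(a-a_n))$ need not be small uniformly in $i$, because $\{w_i\}$ is only uniformly bounded, not norm-precompact, and ``norm contractivity of $\rE_B$'' gives nothing here. The paper handles this by passing to a cofinal ultrafilter $\mathcal U$ on $I$ and working in $M^{\mathcal U}$: the element $W=(w_i)^{\mathcal U}\in(\widetilde A)^{\mathcal U}$ satisfies $\rE_{(\widetilde B)^{\mathcal U}}(b^*Wa)=0$ for $a,b\in 1_AX1_B$, and then \emph{normality} of $\rE_{(\widetilde B)^{\mathcal U}}$ extends this to all of $1_AM1_B$; this normality step is exactly what replaces the uniform estimate you are missing.

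Second, and more seriously, your $(4)\Rightarrow(6)$ starts the averaging with $we_{\widetilde B}w^*$ and asserts that ``finiteness of $\Psi(d)$ forces $\Tr(d\,J1_{B_1}J)<+\infty$''. This is false: $\Psi=\tau_{\widetilde A}\circ\rE_{\widetilde A}\circ\rT_M$ and the canonical trace $\Tr$ on $\langle M,\widetilde B\rangle J1_{B_1}J$ are unrelated weights. Indeed $\Psi(1_Ae_{\widetilde B}1_A)=\tau_{\widetilde A}(1_A)<\infty$, whereas $\Tr(1_Ae_{\widetilde B}1_A\,J1_{B_1}J)=\Tr_{B_1}(\rE_B(1_{B_1}1_A1_{B_1}))$ is typically infinite when $B_1$ is not finite. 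This is precisely why the paper does \emph{not} prove $(4)\Rightarrow(6)$ directly but proves $(5)\Rightarrow(6)$ instead: condition $(5)$ is applied to the tailored dense set
\[
X=\bigcup\Big\{\,M(p+(1_M-1_{B_1}))\;:\;p\in B_1\text{ a finite trace projection}\,\Big\},
\]
so that the finite witness set satisfies $\mathcal F\subset 1_AM(p+1_{B_2})$ and the seed $d_0=\sum_{y\in\mathcal F}ye_{\widetilde B}y^*$ already has $\Tr(d_0\,J1_{B_1}J)=\sum_y\Tr_{B_1}(\rE_B(1_{B_1}y^*y1_{B_1}))<\infty$; this finiteness then survives the averaging by lower semicontinuity of $\Tr$. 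Condition $(5)$ is not a cosmetic variant of $(4)$ --- it is engineered to make the trace-finiteness in $(6)$ reachable, and your attempt to bypass it via $\Psi$ does not work.

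A smaller point: in your $(2)\Rightarrow(6)$ the element $d=ve_{\widetilde B}v^*$ commutes with $eAe$ but not with all of $A$ (take $A=\mathbf M_2$, $e=e_{11}$, $a=e_{12}$; then $e_{12}d=0\neq de_{12}$). The paper sidesteps this by not proving $(2)\Rightarrow(6)$ directly; its cycle is $(1)\Rightarrow(3)\Rightarrow(4)\Rightarrow(5)\Rightarrow(6)\Rightarrow(2)\Rightarrow(1)$, together with the separate $(1)\Leftrightarrow(2)$.
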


Before proving Theorem \ref{intertwining for type III}, we first recall a simple lemma that we will need for the proof of Theorem \ref{intertwining for type III}. 

\begin{lem}\label{Lemma1}
Let $\mathcal M$ be any von Neumann algebra and $\Theta$ any faithful normal semifinite weight on $\mathcal M$. Then the map $x\mapsto \Lambda_\Theta(x)$ is $\sigma$-weak--weak continuous from $\Omega:=\{x\in \mathcal M\mid \Theta(x^*x)\leq1 \}$ to $\rL^2(\mathcal M,\Theta)$. 

It follows that if $\mathcal{C}$ is a $\sigma$-weakly closed convex subset of $\mathcal M$ that is both bounded for the uniform norm and for the $\|\cdot\|_\Theta$-norm, then $\Lambda_\Theta(\mathcal{C})$ is $\|\cdot\|_\Theta$-closed in $\rL^2(\mathcal M,\Theta)$.
\end{lem}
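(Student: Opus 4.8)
The plan is to prove the two assertions in sequence, starting with the continuity statement for the map $x \mapsto \Lambda_\Theta(x)$ restricted to the unit ball $\Omega = \{x \in \mathcal M : \Theta(x^*x) \leq 1\}$, and then deducing the closedness statement as a corollary.

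\emph{First assertion.} The set $\Omega$ is bounded in the $\|\cdot\|_\Theta$-norm by $1$, so $\Lambda_\Theta(\Omega)$ is a bounded subset of $\rL^2(\mathcal M, \Theta)$. The strategy is to verify continuity for nets: suppose $x_i \to x$ $\sigma$-weakly with all $x_i \in \Omega$; I want to show $\Lambda_\Theta(x_i) \to \Lambda_\Theta(x)$ weakly in $\rL^2(\mathcal M, \Theta)$. Since $\Lambda_\Theta(\Omega)$ is norm-bounded, it suffices to check convergence of inner products $\langle \Lambda_\Theta(x_i), \xi \rangle_\Theta$ against a dense family of vectors $\xi$. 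The natural dense family is $\{\Lambda_\Theta(y) : y \in \mathfrak n_\Theta \cap \mathfrak n_\Theta^*\}$, or more conveniently vectors of the form $J_\Theta \Lambda_\Theta(y^*)$, since for such elements $\langle \Lambda_\Theta(x_i), J_\Theta \Lambda_\Theta(y^*)\rangle_\Theta$ should reduce to something like $\Theta(\text{something linear in } x_i)$. Concretely, for $a, b \in \mathfrak n_\Theta \cap \mathfrak n_\Theta^*$ one has weight-continuity relations (as in the Tomita--Takesaki calculus) that turn $\langle \Lambda_\Theta(x_i), \Lambda_\Theta(b)\rangle_\Theta = \Theta(b^* x_i)$ into a $\sigma$-weakly continuous functional of $x_i$. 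Because $b^* \in \mathfrak n_\Theta^*$, the functional $x \mapsto \Theta(b^* x)$ extends to a normal linear functional on $\mathcal M$, hence is $\sigma$-weakly continuous, giving $\langle \Lambda_\Theta(x_i), \Lambda_\Theta(b)\rangle_\Theta \to \langle \Lambda_\Theta(x), \Lambda_\Theta(b)\rangle_\Theta$. Combining boundedness with density yields weak convergence $\Lambda_\Theta(x_i) \to \Lambda_\Theta(x)$.

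\emph{Second assertion.} Let $\mathcal C$ be $\sigma$-weakly closed, convex, uniformly bounded and $\|\cdot\|_\Theta$-bounded. After rescaling we may assume $\mathcal C \subset \Omega$. The image $\Lambda_\Theta(\mathcal C)$ is then a bounded subset of the Hilbert space, and it is convex since $\Lambda_\Theta$ is linear. To show it is norm-closed, I would use the standard Hilbert-space fact that for a convex set, norm-closure equals weak-closure. So it suffices to show $\Lambda_\Theta(\mathcal C)$ is weakly closed. Take a net in $\Lambda_\Theta(\mathcal C)$ converging weakly to some $\eta$; lift it to a net $x_i \in \mathcal C$. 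By uniform boundedness and $\sigma$-weak compactness of bounded sets, pass to a subnet with $x_i \to x$ $\sigma$-weakly; since $\mathcal C$ is $\sigma$-weakly closed, $x \in \mathcal C$. The first assertion gives $\Lambda_\Theta(x_i) \to \Lambda_\Theta(x)$ weakly, so $\eta = \Lambda_\Theta(x) \in \Lambda_\Theta(\mathcal C)$ by uniqueness of weak limits.

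\emph{Main obstacle.} The delicate point is the first assertion: making rigorous that $\langle \Lambda_\Theta(x_i), \Lambda_\Theta(b)\rangle_\Theta = \Theta(b^*x_i)$ defines a $\sigma$-weakly continuous functional, since $\Theta$ is only a weight (possibly unbounded) rather than a state. I would handle this by restricting $b$ to the Tomita algebra $\mathfrak{T}_\Theta$ (analytic elements in $\mathfrak n_\Theta \cap \mathfrak n_\Theta^*$), which is a $\|\cdot\|_\Theta$-dense $*$-subalgebra on which all the modular machinery is available, and invoke that for $b$ in this algebra the map $x \mapsto \Theta(b^*x)$ is indeed normal. The rest is routine functional analysis, but one must be careful that the subnet extraction in the second assertion is compatible with weak convergence of the original net — this is why I reduce to the convex-set characterization of norm-closure rather than attempting a direct sequential argument.
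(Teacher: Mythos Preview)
Your overall architecture is right, and the second assertion is handled exactly as in the paper (uniform boundedness gives $\sigma$-weak compactness of $\mathcal C$, the first part makes $\Lambda_\Theta(\mathcal C)$ weakly compact hence weakly closed, and convexity plus Hahn--Banach gives norm-closedness). The issue is in the first assertion, precisely at the point you flagged as the ``main obstacle''.

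Your proposed fix --- restricting $b$ to the Tomita algebra and invoking that $x \mapsto \Theta(b^*x)$ is normal --- does not work. This functional need not extend to a bounded (let alone normal) functional on $\mathcal M$. Concretely, take $\mathcal M = \mathbf B(H)$ with $\Theta = \Tr$; the modular group is trivial, so the Tomita algebra is all of $\mathfrak n_\Theta \cap \mathfrak n_\Theta^*$, i.e.\ the Hilbert--Schmidt operators. If $b$ is Hilbert--Schmidt but not trace class (say diagonal with entries $1/i$), then $x \mapsto \Tr(b^*x)$ is unbounded on the operator-norm unit ball: against the increasing finite-rank projections $x_n$ one gets the partial harmonic sums. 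So knowing $b \in \mathfrak T_\Theta$ buys you nothing here.

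The paper sidesteps this by testing against a different dense family of vectors, namely $\eta = J_\Theta \sigma_{{\rm i}/2}^\Theta(a) J_\Theta \Lambda_\Theta(b)$ with $a$ analytic in $\mathfrak n_\Theta$ and $b \in \mathfrak n_\Theta$. Using the right-action identity $J_\Theta \sigma_{-{\rm i}/2}^\Theta(a^*) J_\Theta \Lambda_\Theta(x) = \Lambda_\Theta(xa)$, one gets
\[
\langle \Lambda_\Theta(x), \eta\rangle_\Theta = \langle \Lambda_\Theta(xa), \Lambda_\Theta(b)\rangle_\Theta = \langle x\,\Lambda_\Theta(a), \Lambda_\Theta(b)\rangle_\Theta,
\]
which is a vector functional in $x$ and hence manifestly $\sigma$-weakly continuous. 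The point is that the extra factor $a \in \mathfrak n_\Theta$ on the right absorbs the unboundedness of $\Theta$: you are computing $\Theta(b^*xa)$ with \emph{both} $a$ and $b$ square-integrable, rather than $\Theta(b^*x)$ with only $b$ square-integrable.
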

\begin{proof}
Let $x_i\in\Omega$ be a net converging to $x\in \Omega$ in the $\sigma$-weak topology. We show that $\langle \Lambda_\Theta(x-x_i) , \eta\rangle_\Theta$ converges to zero as $i \to \infty$, where $\eta \in \rL^2(\mathcal M, \Theta)$ is of the form $\eta=J_\Theta\sigma_{{\rm i}/2}^\Theta(a)J_\Theta \, \Lambda_\Theta(b)$ for some analytic element $a\in \mathcal M$ with $\Theta(a^*a)<+\infty$ and some $b\in \mathcal M$ with $\Theta(b^*b)<+\infty$. Note that since the subspace spanned by such elements $\eta$ is $\|\cdot\|_\Theta$-dense in $\rL^2(\mathcal M,\Theta)$ and since the vectors $\Lambda_\Theta(x-x_i)$ are bounded in $\rL^2(\mathcal M,\Theta)$, this will indeed prove that $\Lambda_\Theta(x-x_i) \to 0$ weakly in $\rL^2(\mathcal M,\Theta)$ as $i \to \infty$.

We have
	\begin{align*}
		\langle \Lambda_\Theta(x - x_i) , J_\Theta\sigma_{{\rm i}/2}^\Theta(a)J_\Theta \, \Lambda_\Theta(b)\rangle_\Theta
		&= \langle J_\Theta\sigma_{-{\rm i}/2}^\Theta(a^*)J_\Theta \, \Lambda_\Theta(x - x_i) , \Lambda_\Theta(b)\rangle_\Theta \\
		&= \langle \Lambda_\Theta((x - x_i) a) , \Lambda_\Theta(b)\rangle_\Theta \\
		&= \langle (x - x_i)\Lambda_\Theta(a) , \Lambda_\Theta(b)\rangle_\Theta \rightarrow 0 \quad \text{as} \quad i \to \infty.
	\end{align*}
	
Next, let $\mathcal{C}$ be a $\sigma$-weakly closed convex subset of $\mathcal M$ that is both bounded for the uniform norm and for the $\|\cdot\|_\Theta$-norm. Then since $\mathcal{C}$ is $\sigma$-weakly compact and since the map given in the statement is $\sigma$-weak--weak continuous, $\Lambda_\Theta(\mathcal{C})$ is weakly compact and hence is weakly closed in $\rL^2(\mathcal{M},\Theta)$. Since $\Lambda_\Theta(\mathcal{C})$ is a convex set, it is $\|\cdot\|_\Theta$-norm closed in $\rL^2(\mathcal{M},\Theta)$ by the Hahn-Banach separation theorem.
\end{proof}

\begin{proof}[Proof of Theorem \ref{intertwining for type III}]
$(1) \Rightarrow (2)$ By Remark \ref{remark intertwining}.(2), we have either $A\preceq_MB_1$ or $A\preceq_MB_2$. Since the condition $A \preceq_M B_2$ exactly means (2-b),  we only need to show that if $A \preceq_M B_1$, then Condition (2-a) holds. Let $e, f, v, \theta$ witnessing the fact that $A \preceq_M B_1$. Fix a faithful normal trace $\tau \in (\theta(eAe) \oplus \C(1_{B_1} - f))_\ast$ and denote by $\rE : B_1 \to \theta(eAe) \oplus \C(1_{B_1} - f)$ a faithful normal conditional expectation. Define the faithful normal state $\phi = \tau \circ \rE \in (B_1)_\ast$. There exists a positive nonsingular element $T \in \rL^1(B_1, \Tr_{B_1})_+$ such that $\phi = \Tr_{B_1}(\,\cdot \, T)$. Denote by $D \subset B_1$ the abelian von Neumann subalgebra generated by $\left\{ T^{{\rm i}t} : t \in \R\right\}$. We have $(B_1)^\phi = D' \cap B_1$ and hence $\theta(eAe) \oplus \C(1_{B_1} - f) \subset (B_1)^\phi = D' \cap B_1$. We may then choose a nonzero large enough finite trace projection $f_1 \in D$ of the form $f_1 = \mathbf 1_{[\frac1k, +\infty)}(T)$ for some $k \geq 1$ such that $vf_1 \neq 0$. So, up to replacing $f$ by $ff_1$, $v$ by $v f_1$ and $\theta$ by $\theta f_1$ and using Remark \ref{remark intertwining}.(1), we may assume that the projection $f \in B_1$ is of finite trace and that the nonzero partial isometry $v$ satisfies $v \in eMf$. Observe that since $f B_1 f$ is finite, the unital inclusion $\theta(eAe) \subset fB_1 f$ is with expectation.

$(2) \Rightarrow (1)$ It is obvious.

$(1) \Rightarrow (3)$ Let $e, f, v, \theta$ witnessing the fact that $A \preceq_M B$ as in Remark \ref{remark intertwining}.(3). Since $A$ is finite, \cite[Proposition 8.2.1]{KR97} implies that there exist $n \geq 1$ and nonzero pairwise equivalent orthogonal projections $e_1, \dots, e_n \in A$ such that $e_1 \leq e$ and $r = \sum_{i = 1}^n e_i \in \mathcal Z(A)$. Observe that $e_1v \neq 0$ by the choice of $e \in A$ as in Remark \ref{remark intertwining}.(3). For every $1 \leq i \leq n$, let $u_i \in A$ be a partial isometry satisfying $u_i^* u_i = e_1$ and $u_i u_i^* = e_i$. Put $q = \Diag(\theta(e_1))_{i}$, $w = [u_1v \cdots u_nv] \in (1_A\otimes e_{1,1})(M\otimes\mathbf{M}_{n})q$ and $\pi : A \to q(B \otimes \mathbf M_n)q : x \mapsto [\theta(u_i^* x u_j)]_{i, j}$. Note that $w \neq 0$. We have $(a \otimes 1_n) w = w \pi(a)$ for all $a \in A$. Observe that the unital inclusion $\theta(e_1 A e_1)  \oplus \C(1_B - \theta(e_1)) \subset B$ is with expectation and so is the unital inclusion $\left(\theta(e_1 A e_1)  \oplus \C(1_B - \theta(e_1)) \right) \otimes \mathbf M_n \subset B \otimes \mathbf M_n$. This implies that $q\left(\left(\theta(e_1 A e_1)  \oplus \C(1_B - \theta(e_1)) \right) \otimes \mathbf M_n\right) q \subset q(B \otimes \mathbf M_n)q$ is with expectation. Since the inclusion $\pi(A) \subset q\left(\left(\theta(e_1 A e_1)  \oplus \C(1_B - \theta(e_1)) \right) \otimes \mathbf M_n\right) q$ is unital and $q\left(\left(\theta(e_1 A e_1)  \oplus \C(1_B - \theta(e_1)) \right) \otimes \mathbf M_n\right) q$ is finite, this implies that the unital inclusion $\pi(A) \subset q(B \otimes \mathbf M_n)q$ is with expectation.

$(3) \Rightarrow (4)$ Let $n, w, \pi$ as in $(3)$. Denote by $\tr_n$ the normalized trace on $\mathbf M_n$. Put $\varphi_n:=\varphi \otimes \mathrm{tr}_n$ on $M \otimes \mathbf M_n$.  Suppose by contradiction that there exists a net of unitaries $(u_i)_i$ in $\mathcal U(A)$ as in $(4)$. Then we have
	\begin{align*}
		\|\rE_{B\otimes \mathbf{M}_n}(w^*w) \|_{\varphi_n}
				&=\|\pi(u_i)\rE_{B\otimes \mathbf{M}_n}(w^*w) \|_{\varphi_n}\\
		&=\|\rE_{B\otimes \mathbf{M}_n}(\pi(u_i)w^*w) \|_{\varphi_n}\\
		&=\|\rE_{B\otimes \mathbf{M}_n}(w^*(u_i\otimes 1_n)w) \|_{\varphi_n}\rightarrow 0 \quad \text{as} \quad i \to \infty.
	\end{align*}
Thus, we obtain $\rE_{B\otimes \mathbf{M}_n}(w^*w) = 0$ and hence $w=0$, which is a contradiction.

$(4) \Rightarrow (5)$ We prove the implication by contraposition using ultraproduct techniques. Let $X \subset M$ be a $\sigma$-weakly dense subset and $(w_i)_{i \in I}$ a net of unitaries in $\mathcal U(A)$ such that $\rE_B(b^* w_i a) \to 0$ in the $\sigma$-strong topology as $i \to \infty$ for all $a, b \in 1_A X 1_B$. Fix a cofinal ultrafilter $\mathcal U$ on the directed set $I$. We will be working inside the ultraproduct von Neumann algebra $M^{\mathcal U}$. Recall that $M \subset M^{\mathcal U}$ is a von Neumann subalgebra with faithful normal conditional expectation $\rE_{\mathcal U} : M^{\mathcal U} \to M$ (see Section \ref{preliminaries} for further details). 

Since $\widetilde A$ is a finite von Neumann algebra, the uniformly bounded net $(w_i)_{i \in I}$ defines an element $W = (w_i)^{\mathcal U} \in (\widetilde A)^{\mathcal U}$. Since $\widetilde A \subset M$ (resp.\ $\widetilde B \subset M$) is a von Neumann subalgebra with expectation, it follows that $(\widetilde A)^{\mathcal U} \subset M^{\mathcal U}$ (resp.\ $(\widetilde B)^{\mathcal U} \subset M^{\mathcal U}$) is a von Neumann subalgebra with expectation. We then have $W = (w_i)^{\mathcal U} \in (\widetilde A)^{\mathcal U} \subset M^{\mathcal U}$ and $\rE_{(\widetilde B)^{\mathcal U}}(b^* W a) = (\rE_{\widetilde B}(b^* w_i a))^{\mathcal U}$ for all $a, b \in 1_A M 1_B$. Since $\mathcal U$ is a cofinal ultrafilter on the directed set $I$, for all $a, b \in 1_A X 1_B$, we have 
$$\left\|\rE_{(\widetilde B)^{\mathcal U}}(b^* W a)\right\|_{\varphi^{\mathcal U}} = \lim_{i \to \mathcal U} \|\rE_{\widetilde B}(b^* w_i a)\|_\varphi = \lim_{i \to \mathcal U} \|\rE_B(b^* w_i a)\|_\varphi = 0$$ 
and hence $\rE_{(\widetilde B)^{\mathcal U}}(b^*W a) = 0$. Since $\rE_{(\widetilde B)^{\mathcal U}}$ is moreover normal, we obtain $\rE_{(\widetilde B)^{\mathcal U}}(b^* W a) = 0$ for all $a, b \in 1_A M 1_B$. This means that 
$$\lim_{i \to \mathcal U} \|\rE_{B}(b^* w_i a)\|_\varphi^\sharp = \lim_{i \to \mathcal U} \|\rE_{\widetilde B}(b^* w_i a)\|_\varphi^\sharp =  \|\rE_{(\widetilde B)^{\mathcal U}}(b^* W a)\|_{\varphi^{\mathcal U}}^\sharp = 0$$ for all $a, b \in 1_A M 1_B$. Therefore, for every $\varepsilon > 0$ and every finite subset $\mathcal F \subset 1_A M 1_B$, there exists $i = i(\varepsilon, \mathcal F) \in I$ such that $\|\rE_B(b^* w_i a)\|_\varphi^\sharp < \varepsilon$ for all $a, b \in \mathcal F$.

$(5)\Rightarrow (6)$ Consider the $\sigma$-weakly dense subset $X \subset M$ defined by
$$X = \bigcup \left \{ M ( p + (1_M - 1_{B_1})) \mid p \in B_1 \text{ is a finite trace projection} \right \}.$$
By assumption, there exist $\delta>0$, a finite trace projection $p \in B_1$ and a finite subset $\mathcal{F}\subset 1_AM(p + 1_{B_2})$ such that 
$$\sum_{x,y\in \mathcal{F}}\|\rE_B(y^*wx)\|_{\varphi}^2>\delta, \forall w\in\mathcal{U}(A).$$ 
Put $d_0:=\sum_{y\in \mathcal{F}}ye_{\widetilde{B}}y^*\in (1_A\langle M,\widetilde{B} \rangle1_A)^+$ and observe that $\rT_M(d_0) = \sum_{y \in \mathcal F} yy^* \in 1_AM1_A$ and $\Tr(d_0 \, J1_{B_1}J) = \sum_{y\in \mathcal F} \Tr_{B_1}(\rE_B(1_{B_1}y^*y1_{B_1})) < +\infty$. We also have $d_0 \, 1_A \, J 1_{B} J=d_0$, since $1_B \in \mathcal Z(\widetilde B)$ and $e_{\widetilde{B}} \, J1_{B}J=e_{\widetilde{B}}1_{B}$.

Denote by $\mathcal K$ the $\sigma$-weak closure in $1_A\langle M, \widetilde B\rangle1_A$ of the convex  hull of the uniformly bounded subset $\left \{ w^* d_0 w \mid w \in \mathcal U(A)\right \} \subset (1_A \langle M, \widetilde B\rangle 1_A)^+$, that is, 
$$\mathcal K := \overline{{\rm co}}^w \left \{ w^* d_0 w \mid w \in \mathcal U(A)\right \} \subset (1_A \langle M, \widetilde B\rangle 1_A)^+.$$ 
Then $\mathcal K$ is uniformly bounded. Observe that for all $y \in \langle M, \widetilde B\rangle^+$, we have $y^*y = y^{1/2} \, y \, y^{1/2} \leq y^{1/2} \, \|y\|_\infty1 \,y^{1/2} = \|y\|_\infty y$ and hence $\|y\|_\Psi = \Psi(y^* y)^{1/2} \leq (\|y\|_\infty)^{1/2} \Psi(y)^{1/2}$.  
By item $(1)$ in Lemma \ref{operator valued weight1}, we have $\sigma^\Psi |_ {\widetilde A} = \sigma^{\tau_{\widetilde A} \circ \rE_{\widetilde A}} |_{\widetilde A} = \id_{\widetilde A}$. This implies that $\Psi(y) = \Psi(d_0)$ for all $y \in {\rm co} \{ w^* d_0 w \mid w \in \mathcal U(A) \}$. We claim that $\mathcal K$ is bounded in $\|\cdot\|_\Psi$-norm by $(\|d_0\|_\infty)^{1/2} \Psi(d_0)^{1/2}$. Indeed, let $x \in \mathcal K$ and choose a net $(x_i)_{i \in I}$ in ${\rm co} \{ w^* d_0 w \mid w \in \mathcal U(A) \}$ such that $x_i \to x$ $\sigma$-weakly. Since $\Psi(x_i) = \Psi(d_0)$ for all $i \in I$ and since the weight $\Psi$ is $\sigma$-weakly lower semi-continuous on $\langle M, \widetilde B\rangle^+$ by \cite[Theorem VII.1.11 (iii)]{Ta03}, we have 
$$\Psi(x) \leq \liminf_{i \in I} \Psi(x_i) = \Psi(d_0).$$
This implies that $\|x\|_\Psi \leq (\|x\|_\infty)^{1/2} \Psi(x)^{1/2} \leq (\|d_0\|_\infty)^{1/2} \Psi(d_0)^{1/2} < +\infty$.

Using Lemma \ref{Lemma1}, we may regard $\mathcal K$ as a closed convex bounded subset of $\rL^2(\langle M, \widetilde B\rangle, \Psi)$. In particular, there exists a unique element $d \in \mathcal K$ of minimal $\|\cdot\|_\Psi$-norm. We still have $d \, 1_A \, J 1_B J=d$. Since $A\subset 1_A\langle M,\widetilde{B}\rangle^{\Psi}1_A$, we have $\|wdw^*\|_\Psi=\|d\|_\Psi$  for all $w\in \mathcal{U}(A)$. Thus, by minimality of the $\|\cdot\|_\Psi$-norm, we have $wdw^*=d$ for all $w\in \mathcal{U}(A)$ and hence $d\in A'\cap 1_A\langle M, \widetilde B\rangle 1_A$. 

We first show that $d\neq0$. Indeed, for all $w \in \mathcal{U}(A)$, we have
\begin{equation*}
	\sum_{x\in\mathcal{F}}\langle w^*d_0w \, \Lambda_\varphi(x),\Lambda_\varphi(x)\rangle_{\varphi}
	=\sum_{x,y\in\mathcal{F}}\langle e_{\widetilde{B}} \Lambda_\varphi(y^* w x), \Lambda_\varphi(y^* w x)\rangle_{\varphi}
	=\sum_{x,y\in\mathcal{F}}\| \rE_B( y^*w x)\|_{\varphi}^2>\delta.
\end{equation*}
By taking convex combinations and $\sigma$-weak limits, we obtain $\sum_{x\in\mathcal{F}}\langle d  \Lambda_\varphi(x),\Lambda_\varphi(x)\rangle_{\varphi} \geq \delta$ and hence $d \neq 0$.

We next show that $\Tr(d \, J1_{B_1}J)<+\infty$. Indeed, for all $w \in \mathcal{U}(A)$, we have 
$$\Tr(w^*d_0 w \, J1_{B_1}J) = \Tr((w \, J1_{B_1} J)^* \, d_0 J1_{B_1}J \, (w \, J1_{B_1} J)) =  \Tr(d_0 \, J1_{B_1}J)$$
and hence $\Tr(y \, J1_{B_1}J) = \Tr(d_0 \, J1_{B_1}J)$ for all $y \in {\rm co} \{ w^* d_0 w \mid w \in \mathcal U(A) \}$. Let $(x_i)_{i \in I}$ be any net in $\mathrm{co}\{w^*d_0w\mid w\in\mathcal{U}(A)\}$ that converges to $d \in \mathcal K$ in the $\sigma$-weak topology. Since $\Tr(\, \cdot \, J 1_{B_1} J)$ is $\sigma$-weakly lower semi-continuous on $(\langle M, \widetilde B\rangle J 1_{B_1} J)^+$ by \cite[Theorem VII.1.11 (iii)]{Ta03}, we have 
$$\Tr(d \, J 1_{B_1} J)\leq \liminf_{i \in I} \Tr(x_i \, J 1_{B_1} J) = \Tr(d_0 \, J 1_{B_1} J) < +\infty.$$

We finally show that $\rT_M(d \, J1_{B_2}J)\in 1_AM1_A$. This will be a consequence of the next claim.

\begin{claim}
	We have $\phi(\rT_M(d))\leq \|\phi\| \, \|\rT_M(d_0)\|_\infty$ for all normal positive linear functionals $\phi \in M_\ast$. 
\end{claim}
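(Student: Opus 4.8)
The plan is to reduce the claim to the $\sigma$-weak lower semicontinuity of the scalar-valued weight $\phi \circ \rT_M$, after first controlling $\rT_M$ on the convex hull defining $\mathcal K$ by means of its $M$-$M$-bimodularity.

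First I would record the behaviour of $\rT_M$ under conjugation by unitaries of $A$. Since $d_0 = \sum_{y \in \mathcal F} y e_{\widetilde B} y^* \in \mathfrak m_{\rT_M}$ with $\rT_M(d_0) = \sum_{y \in \mathcal F} y y^*$, for every $w \in \mathcal U(A) \subset 1_A M 1_A$ we have $w^* d_0 w = \sum_{y \in \mathcal F} (w^* y) e_{\widetilde B} (w^* y)^* \in \mathfrak m_{\rT_M}$, and the $M$-$M$-bimodularity of $\rT_M$ gives $\rT_M(w^* d_0 w) = w^* \rT_M(d_0) w$, so that $\|\rT_M(w^* d_0 w)\|_\infty = \|\rT_M(d_0)\|_\infty$ because $w$ is unitary. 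By linearity of $\rT_M$ on $\mathfrak m_{\rT_M}$, every element $x \in {\rm co}\{ w^* d_0 w : w \in \mathcal U(A)\}$ satisfies $\rT_M(x) = \sum_k \lambda_k w_k^* \rT_M(d_0) w_k \in 1_A M 1_A$, whence $\|\rT_M(x)\|_\infty \leq \|\rT_M(d_0)\|_\infty$ by the triangle inequality. Consequently $\phi(\rT_M(x)) \leq \|\phi\| \, \|\rT_M(x)\|_\infty \leq \|\phi\| \, \|\rT_M(d_0)\|_\infty$ for every normal positive functional $\phi \in M_\ast$ and every such $x$.

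Next I would pass to the $\sigma$-weak limit defining $d$. Since $d \in \mathcal K$, I would fix a net $(x_i)_{i \in I}$ in ${\rm co}\{ w^* d_0 w : w \in \mathcal U(A)\}$ with $x_i \to d$ $\sigma$-weakly. The composition $\phi \circ \rT_M$ is a normal weight on $\langle M, \widetilde B\rangle$: indeed $\rT_M$ is normal as an operator valued weight and $\phi$ extends normally to $\widehat M^+$, so $(\phi \circ \rT_M)(y_j) \nearrow (\phi \circ \rT_M)(y)$ whenever $y_j \nearrow y$ in $\langle M, \widetilde B\rangle^+$. Normal weights are $\sigma$-weakly lower semicontinuous by \cite[Theorem VII.1.11 (iii)]{Ta03}, and therefore
$$\phi(\rT_M(d)) = (\phi \circ \rT_M)(d) \leq \liminf_{i \in I} (\phi \circ \rT_M)(x_i) \leq \|\phi\| \, \|\rT_M(d_0)\|_\infty,$$
which is exactly the assertion of the claim.

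The main obstacle is the content of the third paragraph: a priori $\rT_M(d)$ lives only in the extended positive cone $\widehat M^+$, so $\phi(\rT_M(d))$ could be $+\infty$, and $\rT_M$ itself is \emph{not} $\sigma$-weakly continuous, so one cannot simply transport the uniform bound across the limit. The key point is that, although $\rT_M$ behaves badly under $\sigma$-weak limits, the scalar weight $\phi \circ \rT_M$ is genuinely lower semicontinuous, and it is precisely this lower semicontinuity — rather than any continuity of $\rT_M$ — that both converts the bound on the convex hull into a bound on $d$ and simultaneously yields the finiteness of $\phi(\rT_M(d))$. Verifying that $\phi \circ \rT_M$ is a normal weight, hence lower semicontinuous, is the one place where the normality of the operator valued weight $\rT_M$ is essential.
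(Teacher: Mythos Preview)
Your proof is correct and follows essentially the same approach as the paper: both arguments bound $\phi(\rT_M(x))$ uniformly on the convex hull via the $M$-$M$-bimodularity of $\rT_M$, then invoke the $\sigma$-weak lower semicontinuity of the normal weight $\phi\circ\rT_M$ (citing the same result \cite[Theorem VII.1.11 (iii)]{Ta03}) to pass to the limit $d$. Your write-up simply makes explicit the step $\|\rT_M(x)\|_\infty \leq \|\rT_M(d_0)\|_\infty$ that the paper leaves implicit.
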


\begin{proof}[Proof of the Claim]
	We fix a normal positive linear functional $\phi \in M_\ast$ and observe that $\phi\circ \rT_M$ is a normal semifinite weight on $\langle M,\widetilde{B}\rangle$. Let $(x_i)_{i \in I}$ be any net in $\mathrm{co}\{w^*d_0w\mid w\in\mathcal{U}(A)\}$ that converges to $d \in \mathcal K$ in the $\sigma$-weak topology. Since $\phi(\rT_M(x_i))\leq \|\phi\| \, \|\rT_M(d_0)\|_\infty$ for all $i \in I$ and since $\phi\circ \rT_M$ is $\sigma$-weakly lower semi-continuous on $\langle M, \widetilde B\rangle^+$ by \cite[Theorem VII.1.11 (iii)]{Ta03}, we have 
$$(\phi\circ \rT_M)(d)\leq \liminf_{i \in I} (\phi\circ \rT_M)(x_i)\leq \|\phi\| \,\| \rT_M(d_0)\|_\infty.$$
This finishes the proof of the claim.
\end{proof}

Recall that any normal linear functional $\phi \in M_\ast$ has a unique decomposition $\phi = (\phi_1 - \phi_2) + {\rm i}(\phi_3 - \phi_4)$ where $\phi_1, \phi_2, \phi_3, \phi_4 \in M_\ast$ are normal positive linear functionals such that $\|\phi_1 - \phi_2\| = \|\phi_1\| + \|\phi_2\|$ and $\|\phi_3 - \phi_4\| = \|\phi_3\| + \|\phi_4\|$. By the Claim, we obtain
\begin{align*}
|\phi(\rT_M(d))| &=  \left|(\phi_1(\rT_M(d)) - \phi_2(\rT_M(d))) + {\rm i}(\phi_3(\rT_M(d)) - \phi_4(\rT_M(d)) \right| \\
& \leq (\|\phi_1\| +\|\phi_2\| +\|\phi_3\| +\|\phi_4\|) \|\rT_M(d_0)\|_\infty \\
& = (\|\phi_1 - \phi_2\| +\|\phi_3 - \phi_4\|) \|\rT_M(d_0)\|_\infty \\
& \leq 2 \|\phi\| \, \|\rT_M(d_0)\|_\infty.
\end{align*}
Therefore, we obtain $\rT_M(d) \in (M_\ast)^* = M$ and hence $\rT_M(d) \in 1_A M 1_A$. In particular, we have $\rT_M(d \, J1_{B_2}J)\in 1_AM1_A$.

$(6)\Rightarrow (2)$ Take a nonzero spectral projection $p$ of $d$ such that $p\leq \lambda d$ for some $\lambda>0$. Then $p$ satisfies exactly the same assumption as $d$, namely $p \, 1_A \, J1_{B}J=p$,  $\Tr(p \, J1_{B_1}J)< +\infty$ and $\rT_M(p \, J1_{B_2}J)\in 1_AM1_A$. Since $p=p \, J1_BJ$, we have either 
 $p \, J1_{B_1}J\neq0$ or $p \, J1_{B_2}J\neq0$.

We first assume that $p \, J1_{B_1}J\neq0$. 
Since $\Tr_{B_1}$ is semifinite, there is a $\Tr_{B_1}$-finite projection $q_0 \in B_1$ which is sufficiently close to $1_{B_1}$. We can particularly choose $q_0$ such that $pJq_0J \neq 0$. Note that this is equivalent to $pJzJ \neq 0$, where $z:=z_{B_1}(q_0)$ denotes the central support in $B_1$ of the projection $q_0 \in B_1$. 

Here we claim that the right dimension of the nonzero $A$-$q_0B_1q_0$-bimodule $pJq_0J\rL^2(M,\varphi)$ with respect to $(q_0B_1q_0, \Tr_{B_1}(q_0\, \cdot \, q_0))$ coincides with $\Tr(pJzJ)$, so that it has a nonzero finite value. 
To see this, put $q_1:=1_{\widetilde{B}} - z$, $q:=q_0+q_1 \in \widetilde{B}$ and observe that $z_{\widetilde{B}}(q)=1_{\widetilde{B}}=1_M$ and $z q=q_0$. Then we apply Proposition \ref{general dimension lemma}(2) and obtain that 
	$$\dim_{(q_0Bq_0,\Tr_B(q_0\, \cdot \, q_0))} p Jq_0J\rL^2(M,\varphi)  = \Tr_{\langle M,\widetilde{B} \rangle JzJ}(pJzJ) ,$$
where $\Tr_{\langle M , \widetilde{B}\rangle JzJ}$ is the unique trace on $\langle M,\widetilde{B}\rangle JzJ$ from Proposition \ref{general dimension lemma}. Since $\Tr_{\langle M,\widetilde{B} \rangle JzJ}=\Tr$ on $\langle M,\widetilde{B} \rangle JzJ$ by uniqueness (see Proposition \ref{general dimension lemma}(1) and Remark \ref{remark-semifinite}), this shows the claim. 

Thus, we obtain a nonzero $A$-$q_0B_1 q_0$-subbimodule of $\rL^2(M,\varphi)$ such that its right dimension is finite, where $q_0\in B_1$ is a $\Tr_{B_1}$-finite projection. Then we can proceed as in the proof of \cite[Proposition 3.1 (i) $\Rightarrow$ (ii) $\Rightarrow$ (iii)]{Ue12} (see also \cite[Theorem 2.3 (2) $\Rightarrow$ (1)]{HV12}) and we deduce that $A\preceq_Mq_0B_1q_0$. This implies (2-a). Observe that we have proved the following fact.

\begin{fact}\label{fact}
Assume that there exists a nonzero positive element $d\in A'\cap 1_A\langle M,\widetilde{B}\rangle1_A$ such that $d=d \, 1_A \, J1_{B_1}J$ and $\Tr(d)< +\infty$. Let $q_0$ be a projection in $B_1$ such that $\Tr_{B_1}(q_0)< +\infty$. If $d \, Jz_{B_1}(q_0)J\neq0$, then we have $A\preceq_Mq_0B_1q_0$.
\end{fact}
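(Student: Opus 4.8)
The plan is to follow the argument just given for the implication $(6)\Rightarrow(2)$ in the case (2-a), the only additional point being the passage from the positive element $d$ to a suitable spectral projection. Write $z:=z_{B_1}(q_0)$ for the central support of $q_0$ in $B_1$. Since $1_{B_1},z\in\mathcal Z(B_1)\subset\mathcal Z(\widetilde B)$, the projections $J1_{B_1}J$ and $JzJ$ lie in $J\widetilde B J$ and hence commute with $d\in\langle M,\widetilde B\rangle=(J\widetilde B J)'\cap\mathbf B(H)$. In particular $dJzJ$ is positive and commutes with $JzJ$, so the hypothesis $dJzJ\neq 0$ says precisely that the support projection $s(d)$ of $d$ satisfies $s(d)JzJ\neq 0$.

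First I would reduce to a projection. The spectral projections $p_n:=\mathbf 1_{[1/n,+\infty)}(d)$ increase $\sigma$-strongly to $s(d)$, so $p_nJzJ\to s(d)JzJ\neq 0$; fix $n$ large enough that $p:=p_n$ satisfies $pJzJ\neq 0$. Then $p\leq n\,d$, and since $d\in A'\cap 1_A\langle M,\widetilde B\rangle 1_A$ the spectral projection $p$ lies in the same algebra, with $p\leq s(d)\leq 1_A$. As $d=dJ1_{B_1}J$ forces $s(d)\leq J1_{B_1}J$, we get $p=pJ1_{B_1}J$, whence $\Tr(p)=\Tr(pJ1_{B_1}J)\leq n\,\Tr(dJ1_{B_1}J)=n\,\Tr(d)<+\infty$. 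Finally, using that $JuJ\in JB_1J$ commutes with $p$ for every $u\in\mathcal U(B_1)$ and that $z=\bigvee_{u}Juq_0u^*J$, the condition $pJzJ\neq 0$ is equivalent to $pJq_0J\neq 0$. Thus $p$ is a nonzero projection satisfying exactly the hypotheses on the projection used in $(6)\Rightarrow(2)$, together with $pJq_0J\neq 0$.

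Next I would run the dimension computation verbatim. Putting $q_1:=1_{\widetilde B}-z$ and $q:=q_0+q_1\in\widetilde B$, one has $z_{\widetilde B}(q)=1_M$ and $zq=q_0$, so Proposition \ref{general dimension lemma}(2) together with the uniqueness of the trace yields
$$\dim_{(q_0B_1q_0,\,\Tr_{B_1}(q_0\,\cdot\,q_0))} pJq_0J\rL^2(M,\varphi)=\Tr_{\langle M,\widetilde B\rangle JzJ}(pJzJ)=\Tr(pJzJ).$$
This right dimension is finite because $\Tr(p)<+\infty$ and nonzero because $pJq_0J\neq 0$. Hence $pJq_0J\rL^2(M,\varphi)$ is a nonzero $A$-$q_0B_1q_0$-subbimodule of $\rL^2(M,\varphi)$ of finite right dimension over the finite von Neumann algebra $q_0B_1q_0$ (finite since $\Tr_{B_1}(q_0)<+\infty$).

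Finally, having produced such a finite right-dimensional bimodule and recalling that $A$ is finite by the standing assumption, I would conclude $A\preceq_M q_0B_1q_0$ by the finite-index argument of \cite[Proposition 3.1 (i)$\Rightarrow$(ii)$\Rightarrow$(iii)]{Ue12} (equivalently \cite[Theorem 2.3 (2)$\Rightarrow$(1)]{HV12}). The main obstacle is not the reduction to $p$, which is routine, but the dimension computation: the identity above rests on the semifinite dimension theory of the Appendix and on the uniqueness statement identifying $\Tr_{\langle M,\widetilde B\rangle JzJ}$ with $\Tr$, and it is this, together with the conversion of a finite right dimension into an honest intertwiner with expectation carried out in \cite{Ue12,HV12}, that carries the real content.
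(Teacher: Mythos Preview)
Your proof is correct and follows essentially the same approach as the paper: reduce to a spectral projection $p$ of $d$ with $pJzJ\neq 0$ and $\Tr(p)<+\infty$, identify the right dimension of $pJq_0J\rL^2(M,\varphi)$ over $(q_0B_1q_0,\Tr_{B_1}(q_0\,\cdot\,q_0))$ with $\Tr(pJzJ)$ via Proposition~\ref{general dimension lemma}(2) and the uniqueness of the trace, and then invoke \cite{Ue12,HV12} to obtain $A\preceq_M q_0B_1q_0$. Your explicit justification of the equivalence $pJq_0J\neq 0\Leftrightarrow pJzJ\neq 0$ via the central support formula is a nice touch that the paper leaves implicit.
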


We next assume that $p \,J1_{B_2}J\neq0$. Up to replacing $p$ by $p\,J1_{B_2}J$, we may assume that $p=p \, J1_{B_2}J$. Applying Lemma \ref{operator valued weight2} to the inclusion $A \subset 1_A \langle M, \widetilde B\rangle 1_A$ and to the faithful normal semifinite operator valued weight $\rE_A \circ \rT_M(1_A \cdot 1_A)$, up to replacing the nonzero projection $p$ by the nonzero projection $zp$ for some nonzero central projection $z \in \mathcal Z(A)$, we may further assume that the inclusion $Ap \subset p \langle M, \widetilde B\rangle p$ is with expectation.

Since $\langle M,\widetilde{B}\rangle J1_{B_2}J= J B_2'J$ is a type ${\rm III}$ von Neumann algebra and since the central support in $J B_2'J$ of the projection $e_{\widetilde{B}} \, J 1_{B_2} J= e_{\widetilde{B}}1_{B_2}$ is equal to $J 1_{B_2} J$, we have $p\preceq e_{\widetilde{B}}1_{B_2}$ in $\langle M,\widetilde{B}\rangle J1_{B_2}J$ by \cite[Theorem 6.3.4]{KR97}. Let $V\in \langle M,\widetilde{B}\rangle J1_{B_2} J$ be a nonzero partial isometry such that $V^*V=p$ and $VV^*\leq e_{\widetilde{B}}1_{B_2}$. Note that $VV^* \in e_{\widetilde{B}}1_{B_2}\langle M,\widetilde{B}\rangle e_{\widetilde{B}}1_{B_2}=\widetilde{B}e_{\widetilde{B}}1_{B_2}\cong B_2$ and so we can write it as $VV^*=fe_{\widetilde{B}}\simeq f$ for some nonzero projection $f\in B_2$. Then we have 
\begin{equation*}
		\Ad(V) \colon p\langle M,\widetilde{B}\rangle p \xrightarrow{\sim} V \, p\langle M,\widetilde{B}\rangle p \, V^*= fe_{\widetilde{B}}\langle M,\widetilde{B}\rangle fe_{\widetilde{B}}=fB_2f e_{\widetilde{B}}\cong fB_2f.
\end{equation*}

Since $p \in A' \cap 1_A \langle M, \widetilde B\rangle 1_A$ and $p = V^*V$, the map $\theta : A \to fB_2f : x \mapsto VxV^*$ defines a unital normal $\ast$-homomorphism. Observe that since the inclusion $Ap \subset p \langle M, \widetilde B\rangle p$ is with expectation and since $V^*V = p$ and $VV^*=fe_{\widetilde{B}}\simeq f$, so is the inclusion 
$$\theta(A) = V A V^* = V \, Ap \, V^* \subset V \,p \langle M, \widetilde B \rangle q\, V^* \cong fB_2f.$$ 
Since $V^*V = p \in A' \cap 1_A \langle M, \widetilde B\rangle 1_A$, we have $\theta(a)V=Va$ for all $a\in A$. Since $V^*V = p$ and $\rT_M(p) \in M$, we have $V \in \mathfrak n_{\rT_M}$. Since $V = e_{\widetilde{B}}V$ and $e_{\widetilde{B}} \in \mathfrak n_{\rT_M}$, we also have $V \in \mathfrak m_{\rT_M}$. We may then apply $\rT_M$ to the equation $\theta(a)V=Va$ and we obtain $\theta(a)\rT_M(V)=\rT_M(V)a$ for all $a \in A$. Since $V = e_{\widetilde B} V= e_{\widetilde B}\rT_M(e_{\widetilde{B}}V) = e_{\widetilde B}\rT_M(V)$ by Proposition \ref{push down lemma} and since $V \neq 0$, we have $\rT_M(V) \neq 0$. Finally, writing $\rT_M(V) = v^* |\rT_M(V)|$ for the polar decomposition of $\rT_M(V)$ in $M$, by Remark \ref{remark intertwining}.(1) we obtain (2-b). This concludes the proof of Theorem \ref{intertwining for type III}.
\end{proof}

\begin{rem}\label{central support for B}
Keep the same notation as in Definition \ref{definition intertwining}. Let $q\in B$ be any projection such that $z_B(q)=1_B$ where $z_B(q)$ denotes the central support in $B$ of the projection $q \in B$. 
Then $A\preceq_MB$ if and only if $A\preceq_MqBq$.

Indeed, by Remark \ref{remark intertwining}.(2), we may assume that $B$ is either semifinite or of type ${\rm III}$. The second case is trivial since $q$ and $z_B(q) = 1_B$ are equivalent in $B$. So, we may  assume that $B$ is semifinite. By definition of the embedding $A \preceq_M B$, only the semifinite direct summand of $A$ can be embedded with expectation into $B$ inside $M$. Therefore, using Remark \ref{remark intertwining}.(4), we may further assume that $A$ is finite. Since $B$ is semifinite, there exists a finite projection $q_0\leq q$ in $B$ such that $z_B(q_0)=1$. Let us fix a faithful normal semifinite trace $\Tr$ on $\widetilde B := B\oplus \C(1_M - 1_B)$ such that $\mathrm{Tr}(q_0)<+\infty$. Since $A\preceq_MB$, one can take a nonzero positive element $d\in A'\cap 1_A\langle M,\widetilde{B}\rangle1_A$ as in Theorem \ref{intertwining for type III}.(6). Since $d \, Jz_{B}(q_0)J=d\neq0$, we can use the Fact in the proof of (6) $\Rightarrow$ (2) in Theorem \ref{intertwining for type III} to obtain that $A\preceq_Mq_0Bq_0$. Thus, we have $A\preceq_MqBq$. 
\end{rem}

In the next lemma, we study the effect of taking tensor products.

\begin{lem}\label{lemma intertwining in tensor}
Let $M$ and $N$ be any $\sigma$-finite von Neumann algebras, $1_A$ and $1_B$ any nonzero projections in $M$, $A \subset 1_A M 1_A$ any finite von Neumann subalgebra with expectation and $B\subset 1_{B}M1_{B}$ any von Neumann subalgebra with expectation $\rE_B : 1_B M 1_B \to B$. 
We will simply denote by $B \ovt N$ the von Neumann subalgebra of $(1_B \otimes 1_N)(M \ovt N)(1_B \otimes 1_N)$ generated by $B \otimes \C 1_N$ and $\C1_B \otimes N$.

The following conditions are equivalent:
\begin{enumerate}
\item $A \preceq_M B$.
\item $A \otimes \C 1_N \preceq_{M \ovt N} B \otimes \C 1_N$.
\item  $A \otimes \C 1_N \preceq_{M \ovt N} B \ovt N$.
\end{enumerate}
\end{lem}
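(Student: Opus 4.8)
The plan is to prove the three equivalences by exploiting the characterization in Theorem~\ref{intertwining for type III}.(4), which reduces the embedding relation $A \preceq_M B$ to a statement about the nonexistence of a net of unitaries $(w_i)_{i \in I}$ in $\mathcal U(A)$ with $\rE_B(b^* w_i a) \to 0$ $\sigma$-$\ast$-strongly for all $a, b \in 1_A M 1_B$. Since this condition is purely asymptotic and phrased in terms of the conditional expectation, it behaves very well under tensoring, and all three conditions in the lemma will become transparent once rewritten in this form. The implications $(3) \Rightarrow (2)$ and $(2) \Rightarrow (1)$ will be the easy directions, obtained essentially by restriction, so the substance lies in $(1) \Rightarrow (3)$, and I will organize the argument as the cycle $(1) \Rightarrow (2) \Rightarrow (3) \Rightarrow (1)$, or more economically prove $(1) \Leftrightarrow (2)$ and $(2) \Leftrightarrow (3)$ separately.

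\textbf{Step 1: the equivalence $(1) \Leftrightarrow (2)$.} First I would fix a faithful normal state $\psi \in N_\ast$ and work with the conditional expectations $\rE_B \otimes \id_N$ and, on the target side, the slice giving $\rE_{B \otimes \C 1_N} = \rE_B \otimes \psi$ from $1_B M 1_B \ovt N$ onto $B \otimes \C 1_N$. The key observation is that for $a, b \in 1_A M 1_B$ and $w_i \in \mathcal U(A)$, the elements $w_i \otimes 1_N$ range over $\mathcal U(A \otimes \C 1_N)$, and the conditional expectation factors as $\rE_{B \otimes \C 1_N}((b^* \otimes 1_N)(w_i \otimes 1_N)(a \otimes 1_N)) = \rE_B(b^* w_i a) \otimes 1_N$. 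Thus a net witnessing the failure of $A \otimes \C 1_N \preceq_{M \ovt N} B \otimes \C 1_N$ of the special tensor form $w_i \otimes 1_N$ exists if and only if a net witnessing the failure of $A \preceq_M B$ exists; the nontrivial point is that one may restrict attention to unitaries of the form $w_i \otimes 1_N$, which holds because $\mathcal U(A \otimes \C 1_N) = \{ w \otimes 1_N : w \in \mathcal U(A)\}$. Combined with Theorem~\ref{intertwining for type III}.(4) (applied on both sides, noting that $A \otimes \C 1_N$ is again finite with expectation), this gives $(1) \Leftrightarrow (2)$ directly.

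\textbf{Step 2: the equivalence $(2) \Leftrightarrow (3)$.} Here $(3) \Rightarrow (2)$ is the cheap direction: since $B \otimes \C 1_N \subset B \ovt N$ is a unital inclusion with expectation, any embedding into the smaller algebra yields an embedding into the larger one (this is a standard transitivity property of $\preceq$, and can also be seen at the level of condition (4) since $\rE_{B \otimes \C 1_N} = \rE_{B \ovt N} \circ (\text{further expectation})$, forcing the norms to shrink). For the converse $(2) \Rightarrow (3)$, I would again use the characterization (4): I must show that if no net in $\mathcal U(A \otimes \C 1_N)$ asymptotically kills $\rE_{B \otimes \C 1_N}$, then no such net kills $\rE_{B \ovt N}$ either. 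Because the relevant unitaries all have the form $w \otimes 1_N$, for $a, b \in 1_A M 1_B \ovt N$ one computes that $\rE_{B \ovt N}(b^*(w \otimes 1_N) a)$ and $\rE_{B \otimes \C 1_N}(b^*(w \otimes 1_N) a)$ are controlled by one another on a total set of $a, b$; tensors $a = a_0 \otimes x$, $b = b_0 \otimes y$ with $a_0, b_0 \in 1_A M 1_B$ and $x, y \in N$ span a $\sigma$-weakly dense subset, and on such elements $\rE_{B \ovt N}(b^*(w \otimes 1_N) a) = \rE_B(b_0^* w a_0) \otimes y^* x$, whose vanishing is governed by $\rE_B(b_0^* w a_0)$. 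Applying Theorem~\ref{intertwining for type III}.(5), which allows checking condition (4) on merely a $\sigma$-weakly dense subset, I may restrict to these elementary tensors and conclude.

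\textbf{The main obstacle} I anticipate is bookkeeping the conditional expectations and the state $\psi$ correctly: one must verify that the expectation onto $B \ovt N$ restricts compatibly with the expectation onto $B \otimes \C 1_N$ and that the finiteness and with-expectation hypotheses needed to invoke Theorem~\ref{intertwining for type III} are preserved under tensoring with $N$ (in particular that $A \otimes \C 1_N$ is finite, which is immediate, and that $B \otimes \C 1_N$ and $B \ovt N$ are with expectation in the respective corners of $M \ovt N$). The genuinely delicate point in Step~2 is reducing from arbitrary $a, b \in 1_A (M \ovt N) 1_{B \ovt N}$ to elementary tensors; this is exactly where the flexibility of condition (5) — testing on a $\sigma$-weakly dense subset in the $\sigma$-strong rather than $\sigma$-$\ast$-strong topology — is indispensable, and I would lean on it rather than attempting a direct norm estimate for general elements.
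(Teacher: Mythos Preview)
Your approach is essentially the same as the paper's: both rely on the characterization in Theorem~\ref{intertwining for type III}(4)--(5), reducing everything to nets of unitaries and checking on elementary tensors. The paper is slightly more streamlined---it simply declares $(1)\Rightarrow(2)\Rightarrow(3)$ obvious (immediately from Definition~\ref{definition intertwining}, since an intertwiner into $B$ is an intertwiner into $B\otimes\C 1_N$ is an intertwiner into $B\ovt N$) and then proves only the contrapositive $\neg(1)\Rightarrow\neg(3)$ by exhibiting the net $(w_i\otimes 1_N)$ and invoking (5) on the dense set of elementary tensors, exactly as in your Step~2.

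However, your Step~2 has the two directions swapped. You call $(3)\Rightarrow(2)$ ``the cheap direction'' and justify it by ``any embedding into the smaller algebra yields an embedding into the larger one''---but that sentence is the argument for $(2)\Rightarrow(3)$, not $(3)\Rightarrow(2)$. Likewise your factorization (which should read $\rE_{B\otimes\C 1_N}=(\text{further expectation})\circ\rE_{B\ovt N}$, not the other order) shows that if a net kills $\rE_{B\ovt N}$ then it kills $\rE_{B\otimes\C 1_N}$, i.e.\ $\neg(3)\Rightarrow\neg(2)$, which is again $(2)\Rightarrow(3)$. Conversely, your ``converse $(2)\Rightarrow(3)$'' paragraph, where you invoke (5) to reduce to elementary tensors, is exactly the tool needed to \emph{produce} a net witnessing $\neg(3)$ from one witnessing $\neg(1)$ (or $\neg(2)$); that is the genuine content, and it proves $(3)\Rightarrow(1)$. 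Once you relabel, all the mathematics you wrote is correct and complete---but as written, both paragraphs in Step~2 argue for the same implication.
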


\begin{proof}
It is obvious that $(1) \Rightarrow (2) \Rightarrow (3)$.

 $(3) \Rightarrow (1)$ By contraposition, we assume that $A \npreceq_M B$ and we show that $A \otimes \C 1_N \npreceq_{M \ovt N} B \ovt N$. By Theorem \ref{intertwining for type III} (4), there exists a net of unitaries $(w_i)_{i \in I}$ in $\mathcal U(A)$ such that $\rE_B(b^* w_i a) \to 0$ $\sigma$-strongly as $i \to \infty$ for all $a, b \in 1_A M 1_B$. Observe that the mapping
$$\rE_{B \ovt N} : (1_B \otimes 1_N)(M \ovt N)(1_B \otimes 1_N) \to B \ovt N : b \otimes y \mapsto \rE_B(b) \otimes y$$ defines a faithful normal conditional expectation. Put $v_i = w_i \otimes 1_N \in \mathcal U(A \otimes \C 1_N)$ for all $i \in I$. For all $a, b \in 1_A M 1_B$ and all $x, y \in N$, we have $\rE_{B \ovt N}((b \otimes y)^* v_i (a \otimes x)) = \rE_B(b^* w_i a) \otimes y^*x$ and hence $\rE_{B \ovt N}((b \otimes y)^* v_i (a \otimes x)) \to 0$ $\sigma$-strongly as $i \to \infty$. By Theorem \ref{intertwining for type III} (5), this implies that $A \otimes \C 1_N \npreceq_{M \ovt N} B \ovt N$.
\end{proof}

The next corollary will be useful in the proof of Theorem \ref{location theorem} below.

\begin{cor}\label{cor F}
Let $M$ be any von Neumann algebra with separable predual, $1_A$ and ${1_B}_n$ $(n \in \N)$ any nonzero projections in $M$, $A \subset 1_A M 1_A$ any finite von Neumann subalgebra with expectation and $B_n\subset 1_{B_n}M1_{B_n}$ any von Neumann subalgebra with expectation for all $n\in \N$. If $A\npreceq_MB_n$ for all $n\in \N$, then there exists a diffuse abelian von Neumann subalgebra $A_0\subset A$ such that $A_0\not\preceq_MB_n$ for all $n\in \N$.
\end{cor}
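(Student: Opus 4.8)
The plan is to construct the diffuse abelian subalgebra $A_0$ directly, as a nested union of finite-dimensional abelian subalgebras of $A$, together with an explicit sequence of unitaries that simultaneously witnesses $A_0 \npreceq_M B_n$ for every $n$ through Theorem \ref{intertwining for type III}(4). Since we seek a diffuse abelian subalgebra, we may assume $A$ is diffuse: replacing $A$ by a diffuse central summand $zA$ (with $z \in \mathcal Z(A)$) only shrinks the algebra, and $zA \npreceq_M B_n$ still holds by Remark \ref{remark intertwining}(2). I would fix a faithful normal state $\varphi \in M_\ast$ and a faithful normal tracial state on $A$, and use separability of the predual to fix, for each $n$, a countable $\|\cdot\|_\varphi$-dense subset of the unit ball of $1_A M 1_{B_n}$. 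I then enumerate as $(R_k)_{k\geq 1}$ all ``requirements'' $R_k = (n_k, F_k, \varepsilon_k)$, where $F_k$ is a finite subset of the dense set attached to $B_{n_k}$ and $\varepsilon_k>0$, arranged so that for each fixed $n$ the requirements with $n_k = n$ have their finite sets $F_k$ increasing to the whole dense set and $\varepsilon_k \to 0$.

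The observation that makes the induction run is that non-intertwining passes to corners: by Remark \ref{remark intertwining}(2), for every projection $p \in A$ we have $A \npreceq_M B_n \Rightarrow pAp \npreceq_M B_n$, and $pAp \subset pMp$ is again a diffuse finite von Neumann subalgebra with expectation. I would build finite-dimensional abelian subalgebras $D_0 = \C 1_A \subseteq D_1 \subseteq \cdots \subseteq A$ and unitaries $w_k \in \mathcal U(D_k)$ as follows. At step $k$, write $R_k = (n, F, \varepsilon)$ and let $p_1, \dots, p_r$ be the minimal projections of $D_{k-1}$. For each $i$, since $p_i A p_i \npreceq_M B_n$, Theorem \ref{intertwining for type III}(4) (applied in the corner, with $p_i b, p_i a \in p_i M 1_{B_n}$) provides a unitary $v_i \in \mathcal U(p_i A p_i)$ with $\|\rE_{B_n}((p_i b)^* v_i (p_i a))\|_\varphi < \varepsilon/r$ for all ordered pairs $a,b \in F$; as this is an open condition I may take $v_i$ of finite spectrum. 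Crucially, the $v_i$ lie in the orthogonal corners $p_i A p_i$, hence mutually commute and commute with $D_{k-1}$, so $w_k := \sum_{i=1}^r v_i$ is a unitary for which the von Neumann algebra $D_k'$ generated by $D_{k-1}$ and $w_k$ equals $\bigoplus_i (\{v_i\}\dpr)$ and is still finite-dimensional and abelian. Finally, using diffuseness of $A$, I refine $D_k'$ to a finite-dimensional abelian $D_k \supseteq D_k'$ all of whose minimal projections have trace $\leq 2^{-k}$; note that $w_k \in \mathcal U(D_k') \subseteq \mathcal U(D_k)$.

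Setting $A_0 := (\bigcup_k D_k)\dpr$, it is abelian (a nested union of abelian algebras) and diffuse (the minimal-projection traces tend to $0$), and $A_0 \subseteq A$ with $1_{A_0} = 1_A$. For the non-intertwining, fix $n$ and restrict to the subsequence of indices $k$ with $n_k = n$; along it the sets $F_k$ exhaust a dense subset of the unit ball of $1_A M 1_{B_n}$ and $\varepsilon_k \to 0$. Since $v_i = p_i v_i p_i$, we have $\rE_{B_n}(b^* w_k a) = \sum_i \rE_{B_n}((p_i b)^* v_i (p_i a))$, so $\|\rE_{B_n}(b^* w_k a)\|_\varphi < \varepsilon_k$ for all $a,b \in F_k$; requiring the same estimate for the pairs $(b,a)$ controls the adjoints as well, so $\rE_{B_n}(b^* w_k a) \to 0$ $\sigma$-$\ast$-strongly along this subsequence, first for $a,b$ in the dense set and then, since the $w_k$ are uniformly bounded and $\rE_{B_n}$ is normal, for all $a,b \in 1_A M 1_{B_n} = 1_{A_0} M 1_{B_n}$. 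As $(w_k)$ lies in $\mathcal U(A_0)$, Theorem \ref{intertwining for type III}(4) yields $A_0 \npreceq_M B_n$, which is the desired conclusion.

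I expect the main obstacle to be precisely the point resolved by this corner trick: the unitaries supplied by $A \npreceq_M B_n$ live in all of $A$ and a priori do not commute, whereas the witnessing unitaries are required to lie inside the abelian algebra $A_0$ being constructed. The fact that non-intertwining descends to the orthogonal corners $p_i A p_i$ (Remark \ref{remark intertwining}(2)) is what allows me to manufacture mixing unitaries block-diagonally with respect to $D_{k-1}$, so that adjoining them preserves both finite-dimensionality and commutativity while keeping uniform control of $\rE_{B_n}$. The remaining ingredients—reduction to diffuse $A$, finite-spectrum approximation of the $v_i$, the refinement ensuring diffuseness of $A_0$, and the passage from a dense set to all of $1_A M 1_{B_n}$—are routine.
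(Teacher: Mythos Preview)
Your proposal is correct and is precisely a detailed write-up of the argument from \cite[Corollary~F.14]{BO08}, which is exactly what the paper invokes; the paper's own proof consists only of observing that separability of the predual allows one to use sequences in place of nets in Theorem~\ref{intertwining for type III} and then deferring to that reference. One cosmetic point: swapping the roles of $a$ and $b$ gives control of $\rE_{B_n}(a^* w_k b)$, not of $\rE_{B_n}(b^* w_k a)^* = \rE_{B_n}(a^* w_k^* b)$, so it does not directly handle the adjoints --- but this is immaterial, since condition~(5) of Theorem~\ref{intertwining for type III} only requires $\sigma$-strong convergence, and you may simply cite~(5) in place of~(4) at the end.
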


\begin{proof}
Since $M$ is assumed to have separable predual, the net that appears in item $(5)$ of Theorem \ref{intertwining for type III} for $A \npreceq_M B_n$ can be taken to be a sequence for all $n \in \N$. Then the proof of \cite[Corollary F.14]{BO08} applies {\em mutatis mutandis}.
\end{proof}

\subsection*{Further results}

In this subsection, we gather various useful facts and permanence properties of the symbol $A \preceq_M B$ when $A \subset 1_AM1_A$ and $B \subset 1_B M1_B$ are any von Neumann subalgebras with expectation. We start by studying the effect of taking {\em unital} subalgebras of $A$.

\begin{lem}\label{intertwining subalgebra}
Let $M$ be any $\sigma$-finite von Neumann algebra, $1_A$ and $1_B$ any nonzero projections in $M$, $A\subset 1_AM1_A$ and $B\subset 1_BM1_B$ any von Neumann subalgebras with expectation. Let $D\subset A$ be any {\em unital} von Neumann subalgebra with expectation. If $A\preceq_MB$, then $D\preceq_MB$.
\end{lem}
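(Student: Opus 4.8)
The plan is to argue by contraposition, the whole point being that $D \subseteq A$ is \emph{unital}: thus $1_D = 1_A$, so the test spaces coincide, $1_D M 1_B = 1_A M 1_B$, and $\mathcal U(D) \subseteq \mathcal U(A)$. Assume $D \npreceq_M B$. Then the negation of condition (4) in Theorem \ref{intertwining for type III}, applied to the pair $(D,B)$, supplies a net of unitaries $(w_i)_{i \in I}$ in $\mathcal U(D)$ with $\rE_B(b^* w_i a) \to 0$ $\ast$-strongly for all $a,b \in 1_D M 1_B$. Since $\mathcal U(D) \subseteq \mathcal U(A)$ and $1_D M 1_B = 1_A M 1_B$, this very net is a net of unitaries in $\mathcal U(A)$ with $\rE_B(b^* w_i a) \to 0$ for all $a,b \in 1_A M 1_B$, so condition (4) forces $A \npreceq_M B$. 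This is exactly the contrapositive.

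Conceptually the implication runs in this direction because the basic-construction criterion (6) is monotone under shrinking the source. Indeed, since $D \subseteq A$ one has the inclusion of relative commutants $A' \cap 1_A\langle M,\widetilde B\rangle 1_A \subseteq D' \cap 1_A\langle M,\widetilde B\rangle 1_A$, while the ingredients $\langle M,\widetilde B\rangle$, $\rT_M$, $\Tr$ and the conditions $d\,1_A\,J1_B J = d$, $\Tr(d\,J1_{B_1}J) < +\infty$, $\rT_M(d\,J1_{B_2}J) \in 1_A M 1_A$ all depend only on the inclusion $B \subseteq M$. Hence any positive element $d$ witnessing $A \preceq_M B$ is automatically a witness for $D \preceq_M B$, because $1_D = 1_A$. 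I would use this reformulation to keep the argument independent of the chosen expectations.

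The step I expect to be the main obstacle is the finiteness hypothesis on the source built into Theorem \ref{intertwining for type III}: condition (4)'s net and condition (6)'s element $d$ are only guaranteed when the source algebra is finite. When $A$ is finite this is harmless, since $D \subseteq A$ is then finite as well and the theorem applies verbatim to both $A$ and $D$. For general $A$ I would instead argue directly from Definition \ref{definition intertwining}: given data $e,f,v,\theta$ witnessing $A \preceq_M B$, normalized as in Remark \ref{remark intertwining}.(3), and a nonzero projection $p \in D$ with $p \leq e$, the quadruple $(p,\theta(p),pv,\theta|_{pDp})$ witnesses $pDp \preceq_M B$ — here $pv \neq 0$ is automatic from the normalization, the relation $d(pv) = (pv)\theta(d)$ for $d \in pDp$ is a direct computation, and $\theta(pDp) \subseteq \theta(p)B\theta(p)$ is with expectation by \cite[Proposition 2.2]{HU15} — whence $D \preceq_M B$ by Remark \ref{remark intertwining}.(2). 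The genuinely delicate point, which I expect to be the crux, is to secure such a nonzero $p$: a fixed $e$ need not dominate any nonzero projection of $D$, so one must either adjust the intertwining data or, equivalently, manufacture a nonzero element of $D' \cap 1_A\langle M,\widetilde B\rangle 1_A$ meeting the requirements of condition (6) by averaging $v e_{\widetilde B} v^*$ over $\mathcal U(D)$ — an averaging that is subtle precisely when $D$ is of type ${\rm III}$ and carries no trace.
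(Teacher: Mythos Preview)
Your contrapositive via condition~(4), and equivalently the monotonicity of condition~(6) under passing to a unital subalgebra, are correct and give a clean proof \emph{when $A$ is finite}: both conditions in Theorem~\ref{intertwining for type III} are then available for the pairs $(A,B)$ and $(D,B)$ simultaneously, and the inclusion $\mathcal U(D)\subset\mathcal U(A)$ (resp.\ $A'\subset D'$) does the rest. This is indeed the engine the paper uses once it has reduced to a finite situation.

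The gap is in the general case. Both of your proposed fixes run into genuine obstructions. Finding a nonzero $p\in D$ with $p\le e$ can simply fail: there is no reason $D$ should meet $eAe$ nontrivially, and Remark~\ref{remark intertwining}.(3) only guarantees $e_0v\neq 0$ for subprojections $e_0\le e$ \emph{in $A$}, not in $D$. The averaging idea over $\mathcal U(D)$ is exactly the step in the proof of $(5)\Rightarrow(6)$ that needs a \emph{trace} on the source to define $\Psi$ and select the element of minimal $\|\cdot\|_\Psi$-norm in the convex hull; without a trace on $D$ there is no $\mathcal U(D)$-invariant norm to minimize, so the argument does not go through when $D$ is of type~${\rm III}$.

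The paper circumvents both difficulties by decomposing $A=Az_1\oplus Az_2$ with $Az_1$ semifinite and $Az_2$ of type~${\rm III}$, and treating the two summands by different mechanisms. In the type~${\rm III}$ case, every projection is equivalent in $A$ to its central support, so after conjugating one may take $e\in\mathcal Z(A)$; then $e$ commutes with $D$, the inclusion $De\subset Ae$ is unital and with expectation, and simply restricting $\theta$ to $De$ yields $De\preceq_M B$, hence $D\preceq_M B$. In the semifinite case the paper does \emph{not} look for projections in $D$ under $e$: instead it uses that the trace on $A$ remains semifinite on $D'\cap A$, produces finite projections $p_n\in D'\cap A$ increasing to $1_A$, passes to the finite corner $p_nAp_n$ (via central supports and Remark~\ref{remark intertwining}.(4)), and only then invokes your finite-case argument for the unital inclusion $Dp_n\subset p_nAp_n$. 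The missing idea in your proposal is precisely this two-pronged reduction: move $e$ into $\mathcal Z(A)$ when $A$ is type~${\rm III}$, and cut by finite projections of $D'\cap A$ (not of $D$) when $A$ is semifinite.
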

\begin{proof}
Write $1_A=z_1+z_2$ with central projections $z_1, z_2\in\mathcal{Z}(A)$ such that $Az_1$ is semifinite and $Az_2$ is of type ${\rm III}$. By Remark \ref{remark intertwining}.(2), we have either $Az_1\preceq_MB$ or $Az_2\preceq_MB$.

First assume that $Az_2\preceq_MB$. Since the unital inclusion $Dz_2 \subset Az_2$ is with expectation by Remark \ref{remark expectation bis}, we may assume without loss of generality that $z_2 = 1_A$. Take $e, f, v, \theta$ as in Definition \ref{definition intertwining}. Then $e$ is equivalent to its central support $z_{A}(e)$ in $A$ and hence we may assume that $e\in \mathcal{Z}(A)$. Using Remark \ref{remark intertwining}.(3), we may further assume that the unital normal $\ast$-homomorphism $\psi : Ae \to v^*v \theta(Ae) : a \mapsto v^*v \theta(a) = v^*av$ is injective. This implies in particular that the unital normal $\ast$-homomorphism $\theta : Ae \to fBf$ is injective. By Remark \ref{remark expectation bis}, the unital inclusion $De \subset Ae$ is with expectation. Since $\theta : Ae \to \theta(Ae)$ is a unital normal $\ast$-isomorphism, the unital inclusion $\theta(De) \subset \theta(A e)$ is also with expectation. Since the unital inclusion $\theta(A e) \subset \theta(e) B \theta(e)$ is with expectation, so is the unital inclusion $\theta(De) \subset \theta(e) B \theta(e)$. Then, taking the restriction $\theta|_{De}$ of $\theta : Ae \to fBf$ to $De$ shows that $De\preceq_MB$. Thus, we obtain $D\preceq_MB$.

Next assume that $Az_1\preceq_MB$. Since the unital inclusion $Dz_1 \subset Az_1$ is with expectation by Remark \ref{remark expectation bis}, we may assume without loss of generality that $z_1 = 1_A$. We first prove that $D\preceq_MB$ in the case when $D$ is finite. Since $A$ is semifinite and $D \subset A$ is finite with expectation, the same reasoning as in the proof of $(1) \Rightarrow (2)$ in Theorem \ref{intertwining for type III} shows that any faithful normal semifinite trace on $A$ is still semifinite on the relative commutant $D'\cap A$. In particular, there exists an increasing sequence $(p_n)_n$ of projections in $D'\cap A$ such that $p_n$ converges to $1_A$ $\sigma$-strongly and each projection $p_n$ is finite in $A$. Denote by $z_A(p_n)$ the central support in $A$ of the projection of $p_n\in A$. Since $z_A(p_n)$ converges to $1_A$ $\sigma$-strongly, there exists $n$ such that $Az_A(p_n)\preceq_MB$ (see e.g.\ Remark \ref{remark intertwining}.(2)). Then by Remark \ref{remark intertwining}.(4), we have $p_nAp_n\preceq_MB$. Since $p_nAp_n$ is finite and since $p_n$ commutes with $D$, it follows that $Dp_n \subset p_n A p_n$ is a unital von Neumann subalgebra and we have $Dp_n\preceq_MB$ by Theorem \ref{intertwining for type III} (3). Thus, we obtain $D\preceq_MB$.

We finally prove the general case. Since $D$ is semifinite, there exists a finite projection $p\in D$ such that $z_D(p)=1_D=1_A$. Since $D\subset A$, we have $z_D(p)\leq z_A(p)\leq1_A$ and hence $z_A(p)=1_A$. By Remark \ref{remark intertwining}.(4), we have $pAp\preceq_MB$. Since $pDp \subset pAp$ is finite, we have $pDp\preceq_MB$ by the previous case. Thus, we obtain $D\preceq_MB$.
\end{proof}

We next study the effect of taking relative commutants. Recall from \cite[Lemma 2.1]{Po81} that whenever $A \subset M$ is an inclusion of von Neumann algebras and $p \in A$ is a nonzero projection, we have $(pAp)'\cap pMp=(A'\cap M)p$. We will use this result quite often in the sequel without explicit reference.

\begin{lem}\label{relative commutants}
Let $M$ be any $\sigma$-finite von Neumann algebra, $1_A$ and $1_B$ any nonzero projections in $M$, $A\subset 1_AM1_A$ and $B\subset 1_BM1_B$ any von Neumann subalgebras with expectation. If $A \preceq_M B$, then $B' \cap 1_B M 1_B \preceq_M A' \cap 1_A M 1_A$. 
\end{lem}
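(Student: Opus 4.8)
The plan is to unpack the hypothesis $A \preceq_M B$ into the data of Definition \ref{definition intertwining} and to build the intertwiner for the relative commutants by conjugating with the \emph{same} partial isometry; the whole difficulty is that this conjugation is not multiplicative on $B' \cap 1_B M 1_B$, so one must route it through an intermediate relative commutant. Throughout write $P := B' \cap 1_B M 1_B$ and $Q := A' \cap 1_A M 1_A$.

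First I would fix projections $e \in A$, $f \in B$, a nonzero partial isometry $v \in eMf$ and a unital normal $*$-homomorphism $\theta : eAe \to fBf$ with $\theta(eAe) \subset fBf$ with expectation and $av = v\theta(a)$ for all $a \in eAe$; taking adjoints gives $v^*a = \theta(a)v^*$. Set $N := \theta(eAe) \subseteq fBf$, $e_1 := vv^* $, $f_1 := v^*v$ and $C := N' \cap fMf$, so $f_1 \in C$ and $e_1 \in Qe$ (since $a\,vv^* = v\theta(a)v^* = vv^*a$). The central computation is that $\Ad(v)\colon y \mapsto vyv^*$ restricts to a $*$-isomorphism $f_1 C f_1 \xrightarrow{\sim} Qe_1$: for $y \in f_1Cf_1$ and $a \in eAe$ one gets $a(vyv^*) = v\theta(a)yv^* = vy\theta(a)v^* = (vyv^*)a$ because $y$ commutes with $N \ni \theta(a)$, so $vyv^* \in (eAe)' \cap e_1Me_1 = Qe_1$, and $\Ad(v^*)$ furnishes the inverse.

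Next I would locate $P$ inside $C$. Since $N \subseteq B$ and $f \in B$, every $p \in P$ commutes with both $N$ and $f$, whence $Pf = (fBf)' \cap fMf \subseteq N' \cap fMf = C$, with $f \in P' \cap 1_B M 1_B$ and $Pf = fPf$. A modular-theory argument shows $Pf \subseteq C$ is with expectation: choosing a faithful normal state on $fMf$ of the form $\psi_1 \circ \rE_N \circ \rE_{fBf}$, its modular group preserves both $fBf$ and $N$, hence both $Pf$ and $C$, and \cite[Theorem IX.4.2]{Ta03} applies. Now let $z := z_C(f_1) \in \mathcal Z(C)$ be the central support of $f_1$ in $C$; by \cite[Proposition 2.2]{HU15}, $Pfz \subseteq zCz$ is with expectation, $zCz \subseteq M$ is with expectation, and $f_1$ has full central support $z$ in $zCz$. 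The trivial embedding gives $Pfz \preceq_{zCz} zCz$, so Remark \ref{central support for B} applied inside $zCz$ with $q = f_1$ yields $Pfz \preceq_{zCz} f_1 C f_1$, and therefore $Pfz \preceq_M f_1 C f_1$.

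Finally I would climb back up. Applying Remark \ref{remark intertwining}.(2) twice — first with $z \in (Pf)' \cap fMf$, then with $f \in P' \cap 1_B M 1_B$ — upgrades $Pfz \preceq_M f_1 C f_1$ to $Pf \preceq_M f_1 C f_1$ and then to $P \preceq_M f_1 C f_1$. Transporting the resulting intertwiner $(E,G,W,\Theta)$ through the spatial isomorphism $\Ad(v)\colon f_1Cf_1 \xrightarrow{\sim} Qe_1$ — replacing $W$ by $Wv^*$ and $\Theta$ by $\Ad(v)\circ\Theta$, and checking $pWv^* = W\Theta(p)v^* = Wv^*\,(v\Theta(p)v^*)$ since $v^*v\,\Theta(p)=\Theta(p)$ — produces $P \preceq_M Qe_1$, and hence $P \preceq_M Q$ by a last use of Remark \ref{remark intertwining}.(2). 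The main obstacle is exactly that $\Ad(v)$ fails to be multiplicative on $P$ because $f_1 = v^*v$ need not lie in $P$; the detour through $C = \theta(eAe)' \cap fMf$ and the central support $z$ of $f_1$ is designed to bypass this, and the least routine point is verifying that the intermediate inclusion $Pf \subseteq C$ admits a normal conditional expectation.
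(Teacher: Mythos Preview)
Your proof is correct and follows essentially the same approach as the paper: both hinge on the computation that $\Ad(v)$ implements a $*$-isomorphism $f_1Cf_1 \cong Qe_1$ where $C = \theta(eAe)' \cap fMf$, and both use that $Pf \subset C$ is a unital inclusion with expectation. The only difference is in the final reduction: the paper first observes $f_1Cf_1 \preceq_M Q$ directly via $v$, upgrades to $C \preceq_M Q$ by Remark~\ref{remark intertwining}(2), and then invokes Lemma~\ref{intertwining subalgebra} to pass to the unital subalgebra $Pf \subset C$; you instead route $Pfz$ into $f_1Cf_1$ using Remark~\ref{central support for B} before transporting through $\Ad(v)$. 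The paper's path is a bit shorter since it avoids the central-support detour, while yours makes the role of $f_1$'s central support explicit and avoids appealing to Lemma~\ref{intertwining subalgebra}.
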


\begin{proof}
This proposition follows by the same argument as in \cite[Lemma 3.5]{Va07} (see also \cite[Lemma 2.3.10]{Is14}). However, for the reader's convenience, we give a complete proof below.

By assumption, there exist projections $e \in A$ and $f \in B$, a nonzero partial isometry $v \in eMf$ and a unital normal $\ast$-homomorphism $\theta : eAe \to fBf$ such that the inclusion $\theta(eAe) \subset fBf$ is with expectation and $av = v \theta(a)$ for all $a \in eAe$.

Since the inclusion $A \subset 1_A M 1_A$ is with expectation, so is the inclusion $ A' \cap 1_A M 1_A \subset 1_A M 1_A$. Since $v^*v \in \theta(eAe)' \cap fMf$ and $vv^* \in (eAe)' \cap eMe=(A'\cap 1_AM1_A)e$, we may define 
\begin{align*}
D &= v^*v (\theta(eAe)' \cap fMf) v^*v \\ 
& =(\theta(eAe) v^* v)' \cap v^* v M v^*v \\
&= (v^* A v)' \cap v^* v M v^*v \\
&= (v^* \, A vv^* \,  v)' \cap v^* \, vv^* M vv^* \, v \\
&= v^* ((A vv^*)' \cap vv^* M vv^*) v \\
&= v^* \, vv^*(A'\cap 1_A M 1_A)vv^* \, v \\
&= v^*(A' \cap 1_A M1_A) v.
\end{align*}
Write $1_D := v^*v$. Since the inclusion $\theta(eAe) \subset fMf$ is with expectation, so are the inclusions $\theta(eAe)' \cap fMf \subset fMf$ and $D \subset 1_D M 1_D$. Since $$v D v^* = vv^* (A' \cap 1_A M 1_A) vv^*$$ we have $$v^*v (\theta(eAe)' \cap fMf) v^*v = D \preceq_M vDv^* = vv^*(A' \cap 1_A M 1_A)vv^*,$$
and hence by Remark \ref{remark intertwining}.(2)
$$\theta(eAe)' \cap fMf\preceq_M A' \cap 1_A M 1_A.$$
Since the inclusion $fBf \subset fMf$ is with expectation, so is the inclusion $(B'\cap 1_BM1_B)f = (fBf)' \cap fMf \subset fMf$. Therefore, the unital inclusion $(B'\cap 1_BM1_B)f\subset \theta(eAe)' \cap fMf$ is also with expectation and by Lemma \ref{intertwining subalgebra}, we have 
$$(B'\cap 1_BM1_B)f\preceq_M A' \cap 1_A M 1_A.$$
Thus, we obtain $B'\cap 1_BM1_B\preceq_M A' \cap 1_A M 1_A$.
\end{proof}

We next prove a useful characterization of $A \preceq_M B$ when $A$ is either finite or of type ${\rm III}$.

\begin{lem}\label{lemma equivalence}
Let $M$ be any $\sigma$-finite von Neumann algebra, $1_A$ and $1_B$ any nonzero projections in $M$, $A\subset 1_AM1_A$ and $B\subset 1_BM1_B$ any von Neumann subalgebras with expectation. Assume moreover that $A$ is either finite or of type ${\rm III}$. The following conditions are equivalent:
\begin{enumerate}
\item $A \preceq_M B$ (in the sense of Definition \ref{definition intertwining}).
\item There exist $n \geq 1$, a projection $q \in B \otimes \mathbf M_n$, a nonzero partial isometry $w\in (1_A\otimes e_{1,1})(M\otimes\mathbf{M}_{n})q$ and a unital normal $\ast$-homomorphism $\pi\colon A \rightarrow q(B\otimes\mathbf{M}_n)q$ such that the inclusion $\pi(A) \subset q (B \otimes \mathbf M_n) q$ is with expectation and $(a\otimes 1_n)w = w\pi(a)$ for all $a\in A$, where $(e_{i,j})_{1 \leq i,j \leq n}$ is a fixed matrix unit in $\mathbf{M}_n$. 
\end{enumerate}
\end{lem}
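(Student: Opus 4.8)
The plan is to treat the two implications separately, noting first that the \emph{finite} case of the equivalence is already contained in Theorem~\ref{intertwining for type III}: condition $(2)$ above is word for word condition $(3)$ there, so for finite $A$ one has $(1)\Leftrightarrow(2)$ for free. Hence the only new work is the type ${\rm III}$ case of $(1)\Rightarrow(2)$, together with a proof of $(2)\Rightarrow(1)$ that avoids the implication chain of Theorem~\ref{intertwining for type III} (which uses finiteness of $A$). I would in fact establish $(2)\Rightarrow(1)$ uniformly, for an arbitrary $A$ with expectation.

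For $(2)\Rightarrow(1)$ the idea is to reinterpret the amplified data as an instance of Definition~\ref{definition intertwining} inside $\widetilde M := M\otimes\mathbf M_n$ and then descend to $M$. Identifying $A$ with $A\otimes\C e_{1,1}\subset(1_A\otimes e_{1,1})\widetilde M(1_A\otimes e_{1,1})$, the choices $e=1_A\otimes e_{1,1}$, $f=q$, $v=w$ and $\theta=\pi$ witness $A\otimes e_{1,1}\preceq_{\widetilde M} B\otimes\mathbf M_n$; here one uses $w=(1_A\otimes e_{1,1})w$ to rewrite $(a\otimes 1_n)w=w\pi(a)$ as the intertwining relation, and the amplified expectations show the relevant inclusions are with expectation. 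Since $1_B\otimes e_{1,1}$ has full central support $1_B\otimes 1_n$ in $B\otimes\mathbf M_n$, Remark~\ref{central support for B} (whose proof makes no finiteness assumption on the embedded algebra) gives $A\otimes e_{1,1}\preceq_{\widetilde M}(1_B\otimes e_{1,1})(B\otimes\mathbf M_n)(1_B\otimes e_{1,1})=B\otimes e_{1,1}$. The last step is a localization: both $A\otimes e_{1,1}$ and $B\otimes e_{1,1}$ lie in the corner $r\widetilde M r=M\otimes e_{1,1}$ with $r=1_M\otimes e_{1,1}$, and any intertwining partial isometry $v\in e\widetilde M f$ with $e,f\leq r$ automatically satisfies $v=rvr$, hence lies in $r\widetilde M r$; since the with expectation requirement on $\theta(eAe)\subset fBf$ is intrinsic, the same data proves $A\otimes e_{1,1}\preceq_{M\otimes e_{1,1}}B\otimes e_{1,1}$. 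Transporting through $M\otimes e_{1,1}\cong M$ then yields $A\preceq_M B$.

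For $(1)\Rightarrow(2)$ with $A$ of type ${\rm III}$, I would take $n=1$. Given Definition~\ref{definition intertwining} data $(e,f,v,\theta)$ with $av=v\theta(a)$ on $eAe$, I use that in a type ${\rm III}$ algebra $e$ is equivalent to its central support $z:=z_A(e)$ --- exactly as exploited in the proof of Lemma~\ref{intertwining subalgebra} --- and pick $u\in A$ with $u^*u=e$, $uu^*=z$. Then $u^*Au=eAe$, so $\pi(a):=\theta(u^*au)$ defines a unital normal $\ast$-homomorphism $A\to fBf$ with $\pi(A)=\theta(eAe)\subset fBf$ with expectation, and I set $q:=f$ and $w:=uv\in 1_AMf$. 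A short computation using $u=zu$, $uu^*=z$ and the relation $v\theta(u^*au)=(u^*au)v$ gives $aw=zaw=w\pi(a)$ for all $a\in A$, while $u^*w=ev=v\neq0$ forces $w\neq0$; this is precisely condition $(2)$ with $n=1$.

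The step I expect to require the most care is not any single computation but the systematic bookkeeping of the \emph{with expectation} hypotheses, which I would discharge throughout by invoking \cite[Proposition~2.2]{HU15}: one must check that the amplified inclusions $A\otimes e_{1,1}\subset(1_A\otimes e_{1,1})\widetilde M(1_A\otimes e_{1,1})$ and $B\otimes\mathbf M_n\subset(1_B\otimes 1_n)\widetilde M(1_B\otimes 1_n)$ are with expectation, that Remark~\ref{central support for B} really applies with no type restriction on the embedded algebra, and that the localization preserves the with-expectation clause. The one genuinely structural input, confined to the type ${\rm III}$ direction $(1)\Rightarrow(2)$, is the equivalence of a projection with its central support in a type ${\rm III}$ algebra.
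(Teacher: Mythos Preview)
Your proof is correct. The $(1)\Rightarrow(2)$ direction in the type ${\rm III}$ case is essentially identical to the paper's argument (same partial isometry $u\in A$ with $u^*u=e$, $uu^*=z_A(e)$, same $w=uv$, $\pi=\theta\circ\Ad(u^*)$ composed with the central cutdown).

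Your $(2)\Rightarrow(1)$ argument, however, takes a genuinely different route. The paper, in the type ${\rm III}$ case, first reduces to $B$ of type ${\rm III}$ (using Remark~\ref{remark expectation}), and then exploits the Murray--von Neumann comparison $q\sim z\otimes e_{1,1}$ in $B\otimes\mathbf M_n$ directly to produce a partial isometry $u\in B\otimes\mathbf M_n$ carrying the amplified data back into a single copy of $B$. You instead pass through $A\otimes e_{1,1}\preceq_{\widetilde M}B\otimes\mathbf M_n$, invoke Remark~\ref{central support for B} to cut down to $B\otimes e_{1,1}$, and then localize. Your approach has the pleasant feature of working uniformly for arbitrary $A$ with expectation, whereas the paper splits into finite and type ${\rm III}$ cases. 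One small inaccuracy: you write that Remark~\ref{central support for B} ``makes no finiteness assumption on the embedded algebra'' --- its \emph{statement} indeed holds for general $A$, but its \emph{proof} (in the semifinite $B$ case) does reduce to $A$ finite via Remark~\ref{remark intertwining}(4) and then appeals to Theorem~\ref{intertwining for type III}. So your uniform argument is not logically independent of the finite case, though this does not affect correctness.
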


\begin{proof}
The proof is essentially contained in \cite[Proposition 2.3.8]{Is14}. In the case when $A$ is finite, the equivalence between (1) and (2) was already proved in Theorem \ref{intertwining for type III} $(1) \Leftrightarrow (3)$. We may next assume that $A$ is of type ${\rm III}$.

Assume (1) holds. Take $e, f, v, \theta$ witnessing the fact that $A \preceq_M B$ as in Definition \ref{definition intertwining}. Denote by $z_A(e)$ the central support in $A$ of the projection $e \in A$. Since $A$ is of type ${\rm III}$, there exists a partial isometry $u \in A$ such that $u^*u = e$ and $uu^* = z_A(e)$. Put $\iota : A \to A z_A(e) : a \mapsto a z_A(e)$. Therefore $(2)$ holds for $n = 1$, $q = f$, $w = uv$ and $\pi = \theta \circ \Ad(u^*) \circ \iota : A \to fBf$.

Assume (2) holds. Since $\pi(A)$ is of type ${\rm III}$ and since the unital inclusion $\pi(A) \subset q (B \otimes \mathbf M_n) q$ is with expectation, using Remark \ref{remark expectation}, we may assume that $B$ is of type ${\rm III}$. The central support in $B \otimes \mathbf M_n$ of the projection $q \in  B \otimes \mathbf M_n$ is of the form $z \otimes 1$ with $z \in \mathcal Z(B)$. Since the central support in $B \otimes \mathbf M_n$ of the projection $z \otimes e_{1, 1} \in B \otimes \mathbf M_n$ is also $z \otimes 1$, there exists a partial isometry $u \in B \otimes \mathbf M_n$ such that $u^*u = q$ and $uu^* = z \otimes e_{1, 1}$ by \cite[Corollary 6.3.5]{KR97}. Define the partial isometry $v \in 1_A M z$ such that $v \otimes e_{1, 1} = wu^*$ and $\theta : A \to Bz$ the unital normal $\ast$-homomorphism such that $\theta (a) \otimes e_{1, 1} = u \pi(a) u^*$ for all $a \in A$. Then $\theta(A) \subset Bz$ is with expectation and $a v = v \theta(a)$ for all $a \in A$. Therefore $(1)$ holds.
\end{proof}

Let $M,A,B$ be as in Definition \ref{definition intertwining}. Following \cite[Definition 3.1]{Va07}, we write $A\preceq_M^{\rm f}B$ if for every nonzero projection $p\in A'\cap 1_AM1_A$, we have $Ap\preceq_MB$. Recall from \cite[Proposition 2.2]{HU15} that for every nonzero projection $p\in A'\cap 1_AM1_A$, the inclusion $Ap \subset pMp$ is still with expectation.

\begin{lem}\label{lemma good choice}
Let $M$ be any $\sigma$-finite von Neumann algebra, $1_A$ and $1_B$ any nonzero projections in $M$, $A\subset 1_AM1_A$ and $B\subset 1_BM1_B$ any von Neumann subalgebras with expectation. Assume moreover that $A$ is either finite or of type ${\rm III}$ and $A\preceq_M^{\rm f}B$.

Then one can choose $n, q, w, \pi$ as in Condition (2) of Lemma \ref{lemma equivalence} such that the projection $ww^* \in A' \cap 1_A M 1_A$ is arbitrarily close to $1_A$ in the $\sigma$-strong topology. 
\end{lem}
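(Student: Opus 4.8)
The plan is to run a standard exhaustion argument on the left supports of the intertwining partial isometries, and then to glue finitely many of them into a single intertwiner living in a larger matrix amplification.

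First I would record the elementary observation that for any datum $(n,q,w,\pi)$ as in Condition (2) of Lemma~\ref{lemma equivalence}, the relation $(a\otimes 1_n)w=w\pi(a)$ forces $ww^*$ to commute with $A\otimes 1_n$, so that under the identification $(1_A\otimes e_{1,1})(M\otimes\mathbf M_n)(1_A\otimes e_{1,1})\cong 1_AM1_A$ the element $ww^*$ is a projection in $A'\cap 1_AM1_A$. The goal is thus to produce such data with $ww^*$ arbitrarily $\sigma$-strongly close to $1_A$. I would then consider the collection of families $\{(w_i,n_i,q_i,\pi_i)\}_{i\in I}$ of intertwining data for $A\preceq_M B$ (each $\pi_i$ defined on all of $A$ with $\pi_i(A)\subset q_i(B\otimes\mathbf M_{n_i})q_i$ with expectation) whose left supports $p_i:=w_iw_i^*\in A'\cap 1_AM1_A$ are pairwise orthogonal, and pick a maximal such family by Zorn's lemma. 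Since $M$ is $\sigma$-finite, so is $A'\cap 1_AM1_A$, hence the family is at most countable and the increasing partial sums of the $p_i$ converge $\sigma$-strongly to $p:=\sum_i p_i$.

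The next step is to show $p=1_A$. Otherwise $q_0:=1_A-p$ is a nonzero projection in $A'\cap 1_AM1_A$, and the hypothesis $A\preceq_M^{\rm f}B$ yields $Aq_0\preceq_M B$. Applying Lemma~\ref{lemma equivalence} to the corner $Aq_0\subset q_0Mq_0$, which is again finite, resp.\ of type ${\rm III}$ (being a normal homomorphic image of $A$), and with expectation by \cite[Proposition 2.2]{HU15}, produces a partial isometry $w_0$ with $w_0w_0^*\leq q_0\otimes e_{1,1}$ intertwining $Aq_0$; precomposing the associated $\ast$-homomorphism with $a\mapsto aq_0$ turns it into a datum intertwining all of $A$ with left support $\leq q_0$, orthogonal to all $p_i$ and contradicting maximality. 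Hence $p=1_A$. Given any prescribed $\sigma$-strong neighborhood of $1_A$, I fix finitely many indices $1,\dots,m$ with $\sum_{i=1}^m p_i$ inside it and glue $(w_i,n_i,q_i,\pi_i)_{i\le m}$ into a single datum: setting $n=n_1+\dots+n_m$ and viewing $\mathbf M_{n_1}\oplus\dots\oplus\mathbf M_{n_m}$ block-diagonally in $\mathbf M_n$, I put $q=q_1\oplus\dots\oplus q_m$, $\pi=\pi_1\oplus\dots\oplus\pi_m$, and stack the $w_i$ into the first row of $\mathbf M_n$. Because the $p_i$ are orthogonal and the right supports lie in distinct blocks, the cross terms vanish and the resulting $w\in(1_A\otimes e_{1,1})(M\otimes\mathbf M_n)q$ is a partial isometry with $(a\otimes 1_n)w=w\pi(a)$ and $ww^*=\sum_{i=1}^m p_i$.

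The one genuinely technical point, which I expect to be the main obstacle, is to verify that the glued inclusion $\pi(A)\subset q(B\otimes\mathbf M_n)q$ is still with expectation. Composing the diagonal conditional expectation $x\mapsto\sum_{i=1}^m q_ixq_i$ onto $\bigoplus_{i=1}^m q_i(B\otimes\mathbf M_{n_i})q_i$ with $\bigoplus_i \rE_i$ reduces the problem to producing a normal conditional expectation from $\bigoplus_{i=1}^m\pi_i(A)$ onto the \emph{diagonal} copy $\pi(A)=\{\bigoplus_i\pi_i(a):a\in A\}$. Writing $\pi_i(A)\cong Az_i$ for central projections $z_i\in\mathcal Z(A)$, I would construct such an expectation by explicit averaging over the atoms of the finite Boolean algebra generated by $z_1,\dots,z_m$: on the atom $e_S=\bigwedge_{i\in S}z_i\wedge\bigwedge_{j\notin S}(1-z_j)$ one averages the coordinates indexed by $S$, and a direct check shows the map $x\mapsto\sum_{S}e_S\,\tfrac1{|S|}\sum_{i\in S}x_i$ induces a normal, unital, $\pi(A)$-bimodular conditional expectation onto the diagonal. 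This settles the ``with expectation'' requirement and, since only finitely many supports can be combined into a fixed $\mathbf M_n$, yields $ww^*=\sum_{i\le m}p_i$ rather than exactly $1_A$, which is precisely the asserted $\sigma$-strong approximation.
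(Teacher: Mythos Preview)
Your proof is correct and follows the same overall strategy as the paper: a Zorn's lemma exhaustion argument producing a maximal family of intertwining data with pairwise orthogonal left supports, the contradiction via $A\preceq_M^{\rm f}B$ to show these supports sum to $1_A$, and then a finite block-diagonal gluing.

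The only point of divergence is the verification that $\pi(A)\subset\bigoplus_{i=1}^m\pi_i(A)$ is with expectation. The paper handles this via modular theory: it chooses a faithful normal state $\varphi$ on $A$, puts $\Phi((a_iz_i)_i)=\sum_i\lambda_i\varphi(a_iz_i)$ on $\bigoplus_i Az_i$, computes $\sigma_t^\Phi$ coordinatewise using \cite[Lemme 3.2.6]{Co72}, and observes that the diagonal $\pi(A)$ is globally invariant, whence Takesaki's theorem gives the expectation. Your explicit averaging map over the atoms $e_S$ of the Boolean algebra generated by $z_1,\dots,z_m$ is a more elementary alternative that bypasses modular theory entirely; it is straightforward to check (as you indicate) that it is normal, unital, positive, $\pi(A)$-bimodular, and faithful, since $z_j=\sum_{S\ni j}e_S$ forces each $x_j$ to vanish once all $e_Sx_j$ with $j\in S$ do. Both arguments are short; yours is self-contained, while the paper's is more in keeping with the modular-theoretic machinery used throughout.
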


\begin{proof}
By Zorn's lemma, there exists a maximal family $((n_i, q_i, w_i, \pi_i))_{i \in I}$ (with respect to inclusion) of quadruples $(n_i, q_i, w_i, \pi_i)$ witnessing the fact that $A \preceq_M B$ as in Condition (2) of Lemma \ref{lemma equivalence} such that the projections $p_i = w_iw_i^* \in A' \cap 1_A M 1_A$ are pairwise orthogonal for all $i \in I$. We claim that $\sum_{i \in I} p_i = 1_A$. Indeed, if not, put $p = 1_A - \sum_i p_i \in A' \cap 1_A M 1_A$. Since $A\preceq_M^{\rm f}B$, we have $A p \preceq_M B$. Take $(n, q, w, \pi)$ as in Condition (2) of Lemma \ref{lemma equivalence} witnessing the fact that $A p \preceq_M B$. We regard $\pi : A \to q(B \otimes \mathbf M_n)q$ such that the unital inclusion $\pi(A) \subset q(B \otimes \mathbf M_n)q$ is with expectation and $ww^* \leq p$. Then the family $(((n_i, q_i, w_i, \pi_i))_{i \in I}, (n, q, w, \pi))$ contradicts the maximality of the family $((n_i, q_i, w_i, \pi_i))_{i \in I}$. This shows that $\sum_{i \in I} p_i = 1_A$.

Let $\mathcal V \subset A$ be any $\sigma$-strong neighborhood of $1_A$. There exists a finite subset $\mathcal F \subset I$ such that $p_\mathcal F := \sum_{i \in \mathcal F} p_i \in \mathcal V$. Put $n = \sum_{i \in \mathcal F} n_i$, $q = \Diag(q_i)_{i \in \mathcal F} \in B \otimes \mathbf M_n$, $w = [w_{n_i}]_{i \in \mathcal F} \in (1_A \otimes e_{1, 1}) (M \otimes \mathbf M_n)q$ and $\pi : A \to q(B \otimes \mathbf M_n)q : a \mapsto \Diag(\pi_i(a))_{i \in \mathcal F}$. Then we have $(a \otimes 1_n) w = w \pi(a)$ for all $a \in A$ and $ww^* = p_{\mathcal F} \in \mathcal V$. Moreover, the unital inclusion $\pi(A) \subset q(B \otimes \mathbf M_n)q$ is with expectation. Indeed, first observe that the unital inclusions $\Diag(\pi_i(A))_{i \in \mathcal F} \subset \Diag(q_i (B \otimes \mathbf M_{n_i}) q_i)_{i \in \mathcal F}$  and  $ \Diag(q_i (B \otimes \mathbf M_{n_i}) q_i)_{i \in \mathcal F} \subset q(B \otimes \mathbf M_n)q$ are with expectation. Thus, the unital inclusion $\Diag(\pi_i(A))_{i \in \mathcal F} \subset q(B \otimes \mathbf M_n)q$ is with expectation. We next show that $\pi(A) \subset \Diag(\pi_i(A))_{i \in \mathcal F}$ is with expectation. To do so, for every $i \in \mathcal F$, denote by $z_i\in\mathcal{Z}(A)$ the unique central projection that satisfies $\ker \pi_i=A(1_A-z_i)$. We will identify $\pi_i(A)$ with $Az_i$ {\em via} the unital normal $\ast$-isomorphism $\pi_i(A) \to Az_i : \pi_i(a) \mapsto az_i$. We have $\Diag(\pi_i(A))_{i \in \mathcal F}\cong \bigoplus_{i \in \mathcal F} A z_i$. We may and will write $\pi(a)=(az_i)_{i\in\mathcal F} \in \bigoplus_{i \in \mathcal F} A z_i$ for all $a\in A$. Let $\varphi$ be a faithful normal state on $A$ and define $\Phi \in (\bigoplus_{i \in \mathcal F} A z_i)_\ast$ by $\Phi((a_i z_i)_{i \in \mathcal F}) = \sum_{i \in \mathcal F} \lambda_i \varphi(a_i z_i)$ for all $a_i \in A$, where  $\lambda_i > 0$ satisfy $\sum_{i \in \mathcal F} \lambda_i \varphi(z_i) = 1$. By \cite[Lemme 3.2.6]{Co72}, the modular automorphism group of $\Phi$ is given by $\sigma_t^\Phi((a_i z_i)_{i \in \mathcal F})= (\sigma_t^\varphi(a_i) z_i)_{i \in \mathcal F}$ and hence $\pi(A)$ is globally invariant under the modular automorphism group $(\sigma_t^\Phi)$. Thus the unital inclusion $\pi(A) \subset \bigoplus_{i \in \mathcal F} A z_i$ is with expectation. This shows that the unital inclusion $\pi(A) \subset \Diag(\pi_i(A))_{i \in \mathcal F}$ is with expectation. Therefore, we finally obtain that the unital inclusion $\pi(A) \subset q(B \otimes \mathbf M_n)q$ is with expectation.
\end{proof}

The next lemma shows that $\preceq_M^{\rm f}$ provides a sufficient condition for the transitivity of $\preceq_M$.

\begin{lem}\label{intertwining transitivity}
Let $M$ be any $\sigma$-finite von Neumann algebra, $1_A$, $1_B$ and $1_C$ any nonzero projections in $M$, $A\subset 1_AM1_A$, $B\subset 1_BM1_B$ and $C\subset 1_CM1_C$ any von Neumann subalgebras with expectation. 
\begin{enumerate}
	\item Let $p\in A$ (resp.\ $r\in B$) be any projection whose central support in $A$ (resp.\ in $B$) is equal to $1_A$ (resp.\ $1_B$). Then $A\preceq_M^{\rm f}B$ if and only if $pAp\preceq_M^{\rm f}rBr$.
	\item Assume that $A'\cap 1_AM1_A$ is a factor. Then $A\preceq_MB$ if and only if $A\preceq_M^{\rm f}B$.
	\item If $A\preceq_MB$ and $B\preceq_M^{\rm f}C$, then we have $A\preceq_MC$.
\end{enumerate}
\end{lem}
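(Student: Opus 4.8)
For part (1), the plan is to run a central-support argument together with Remark \ref{central support for B}. First I record the algebraic facts that for a projection $p\in A$ with $z_A(p)=1_A$ one has $(pAp)'\cap pMp=(A'\cap 1_AM1_A)p$, and that for a projection $q'\le p$ lying in $A'\cap 1_AM1_A$ the compression $pApq'$ coincides with the corner $Aq'$ of $A$. The crucial observation is that $z_A(p)=1_A$ forces $q''p\ne 0$ for every nonzero $q''\in A'\cap 1_AM1_A$: if $q''p=0$ then $q''$ is orthogonal to $up u^{*}$ for all $u\in\mathcal U(A)$, hence to $z_A(p)=1_A$, so $q''=0$. Using this, the implication $A\preceq_M^{\rm f}B\Rightarrow pAp\preceq_M^{\rm f}rBr$ follows by taking a projection $q'\le p$ in $A'\cap 1_AM1_A$, using $pApq'=Aq'\preceq_M B$, and converting $\preceq_M B$ into $\preceq_M rBr$ via Remark \ref{central support for B} (applied to the subalgebra $Aq'$, using that $z_B(r)=1_B$). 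Conversely, given a nonzero $q''\in A'\cap 1_AM1_A$, the projection $q'=q''p$ is nonzero and $\le p$, and $Aq'=pApq'\preceq_M rBr$, whence $Aq'\preceq_M B$ (Remark \ref{central support for B}) and finally $Aq''\preceq_M B$ by Remark \ref{remark intertwining}.(2).

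For part (2), I first note that $A$ is \emph{automatically} a factor: $\mathcal Z(A)\subseteq A'\cap 1_AM1_A=N$, and $\mathcal Z(A)$ commutes with $N$, so $\mathcal Z(A)\subseteq\mathcal Z(N)=\mathbf C 1_A$. The implication $\preceq_M^{\rm f}\Rightarrow\preceq_M$ is trivial (take $p=1_A$). For $\preceq_M\Rightarrow\preceq_M^{\rm f}$, I reduce to $A$ finite or of type ${\rm III}$ by Remark \ref{remark intertwining}.(4) (cutting a semifinite factor by a finite projection of full central support), and then invoke Lemma \ref{lemma equivalence} to obtain a matrix intertwiner $(n,q,w,\pi)$ whose support $p_1:=ww^{*}$ lies in $N$. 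Given an arbitrary nonzero projection $p\in N$, factoriality of $N$ produces a partial isometry $W\in N$ with $W^{*}W\le p_1$ and $WW^{*}\le p$, both nonzero. Setting $w':=(W\otimes e_{1,1})w$ one checks $(a\otimes 1_n)w'=w'\pi(a)$ for all $a\in A$ and $w'(w')^{*}=WW^{*}\otimes e_{1,1}\le p\otimes e_{1,1}$, so $w'$ is supported under $p$. Since $A$ is a factor and $p\ne 0$, the map $a\mapsto ap$ is injective, so $ap\mapsto\pi(a)$ is a well-defined normal $\ast$-homomorphism $Ap\to q(B\otimes\mathbf M_n)q$ with expectation; Lemma \ref{lemma equivalence} then yields $Ap\preceq_M B$, as required.

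For part (3), my plan is to compose intertwiners. Using Remark \ref{remark intertwining}.(2),(4), I reduce to the case where $A$ is finite or of type ${\rm III}$, and write $A\preceq_M B$ through data $e,f,v,\theta$ as normalised in Remark \ref{remark intertwining}.(3); then $\theta$ is injective, $D:=\theta(eAe)\cong eAe$ is finite or type ${\rm III}$, and $v^{*}v$ has full central support in $D$. After replacing $B$ by its central summand $Bz_B(f)$, part (1) upgrades the hypothesis to $fBf\preceq_M^{\rm f}C$. The pasting step is the clean one: if I can find an intertwiner $v_3$ witnessing $D(v^{*}v)\preceq_M C$ with left support $v_3v_3^{*}\le v^{*}v$, then, since $v\psi(a)=av$ for $a\in eAe$ (where $\psi(a)=v^{*}av$), the element $V:=vv_3$ satisfies $aV=V(\theta_3\circ\psi)(a)$, and $VV^{*}=v(v_3v_3^{*})v^{*}\ne 0$ because $v$ is isometric on the range of $v^{*}v$. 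This matrix intertwiner gives $eAe\preceq_M C$, and hence $A\preceq_M C$ by Remark \ref{remark intertwining}.(4) and Remark \ref{remark intertwining}.(2).

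The main obstacle is exactly the production of that intertwiner, i.e.\ passing from $fBf\preceq_M^{\rm f}C$ to $D\preceq_M^{\rm f}C$ (so that $D(v^{*}v)\preceq_M C$, $v^{*}v$ being a nonzero projection of $D'\cap fMf$). The naive route, Lemma \ref{intertwining subalgebra} applied to the unital inclusion with expectation $D\subseteq fBf$, only delivers $D\preceq_M C$, and a bare embedding need not have support meeting $v^{*}v$: two projections of full central support in the relative commutant can be orthogonal, which is precisely why the support-control furnished by $\preceq^{\rm f}$ is needed. The intended resolution is to establish $D\preceq_M^{\rm f}C$ by combining, for projections $p'\in(fBf)'\cap fMf$, the embeddings $(fBf)p'\preceq_M C$ with Lemma \ref{intertwining subalgebra} applied to $Dp'\subseteq(fBf)p'$, and then, since $D$ is finite or of type ${\rm III}$, using Lemma \ref{lemma good choice} to choose the $C$-side intertwiner with support arbitrarily close to $1_D$ (hence meeting $v^{*}v$). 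Carefully checking that $D\preceq_M^{\rm f}C$ genuinely follows from $fBf\preceq_M^{\rm f}C$ is the step I expect to demand the most care.
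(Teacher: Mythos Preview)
Parts (1) and (2) are fine. Part (1) is essentially the paper's argument. In part (2) your observation that $A$ is automatically a factor is correct, and your matrix-intertwiner route via Lemma~\ref{lemma equivalence} works; the paper instead argues directly with the data $e,f,v,\theta$ from Remark~\ref{remark intertwining}(3) without reducing $A$, but both approaches are valid.

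Part (3) has a genuine gap, exactly at the step you flagged. You want to deduce $D\preceq_M^{\rm f}C$ (where $D=\theta(eAe)\subset fBf$) from $fBf\preceq_M^{\rm f}C$, but your proposed method only handles projections $p'\in(fBf)'\cap fMf$. The problem is that $D'\cap fMf$ is strictly larger than $(fBf)'\cap fMf$ in general (since $D\subset fBf$), and in particular the projection you actually care about, $v^{*}v$, lies in $D'\cap fMf$ but typically not in $(fBf)'\cap fMf$. So your argument does not reach the projection you need, and there is no evident reason why $D\preceq_M^{\rm f}C$ should hold.

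The paper sidesteps this entirely by applying Lemma~\ref{lemma good choice} to $B$ itself rather than to $D$. For this, one first reduces not only $A$ but also $B$ to the case finite or of type~${\rm III}$ (using Remark~\ref{remark intertwining}(2),(4), Remark~\ref{central support for B}, and part~(1) of the present lemma). Then, since $B\preceq_M^{\rm f}C$, Lemma~\ref{lemma good choice} produces $(n,q,w,\pi)$ for $B\preceq_M C$ with $ww^{*}\in B'\cap 1_BM1_B$ arbitrarily close to $1_B$, so one may arrange $(v\otimes 1_n)w\neq 0$. Now simply compose: $(a\otimes 1_n)(v\otimes 1_n)w=(v\otimes 1_n)w\,\pi(\theta(a))$ for $a\in eAe$, and one checks that $\pi(\theta(eAe))\subset \pi(f)(C\otimes\mathbf M_n)\pi(f)$ is with expectation. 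Polar decomposition and Lemma~\ref{lemma equivalence} then give $eAe\preceq_M C$. The point is that the support control furnished by $\preceq_M^{\rm f}$ is used at the level of $B$, where it is assumed, not at the level of $D$, where it would have to be proved.
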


\begin{proof}
$(1)$ By Remark \ref{central support for B}, we may assume that $r=1_B$. Assume that $pAp\preceq_M^{\rm f}B$ and take a nonzero projection $q\in A'\cap 1_AM1_A$. Since $pq\neq0$ and $pq\in (pAp)'\cap pMp$, we have $pApq\preceq_MB$ by assumption. Thus, we obtain that $Aq\preceq_MB$. This shows that $A\preceq_M^{\rm f}B$.

Assume next that $A\preceq_M^{\rm f}B$ and take a nonzero projection $q\in (pAp)'\cap pMp=(A'\cap 1_AM1_A)p$. We may choose a projection $\widetilde{q}\in A'\cap 1_AM1_A$ such that $\widetilde{q}p=q$. By assumption, we have $A\widetilde q\preceq_MB$. Since the central support in $A\widetilde q$ of the projection $q = \widetilde q p \in A\widetilde q$ is equal to $\widetilde q$, we have $qAq\preceq_MB$ by Remark \ref{remark intertwining}.(4). This shows that $pAp \preceq_M^{\rm f}B$.

$(2)$ The `if' direction is trivial and does not require $A'\cap 1_AM1_A$ to be a factor. We next assume that $A'\cap 1_AM1_A$ is a factor and $A\preceq_MB$. Take $e,f, v,\theta$ as in Remark \ref{remark intertwining}.(3). Observe that $vv^* \in (eAe)' \cap eMe$. Let $p \in (A' \cap 1_A M 1_A)e \subset (eAe)'$ be any nonzero projection such that $p \leq vv^*$. Denote by $z \in \mathcal Z(eAe)$ the central support in $(eAe)'$ of the projection $p \in (eAe)'$. We have $eAe z \, p = eAe \,p$ and $eAez \cong eAep$. Define the unital normal $\ast$-isomorphism $\iota : eAep \to eAez$. We may then define the unital normal $\ast$-homomorphism $\widetilde{\theta} : eAep\to\theta(z)B\theta(z) : y \mapsto \theta(\iota(y))$. By assumption and since $z \in eAe$ is a nonzero projection, we have $zv \neq 0$ and $\theta(z) \neq 0$. The unital inclusion $\widetilde \theta(eAep) = \theta(eAe z) \subset \theta(z)B\theta(z)$ is moreover with expectation. We finally have $a \, pv = \iota(a) \, pv = p \, \iota(a) v =  p \, v  \theta(\iota(a)) = pv \, \widetilde \theta(a)$ for all $a \in eAe p$. Since $pv \neq 0$ ({\em n.b.~$p \leq vv^*$}), the above reasoning shows that $eAep \preceq_M B$ for any nonzero projection $p \in (eAe)' \cap eMe$ such that $p \leq vv^*$. 

Let now $q \in A'\cap 1_AM1_A$ be any nonzero projection. Since $A'\cap 1_AM1_A$ is a factor, the projection $qe \in (A' \cap 1_A M 1_A)e$ is nonzero. Since $qe, vv^* \in (A' \cap 1_A M 1_A)e$ and since $(A' \cap 1_A M 1_A)e$ is a factor, there exist nonzero subprojections $q_0\leq qe$ and $p\leq vv^*$ that are equivalent in $(A'\cap 1_AM1_A)e = (eAe)' \cap  eMe$. Since $p\sim q_0$ in $(A'\cap 1_AM1_A)e$ and since $eAep\preceq_MB$ by the first part of the proof, we conclude that $eAeq_0\preceq_MB$ and hence $eAeq\preceq_MB$. This implies that $Aq \preceq_M B$ and finally shows that $A\preceq_M^{\rm f}B$.

$(3)$ Let $A=A_1\oplus A_2$ and $B=B_1\oplus B_2$ be the unique decompositions such that $A_1$ and $B_1$ are semifinite and $A_2$ and $B_2$ are of type III. Let $p\in A_1$ and $q\in B_1$ be finite projections whose central supports are $1_{A_1}$ and $1_{B_1}$. By Remark \ref{remark intertwining}.(2),(4) and Remark \ref{central support for B}, we have either $pA_1p\preceq qB_1q$, or $pA_1p\preceq B_2$ or $A_2\preceq_MB_2$. Also by the first item of Lemma \ref{intertwining transitivity} that we have already proved, we have $qB_1q\preceq_M^{\rm f}C$ and $B_2\preceq_M^{\rm f}C$. Therefore, we may assume that each of the von Neumann algebras $A$ and $B$ is either finite or of type ${\rm III}$.

Then we can proceed as in the proof of \cite[Lemma 3.7]{Va07}. Since $A \preceq_M B$, take $e, f, v, \theta$ witnessing the fact that $A \preceq_M B$ as in Definition \ref{definition intertwining}. Since $B$ is either finite or of type ${\rm III}$ and $B \preceq_M^{\rm f}C$, Lemma \ref{lemma good choice} shows that we can choose $n, q, w, \pi$ witnessing the fact that $B \preceq_M C$ as in Condition (2) of Lemma \ref{lemma equivalence} in such a way that $(v \otimes 1_n)w \neq 0$. Observe that we have $(a \otimes 1_n) \,(v \otimes 1_n)w =  (v \otimes 1_n)w\, \pi(\theta(a))$ for all $a \in eAe$. The unital inclusion $\pi (\theta(eAe) )\subset \pi(f)(C \otimes \mathbf M_n)\pi(f)$ is moreover with expectation. Indeed, first observe that the unital inclusion $\pi(fBf) \subset \pi(f) (C \otimes \mathbf M_n) \pi(f)$ is with expectation. Next, denote by $z \in \mathcal Z(fBf)$ the unique central projection such that $\pi : fBfz \to \pi(fBf)$ is a unital normal $\ast$-isomorphism. Since the unital inclusion $\theta(eAe)z \subset fBfz$ is with expectation by Remark \ref{remark expectation bis}, it follows that the unital inclusion $\pi(\theta(eAe)) = \pi(\theta(eAe)z) \subset \pi(fBfz) = \pi(fBf)$ is with expectation. Therefore, the unital inclusion $\pi( \theta(eAe) )\subset \pi(f)(C \otimes \mathbf M_n)\pi(f)$ is with expectation. Write $(v \otimes 1_n)w = u |(v \otimes 1_n)w|$ for the polar decomposition of $(v \otimes 1_n)w \in M \otimes \mathbf M_n$. Since $A$ is either finite or of type ${\rm III}$, $u \neq 0$ and $(a \otimes 1_n) \,u =  u\, \pi(\theta(a))$ for all $a \in eAe$ and $\pi( \theta(eAe) )\subset \pi(f)(C \otimes \mathbf M_n)\pi(f)$ is with expectation, Lemma \ref{lemma equivalence} finally shows that $eAe \preceq_M C$ and hence $A \preceq_M C$.
\end{proof}

Recall that whenever $P \subset M$ is an inclusion of $\sigma$-finite von Neumann algebras with expectation such that $P$ is a factor and $M = P \vee (P' \cap M)$, we have $M \cong P \ovt (P' \cap M)$. In that case, we will simply write $M = P \ovt (P' \cap M)$. The next intertwining lemma inside tensor product factors will be crucial in the proof of Theorem \ref{thmB}.

\begin{lem}\label{intertwining tensor}
Let $M_1, M_2, N_1, N_2$ be any $\sigma$-finite diffuse factors. Put $M := M_1 \ovt M_2$ and assume that $M = N_1 \ovt N_2$. If $M_1 \preceq_M N_1$, then for every $i\in \{1,2\}$, there exist  projections $p_i\in M_i$, $q_i\in N_i$ and a nonzero partial isometry $v \in M$ with $v^*v =p_1p_2=:p$ and $vv^*=q_1q_2=:q$ such that the inclusion $v M_1 v^* \subset qN_1q$ is with expectation. 

Moreover, $P = (v M_1 v^*)' \cap qN_1q \subset qN_1 q$ is a subfactor with expectation satisfying
$$q N_1 q = vM_1v^* \ovt P \quad \text{and} \quad vM_2v^* =  P \ovt qN_2q.$$
If $M_2$ (resp.\ $N_2$) is a type $\rm III$ factor, then we can take $p_2=1$ (resp.\ $q_2=1$).
\end{lem}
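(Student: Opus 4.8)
The plan is to promote the abstract intertwining $M_1\preceq_M N_1$ to an honest partial isometry $v$ conjugating a corner of $M_1$ onto a subfactor of a corner of $N_1$, while forcing both the source and range projections of $v$ to be of product form; once this is achieved, both tensor decompositions fall out of the relative commutant calculus inside $qMq$.

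First I would unpack $M_1\preceq_M N_1$ via Definition \ref{definition intertwining}: there are projections $e\in M_1$, $f\in N_1$, a nonzero partial isometry $w\in eMf$ and a unital normal $\ast$-homomorphism $\theta:eM_1e\to fN_1f$ with expectation such that $aw=w\theta(a)$ for all $a\in eM_1e$. Setting $v:=w^*$ gives $va=\theta(a)v$ for all $a\in eM_1e$. Since $M_1$ is a factor with $M_1'\cap M=M_2$, the relative commutant formula $(eM_1e)'\cap eMe=(M_1'\cap M)e=M_2e$ shows that the source projection $v^*v=ww^*$ already has product form $v^*v=e\,p_2=p_1p_2$, with $p_1=e\in M_1$ and $p_2\in M_2$.

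The main obstacle is that the range projection $vv^*=w^*w$ need not be of product form. It lies in $\theta(eM_1e)'\cap fMf=R\ovt N_2$, where $R:=\theta(eM_1e)'\cap fN_1f\subseteq N_1$ and the identity uses $fMf=fN_1f\ovt N_2$ together with the tensor commutant formula $X'\cap(Y\ovt Z)=(X'\cap Y)\ovt Z$ for a subfactor $X\subseteq Y$ (a slice-map argument). To repair this I would left-multiply $v$ by a partial isometry $c\in R\ovt N_2$ chosen so that $c^*c\leq vv^*$ and $cc^*=q_1\otimes q_2$ is a nonzero product projection with $q_1\in R\subseteq N_1$ and $q_2\in N_2$. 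Such a $c$ exists precisely because $N_2$ is a diffuse factor, so $R\ovt N_2$ admits a nonzero product projection subequivalent to any given nonzero projection (reduce by a central projection of $R$ and compare sizes). Then $v':=cv$ is again a partial isometry, still satisfies $v'a=\theta(a)v'$ (as $c$ commutes with $\theta(eM_1e)$), has range $v'v'^*=q_1q_2$, and has source $v'^*v'=v^*(c^*c)v\in(eM_1e)'\cap eMe=M_2e$, hence again of product form $e\,p_2$. Relabelling $v:=v'$ and keeping track of expectations through \cite{HU15}, I obtain $v$ with $v^*v=p_1p_2=:p$, $vv^*=q_1q_2=:q$, and $vM_1v^*=\theta(eM_1e)\,q\subseteq qN_1q$ a subfactor with expectation.

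Finally I would read off the decompositions. As $\Ad v:pMp\to qMq$ is a $\ast$-isomorphism and $pMp=p_1M_1p_1\ovt p_2M_2p_2$, the remark that a factor together with its relative commutant generates a tensor product yields $qMq=vM_1v^*\ovt vM_2v^*$, with both legs factors; and since $q=q_1q_2$ is product, $qMq=q_1N_1q_1\ovt q_2N_2q_2$ with $A:=vM_1v^*\subseteq q_1N_1q_1=:C$. Writing $B:=vM_2v^*=A'\cap qMq$, $D:=q_2N_2q_2$ and $P:=A'\cap C$, the tensor commutant formula applied to $qMq=C\ovt D$ gives $B=(A'\cap C)\ovt D=P\ovt q_2N_2q_2$, which is $vM_2v^*=P\ovt qN_2q$; applying it again to $qMq=A\ovt P\ovt D$ and commuting with $D$ gives $C=D'\cap qMq=A\ovt P$, that is $qN_1q=vM_1v^*\ovt P$. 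Here $P$ is a factor because $C=A\ovt P$ and $A$ are, and $P\subseteq qN_1q$ is with expectation, being the relative commutant of a with-expectation subfactor of the factor $C$. For the last assertion, if $M_2$ is of type ${\rm III}$ then $p_2\sim 1$ in $M_2$, and precomposing $v$ with a partial isometry $e\otimes u\in M_1\ovt M_2$ (with $u^*u=1,\ uu^*=p_2$) enlarges the source to $p_2=1$ without altering the range or the intertwining; symmetrically, if $N_2$ is of type ${\rm III}$, postcomposing $v$ with $q_1\otimes u'\in N_1\ovt N_2$ enlarges the range to $q_2=1$.
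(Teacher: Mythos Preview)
Your proposal is correct and follows essentially the same route as the paper: unpack the intertwining, observe that one support projection is automatically of product form via the relative commutant identity $(eM_1e)'\cap eMe=M_2e$, then adjust the other support by a partial isometry in $R\ovt N_2$ (the paper's $L\ovt N_2f$) to force a product range, and finish with the relative commutant calculus.

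The one place where your write-up is thinner than the paper is the existence of the partial isometry $c$ with $cc^*=q_1\otimes q_2$ and $c^*c\leq vv^*$. Your phrase ``reduce by a central projection of $R$ and compare sizes'' is the right instinct but hides nontrivial work: when $R\ovt N_2$ has a semifinite direct summand (e.g.\ $N_2$ of type ${\rm II_1}$ and $Rz$ semifinite), one must actually produce $q_1\in R$, $q_2\in N_2$ with $\widetilde{\rT}(q_1\otimes q_2)\leq\widetilde{\rT}(vv^*)$ for an extended center-valued trace $\widetilde{\rT}=\rT_L\otimes\Tr_{N_2}$, which the paper does by first cutting to a central piece where $\widetilde{\rT}(vv^*)\geq\lambda z_1$ and then choosing $q_1$ with $\rT_L(q_1)\leq nz_1$ and $q_2$ with $\Tr_{N_2}(q_2)\leq\lambda/n$. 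In the type ${\rm III}$ case this step is trivial (equal central support implies equivalence), and the paper treats the two cases separately after first cutting by the central projection $z\in\mathcal Z(R)$ separating the semifinite and type ${\rm III}$ parts. Your outline would be complete once this comparison-of-projections argument is filled in; everything else matches the paper's proof, including your handling of the type ${\rm III}$ refinements (the paper absorbs the $q_2=1$ case into the construction rather than postcomposing, but your postcomposition works equally well).
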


\begin{proof}
We first prove the existence of the nonzero partial isometry $v$ as in the first part of the statement. This will follow from the proofs of \cite[Proposition 12]{OP03} and \cite[Lemma 3.3.2]{Is14}.

Let $e,f,v,\theta$ witnessing the fact that $M_1\preceq_MN_1$ as in Definition \ref{definition intertwining}. Then we have 
$$vv^*\in (eM_1e)'\cap eMe= M_2e \quad \text{and}  \quad v^*v\in \theta(eAe)'\cap fMf=(\theta(eAe)'\cap fN_1f)\ovt N_2f.$$
Put $L:=\theta(eM_1e)'\cap fN_1f$ and observe that $\theta(eM_1e)'\cap fMf=L\ovt N_2f$. Denote by  $z\in \mathcal{Z}(L)$ the unique central projection such that $Lz$ is semifinite and $L(1-z)$ is of type III. Then, up to replacing $\theta$ and $v$ by $\theta(\cdot )z$ and $vz$ (resp.\ by $\theta(\cdot)(1-z)$ and $v(1-z)$) and observing that the unital inclusion $\theta(eM_1e)z \subset zN_1z$ (resp.\ $\theta(eM_1e)(1 - z) \subset (1 - z)N_1(1 - z)$) is with expectation by Remark \ref{remark expectation bis}, we may assume that $L$ is either semifinite or of type ${\rm III}$. Then $L\ovt N_2f$ is also either semifinite or of type ${\rm III}$. In this setting, we first show that for every $i\in \{1,2\}$, there exist a nonzero projection $q_i\in N_i$  such that $q_1q_2\in \theta(eM_1e)'\cap fMf$ and $q_1q_2\precsim v^*v$ in $\theta(eM_1e)'\cap fMf$. 

First assume that $L\ovt N_2f$ is of type ${\rm III}$. Note that this is always the case if $N_2$ is of type ${\rm III}$. If we denote by $z\otimes 1_{N_2}f$ the central support in $L\ovt N_2f$ of the projection $v^*v \in L\ovt N_2f$, with $z\in \mathcal{Z}(L)$, we have that $v^*v\sim z\otimes 1_{N_2}f$ in $L\ovt N_2f$. Thus, we can put $q_1:=z$ and $q_2:=1_{N_2}$.

Next assume that $L\ovt N_2f$ is semifinite. Then $N_2f$ is a type $\rm II_1$ or type $\rm II_\infty$ factor. Fix a faithful normal semifinite trace $\Tr_{N_2f}$ on $N_2f$ and a faithful normal semifinite extended center valued trace $\rT_L$ on $L$ (see e.g.\ \cite[Theorem 2.7]{Ha77a} applied to $\mathcal Z(L) \subset L$ for the proof of the existence of such an extended center valued trace and observe moreover that an extended center valued trace is an operator valued weight). Then $\widetilde{\rT}:=\rT_L\otimes \Tr_{N_2f}$ is a faithful normal semifinite extended center valued trace on $L\ovt N_2f$ (see e.g.\ \cite[Theorem 5.5]{Ha77b} for the proof of the existence of the tensor product of operator valued weights which is in our case an extended center valued trace). Since $\widetilde{\rT}(v^*v)$ is nonzero, there exists a nonzero central projection $z_1\in \mathcal{Z}(L)$ and $\lambda>0$ such that $\lambda z_1\leq \widetilde{\rT}(v^*v)$. Then there exists a nonzero (finite) projection $q_1 \in L z_1$ such that $\rT_L(q_1)\leq n z_1$ for some $n\in \N$. Let $q_2\in N_2$ be a nonzero (finite) projection such that $\Tr_{N_2f}(q_2f)\leq \lambda/n$. Then $q_1q_2 \in L \ovt N_2f$ is finite and we have $\widetilde{\rT}(q_1 q_2)=\Tr_{N_2f}(q_2f)\rT_L(q_1)\leq \Tr_{N_2f}(q_2f)nz_1\leq \lambda z_1\leq \widetilde{\rT}(v^*v)$. This implies that $q_1q_2\precsim v^*v$ in $L\ovt N_2f$.

Let $u\in L\ovt N_2f = \theta(eM_1e)'\cap fMf$ be a partial isometry such that $uu^*\leq v^*v$ and $u^*u=q_1q_2$. Then the nonzero partial isometry $vu$ satisfies 
$$avu = v\theta(a)u = vu\theta(a) \quad \text{for all} \quad a\in eM_1e.$$
Thus, up to replacing the partial isometry $v$ by $vu$, we may assume that $v^*v=q_1q_2$ and $vv^* \in M_2 e$. Since $vv^*\in M_2e$, we may write $vv^*=ep_2$ for some projection $p_2\in M_2$. Put $p_1:=e$. Now, consider the mapping
$$\widetilde{\theta}\colon p_1p_2M_1p_1p_2\ni x\mapsto v^*xv\in v^*vN_1v^*v=q_1q_2N_1q_1 q_2.$$
The map $\widetilde{\theta}$ defines a unital normal $\ast$-isomorphism from $p_1p_2M_1p_1p_2$ into $q_1q_2N_1q_1q_2$. Since the inclusion $p_1p_2M_1p_1p_2\subset p_1p_2Mp_1p_2$ is with expectation, so is the inclusion $$\widetilde{\theta}(p_1p_2M_1p_1p_2) = v^* \,p_1p_2M_1p_1p_2 \,   v \subset v^* \,p_1p_2Mp_1p_2 \,   v = q_1q_2Mq_1 q_2.$$ 
Hence the inclusion $\widetilde{\theta}(p_1p_2M_1p_1p_2) \subset q_1q_2N_1q_1 q_2$ is with expectation. Thus, we obtained the desired partial isometry $v^*$ which satisfies the first part of the statement of Lemma \ref{intertwining tensor}. We note that if $M_2$ is of type ${\rm III}$, we have $p_2\sim 1_{M_2}$ in $M_2$ and hence we may replace $p_1p_2$ by $p_1$. Also, if $N_2$ is of type ${\rm III}$, then by the above proof, we can take $q_2=1_{N_2}$.

Now, put $P = (v M_1 v^*)' \cap qN_1q$ as in the second part of the statement of Lemma \ref{intertwining tensor} and observe that $P \subset qN_1q$ is with expectation. Since $vMv^* = qMq = qN_1q \ovt qN_2q$ and since $vM_1v^* \subset qN_1q$, we obtain 
$$vM_2v^* = (vM_1v^*)' \cap vMv^* = (vM_1v^*)' \cap qMq = ((vM_1v^*)' \cap qN_1q) \ovt qN_2q =  P \ovt qN_2 q$$ 
Likewise, we obtain $q N_1 q = vM_1v^* \ovt P$.
\end{proof}

\section{Proofs of Theorems \ref{thmA} and \ref{thmB}}\label{proofs}

In this section, all the von Neumann algebras that we consider are assumed to have separable predual. We will be using the following notations. For any von Neumann algebra $M$, the standard form of $M$ will be denoted by $(M, \rL^2(M), J, \mathfrak P)$ and we will regard $M \subset \B (\rL^2(M))$. For any $m \geq 1$, any tensor product of von Neumann algebras $M = M_0 \ovt M_1\ovt \cdots \ovt M_i\ovt \cdots \ovt M_m$ and any $1 \leq i \leq m$, put 
$$M_i^c:= M_0 \ovt M_1\ovt \cdots \ovt M_{i-1}\ovt \C1_{M_i}\ovt M_{i+1} \ovt \cdots \ovt M_m.$$
Observe that when $M_i$ is a factor, then we have $M_i^c = (M_i)' \cap M$. When we consider a faithful normal conditional expectation from $M$ onto a tensor component (e.g.\ $M_i$ or $M_i^c$), it will be always assumed to preserve a fixed faithful normal product state $\varphi=\varphi_0\otimes\varphi_1\otimes \cdots\otimes \varphi_n$. Note that as we mentioned in Remark \ref{remark intertwining}.(5), the notion of embedding with expectation $A\preceq_M B$ does not depend on the choice of $\rE_B$. Therefore, the faithful normal states $\varphi_i$ can always be replaced when we consider $A\preceq_MB$ with $B$ a tensor component.

Recall that $R_\infty$ denotes the unique amenable type ${\rm III_1}$ factor with separable predual. It follows from the classification of amenable factors with separable predual \cite{Co72, Co75, Ha85} and \cite[Corollary 6.8]{CT76} that for any amenable factor $P$ with separable predual, we have that $R_\infty \ovt P$ is an amenable type ${\rm III_1}$ factor and hence
$$ R_\infty \ovt P \cong R_\infty.$$ 

\subsection*{A key intermediate result}
	We first prove the following key rigidity result for tensor products of von Neumann algebras that belong to the class $\Cao$. This is a generalization of \cite[Proposition 11]{OP03}. 
	
\begin{thm}\label{location theorem}
	Let $m \geq 1$. For all $1 \leq i \leq m$, let $\mathcal M_i$ be any von Neumann algebra that satisfies the strong condition $\rm (AO)$ and $M_i \subset \mathcal M_i$ any von Neumann subalgebra with expectation $\rE_i : \mathcal M_i \to M_i$. Let $M_0=\mathcal M_0$ be any amenable von Neumann algebra (possibly trivial). 
	
Put $M:=M_0\ovt \cdots \ovt M_m$ and $\mathcal M:=\mathcal M_0\ovt \cdots \ovt \mathcal M_m$. Let $Q\subset M$ be any finite von Neumann subalgebra with expectation $\rE_Q : M \to Q$. 

Then at least one of the following conditions hold:
	\begin{enumerate}
		\item The relative commutant $Q'\cap M$ is amenable. 
		\item There exists $1 \leq i \leq m$ such that $Q \preceq_{M} M_i^c$. 
	\end{enumerate}
\end{thm}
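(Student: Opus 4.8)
The plan is to prove the statement in contrapositive form for the second alternative: assuming that $Q \npreceq_M M_i^c$ for every $1 \leq i \leq m$, I would show that $N := Q' \cap M$ is amenable. Note first that $N \subset M$ is with expectation by the modular-theory discussion in Section \ref{preliminaries}, so amenability of $N$ is the right notion. I would realize everything on $H = \rL^2(\mathcal M) = H_0 \otimes \cdots \otimes H_m$, so that $M \subset \mathcal M \subset \B(H)$, $JMJ = M' \subseteq N'$ and $Q \subseteq N'$; by the commutation theorem $N' = (Q \cup JMJ)\dpr$. The engine is Proposition \ref{condition AO for tensor product}: the multiplication map $\nu\colon A \otimes_{\rm alg} JAJ \to \mathcal M(\mathcal J)/\mathcal J$ is min-continuous, where $A$ is exact and $\sigma$-weakly dense in $\mathcal M$, $\mathcal C \supseteq A$ is nuclear, and $\mathcal J = \sum_{i=1}^m \mathcal K_i$ records the $i$-th-leg compacts. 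The crucial structural point is that $A$, $JAJ$ and the whole minimal tensor product $\B(H_0) \otimes_{\rm min} \cdots \otimes_{\rm min} \B(H_m)$ sit inside $\mathcal M(\mathcal J)$, so every C$^*$-algebraic manipulation with $A$ and $JAJ$ can be pushed into the quotient $\mathcal M(\mathcal J)/\mathcal J$.

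Next, following Ozawa's method and \cite[Proposition 11]{OP03}, I would invoke Connes' characterization of amenability: $N$ is amenable if and only if the multiplication map $N \otimes_{\rm min} N' \to \B(H)$ is continuous for the minimal tensor norm. Using $N' = (Q \cup JMJ)\dpr$ together with the fact that $Q \subset M$ commutes with $JAJ \subset JMJ$, a $\sigma$-weak density reduction to the C$^*$-generators lets me replace this by bounding expressions $\|\sum_j a_j\, q_j\, Jb_jJ\|_{\B(H)}$ with $a_j \in N$, $q_j \in Q$ and $b_j \in A$. Modulo the ideal $\mathcal J$, condition (AO) forces the relevant left--right action to factor through the min-bounded $\ast$-homomorphism $\nu$; exactness of $A$ is what makes this compatible with the minimal tensor norm. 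Hence the only obstruction to the desired min-boundedness, and thus to amenability of $N$, is concentrated in $\mathcal J$.

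The role of the intertwining hypothesis is to annihilate that $\mathcal J$-contribution. Since $\mathcal J = \sum_{i=1}^m \mathcal K_i$ is a \emph{finite} sum, I would localize the obstruction to a single summand $\mathcal K_i$ and treat one leg at a time. For each fixed $i$, the hypothesis $Q \npreceq_M M_i^c$ furnishes, by Theorem \ref{intertwining for type III}(4), a net of unitaries in $\mathcal U(Q)$ with $\rE_{M_i^c}(b^* u a) \to 0$; because an element of $\mathcal K_i$ is compact in the $i$-th leg and these unitaries escape to infinity in that same leg (which is exactly what vanishing of $\rE_{M_i^c}$ encodes, the Jones projection $e_{M_i^c}$ acting as a rank-one projection in leg $i$), averaging the maps $x \mapsto u^* x u$ against this net kills $\mathcal K_i$. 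Crucially, since this is done separately for each $i$, no net escaping all the $M_i^c$ \emph{simultaneously} is required. Running this for $i = 1, \dots, m$ removes the whole $\mathcal J$-part and, combined with the min-bounded boundary map of the previous step, yields min-continuity of $N \otimes_{\rm min} N' \to \B(H)$, i.e.\ $N$ is amenable. The amenability of $M_0$ (equivalently $\mathcal K_0 = 0$, so that $\mathcal C_0 = A_0$ is already nuclear) is precisely what prevents a spurious ``$i = 0$'' obstruction, matching the fact that alternative (2) only ranges over $1 \leq i \leq m$.

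I expect the main obstacle to be making the ``the $\mathcal J$-part is negligible'' step fully rigorous. The obstruction is naturally encoded by a $Q$-central functional (or a ucp averaging map) that need not be normal, so the argument that conjugation by the escaping net sends each $\mathcal K_i$ to zero has to be carried out carefully in the weak$^\ast$/ultrafilter topology rather than by a naive limit, and the averaged map must be shown to land back in $N$. A second delicate point is ensuring that the localized object in the $i$-th leg satisfies the with-expectation requirements needed to apply Theorem \ref{intertwining for type III} when the corresponding tensor component is of type ${\rm III}$; this is where the finite-versus-type-${\rm III}$ case distinctions of Theorem \ref{intertwining for type III}(2) and the finiteness of $Q$ genuinely enter.
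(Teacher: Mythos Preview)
Your overall strategy matches the paper's (assume no intertwining, deduce amenability via condition (AO)), but there are two genuine gaps.

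First, a setup error: on $H = \rL^2(\mathcal M)$ you do \emph{not} have $JMJ = M'$; rather $J\mathcal M J = \mathcal M' \subsetneq M'$ whenever $M \subsetneq \mathcal M$, so your identification $N' = (Q \cup JMJ)\dpr$ fails on that Hilbert space. The paper handles this by compressing with the Jones projection $e_M^{\mathcal M}$ to pass to $\rL^2(M)$ (where $J_M M J_M = M'$ does hold), and then further by $e_N$ to $\rL^2(N)$ at the very end; this compression must be threaded through every step and interacts nontrivially with the identification $e_M^{\mathcal M} e_{\mathcal M_i^c}^{\mathcal M} e_M^{\mathcal M} = e_{M_i^c}^M$.

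Second, and more seriously, your plan to kill each $\mathcal K_i$ by averaging against an $i$-dependent net of unitaries does not produce a single well-defined ucp map $\mathcal M(\mathcal J)/\mathcal J \to \B(\rL^2(M))$: separate limits along different nets do not cohere into one map vanishing on all of $\mathcal J = \sum_i \mathcal K_i$. The paper's resolution is precisely to manufacture a \emph{simultaneous} object. It first invokes Corollary~\ref{cor F} to replace $Q$ by a diffuse abelian, hence AFD, subalgebra $Q_0 \subset Q$ satisfying $Q_0 \npreceq_M M_i^c$ for \emph{all} $i$ at once --- so a simultaneous condition does enter, contrary to your claim. AFD-ness then yields a \emph{proper} conditional expectation $\Psi_{Q_0} : \B(\rL^2(\mathcal M)) \to Q_0'$, a single $Q_0$-equivariant map whose restriction to $\mathcal M$ is automatically normal. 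Non-intertwining is fed in via criterion (6) of Theorem~\ref{intertwining for type III} rather than (4): the element $d := e_M^{\mathcal M}\Psi_{Q_0}(e_{\mathcal M_i^c}^{\mathcal M})e_M^{\mathcal M}$ lies in $Q_0' \cap \langle M, M_i^c\rangle$ with $\rT_M(d) \in M$ by properness, so $Q_0 \npreceq_M M_i^c$ forces $d = 0$, and hence $e_M^{\mathcal M}\Psi_{Q_0}(\mathcal J)e_M^{\mathcal M} = 0$. (The paper also first tensors with $R_\infty$ to make every $M_i^c$ of type ${\rm III}$, so that only the $\rT_M$-half of criterion (6) is relevant.) Without the AFD reduction, the averaging map you need is simply not available, and your argument does not close; this is exactly the obstacle you flagged in your last paragraph, and Corollary~\ref{cor F} is the missing ingredient that resolves it.
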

\begin{proof}
We first prove the result under the additional assumption that $M_0$ is of type ${\rm III}$. Since for every $1 \leq i \leq m$, $M_0 \subset M_i^c$ is with expectation, $M_i^c$ is also of type ${\rm III}$. We use the same notation as in Proposition \ref{condition AO for tensor product} for the tensor product von Neumann algebra $\mathcal M$. Write $e_M^{\mathcal M}$ for the Jones projection of the inclusion $M\subset \mathcal M$. We assume that $Q \npreceq_{M} M_i^c$ for all $1 \leq i \leq m$ and show that the relative commutant $Q'\cap M$ is amenable. By Corollary \ref{cor F}, there exists a diffuse abelian (hence AFD) subalgebra $Q_0 \subset Q$ such that $Q_0 \npreceq_M M_i^c$ for all $1 \leq i \leq m$. Since $Q' \cap M \subset Q_0' \cap M$ is with expectation, it suffices to show that $Q_0' \cap M$ is amenable. Without loss of generality, we may assume that $Q$ is AFD.

Since $Q$ is AFD, we can define a {\em proper} conditional expectation $\Psi_Q\colon \mathbf{B}(\rL^2(\mathcal M)) \rightarrow Q'$, that is, $\Psi_{Q}(x)\in Q' \cap \overline{\rm co}^w \left\{uxu^*\mid u\in \mathcal{U}(Q) \right \}$ for all $x\in \B(\rL^2(\mathcal M))$. We note that the properness of $\Psi_Q$ implies that ${(\tau_Q\circ \rE_Q^{\mathcal{M}}\circ \Psi_Q)}|_{\mathcal{M}}=\tau_Q\circ \rE_Q^{\mathcal{M}}$ for any faithful normal trace $\tau_Q \in Q_\ast$ and any faithful normal conditional expectation $\rE_Q^{\mathcal{M}} : \mathcal{M} \to Q$. Hence ${\Psi_Q}|_{ \mathcal{M}} : \mathcal{M} \to Q' \cap \mathcal{M}$ is a faithful {\em normal} conditional expectation. For all $1 \leq i \leq m$, denote by $e_{\mathcal M_i^c}^{\mathcal M} \in \B(\rL^2(\mathcal M))$ the Jones projection of the inclusion $\mathcal M_i^c \subset \mathcal M$ and by $e_{M_i^c}^M \in \B(\rL^2(M))$ the Jones projection of the inclusion $M_i^c \subset M$. We will use the identification $\B (\rL^2(M)) = e_M^{\mathcal M} \B(\rL^2(\mathcal M)) e_M^{\mathcal M}$. Under this identification, we may and will simply denote $e_M^{\mathcal M} e_{\mathcal M_i^c}^{\mathcal M} e_M^{\mathcal M}$ by $e_{M_i^c}^M$. We may and will also identify $e_M^{\mathcal M} \Psi_Q(\cdot) e_M^{\mathcal M}|_M$ with $\Psi_Q|_M$. 

We show that $e_M^{\mathcal M}\Psi_Q(e_{\mathcal M_i^c}^{\mathcal M})e_M^{\mathcal M} = 0$ for all $1 \leq i \leq m$. By contradiction, assume that there exists $1 \leq i \leq m$ such that $e_M^{\mathcal M}\Psi_Q(e_{\mathcal M_i^c}^{\mathcal M})e_M^{\mathcal M} \neq 0$. By the properness of $\Psi_Q$, we have 
$$0 \neq d:= e_M^{\mathcal M}\Psi_Q(e_{\mathcal M_i^c}^{\mathcal M})e_M^{\mathcal M}=\Psi_Q(e_M^{\mathcal M}e_{\mathcal M_i^c}^{\mathcal M}e_M^{\mathcal M}) \in Q' \cap \overline{{\rm co}}^w\left\{ u e_{M_i^c}^{M} u^* \mid u \in \mathcal U(Q) \right \} \subset Q' \cap \langle M, M_i^c\rangle.$$ 
 Let $\rT_{M}$ be the canonical faithful normal semifinite operator valued weight from $\langle M,M_i^c\rangle$ to $M$. Since $d \in \overline{\rm co}^w \left\{ue_{M_i^c}^Mu^*\mid u\in \mathcal{U}(Q) \right\}$ and since $\rT_{M}(e_{M_i^c}^M)=1$, we obtain $\rT_M(d)\in M$ (see the Claim in the proof of Theorem \ref{intertwining for type III} $(5) \Rightarrow (6)$). Recall that $M_i^c$ is of type ${\rm III}$. Then $d$ satisfies Condition $(6)$ in Theorem~\ref{intertwining for type III} ({\em n.b.}~since $B = M_i^c$ is of type ${\rm III}$ here, we have $B_1 = 0$), which contradicts our assumption. Thus, we obtain $e_M^{\mathcal M}\Psi_Q(e_{\mathcal M_i^c}^{\mathcal M})e_M^{\mathcal M} = 0$ for all $1 \leq i \leq m$.

Recall that $\mathcal K_i = \mathbf{B}(\rL^2(\mathcal M_0))\otimes_{\rm min}\cdots\otimes_{\rm min}\mathbf{B}(\rL^2(\mathcal M_{i - 1}))\otimes_{\rm min} \mathbf{K}(\rL^2(\mathcal M_i))\otimes_{\rm min}\mathbf{B}(\rL^2(\mathcal M_{i + 1}))\otimes_{\rm min}\cdots\otimes_{\rm min}\mathbf{B}(\rL^2(\mathcal M_m))$ and hence $\mathcal K_i \subset \mathbf{K}(\rL^2(\mathcal M_i))\otimes_{\rm min} \mathbf{B}(\rL^2(\mathcal M_i^c))$ for all $1 \leq i \leq m$. Recall moreover that $\mathcal J = \sum_{i = 1}^m \mathcal K_i$.

\begin{claim}
For all $1 \leq i \leq m$, we have $e_M^{\mathcal M}\Psi_Q(\mathbf{K}(\rL^2(\mathcal M_i))\otimes_{\rm min} \mathbf{B}(\rL^2(\mathcal M_i^c)))e_M^{\mathcal M}=0$ and hence 
$$e_M^{\mathcal M}\Psi_Q(\mathcal{J})e_M^{\mathcal M}=0.$$
\end{claim}

\begin{proof}[Proof of the Claim]
Fix $1 \leq i \leq m$. Since $e_M^{\mathcal M}\Psi_Q(\cdot)e_M^{\mathcal M}$ is a ucp map, it suffices to show that $e_M^{\mathcal M}\Psi_Q(\mathbf{K}(\rL^2(\mathcal M_i))\otimes_{\rm min} \C 1)e_M^{\mathcal M}=0$. 

Fix a faithful normal state $\varphi \in \mathcal M_\ast$ such that $\sigma_t^\varphi(\mathcal M_i) = \mathcal M_i$ and $\sigma_t^\varphi(M_i) = M_i$ for all $t \in \R$ and all $1 \leq i \leq m$.  Since $\mathbf{K}(\rL^2(\mathcal M_i))$ is generated by the rank one operators $S_{a, b} : \rL^2(\mathcal M_i) \to \C J a \xi_\varphi : \zeta \mapsto \langle \zeta, J b \xi_\varphi \rangle \, J a \xi_\varphi$ with $a, b \in \mathcal M_i$, it suffices to show that $e_M^{\mathcal M}\Psi_Q(S_{a, b} \otimes 1)e_M^{\mathcal M} = 0$ for all $a, b \in \mathcal M_i$. Denote by $\rE_M^{\mathcal M} : \mathcal M \to M$ the unique $\varphi$-preserving faithful normal conditional expectation.

Let $a, b \in \mathcal M_i$. Since 
\begin{align*}
e_M^{\mathcal M}\Psi_Q(S_{a, b}\otimes 1)e_M^{\mathcal M} &= e_M^{\mathcal M}\Psi_Q( e_M^{\mathcal M}(S_{a, b}\otimes 1)e_M^{\mathcal M})e_M^{\mathcal M}  \\
&= e_M^{\mathcal M} \Psi_Q( e_M^{\mathcal M}(S_{\rE_M^{\mathcal M}(a), \rE_M^{\mathcal M}(b)}\otimes 1) e_M^{\mathcal M}) e_M^{\mathcal M} \\
&= e_M^{\mathcal M} \Psi_Q(S_{\rE_M^{\mathcal M}(a), \rE_M^{\mathcal M}(b)}\otimes 1) e_M^{\mathcal M} 
\end{align*}
and since $\rE_M^{\mathcal M}(a) \in M_i$ and $\rE_M^{\mathcal M}(b) \in M_i$, we may assume that $a, b \in M_i$. A simple computation shows that 
$$(J a J \otimes 1) \, (S_{1, 1} \otimes 1) \, (J b^* J \otimes 1) = S_{a, b} \otimes 1.$$
Since $a, b \in M_i \subset M$, we have $JaJ \otimes 1, Jb^*J \otimes 1\in Q' \cap \{e_M^{\mathcal M}\}' \cap \B(\rL^2(\mathcal M))$ and hence 
$$e_M^{\mathcal M}\Psi_Q(S_{a, b}\otimes 1)e_M^{\mathcal M} = (J a J \otimes 1) \, e_M^{\mathcal M}\Psi_Q(S_{1, 1}\otimes 1)e_M^{\mathcal M} \, (J b^* J \otimes 1).$$
Since $S_{1, 1} \otimes 1 = e_{\mathcal M_i^c}^{\mathcal M}$ and $e_M^{\mathcal M}\Psi_Q(e_{\mathcal M_i^c}^{\mathcal M})e_M^{\mathcal M} = 0$, we finally obtain $e_M^{\mathcal M}\Psi_Q(S_{a, b}\otimes 1)e_M^{\mathcal M} = 0$. This finishes the proof of the Claim.
\end{proof}

Using Proposition \ref{condition AO for tensor product} and the above Claim, we may define the composition map
\begin{equation*}
\Phi_Q\colon A\otimes_{\rm min} JAJ \xrightarrow{\nu} \mathcal M(\mathcal{J})/\mathcal{J} \xrightarrow{e_M^{\mathcal M}\Psi_Q(\cdot)e_M^{\mathcal M}} Q' \cap \mathbf B(\rL^2(M)).
\end{equation*}
Observe that $\Phi_Q(a \otimes JbJ) = e_M^{\mathcal M} \Psi_Q(a) \, JbJ e_M^{\mathcal M}$ for all $a, b \in A \subset \mathcal M$. 

\begin{claim}
The ucp map $\Phi_Q$ can be extended to $\mathcal{M} \otimes_{\rm min} J\mathcal{M}J$ which satisfies $\Phi_Q(x \otimes JyJ) = e_M^{\mathcal M}\Psi_Q(x) \, JyJe_M^{\mathcal M}$ for all $x,y \in \mathcal{M}$.
\end{claim}

\begin{proof}[Proof of the Claim]
Consider the algebraic ucp map $\Phi_Q^\circ\colon \mathcal{M}\otimes_{\rm alg} J\mathcal{M}J \to Q' \cap \B(\rL^2(M))$ given by $\Phi^\circ_Q(x \otimes JyJ) = e_M^{\mathcal M}\Psi_Q(x) \, JyJe_M^{\mathcal M}$ for $x,y \in \mathcal{M}$. We know that $\Phi_Q^\circ$ is continuous on $A \otimes_{\rm alg} JAJ$ with respect to the minimal tensor norm since it coincides with $\Phi_Q$. Moreover, $\Phi^\circ_Q$ is normal on $\mathcal{M}\otimes_{\rm alg} \C 1$ and $\C 1 \otimes_{\rm alg} J\mathcal{M}J$ since $\Psi_Q$ is normal on $\mathcal{M}$. By exactness of $A$ (which is equivalent to {\em property C}, see \cite[Theorem 9.3.1 and Exercise 9.3.1]{BO08}), we can apply \cite[Lemma 9.2.9]{BO08} to obtain that $\Phi_Q^\circ$ is actually continuous on $\mathcal{M} \otimes_{\rm alg} J\mathcal{M}J$ with respect to the minimal tensor norm. Thus, by extending $\Phi_Q^\circ$ to $\mathcal{M} \otimes_{\rm min} J\mathcal{M}J$, we obtain the desired ucp map.
\end{proof}

Put $N:=Q'\cap M$ and let $e_N$ be a Jones projection corresponding to $N\subset M$. We restrict the resulting map $\Phi_Q$ in the Claim to the one on $N\otimes_{\rm min} JNJ$. By identifying $JNJ \cong J_NNJ_N$ and $e_Ne_M^{\mathcal M}\B(\rL^2(\mathcal{M}))e_M^{\mathcal M}e_N =\B(\rL^2(N))$, we obtain a ucp map 
$$\pi:=e_N\Phi_Q(\cdot) e_N : N\otimes_{\rm min} J_NNJ_N \to \B(\rL^2(N))$$ 
which satisfies $\pi(x\otimes J_NyJ_N) = x \, J_NyJ_N$ for all $x,y \in N$. Thus, $N=Q'\cap M$ is  amenable. This finishes the proof of Theorem \ref{location theorem} in the case when $M_0$ is of type ${\rm III}$.

In the general case, put $\widetilde M := R_\infty \ovt M$ and $\widetilde M_0 :=R_\infty \ovt M_0$. Then we have that $\widetilde M_0$ is of type ${\rm III}$ and $\widetilde M = \widetilde M_0 \ovt M_1 \ovt \cdots \ovt M_m$. Put $\widetilde M_i^c:= \widetilde M_0 \ovt M_1\ovt \cdots \ovt M_{i-1}\ovt \C1_{M_i}\ovt M_{i+1} \ovt \cdots \ovt M_m$. Let $Q \subset M$ be any finite von Neumann subalgebra with expectation. By the first part of the proof and regarding $Q \subset \widetilde M$, we obtain that $Q' \cap \widetilde M$ is amenable (and hence $Q' \cap M$ is amenable) or there exists $1 \leq i \leq m$ such that $Q \preceq_{\widetilde M} \widetilde M_i^c$. In that case, since $Q \subset M$ is finite, $\widetilde M = R_\infty \ovt M$ and $\widetilde M_i^c = R_\infty \ovt M_i^c$,  Lemma \ref{lemma intertwining in tensor} implies that $Q \preceq_M M_i^c$. This finishes the proof of Theorem \ref{location theorem} in the general case.
\end{proof}

\subsection*{Proofs of Theorems \ref{thmA} and \ref{thmB}}

We start by proving three lemmas.

\begin{lem}\label{intermediate lemma0}
Let $m, n \geq 1$ be any integers. For each $1 \leq i \leq m$, let $M _i$ be any nonamenable factor belonging to the class $\Cao$. For each $1 \leq j \leq n$, let $N_j$ be any nonamenable factor. Assume moreover that all the factors $N_1, \dots, N_{n - 1}$ possess a state with large centralizer. Finally, let $M_0$ and $N_0$ be any amenable factors (possibly trivial). 

Assume that $M:= M_0\ovt M_1\ovt \cdots \ovt M_m = N_0\ovt N_1\ovt \cdots \ovt N_n$. Then there exists $1 \leq i_0 \leq m$ such that $M_{i_0} \preceq_M N_n$. Moreover there exist a nonzero partial isometry $v \in M$, projections $p_i \in M_i$ for every $0 \leq i \leq m$ and projections $q_j \in N_j$ for every $0 \leq j \leq n$ such that $v^*v = p_0 p_1 \cdots p_m := p$, $vv^* = q_0 q_1 \cdots q_n := q$ and $v M_{i_0} v^* \subset qN_nq$ is with expectation. The subfactor $P = (v M_{i_0} v^*)' \cap qN_nq \subset qN_nq$ is also with expectation and satisfies
\begin{align*}
qN_nq &= v M_{i_0} v^* \ovt P \\
P \ovt qN_0q \ovt \cdots \ovt qN_{n- 1}q &= vM_0v^* \ovt \cdots \ovt vM_{i_0 - 1} v^* \ovt vM_{i_0 + 1}v^* \ovt \cdots \ovt vM_mv^*.
\end{align*}
\end{lem}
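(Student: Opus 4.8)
The plan is to first neutralise the amenable factor $N_0$ by an auxiliary enlargement, then to locate $N_n$ as the relative commutant of a \emph{finite} subalgebra via Theorem \ref{location theorem}, and finally to descend to $M$ and split off the tensor factor with Lemma \ref{intertwining tensor}. First I would set $\widetilde M := R_\infty \ovt M$, $\widetilde M_0 := R_\infty \ovt M_0$ and $\widetilde N_0 := R_\infty \ovt N_0$, so that $\widetilde M = \widetilde M_0 \ovt M_1 \ovt \cdots \ovt M_m = \widetilde N_0 \ovt N_1 \ovt \cdots \ovt N_n$ with $\widetilde M_0$ amenable and each $M_i \subset \mathcal M_i$ ($\mathcal M_i$ strong (AO)) with expectation. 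Since $R_\infty$ is of type ${\rm III_1}$, so is $\widetilde N_0$; being amenable with separable predual it is isomorphic to $R_\infty$, so by Theorem \ref{semisolid} (or directly \cite[Theorem 3.1]{Ha85}) it admits a faithful normal state $\varphi_0$ with $(\widetilde N_0^{\varphi_0})' \cap \widetilde N_0 = \C 1$. Fixing states $\varphi_j$ on $N_j$ with large centralizer for $1 \leq j \leq n-1$, I put $\widetilde A := \widetilde N_0^{\varphi_0} \ovt N_1^{\varphi_1} \ovt \cdots \ovt N_{n-1}^{\varphi_{n-1}}$, a finite subalgebra of $\widetilde M$ with expectation, and a direct tensor-product computation gives $\widetilde A' \cap \widetilde M = \C 1 \ovt \cdots \ovt \C 1 \ovt N_n = N_n$, which is nonamenable.

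Next I would apply Theorem \ref{location theorem} to $\widetilde A \subset \widetilde M$: alternative $(1)$ is excluded because $\widetilde A' \cap \widetilde M = N_n$ is nonamenable, so there is $1 \leq i_0 \leq m$ with $\widetilde A \preceq_{\widetilde M} \widetilde M_{i_0}^c$, where $\widetilde M_{i_0}^c = \widetilde M_0 \ovt M_1 \ovt \cdots \widehat{M_{i_0}} \cdots \ovt M_m$. As $M_{i_0}$ is a factor, $(\widetilde M_{i_0}^c)' \cap \widetilde M = M_{i_0}$, and Lemma \ref{relative commutants} then yields $M_{i_0} \preceq_{\widetilde M} \widetilde A' \cap \widetilde M = N_n$.

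The main obstacle is the third phase, descending $M_{i_0} \preceq_{\widetilde M} N_n$ to $M_{i_0} \preceq_M N_n$, i.e.\ removing the auxiliary $R_\infty$ that was forced upon us by the amenable factor $N_0$. If $M_{i_0}$ is finite this is immediate from Lemma \ref{lemma intertwining in tensor} applied with $N = R_\infty$; if $M_{i_0}$ is of type ${\rm II_\infty}$ I would first cut by a finite projection of central support $1$ (Remark \ref{remark intertwining}.(4)) and argue as before. The genuinely delicate case is $M_{i_0}$ of type ${\rm III}$, where Lemma \ref{lemma intertwining in tensor} does not apply directly. Here I would invoke Theorem \ref{thmC}: the nonamenable type ${\rm III}$ factor $M_{i_0} \in \Cao$ admits a state $\psi$ with $B := M_{i_0}^\psi$ a ${\rm II_1}$ factor satisfying $B' \cap M_{i_0} = \C 1$, with expectation. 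Then $B \preceq_{\widetilde M} N_n$ by Lemma \ref{intertwining subalgebra}, and since $B$ is finite, $B \preceq_M N_n$ by Lemma \ref{lemma intertwining in tensor}. Because $B' \cap M = (B' \cap M_{i_0}) \ovt M_{i_0}^c = M_{i_0}^c$, Lemma \ref{relative commutants} converts this into $N_n^c \preceq_M M_{i_0}^c$; applying Lemma \ref{intertwining tensor} to the two decompositions $M = N_n^c \ovt N_n = M_{i_0}^c \ovt M_{i_0}$ produces a corner of $M_{i_0}$ sitting with expectation inside a corner of $N_n$, which is precisely $M_{i_0} \preceq_M N_n$.

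Finally, with $M_{i_0} \preceq_M N_n$ established inside $M$, I would apply Lemma \ref{intertwining tensor} to $M = M_{i_0} \ovt M_{i_0}^c = N_n \ovt N_n^c$, where $N_n^c = N_0 \ovt N_1 \ovt \cdots \ovt N_{n-1}$. This gives a nonzero partial isometry $v \in M$ with $v^*v = p_{i_0}p'$ and $vv^* = q_n q'$ (for projections $p_{i_0} \in M_{i_0}$, $p' \in M_{i_0}^c$, $q_n \in N_n$, $q' \in N_n^c$), with $vM_{i_0}v^* \subset qN_nq$ with expectation, $P := (vM_{i_0}v^*)' \cap qN_nq$ a subfactor with expectation, $qN_nq = vM_{i_0}v^* \ovt P$ and $vM_{i_0}^c v^* = P \ovt qN_n^c q$. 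To reach the stated product form I would cut $v$ by suitable product projections, using factoriality of each $M_i$ and $N_j$ (taking $p_i = 1_{M_i}$, resp.\ $q_j = 1_{N_j}$, whenever the corresponding factor is of type ${\rm III}$, and cutting down inside the semifinite ones), so that $v^*v = p_0 \cdots p_m$ and $vv^* = q_0 \cdots q_n$; this rewrites $vM_{i_0}^c v^* = \ovt_{i \neq i_0} vM_i v^*$ and $qN_n^c q = qN_0q \ovt \cdots \ovt qN_{n-1}q$, yielding the two displayed identities. I expect the type ${\rm III}$ descent in the third phase to be the technically heaviest point, as it is exactly where the existence of a large centralizer for $\Cao$ factors (Theorem \ref{thmC}) enters in an essential way.
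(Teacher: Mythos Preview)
Your proposal is correct and follows essentially the same strategy as the paper: enlarge by $R_\infty$, use the centralizers $N_j^{\varphi_j}$ (which are exactly the paper's irreducible ${\rm II_1}$ subfactors $L_j$) to build a finite subalgebra with relative commutant $N_n$, apply Theorem~\ref{location theorem} and Lemma~\ref{relative commutants} to obtain $M_{i_0}\preceq_{\widetilde M} N_n$, descend to $M$ via a finite irreducible subfactor of $M_{i_0}$ provided by Theorem~\ref{thmC}, and finish with Lemma~\ref{intertwining tensor} and a product-projection replacement.

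One small simplification in the type ${\rm III}$ descent: once you have $B\preceq_M N_n$ with $B'\cap M = M_{i_0}^c$, there is no need to invoke Lemma~\ref{intertwining tensor}. Just apply Lemma~\ref{relative commutants} \emph{twice}: first $N_n^c\preceq_M M_{i_0}^c$, then $(M_{i_0}^c)'\cap M \preceq_M (N_n^c)'\cap M$, i.e.\ $M_{i_0}\preceq_M N_n$. This is exactly what the paper does, it avoids the diffuseness hypothesis of Lemma~\ref{intertwining tensor} (which could fail for $M_{i_0}^c$ when $m=1$ and $M_0=\C$), and it also makes your separate case analysis finite/${\rm II_\infty}$/${\rm III}$ collapse into a single argument, as in the paper's uniform treatment via $M_{i_0}=\widetilde M_{i_0}\ovt I$.
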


\begin{proof}
Since $M = M_0\ovt M_1\ovt \cdots \ovt M_m = N_0\ovt N_1\ovt \cdots \ovt N_n$, we also have 
$$\widetilde M := R_\infty \ovt (M_0 \ovt M_1\ovt \cdots \ovt M_m)  = R_\infty \ovt (N_0\ovt N_1\ovt \cdots \ovt N_n).$$ 
By assumption, for every $1 \leq j \leq n - 1$, there exists an irreducible type ${\rm II_1}$ subfactor $L_j \subset N_j$ with expectation. Since $\widetilde N_0 := R_\infty \ovt N_0 \cong R_\infty$, there exists an irreducible type ${\rm II_1}$ subfactor $L_0 \subset \widetilde N_0$ with expectation. Indeed, it is well known that the Araki-Woods factor $R_\infty$ of type ${\rm III_1}$ possesses a state with large centralizer (see e.g.\ \cite[Example 1.6]{Ha85}). Put $\widetilde M_0 = M_0 \ovt R_\infty \cong R_\infty$. We have $\widetilde M = \widetilde M_0 \ovt M_1 \ovt \cdots \ovt  M_m$. 

We first prove that there exists $1 \leq i_0 \leq m$ such that $M_{i_0} \preceq_M N_n$. Because of various considerations on relative commutants, we first need to work inside the larger von Neumann algebra $\widetilde M$ before descending back to the original von Neumann algebra $M$. Put $L_n^c :=L_0\ovt L_1\ovt \cdots \ovt L_{n-1}$. Since $(L_n^c)' \cap \widetilde M = N_n$ is nonamenable, there exists $1 \leq {i_0} \leq m$ such that $L_n^c \preceq_{\widetilde M} (M_{i_0})' \cap \widetilde M$ by Theorem \ref{location theorem}. Applying Lemma \ref{relative commutants}, we obtain $ M_{i_0}\preceq_{\widetilde M} N_n$. Write $M_{i_0} = \widetilde M_{i_0} \ovt I$ where $\widetilde M_{i_0}$ is either of type ${\rm II_1}$ or of type ${\rm III}$ and $I$ is a type ${\rm I}$ factor. By Proposition \ref{cao}, $\widetilde M_{i_0}$ belongs to the class $\Cao$ and hence there exists an irreducible type ${\rm II_1}$ subfactor $K_{i_0} \subset \widetilde M_{i_0}$ with expectation by Theorem \ref{thmC} ({\em n.b.}~$\widetilde M_{i_0}$ is nonamenable). Since $M_{i_0}\preceq_{\widetilde M} N_n$, we have $\widetilde M_{i_0}\preceq_{\widetilde M} N_n$ and hence $K_{i_0} \preceq_{\widetilde M} N_n$ by Lemma \ref{intertwining subalgebra}. Since $K_{i_0} \subset M$ is finite, $\widetilde M = R_\infty \ovt M$ and $N_n \subset M$, Lemma \ref{lemma intertwining in tensor} implies that $K_{i_0}\preceq_{M} N_n$. Since $(K_{i_0})' \cap M = (\widetilde M_{i_0})' \cap M$, applying twice Lemma \ref{relative commutants}, we obtain $\widetilde M_{i_0} \preceq_M N_n$. For any minimal projection $e \in I$, we have $eM_{i_0}e = \widetilde M_{i_0}e$ and $\widetilde M_{i_0}e \preceq_M N_n$ by Lemma \ref{intertwining transitivity}.(2). This implies that $eM_{i_0}e \preceq_M N_n$ and hence $M_{i_0} \preceq_M N_n$ by Remark \ref{remark intertwining}.(2).

By Lemma \ref{intertwining tensor}, there exist a nonzero partial isometry $v \in M$ and projections $p_{i_0} \in M_{i_0}$, $p_{i_0}' \in M_{i_0}^c$, $q_n \in N_n$, $q_n' \in N_n^c$ such that $v^*v = p_{i_0} p_{i_0}' := p$, $vv^* = q_n q_n' := q$ and $v M_{i_0} v^* \subset qN_nq$ is with expectation. The subfactor $P = (v M_{i_0} v^*)' \cap qN_nq \subset qN_nq$ is also with expectation and satisfies
$$qN_n q = v M_{i_0} v^* \ovt P \quad \text{and} \quad v M_{i_0}^c v^* = q N_n^cq \ovt P.$$
By factoriality and since $M_1, \dots, M_{i_0 - 1}, M_{i_0 +1}, \dots, M_m$ are diffuse factors, $p_{i_0}' \in M_{i_0}^c$ is equivalent in $M_{i_0}^c$ to a projection of the form $p_0 p_1 \cdots p_{i_0 - 1} p_{i_0 + 1} \cdots p_m$ where $p_i \in M_i$ is a nonzero projection for all $i \neq i_0$. Likewise, by factoriality and since $N_1, \dots, N_{n - 1}$ are diffuse factors, $q_n' \in N_n^c$ is equivalent in $N_n^c$ to a projection of the form $q_0 \cdots q_{n - 1}$ where $q_j \in N_j$ is a nonzero projection for all $0 \leq j \leq n - 1$. Thus, we may assume that $p_{i_0}' = p_0 p_1 \cdots p_{i_0 - 1} p_{i_0 + 1} \cdots p_m$, $q_n' = q_0 \cdots q_{n - 1}$ and hence $v^*v = p_0 \cdots p_m$, $vv^* = q_0 \cdots q_n$. Then the von Neumann subalgebras $v M_i v^* \subset qMq$ pairwise commute for all $0 \leq i \leq m$ as well as the von Neumann subalgebras $q N_j q \subset qMq$ for all $0 \leq j \leq n$. Thus, we have
$$qMq = vM_0v^* \ovt vM_1v^* \ovt \cdots \ovt vM_mv^* = qN_0q \ovt qN_1q \ovt \cdots \ovt qN_nq.$$
Finally, we obtain
\begin{align*}
qN_nq &= v M_{i_0} v^* \ovt P \\
P \ovt qN_0q \ovt \cdots \ovt qN_{n- 1}q &= vM_0v^* \ovt \cdots \ovt vM_{i_0 - 1} v^* \ovt vM_{i_0 + 1}v^* \ovt \cdots \ovt vM_mv^*.
\end{align*}
This finishes the proof of Lemma \ref{intermediate lemma0}.
\end{proof}

\begin{lem}\label{intermediate lemma1}
Keep the same assumption as in Lemma \ref{intermediate lemma0}. 

Assume that $M:= M_0\ovt M_1\ovt \cdots \ovt M_m = N_0\ovt N_1\ovt \cdots \ovt N_n$. Then we have $m\geq n$. 
\end{lem}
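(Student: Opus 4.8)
The plan is to argue by induction on $m+n$, using Lemma \ref{intermediate lemma0} as the engine that strips one nonamenable factor off each side of the decomposition. The base case is $n=1$: since $M_1,\dots,M_m$ are nonamenable we automatically have $m\geq 1=n$, which in particular handles the smallest value $m+n=2$. So I assume $n\geq 2$ and that the statement holds for every instance with strictly smaller value of $m+n$.

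In the inductive step I apply Lemma \ref{intermediate lemma0} to produce $i_0$, the partial isometry $v$, the projections, the subfactor $P=(vM_{i_0}v^*)'\cap qN_nq$, and the identity
$$P \ovt qN_0q \ovt \cdots \ovt qN_{n-1}q = vM_0v^* \ovt \cdots \ovt vM_{i_0-1}v^* \ovt vM_{i_0+1}v^* \ovt \cdots \ovt vM_mv^*.$$
I then analyze both sides. On the right, $vM_0v^*$ is a corner of the amenable factor $M_0$ and hence amenable, while the $m-1$ factors $vM_iv^*$ (for $i\neq i_0$) are corners of the nonamenable $\Cao$ factors $M_i$; being stably isomorphic to $M_i$, they are again nonamenable factors in $\Cao$ by Proposition \ref{cao}. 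If $m=1$ the right-hand side would be amenable, yet the left-hand side contains the nonamenable subfactor $qN_1q$ with expectation (present because $n\geq 2$), which is impossible; hence necessarily $m\geq 2$.

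Next I reorganize the left-hand side. The corner $qN_0q$ is amenable, and each $qN_jq$ for $1\leq j\leq n-1$ is a corner of $N_j$, hence stably isomorphic to $N_j$, so it is nonamenable and still possesses a state with large centralizer (this property is preserved under stable isomorphism: the type is unchanged, and in the type $\mathrm{III}_1$ case triviality of the bicentralizer passes to corners via Haagerup's characterization \cite{Ha85}). The subfactor $P$ is either amenable or nonamenable. If $P$ is amenable I absorb it into the amenable part $P\ovt qN_0q$ and set $\tilde n=n-1$, the nonamenable factors being $qN_1q,\dots,qN_{n-1}q$, all carrying large-centralizer states. If $P$ is nonamenable I keep it as the last nonamenable factor and set $\tilde n=n$, so the nonamenable factors are $qN_1q,\dots,qN_{n-1}q,P$, of which the first $\tilde n-1=n-1$ carry large-centralizer states (and $P$, being last, is not required to). In either case the displayed identity is an instance of the lemma with $M$-side length $m-1\geq 1$ and $N$-side length $\tilde n\in\{n-1,n\}$, and total value $(m-1)+\tilde n<m+n$. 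By the induction hypothesis, $m-1\geq\tilde n$; if $\tilde n=n-1$ this gives $m\geq n$, and if $\tilde n=n$ it gives $m\geq n+1\geq n$, so in all cases $m\geq n$.

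The main obstacle is precisely the case where the residual subfactor $P$ is nonamenable: there the $N$-side length does not decrease, so an induction on $n$ alone fails, and one is forced to induct on $m+n$, exploiting that the $M$-side length always drops by one. A secondary technical point to pin down carefully is that both membership in the class $\Cao$ and the existence of a state with large centralizer survive passage to the corners $vM_iv^*$ and $qN_jq$ appearing after the cut-down.
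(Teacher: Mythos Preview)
Your argument is correct, but it is more elaborate than necessary. The paper proceeds by a straight induction on $n$: after applying Lemma \ref{intermediate lemma0} it simply absorbs $P$ into the \emph{last} nonamenable factor on the $N$-side, writing
\[
vM_0v^* \ovt \bigovt_{i\neq i_0} vM_iv^* \;=\; qN_0q \ovt qN_1q \ovt \cdots \ovt qN_{n-2}q \ovt \bigl(qN_{n-1}q\ovt P\bigr).
\]
Since $qN_{n-1}q$ is nonamenable, the tensor product $qN_{n-1}q\ovt P$ is nonamenable regardless of whether $P$ is amenable or not, and because it occupies the last position, no large-centralizer hypothesis is needed for it. Thus the $N$-side always drops to length $n-1$, and induction on $n$ alone suffices. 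Your perceived ``main obstacle'' (that the $N$-side length might not decrease when $P$ is nonamenable) is therefore an artifact of keeping $P$ as a separate tensor factor; the paper's simple absorption trick dissolves it. Your induction on $m+n$ works, but the case split and the stronger induction scheme are not needed.

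One minor remark: the paper handles the case $m=1$ slightly differently, by using Lemma \ref{relative commutants} to obtain $N_n^c\preceq_M M_0$ and reaching a contradiction from there, rather than arguing about amenability of $vM_0v^*$ directly; but your version of that step is equally valid.
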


\begin{proof}
We prove the result by induction over $n \geq 1$. When $n = 1$, this is obvious since by assumption we have $m \geq 1 = n$. 

Next, assume that $n \geq 2$. By Lemma \ref{intermediate lemma0}, there exists $1 \leq i_0 \leq m$ such that $M_{i_0} \preceq_M N_n$. To simplify the notation, we may assume that $i_0 = m$. By Lemma \ref{relative commutants}, we also have $N_n^c \preceq_M M_m^c$. If $m=1$, then $N_n^c \preceq_M M_0$ yields an embedding with expectation of a nonamenable factor into an amenable factor, a contradiction. Hence, we may assume that $m\geq 2$.

Since $M_m \preceq_M N_n$, choose $v, p_0, \dots, p_m, q_0, \dots, q_n$ and define $P = (v M_m v^*)' \cap qN_nq$ as in the conclusion of Lemma \ref{intermediate lemma0}. We have
$$vM_0 v^* \ovt vM_{1}v^* \ovt \cdots \ovt v M_{m - 1} v^* = qN_0q \ovt \cdots \ovt qN_{n - 2}q \ovt (qN_{n- 1}q \ovt P).$$
Observe that for every $1 \leq i \leq m - 1$, we have $vM_i v^* \cong p_i M_i p_i$ and hence $vM_iv^*$ belongs to the class $\Cao$ by Proposition \ref{cao}. Since $qN_0q$ is amenable, since $qN_{n- 1}q \ovt P$ is nonamenable and since the factor $q N_jq \cong q_j N_j q_j$ has a state with large centralizer for every $1 \leq j \leq n - 1$, we may apply the induction hypothesis and we obtain $m - 1 \geq n - 1$. Thus, $m \geq n$. This finishes the proof of the induction and hence the one of Lemma~\ref{intermediate lemma1}.
\end{proof}

\begin{lem}\label{intermediate lemma2}
Let $m \geq 1$ be any integer. For all $1 \leq i \leq m$, let $M _i$ be any nonamenable factor belonging to the class $\Cao$. Then the tensor product factor $M_1 \ovt \cdots \ovt M_m$ is not McDuff.
\end{lem}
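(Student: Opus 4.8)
The plan is to argue by contradiction, using the unique prime factorization machinery already established in Lemma \ref{intermediate lemma0} together with the primeness of nonamenable $\Cao$ factors. Recall that $M$ being McDuff means $M \cong M \ovt R$, where $R$ is the unique AFD ${\rm II_1}$ factor. Fixing such an isomorphism $\Theta \colon M \to M \ovt R$ and setting $A := \Theta^{-1}(\C 1 \ovt R)$, we obtain a subfactor $A \cong R$ with $A' \cap M = \Theta^{-1}(M \ovt \C 1)$, so that $M = A \ovt (A' \cap M)$. Hence it suffices to prove the following statement, which I will establish by induction on $m \geq 1$: \emph{if $M_1, \dots, M_m$ are nonamenable factors in $\Cao$, then $M = M_1 \ovt \cdots \ovt M_m$ admits no tensor decomposition $M = A \ovt B$ with $A \cong R$.} Note that no large-centralizer assumption on the $M_i$ is needed for this formulation, which is what will make the induction close cleanly.

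For the base case $m = 1$, suppose $M_1 = A \ovt B$ with $A \cong R$. Since $M_1 \in \Cao$ is nonamenable it is prime, so one of $A, B$ is of type ${\rm I}$; as $A \cong R$ is of type ${\rm II_1}$, the factor $B$ must be of type ${\rm I}$ and hence amenable, forcing $M_1 = A \ovt B$ to be amenable, a contradiction. For the inductive step, assume the statement for $m - 1$ and suppose $M = M_1 \ovt \cdots \ovt M_m = A \ovt B$ with $A \cong R$. Since $M$ is nonamenable and $A$ is amenable, $B = A' \cap M$ is nonamenable. I now apply Lemma \ref{intermediate lemma0} with $M_0 := \C 1$ (trivial), $N_0 := A$, $n := 1$ and $N_1 := B$; here the large-centralizer hypothesis concerns only $N_1, \dots, N_{n-1}$ and is therefore vacuous. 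The lemma yields an index $i_0$, a nonzero partial isometry $v$, projections $p_i \in M_i$ and $q_j \in N_j$, and the subfactor $P = (v M_{i_0} v^*)' \cap q N_1 q$, such that, since $v M_0 v^* = \C q$ is trivial,
\begin{equation*}
P \ovt qAq = v M_1 v^* \ovt \cdots \ovt v M_{i_0 - 1} v^* \ovt v M_{i_0 + 1} v^* \ovt \cdots \ovt v M_m v^*.
\end{equation*}

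The right-hand side is a tensor product of the $m - 1$ corners $v M_i v^* \cong p_i M_i p_i$ $(i \neq i_0)$, each of which is again a nonamenable factor belonging to $\Cao$ by Proposition \ref{cao}, while $qAq$ is a nonzero compression of the ${\rm II_1}$ factor $A \cong R$ and is therefore itself isomorphic to $R$. Thus the product of the $m-1$ corners $p_i M_i p_i$ carries a tensor decomposition of the forbidden form $R \ovt P$, contradicting the induction hypothesis and completing the induction; applying the statement with our factors $M_1, \dots, M_m$ then shows $M$ is not McDuff. The step requiring the most care is this reduction: one must set up Lemma \ref{intermediate lemma0} with the single, possibly non-large-centralizer, factor $N_1 = B$ and observe that peeling off $M_{i_0}$ converts a hypothetical hyperfinite ${\rm II_1}$ tensor factor of $M$ into one of $\bigl(\bigotimes_{i \neq i_0} p_i M_i p_i\bigr)$, so that the inductive hypothesis becomes applicable. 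The remaining points—that corners of $\Cao$ factors remain in $\Cao$, that a corner of $R$ is again $R$, and that $v M_i v^* \cong p_i M_i p_i$—are routine.
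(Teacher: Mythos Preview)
Your proof is correct and takes a genuinely different route from the paper's. The paper first disposes of the semifinite case by appealing to fullness \cite{HR14}, then in the type ${\rm III}$ case picks the minimal $m$ for which a McDuff tensor product of nonamenable $\Cao$ factors exists, invokes Theorem~\ref{thmC} so that each $M_i$ has a large-centralizer state, applies Lemma~\ref{intermediate lemma0} with $n=m$ and $N_j=M_j$ (for $j<m$), $N_m=M_m\ovt R$, and then uses Lemma~\ref{intermediate lemma1} together with minimality to force $P$ to be type~${\rm I}$, concluding via \cite{HR14} that $M_m$ cannot be McDuff. Your argument instead runs a clean induction on $m$: the base case uses only primeness of nonamenable $\Cao$ factors, and the inductive step applies Lemma~\ref{intermediate lemma0} with $n=1$, $N_0=A\cong R$, $N_1=B$, so that the large-centralizer hypothesis on $N_1,\dots,N_{n-1}$ is vacuous; the resulting identity $P\ovt qAq=\ovt_{i\neq i_0} vM_iv^*$ exhibits a copy of $R$ inside a product of $m-1$ nonamenable $\Cao$ corners, contradicting the inductive hypothesis. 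This avoids Theorem~\ref{thmC}, Lemma~\ref{intermediate lemma1}, and the appeals to \cite{HR14}, at the cost of nothing beyond primeness and Proposition~\ref{cao}(3). The paper's approach, on the other hand, extracts more information along the way (it pins down which $M_i$ would have to absorb $R$) and fits naturally with the structure used in Theorem~\ref{main theorem}.
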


\begin{proof}
Assume first that $M = M_1 \ovt \cdots \ovt M_m$ is semifinite. Then $M$ is full and hence not McDuff by \cite[Corollary 2.3]{Co75}. 

Assume next that $M = M_1 \ovt \cdots \ovt M_m$ is of type ${\rm III}$. By Proposition \ref{cao}, we may further assume that each of the factors $M_1, \dots, M_m$ is either of type ${\rm II_1}$ or of type ${\rm III}$ and belongs to the class $\Cao$. By contradiction, assume that $M$ is McDuff. Then there exist $m \geq 1$ such that the following property $\mathcal P_m$ holds: there exist nonamenable factors $M_1, \dots, M_m$ belonging to the class $\Cao$ such that each of the factors $M_1, \dots, M_m$ is either of type ${\rm II_1}$ or of type ${\rm III}$  and $M = M_1 \ovt \cdots \ovt M_m$ is McDuff, that is, 
$$M = M_1 \ovt \cdots \ovt M_m \cong M_1 \ovt \cdots \ovt (M_m \ovt R).$$ 
We may further assume that $m \geq 1$ is the minimum integer for which the property $\mathcal P_m$ holds. By \cite[Theorem A]{HR14}, we necessarily have $m \geq 2$.

By Theorem \ref{thmC}, each of the factors $M_1, \dots, M_m$ possesses a state with large centralizer. Applying Lemma \ref{intermediate lemma0} to $m = n$, $M_0 = N_0 = \C1$, $N_1 = M_1, \dots, N_{m - 1} = M_{m - 1}$ and $N_m = M_m \ovt R$, there exists $1 \leq i_0 \leq m$ such that $M_{i_0} \preceq_M N_m$. To simplify the notation, we may assume that $i_0 = m$. Since $M_m \preceq_M N_m$, choose $v, p_0, \dots, p_m, q_0, \dots, q_m$ and define $P = (v M_m v^*)' \cap qN_m q$ as in the conclusion of Lemma \ref{intermediate lemma0}. We have
$$qN_mq = vM_mv^* \ovt P \quad \text{and} \quad vM_{1}v^* \ovt \cdots \ovt v M_{m - 1} v^* = qN_1q \ovt \cdots \ovt qN_{m - 1}q \ovt P.$$ 

By Lemma \ref{intermediate lemma1} and since $m \geq 1$ is the minimum integer for which the property $\mathcal P_m$ holds, the second equation implies that $P$ is a type ${\rm I}$ factor ({\em n.b.}~Any diffuse amenable factor is McDuff). Since moreover $q N_m q = v M_m v^* \ovt P$, it follows that $M_m$ and $N_m = M_m \ovt R$ are stably isomorphic and hence $M_m$ is McDuff. This however contradicts \cite[Theorem A]{HR14}.
\end{proof}

\begin{nota}\label{notation equivalence}
Let $M$ be any $\sigma$-finite von Neumann algebra, $1_A$ and $1_B$ any nonzero projections in $M$, $A \subset 1_A M 1_A$ and $B \subset 1_B M 1_B$ any von Neumann subalgebras with expectation. We will write $A\sim_M B$ if there exist projections $p\in A$, $p'\in A'\cap 1_AM1_A$, $q\in B$, $q'\in B'\cap 1_BM1_B$ and a nonzero partial isometry $v\in 1_BM1_A$ such that $v^*v=pp'$, $vv^*=qq'$ and $vpApp'v^*=qBqq'$. 
\end{nota}

Keep $M, A, B$ as in Notation \ref{notation equivalence}. Obviously, $A\sim_MB$ implies that $A\preceq_MB$ and $B\preceq_MA$. Moreover, it is easy to see that when $A$ and $A' \cap 1_A M1_A$ are both factors, we have $A \sim_M B$ if and only if $rArr' \sim_M B$ for some (or any) nonzero projections $r \in A$ and $r' \in A' \cap 1_A M 1_A$. Finally, when $A, B \subset M$ are {\em unital} von Neumann subalgebras and $A, B, A' \cap M, B' \cap M$ are type ${\rm III}$ factors, we have $A\sim_MB$ if and only if $A$ and $B$ are unitarily conjugate inside $M$, that is, there exists $u \in \mathcal U(M)$ such that $u A u^* = B$.

Theorems \ref{thmA} and \ref{thmB} will be consequences of the following theorem that generalizes \cite[Corollary 3]{OP03}. 

\begin{thm}\label{main theorem}
Let $n \geq 1$ be any integer. For all $1 \leq i \leq n$, let $M _i$ be any nonamenable factor belonging to the class $\Cao$. For all $1 \leq j \leq n$, let $N_j$ be any non-McDuff factor that possesses a state with large centralizer. Finally, let $M_0$ and $N_0$ be any amenable factors (possibly trivial) with separable predual.

Assume that  
$$M:= M_0\ovt M_1\ovt \cdots \ovt M_n = N_0\ovt N_1\ovt \cdots \ovt N_n.$$
Then there exists a unique permutation $\sigma \in \mathfrak S_n$ such that
$$N_0 \sim_M M_0 \quad \text{and} \quad N_j \sim_M M_{\sigma(j)} \quad \text{for all} \quad 1 \leq j \leq n.$$
In particular, $M_0$ and $N_0$ are stably isomorphic and $M_{\sigma(j)}$ and $N_j$ are stably isomorphic for all $1 \leq j \leq n$.
\end{thm}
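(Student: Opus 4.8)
The plan is to prove Theorem \ref{main theorem} by induction on $n$, peeling off one nonamenable tensor factor at a time in the spirit of \cite{OP03}. Before starting the induction I would reduce to the case in which every $N_j$ ($1\le j\le n$) is nonamenable. The point is that an \emph{amenable} non-McDuff factor is necessarily of type ${\rm I}$, so any amenable $N_j$ is of type ${\rm I}$ and can be absorbed into the amenable factor $N_0$; using the count in Lemma \ref{intermediate lemma1} together with the fact (Lemma \ref{intermediate lemma2}) that a tensor product of nonamenable $\Cao$ factors is never McDuff, I would eliminate the surviving type ${\rm I}$ possibilities, so that all the $N_j$ are nonamenable and hence, being non-McDuff with large centralizer, admit an irreducible finite subfactor with expectation (namely a state centralizer). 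For the base case $n=1$, the identity $M_0\ovt M_1=N_0\ovt N_1$ forces $N_1$ to be nonamenable (otherwise $M$ would be amenable), and Lemma \ref{intermediate lemma0} applied directly produces $M_1\sim_M N_1$ together with $M_0\sim_M N_0$.

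For the inductive step, since $M$ is nonamenable I may assume $N_n$ is nonamenable, and then Lemma \ref{intermediate lemma0} (whose hypotheses hold because $N_1,\dots,N_{n-1}$ carry states with large centralizer and $N_n$ is nonamenable) supplies an index $i_0$, a nonzero partial isometry $v$, cutting projections $p_i\in M_i$, $q_j\in N_j$, and a subfactor with expectation $P=(vM_{i_0}v^*)'\cap qN_nq$ with
\[
qN_nq=vM_{i_0}v^*\,\ovt\,P,
\]
while the complementary identity expresses $P\ovt qN_0q\ovt\cdots\ovt qN_{n-1}q$ as the tensor product of the amenable $vM_0v^*$ with the $n-1$ nonamenable $\Cao$ factors $vM_iv^*$ ($i\ne i_0$), the latter lying in $\Cao$ by Proposition \ref{cao}. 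The hard part is to show that $P$ is of type ${\rm I}$. First $P$ is amenable: if it were not, then the complementary identity would exhibit the $n-1$ nonamenable $\Cao$ factors as being equal to an amenable factor tensored with the $n$ nonamenable factors $qN_1q,\dots,qN_{n-1}q,P$ (here it is essential that all $qN_jq$ are nonamenable, which is exactly why the preliminary reduction is needed), contradicting Lemma \ref{intermediate lemma1}. Once $P$ is known to be amenable, the identity $qN_nq=vM_{i_0}v^*\ovt P$ together with the fact that $N_n$ is non-McDuff forces $P$ to have no diffuse part, since a diffuse amenable factor is McDuff and would render $qN_nq$ McDuff; hence $P$ is a type ${\rm I}$ factor.

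With $P$ of type ${\rm I}$, the identity $qN_nq=vM_{i_0}v^*\ovt P$ shows that $N_n$ and $M_{i_0}$ are stably isomorphic and yields $N_n\sim_M M_{i_0}$. Absorbing the type ${\rm I}$ factor $P$ into the amenable $qN_0q$, the complementary identity becomes an instance of the theorem at level $n-1$, to which I apply the induction hypothesis, obtaining a permutation $\rho$ of the remaining indices matching each $qN_jq$ with some $vM_{\rho(j)}v^*$ and $P\ovt qN_0q$ with $vM_0v^*$. Translating these equivalences between corners back to $M$ via the remark following Notation \ref{notation equivalence} (legitimate because the algebras involved and their relative commutants are factors) and setting $\sigma(n)=i_0$, $\sigma(j)=\rho(j)$, gives the required $\sigma$ with $N_0\sim_M M_0$ and $N_j\sim_M M_{\sigma(j)}$, from which stable isomorphism is immediate. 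Uniqueness of $\sigma$ follows since $M_a\sim_M M_b$ with $a\ne b$ would give $M_a\preceq_M M_b$ while $M_a$ commutes with $M_b$, which is impossible for diffuse factors. The main obstacle is precisely this type ${\rm I}$ dichotomy for $P$: both the preliminary reduction to nonamenable $N_j$ and the amenability of $P$ rest on making the counting of Lemma \ref{intermediate lemma1} robust against type ${\rm I}$ (amenable) summands, and the bookkeeping of cutting projections together with the passage between $\sim_{qMq}$ and $\sim_M$ must be carried out carefully.
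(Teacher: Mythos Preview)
Your overall strategy matches the paper's proof exactly: induct on $n$, apply Lemma \ref{intermediate lemma0} to peel off one factor, show the auxiliary factor $P$ is amenable via Lemma \ref{intermediate lemma1} and then of type ${\rm I}$ via non-McDuffness of $N_n$, and recurse on the complementary identity. Two points, however, are not fully justified.

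First, the preliminary reduction to nonamenable $N_j$ does not go through as you describe it. Lemma \ref{intermediate lemma1} only yields the inequality $m\ge n$ (with the $\Cao$ side on the left), so after absorbing a type ${\rm I}$ factor $N_j$ into $N_0$ you are left with $n$ nonamenable $\Cao$ factors against $n-1$ nonamenable $N_j$'s, and neither Lemma \ref{intermediate lemma1} nor Lemma \ref{intermediate lemma2} produces a contradiction from that. The paper does not perform this reduction at all: it applies Lemma \ref{intermediate lemma0} directly, whose hypotheses already require each $N_j$ to be nonamenable, and in the applications (Theorems \ref{thmA} and \ref{thmB}) this is automatic.

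Second, your uniqueness argument is too compressed. From two permutations $\sigma\neq\tau$ you jump to ``$M_a\sim_M M_b$'', but $\sim_M$ is not known to be transitive. The paper instead argues at the level of $\preceq_M$: from $M_{\sigma(j)}\preceq_M N_j$ and $N_j\preceq_M M_{\tau(j)}$ one upgrades the second to $N_j\preceq_M^{\rm f} M_{\tau(j)}$ via Lemma \ref{intertwining transitivity}(2) (using factoriality of $M_{\tau(j)}'\cap M$), then obtains $M_{\sigma(j)}\preceq_M M_{\tau(j)}$ by Lemma \ref{intertwining transitivity}(3). Finally, one does not simply invoke that ``commuting diffuse factors cannot intertwine'': the paper passes to a diffuse abelian $K\subset M_{\sigma(j)}$ via Lemma \ref{intertwining subalgebra} and checks by a direct computation with a product state and Theorem \ref{intertwining for type III}(5) that $K\npreceq_M M_{\sigma(j)}^c$.
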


\begin{proof}
We first prove the existence of a permutation $\sigma \in \mathfrak S_n$ satisfying the property as in the statement of Theorem \ref{main theorem} by induction over $n \geq 1$.

Assume that $n=1$. By Lemma \ref{intermediate lemma0}, we have $M_1\preceq_M N_1$. Choose $v, p_0, p_1, q_0, q_1$ and define $P = (v M_1 v^*)' \cap qN_1q$ as in the conclusion of Lemma \ref{intermediate lemma0}. We have
$$qN_1q = vM_1v^* \ovt P \quad \text{and} \quad vM_{0}v^* = qN_0q \ovt P.$$
Note that $P$ is amenable since $M_0$ is amenable and $P \subset vM_0 v^*$ is with expectation. Since $N_1$ is not McDuff, $qN_1 q$ is not McDuff either and hence $P$ is a type ${\rm I}$ factor. Therefore, we obtain $N_0 \sim_M M_0$ and $N_1 \sim M_1$. This proves the case when $n=1$ and hence the first step of the induction.

Next, assume that $n \geq 2$. By Lemma \ref{intermediate lemma0}, there exists $1 \leq i_0 \leq n$ such that $M_{i_0} \preceq_M N_n$. To simplify the notation, we may assume that $i_0 = n$. Since $M_n \preceq_M N_n$, choose $v, p_0, \dots, p_n, q_0, \dots, q_n$ and define $P = (v M_n v^*)' \cap qN_nq$ as in the conclusion of Lemma \ref{intermediate lemma0}. We have
\begin{align*}
qN_nq &= v M_nv^* \ovt P \\
vM_{0}v^* \ovt vM_1 v^* \ovt \cdots \ovt v M_{n - 1} v^* &= qN_0q \ovt \cdots \ovt qN_{n - 2}q \ovt qN_{n- 1}q \ovt P.
\end{align*}
Observe that using Lemma \ref{intermediate lemma1}, the second equation implies that $P$ is amenable. Since $N_n$ is not McDuff, $qN_nq$ is not McDuff either and hence the first equation implies that $P$ is a type ${\rm I}$ factor. This implies in particular that $N_n \sim_M M_n$.

We may apply the induction hypothesis to 
$$vM_n^cv^* = vM_0 v^* \ovt vM_{1}v^* \ovt \cdots \ovt v M_{n - 1} v^* = (P \ovt qN_0q) \ovt \cdots \ovt qN_{n - 2}q \ovt qN_{n- 1}q$$ 
and we obtain a permutation $\sigma \in \mathfrak S_{n - 1}$ such that $P \ovt qN_0q \sim_{vM_n^cv^*} vM_0v^*$ and $q N_jq  \sim_{vM_n^cv^*} vM_{\sigma(j)}v^*$ for all $1 \leq j \leq n - 1$. This implies that $N_0 \sim_{M} M_0$ ({\em n.b.}~$P$ is a type ${\rm I}$ factor) and $N_j  \sim_{M} M_{\sigma(j)}$ for all $1 \leq j \leq n - 1$. Letting $\sigma (n) = n$ and regarding $\sigma \in \mathfrak S_n$, we also have $N_n \sim_M M_{\sigma(n)}$. This finishes the proof of the induction and hence the one of the existence of a permutation $\sigma \in \mathfrak S_n$ satisfying the property as in the statement of Theorem \ref{main theorem}.

We finally prove the uniqueness of the permutation $\sigma \in \mathfrak S_n$ satisfying the property  as in the statement of Theorem \ref{main theorem}. Assume that there exists another permutation $\tau\in\mathfrak{S}_n$ satisfying the property  as in the statement of Theorem \ref{main theorem}.
Observe that for all $1\leq j\leq n$, we have $M_{\sigma(j)}\preceq_M N_j$ and $N_j\preceq_M M_{\tau(j)}$. By Lemma \ref{intertwining transitivity}.(2), we have $N_j\preceq_M^{\rm f}M_{\tau(j)}$ and hence by Lemma \ref{intertwining transitivity}.(3), we have $M_{\sigma(j)}\preceq_M M_{\tau(j)}$ for all $1\leq j\leq n$. 

Now suppose by contradiction that $\sigma\neq \tau$ and fix $1 \leq j \leq n$ such that $\sigma(j)\neq \tau(j)$. Let $K\subset M_{\sigma(j)}$ be a diffuse abelian von Neumann subalgebra with expectation. By Lemma \ref{intertwining subalgebra}, we have $K\preceq_M M_{\tau(j)}$ and hence $K\preceq_M M_{\sigma(j)}^c$ since $\sigma(j)\neq \tau(j)$. Write $k := \sigma(j)$ for simplicity. Since $K$ is diffuse, there exists a net $(u_i)_{i \in I}$ of unitaries in $\mathcal U(K)$ that converges to zero $\sigma$-weakly as $i \to \infty$. Fix a product faithful normal state $\psi:=\psi_0\otimes \psi_1\otimes\cdots\otimes \psi_n \in M_\ast$. Denote by $\rE_{M_k} : M \to M_k$ and $\rE_{M_k^c} : M \to M_k^c$ the corresponding unique $\psi$-preserving conditional expectations. Then for all $a,b\in M_k$ and all $c,d\in M_k^c$, we have
\begin{align*}
\limsup_i \|\rE_{M_k^c}((b\otimes d)^* u_i (a\otimes c) ) \|_{\psi} &= \limsup_i\|d^* \, \rE_{M_k^c}(b^* u_i a) \, c \|_{\psi} \\
&= \limsup_i \|d^* c \|_{\psi} \, |\psi_k(b^* u_i a)| \\
& = 0.
\end{align*}
Hence by Theorem \ref{intertwining for type III}.(5), we obtain $K\not\preceq_MM_k^c$, a contradiction. This shows the uniqueness of the permutation $\sigma \in \mathfrak S_n$ satisfying the property as in the statement of Theorem \ref{main theorem}. This finishes the proof of Theorem \ref{main theorem}.
\end{proof}

\begin{proof}[Proof of Theorem \ref{thmA}]
We only need to show that if $M = M_0\ovt M_1 \ovt \cdots \ovt M_m$ and $N = N_0\ovt N_1 \ovt \cdots \ovt N_n$ are stably isomorphic then $m = n$, $M_0$ and $N_0$ are stably isomorphic, and there exists a permutation $\sigma \in \mathfrak S_n$ such that $M_{\sigma(j)}$ and $N_j$ are stably isomorphic for all $1 \leq j \leq n$. 

By assumption, we have $M \ovt \B(\ell^2) \cong N \ovt \B(\ell^2)$. Using Proposition \ref{cao} and up to replacing $M_1$ by $M_1 \ovt \B(\ell^2)$ and $N_1$ by $N_1 \ovt \B(\ell^2)$, we may further assume that $M = N$. 

First, assume that $M$ is semifinite. In this case, we may assume that $M_0, N_0$ are semifinite and $M_1, \dots, M_m, N_1, \dots, N_n$ are ${\rm II_1}$ factors belonging to the class $\Cao$ (see Proposition \ref{cao}). Then Theorem \ref{thmA} follows by applying twice Lemma \ref{intermediate lemma1} and Theorem \ref{main theorem}. 

Next, assume that $M$ is of type ${\rm III}$. In that case, we may assume that each of the factors $M_1, \dots, M_m, N_1, \dots, N_n$ is either of type ${\rm II_1}$ or of type ${\rm III}$ and belongs to the class $\Cao$ (see Proposition \ref{cao}). Then Theorem \ref{thmC} implies that each of the factors $M_1, \dots, M_m, N_1, \dots, N_n$ possesses a state with large centralizer. Then Theorem \ref{thmA} follows again by applying twice Lemma \ref{intermediate lemma1} and Theorem \ref{main theorem}.
\end{proof}

\begin{proof}[Proof of Theorem \ref{thmB}]
Observe that the factors $N_1, \dots, N_n$ are not McDuff and hence nonamenable by Lemma \ref{intermediate lemma2}. Then we can apply Lemma \ref{intermediate lemma1} to obtain $(1)$ and Theorem \ref{main theorem} to obtain $(2)$.

If $M_1, \dots, M_n$ are moreover type ${\rm III}$ factors and $n \geq 2$, then $M_i, M_i^c, N_j, N_j^c$ are type ${\rm III}$ factors for all $1 \leq i, j \leq n$. Therefore, $M_{\sigma(j)}$ and $N_j$ are unitarily conjugate inside $M$ for all $1 \leq j \leq n$.
\end{proof}

\appendix

\section{Relative modular theory and Jones basic construction}\label{section-relative}

	Let $N \subset M$ be any inclusion of $\sigma$-finite von Neumann algebras with faithful normal conditional expectation $\rE_N : M \to N$. Let $\varphi_N$ be any faithful normal semifinite weight on $N$ and put $\varphi:=\varphi_N\circ \rE_N$. We have a canonical inclusion $\rL^2(N,\varphi_N) \subset \rL^2(M,\varphi)$. Recall that the {\em Jones projection} $e_N^{\varphi_N} \colon \rL^2(M,\varphi) \to \rL^2(N,\varphi_N)$ is defined by $e_N^{\varphi_N}\Lambda_{\varphi}(x) := \Lambda_{\varphi_N}(\rE_N(x))$ for $x\in \mathfrak{n}_{\varphi}$. In this appendix, we observe that the Jones projection $e_N^{\varphi_N} : \rL^2(M, \varphi) \to \rL^2(N, \varphi_N)$ does not depend on the choice of the faithful normal semifinite weight $\varphi_N$ on $N$. We refer to \cite[Lemma A]{Ko88} for a similar statement involving states instead of weights.

	The following lemma is well known (see e.g.\ the proof of \cite[Theorem IX.4.2]{Ta03}).
\begin{lem}\label{takesaki lemma}
	The following relations hold true: 
	$$\Delta_{\varphi}e_N^{\varphi_N} = e_N^{\varphi_N}\Delta_\varphi = \Delta_{\varphi_N} \quad \text{and} \quad J_{\varphi}e_N^{\varphi_N} = e_N^{\varphi_N}J_\varphi = J_{\varphi_N}.$$
\end{lem}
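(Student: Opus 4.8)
The plan is to reduce everything to the single fact, due to Takesaki and essentially already recorded in the preliminaries, that since $\rE_N$ is $\varphi$-preserving (i.e.\ $\varphi = \varphi_N \circ \rE_N$), the modular automorphism group satisfies $\sigma_t^\varphi(N) = N$ and $\sigma_t^\varphi|_N = \sigma_t^{\varphi_N}$ for all $t \in \R$ (see \cite[Theorem IX.4.2]{Ta03}). First I would transport this to the standard Hilbert space. For $n \in N$ one has $\varphi(n^*n) = \varphi_N(\rE_N(n^*n)) = \varphi_N(n^*n)$, so $\mathfrak n_{\varphi_N} = N \cap \mathfrak n_\varphi$ and the canonical inclusion $\rL^2(N,\varphi_N) \subset \rL^2(M,\varphi)$ identifies $\Lambda_{\varphi_N}(n)$ with $\Lambda_\varphi(n)$. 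Since $\sigma_t^\varphi(n) = \sigma_t^{\varphi_N}(n) \in \mathfrak n_{\varphi_N}$, this gives
$$\Delta_\varphi^{{\rm i}t}\Lambda_\varphi(n) = \Lambda_\varphi(\sigma_t^\varphi(n)) = \Lambda_{\varphi_N}(\sigma_t^{\varphi_N}(n)) = \Delta_{\varphi_N}^{{\rm i}t}\Lambda_{\varphi_N}(n).$$
Hence $\Delta_\varphi^{{\rm i}t}$ leaves the closed subspace $\rL^2(N,\varphi_N)$ globally invariant and restricts there to $\Delta_{\varphi_N}^{{\rm i}t}$.

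Next, being a unitary that preserves $\rL^2(N,\varphi_N) = e_N^{\varphi_N}\rL^2(M,\varphi)$, the operator $\Delta_\varphi^{{\rm i}t}$ also preserves the orthogonal complement and therefore commutes with $e_N^{\varphi_N}$. Applying Stone's theorem to the two strongly continuous one-parameter unitary groups $t \mapsto \Delta_\varphi^{{\rm i}t}$ and $t \mapsto \Delta_{\varphi_N}^{{\rm i}t}$, which agree on the invariant subspace $\rL^2(N,\varphi_N)$, I would conclude that their positive self-adjoint generators agree there as well. With the convention that $\Delta_{\varphi_N}$ is extended by $0$ on $\rL^2(N,\varphi_N)^\perp$, this yields $\Delta_\varphi e_N^{\varphi_N} = e_N^{\varphi_N}\Delta_\varphi = \Delta_{\varphi_N}$.

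For the modular conjugation I would use the Tomita operator $S_\varphi$, the closure of $\Lambda_\varphi(x) \mapsto \Lambda_\varphi(x^*)$ on $\mathfrak n_\varphi \cap \mathfrak n_\varphi^*$. For $n \in \mathfrak n_{\varphi_N} \cap \mathfrak n_{\varphi_N}^*$ we have $n^* \in N$ and $S_\varphi \Lambda_\varphi(n) = \Lambda_\varphi(n^*) = \Lambda_{\varphi_N}(n^*) = S_{\varphi_N}\Lambda_{\varphi_N}(n)$, so on the core $\Lambda_{\varphi_N}(\mathfrak n_{\varphi_N} \cap \mathfrak n_{\varphi_N}^*)$ the operator $S_\varphi$ restricts to $S_{\varphi_N}$. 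Since $\rL^2(N,\varphi_N)$ is invariant under $(\Delta_\varphi^{{\rm i}t})$ by the previous step, it is invariant under $\Delta_\varphi^{1/2}$, and comparing the polar decompositions $S_\varphi = J_\varphi \Delta_\varphi^{1/2}$ and $S_{\varphi_N} = J_{\varphi_N}\Delta_{\varphi_N}^{1/2}$ on $\rL^2(N,\varphi_N)$ forces $J_\varphi$ to leave $\rL^2(N,\varphi_N)$ invariant and to restrict there to $J_{\varphi_N}$. As $J_\varphi$ is an antiunitary preserving $\rL^2(N,\varphi_N)$, it preserves the orthogonal complement too and hence commutes with $e_N^{\varphi_N}$, giving $J_\varphi e_N^{\varphi_N} = e_N^{\varphi_N} J_\varphi = J_{\varphi_N}$.

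The hard part will be the bookkeeping with unbounded operators: checking that $\Lambda_{\varphi_N}(\mathfrak n_{\varphi_N}\cap \mathfrak n_{\varphi_N}^*)$ is a genuine core for $S_{\varphi_N}$ and that the restriction of $S_\varphi$ to it has closure exactly $S_{\varphi_N}$, and then matching the polar decompositions on a subspace that need not be invariant under $S_\varphi$ itself. The invariance of $\rL^2(N,\varphi_N)$ under $(\Delta_\varphi^{{\rm i}t})$ obtained in the second step is precisely the enabling fact here: it guarantees that $\Delta_\varphi^{1/2}$ and hence $J_\varphi$ respect the orthogonal splitting $\rL^2(M,\varphi) = \rL^2(N,\varphi_N) \oplus \rL^2(N,\varphi_N)^\perp$, so that uniqueness of the polar decomposition may be applied componentwise.
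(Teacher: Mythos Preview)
Your argument is correct and follows exactly the standard route: from $\sigma_t^\varphi|_N = \sigma_t^{\varphi_N}$ you get that $\Delta_\varphi^{{\rm i}t}$ commutes with $e_N^{\varphi_N}$ and restricts to $\Delta_{\varphi_N}^{{\rm i}t}$ on $\rL^2(N,\varphi_N)$, then pass to generators via Stone's theorem, and finally read off $J_\varphi|_{\rL^2(N,\varphi_N)} = J_{\varphi_N}$ from the polar decomposition of the Tomita operator. This is precisely the argument the paper is pointing to: the paper does not give its own proof but simply cites the proof of \cite[Theorem IX.4.2]{Ta03}, where exactly this computation is carried out. One small point worth making explicit in your write-up is why $\{\Delta_{\varphi_N}^{1/2}\Lambda_{\varphi_N}(n) : n \in \mathfrak n_{\varphi_N}\cap\mathfrak n_{\varphi_N}^*\}$ is dense in $\rL^2(N,\varphi_N)$ (use $\Delta_{\varphi_N}^{1/2}\Lambda_{\varphi_N}(n) = J_{\varphi_N}\Lambda_{\varphi_N}(n^*)$), so that the action of $J_\varphi$ is determined on all of $\rL^2(N,\varphi_N)$ by continuity.
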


	Fix another faithful normal semifinite weight $\psi_N$ on $N$ and put $\psi:=\psi_N\circ \rE_N$. Since the standard representation is unique thanks to \cite[Theorem 2.3]{Ha73}, there is a unique unitary mapping $U_{\psi,\varphi}\colon \rL^2(M,\varphi)\to \rL^2(M,\psi)$ that preserves the standard representation structure. Recall that $U_{\psi, \varphi}$ is uniquely determined by the following two conditions (see \cite[Theorem 2.3 and Remark 2.11]{Ha73} or \cite[Lemma IX.1.5]{Ta03}):
\begin{equation}\label{eq-uniqueness}
U_{\psi,\varphi}\mathfrak{P}_{\varphi} = \mathfrak{P}_{\psi} \quad \text{and} \quad U_{\psi,\varphi}\pi_\varphi(x) = \pi_\psi(x) U_{\psi,\varphi} \quad (x\in M),
\end{equation}
where $\pi_\varphi$ and $\pi_\psi$ are the corresponding GNS representations. Using the above uniqueness result, we have the following two possible constructions for $U_{\psi,\varphi}$. 

	The first construction deals with the case when $\varphi$ and $\psi$ are faithful normal states on $M$ (see \cite[Lemma IX.1.8]{Ta03} for this construction). One can define the unitary mapping $V_{\psi, \varphi} : \rL^2(M, \varphi) \to \rL^2(M, \psi)$ by $V_{\psi,\varphi}\pi_\varphi(x)\Lambda_\varphi(1) = \pi_\psi(x) \xi_\varphi$ for $x\in M$, where $\xi_\varphi \in \mathfrak{P}_\psi$ is the unique vector implementing $\varphi \in M_\ast$. Then $V_{\psi, \varphi}$ is a well defined unitary mapping which satisfies the uniqueness conditions \eqref{eq-uniqueness}. Therefore, we have $U_{\psi, \varphi} = V_{\psi, \varphi}$.

	The second construction deals with the more general case when $\varphi$ and $\psi$ are faithful normal semifinite weights on $M$ (see the proof of \cite[Lemma IX.1.5]{Ta03} for this construction). Consider the balanced weight $\theta:=\psi\oplus \varphi$ on $M\otimes \mathbf M_2$ and recall that, using the notation in \cite[VIII.\S3]{Ta03}, its corresponding $J$-map has the following form:
$$
J_\theta=\left[ 
\begin{array}{cccc}
J_{\psi} & 0 & 0 &  0 \\
0 & 0 & J_{\psi, \varphi} &  0 \\
0 & J_{\psi,\varphi}^{-1} & 0 &  0 \\
0 & 0 & 0 &  J_{\varphi} \\
\end{array} 
\right].
$$
The unitary mappings $J_\psi J_{\psi, \varphi}$ and $J_{\psi,\varphi}J_\varphi$ satisfy the uniqueness conditions \eqref{eq-uniqueness}. Therefore, we have $U_{\psi, \varphi} = J_\psi J_{\psi, \varphi} = J_{\psi,\varphi}J_\varphi$.

	Using this second construction, if we put $N_2:=N\otimes \mathbf M_2 \subset M \otimes \mathbf M_2$ with expectation $\rE_{N_2}:=\rE_N \otimes \id_{\mathbf M_2} : M \otimes \mathbf M_2 \to N \otimes \mathbf M_2$, then we have $\theta\circ \rE_{N_2}=\theta$. So by Lemma \ref{takesaki lemma}, we have $e_{N_2}^{\theta|_{N_2}} J_\theta=J_\theta e_{N_2}^{\theta|_{N_2}}$. Observe that $e_{N_2}^{\theta|_{N_2}}$ is of the form 
$$
e_{N_2}^{\theta|_{N_2}}=\left[ 
\begin{array}{cccc}
e_N^{\psi_N} & 0 & 0 &  0 \\
0 & e_N^{\psi_N} & 0 &  0 \\
0 & 0 & e_N^{\varphi_N} &  0 \\
0 & 0 & 0 & e_N^{\varphi_N} \\
\end{array} 
\right].
$$
Thus, we obtain $e_N^{\psi_N}J_{\psi,\varphi}=J_{\psi,\varphi}e_N^{\varphi_N} $ and hence $e_N^{\psi_N} U_{\psi,\varphi}=U_{\psi,\varphi}e_N^{\varphi_N}$. By construction and the condition $J_\theta e_N^{\theta|_{N_2}}=J_{\theta|_{N_2}}$, we conclude that $U_{\psi_N,\varphi_N} = e_N^{\psi_N} U_{\psi,\varphi}=U_{\psi,\varphi}e_N^{\varphi_N}$. We summarize this observation as follows.

\begin{prop}\label{prop-uniqueness-projection}
	Let $\varphi_N$ and $\psi_N$ be any faithful normal semifinite weights on $N$ and put $\varphi:=\varphi_N\circ \rE_N$ and $\psi:=\psi_N\circ \rE_N$. Then we have $U_{\psi_N,\varphi_N} = e_N^{\psi_N} U_{\psi,\varphi}=U_{\psi,\varphi}e_N^{\varphi_N}$. Thus, we have
	$$e_N^{\psi_N} = U_{\psi,\varphi} \, e_N^{\varphi_N} \, (U_{\psi, \varphi})^*.$$
\end{prop}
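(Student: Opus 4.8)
The plan is to reduce the whole statement to Lemma \ref{takesaki lemma} by means of the balanced-weight ($2 \times 2$ matrix) trick, along the lines of the discussion preceding the proposition. First I would form the balanced weight $\theta := \psi \oplus \varphi$ on $M \otimes \mathbf M_2$ and set $N_2 := N \otimes \mathbf M_2 \subset M \otimes \mathbf M_2$ together with the conditional expectation $\rE_{N_2} := \rE_N \otimes \id_{\mathbf M_2}$. The crucial preliminary observation is that $\theta = (\psi_N \oplus \varphi_N) \circ \rE_{N_2}$, and hence $\theta \circ \rE_{N_2} = \theta$, since $\rE_{N_2}$ is idempotent with range $N_2$. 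This is exactly the hypothesis needed to apply Lemma \ref{takesaki lemma} to the inclusion $N_2 \subset M \otimes \mathbf M_2$, which then shows that the Jones projection $e_{N_2}^{\theta|_{N_2}}$ commutes with $J_\theta$ and moreover satisfies $J_\theta \, e_{N_2}^{\theta|_{N_2}} = e_{N_2}^{\theta|_{N_2}} J_\theta = J_{\theta|_{N_2}}$.

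Next I would write out the block decompositions. Using the displayed form of $J_\theta$ (with $J_\psi$ and $J_\varphi$ on the diagonal and $J_{\psi,\varphi}$, $J_{\psi,\varphi}^{-1}$ in the two mixed positions) together with the block-diagonal form of $e_{N_2}^{\theta|_{N_2}}$ having entries $e_N^{\psi_N}, e_N^{\psi_N}, e_N^{\varphi_N}, e_N^{\varphi_N}$, the commutation relation $e_{N_2}^{\theta|_{N_2}} J_\theta = J_\theta \, e_{N_2}^{\theta|_{N_2}}$ becomes, upon comparing the relevant off-diagonal entry, the single intertwining identity
$$e_N^{\psi_N} \, J_{\psi,\varphi} = J_{\psi,\varphi} \, e_N^{\varphi_N}.$$
Reading off the same off-diagonal block of the identity $J_{\theta|_{N_2}} = e_{N_2}^{\theta|_{N_2}} J_\theta$, and using that $J_{\theta|_{N_2}}$ has $J_{\psi_N,\varphi_N}$ in that position, gives the identification $J_{\psi_N,\varphi_N} = e_N^{\psi_N} \, J_{\psi,\varphi}$ of the relative modular conjugation of the restricted weights.

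Finally I would assemble the conclusion using $U_{\psi,\varphi} = J_\psi J_{\psi,\varphi} = J_{\psi,\varphi} J_\varphi$ together with Lemma \ref{takesaki lemma} applied on $N$ itself, namely $e_N^{\psi_N} J_\psi = J_\psi e_N^{\psi_N} = J_{\psi_N}$ and $J_\varphi e_N^{\varphi_N} = e_N^{\varphi_N} J_\varphi = J_{\varphi_N}$. Indeed, on the one hand $e_N^{\psi_N} \, U_{\psi,\varphi} = e_N^{\psi_N} J_\psi \, J_{\psi,\varphi} = J_{\psi_N} J_{\psi,\varphi}$, while on the other hand the intertwining identity yields $U_{\psi,\varphi} \, e_N^{\varphi_N} = J_\psi \, J_{\psi,\varphi} \, e_N^{\varphi_N} = J_\psi \, e_N^{\psi_N} J_{\psi,\varphi} = J_{\psi_N} J_{\psi,\varphi}$, so the two coincide; and by $J_{\psi_N,\varphi_N} = e_N^{\psi_N} J_{\psi,\varphi}$ this common operator equals $J_{\psi_N} J_{\psi_N,\varphi_N} = U_{\psi_N,\varphi_N}$. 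Rearranging $e_N^{\psi_N} U_{\psi,\varphi} = U_{\psi,\varphi} e_N^{\varphi_N}$ by multiplying on the right by the unitary $(U_{\psi,\varphi})^*$ then gives the displayed formula $e_N^{\psi_N} = U_{\psi,\varphi} \, e_N^{\varphi_N} \, (U_{\psi,\varphi})^*$. The main point requiring care will be the bookkeeping of domains and ranges: $J_{\psi,\varphi}$ is a map $\rL^2(M,\varphi) \to \rL^2(M,\psi)$, so I must keep straight which of the four blocks corresponds to which $\rL^2$-space and confirm that the uniqueness characterization \eqref{eq-uniqueness} genuinely identifies $U_{\psi,\varphi}$ with $J_\psi J_{\psi,\varphi}$; once this matching is pinned down, each identity above is a routine block comparison.
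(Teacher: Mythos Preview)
Your proposal is correct and follows essentially the same approach as the paper: form the balanced weight $\theta = \psi \oplus \varphi$, apply Lemma \ref{takesaki lemma} to the inclusion $N \otimes \mathbf M_2 \subset M \otimes \mathbf M_2$, and read off the off-diagonal block identities $e_N^{\psi_N} J_{\psi,\varphi} = J_{\psi,\varphi} e_N^{\varphi_N}$ and $J_{\psi_N,\varphi_N} = e_N^{\psi_N} J_{\psi,\varphi}$ to conclude. Your final assembly is slightly more explicit than the paper's, but the argument is the same.
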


	The above proposition exactly means that the Jones projection $e_N^{\varphi_N} : \rL^2(M, \varphi) \to \rL^2(N, \varphi_N)$ does not depend on the choice of the faithful normal semifinite weight $\varphi_N$ of $N$. Therefore, we simply denote it by $e_N : \rL^2(M) \to \rL^2(N)$.

Combined with the first construction of $U_{\psi, \varphi}$ above for faithful normal states $\varphi, \psi \in M_\ast$, we recover \cite[Lemma A]{Ko88}. Indeed, recall that $U_{\psi,\varphi} : \rL^2(M, \varphi) \to \rL^2(M, \psi)$ is given by $U_{\psi, \varphi} \pi_\varphi(x) \Lambda_\varphi(1) = \pi_\psi(x) \xi_\varphi$ for $x \in M$, where $\xi_\varphi \in \mathfrak{P}_\psi$ is the unique vector implementing $\varphi \in M_\ast$. Thus, for all $x\in M$, we have
	$$e_N^\psi\pi_\psi(x) \xi_\varphi= U_{\psi , \varphi}  e_N^\varphi\pi_\varphi(x) \Lambda_\varphi(1) =U_{\psi, \varphi}  \pi_\varphi(\rE_N(x)) \Lambda_\varphi(1) =\pi_\psi(\rE_N(x)) \xi_\varphi.$$
This is exactly \cite[Lemma A]{Ko88}.

\section{Dimension theory for semifinite von Neumann algebras}\label{section-dimension}

\subsection*{Dimension theory for right modules over semifinite von Neumann algebras}

Let $B$ be any semifinite von Neumann algebra with a faithful normal semifinite tracial weight $\Tr_B$. Let $\mathcal K=\mathcal K_B$ be a right $B$-module (i.e.\ there is a unital normal $\ast$-homomorphism $B^{\rm op} \to \B(\mathcal K)$). We note that injectivity of the $\ast$-homomorphism is {\em not} assumed in this subsection. Then by \cite[Theorem IV.5.5]{Ta02}, there is a Hilbert space $\ell^2(I)$ and a projection $p\in B\ovt \B(\ell^2(I))$ such that $\mathcal K \cong p \left(\rL^2(B,\Tr_B)\otimes \ell^2(I)\right)$ as right $B$-modules. We define the {\em right dimension of $\mathcal K$ with respect to $(B,\Tr_B)$} by $\dim_{(B,\Tr_B)} \mathcal K := \Tr(p)$,  where $\Tr:=\Tr_B\otimes \Tr_{\mathbf B(\ell^2(I))}$ for the unique trace $\Tr_{\mathbf B(\ell^2(I))}$ on $\B(\ell^2(I))$ satisfying $\Tr_{\mathbf B(\ell^2(I))}(e)=1$ for all minimal projections $e$. This definition depends neither on the choice of the Hilbert space $\ell^2(I)$ nor on the choice of the projection $p \in B\ovt \B(\ell^2(I))$. Indeed, if $V,W : \mathcal K \to \rL^2(B,\Tr_B)\otimes \ell^2(I)$ are right $B$-modular isometries, since $WV^* \in (J_BBJ_B\otimes \C)' = B \ovt \B(\ell^2(I))$, we have
$$\Tr(VV^*)=\Tr(VW^*WV^*)=\Tr(WV^*VW^*)=\Tr(WW^*).$$

For any Hilbert space $\ell^2(I)$, any projections $p\in B\ovt\B(\ell^2(I))$ and $q\in B$ with central support $z:=z_B(q) \in \mathcal Z(B)$, regarding $J_BqJ_B\rL^2(B,\Tr_B)=\rL^2(Bq,\Tr_B)$, we have that 
\begin{align}\label{eq-dimension}
\dim_{(qBq,\Tr_B(q\, \cdot \, q))}p \left(\rL^2(Bq,\Tr_B)\otimes \ell^2(I) \right) &=\dim_{(Bz,\Tr_B(\, \cdot \, z))}p \left( \rL^2(Bz,\Tr_B)\otimes \ell^2(I) \right) \\ \nonumber
&=\Tr(p(z\otimes 1)).
\end{align}
To see this, put $p_0 := p(z\otimes 1)$ and take projections $\{p_j \}_j \subset B\ovt \B(\ell^2(I))$ such that $p_0=\sum_j p_j$ and $p_j\sim q_j \otimes e$ for some projections $q_j\in qBq$ and a fixed minimal projection $e\in \B(\ell^2(I))$ (use \cite[Lemma V.1.7]{Ta02} and Zorn's lemma). Then we have the following right $qBq$-module isomorphisms 
	$$p_0 \left(\rL^{2}(Bq)\otimes \ell^2(I) \right) = \sum_j p_j \left(\rL^{2}(Bq)\otimes \ell^2(I) \right) \cong  \bigoplus_j (q_j\otimes e) \left(\rL^{2}(Bq)\otimes \ell^2(I) \right) \cong \bigoplus_j q_j\rL^{2}(Bq),$$ 
where $qBq$ acts diagonally by right multiplication on the right hand side. Since $\Tr_B(q_j)=\Tr(q_j\otimes e)=\Tr(p_j)$, we obtain 
$$\dim_{(qBq,\Tr_B(q\, \cdot \, q))}p_0 \left(\rL^{2}(Bq)\otimes \ell^2(I) \right) = \sum_j \Tr_B(q_j)= \sum_j \Tr_B(p_j) = \Tr(p_0).$$
Finally considering the case $q=z$, we obtain the desired equation \eqref{eq-dimension}.

\subsection*{Dimension theory and Jones basic construction}

Let $B\subset M$ be any inclusion of von Neumann algebras with expectation $\rE_B$ and $\varphi$ any faithful normal semifinite weight on $B$. Extend $\rE_B : M \to B$ to $\rE_B : \langle M,B\rangle \to B$ by the formula $\rE_B(x)e_B=e_Bxe_B$ for all $x\in \langle M,B\rangle$. We define the corresponding faithful normal semifinite weight $\widehat \varphi$ on $\langle M,B\rangle$ as follows. 
Let $q\in B$ be any projection such that $z_B(q)= 1_B = 1_M$ (possibly $q=1_B$), where $z_B(q)$ is the central support in $B$ of the projection $q \in B$. Then it is easy to see that $z_{\langle M,B\rangle}(e_Bq)=Jz_B(q)J=1_M$, where $J$ is the modular conjugation on $\rL^2(M,\varphi\circ \rE_B)$. Hence, there exists a family of partial isometries $(v_i)_{i\in  I}$ in $\langle M,B\rangle$ such that $v_i^*v_i\leq qe_B$ for all $i\in I$ and $\sum_{i\in I}v_iv_i^*=1_M$ (use \cite[Lemma V.1.7]{Ta02} and Zorn's lemma). Then using the identification $qe_B\langle M,B\rangle qe_B=qBqe_B\cong qBq$, we can define the (not necessarily unital) normal $\ast$-homomorphism 
$$\pi : \langle M,B\rangle \to qBq\ovt \B(\ell^2(I)):x\mapsto \sum_{i,j\in I}\rE_B(v_i^*xv_j)\otimes e_{i,j}$$ 
where $(e_{i,j})_{i, j}$ is a matrix unit in $\B(\ell^2(I))$. Then $\widehat \varphi := (\varphi\otimes \Tr_{\B(\ell^2(I))})\circ \pi$ defines a faithful normal weight on $\langle M,B\rangle$. We have  
$$\widehat \varphi(x)=\sum_{i\in I}\varphi(q\rE_B(v_i^*xv_i)q), \quad x\in \langle M,B\rangle^+.$$ 
Note that $\varphi(q\rE_B(v_i^*xv_i)q)=\langle xv_i\Lambda_\varphi(q), v_i\Lambda_\varphi(q)\rangle_{\varphi}$ if $\varphi(q)< +\infty$ ({\em n.b.}~$e_B \Lambda_{\varphi\circ \rE_B}(q) = \Lambda_\varphi(q)$). 
We denote by $q_i \in qBq$ the unique projection such that $q_ie_B = qe_B v_i^*v_i qe_B =v_i^*v_i$ via $qe_B\langle M,B\rangle qe_B=qBqe_B$. Then  $\widehat{\varphi}$ is semifinite if $\varphi(q\rE_B(\, \cdot \,)q)$ is semifinite on $v_i^*\langle M,B \rangle v_i = q_i B q_i e_B$ for all $i \in I$, which is equivalent to the semifiniteness of $\varphi$ on $q_i Bq_i$ for all $i \in I$. If we assume $\varphi$ to be tracial on $B$, then $\varphi$ is semifinite on $rBr$ for any projection $r\in B$ and hence $\widehat{\varphi}$ is semifinite on $\langle M,B \rangle$.

Let $z\in \mathcal{Z}(B)$ be any nonzero projection such that $Bz$ is semifinite and observe that $z^\circ:= JzJ \in \mathcal Z(\langle M,B\rangle)$. Assume that the weight $\varphi$ is tracial on $Bz$ and write $\Tr_{Bz}:=\varphi(\, \cdot \, z) |_{Bz}$ on $Bz$. For any projection $r\in M^{\varphi\circ \rE_B}$, write $\rL^2(Mr,\varphi\circ \rE_B):=JrJ\rL^2(M,\varphi\circ \rE_B)$. In this setting, we show the following proposition.

\begin{prop}\label{general dimension lemma}
Keep the same notation as above. The following statements hold true.
\begin{enumerate}

\item The weight $\widehat \varphi (\, \cdot \, z^\circ)$ on $\langle M,B \rangle z^\circ$ is the unique faithful normal semifinite tracial weight which satisfies
\begin{equation}\label{eq-unique-trace}
\widehat{\varphi}((x^*e_Bx) z^\circ) = \Tr_{Bz}(\rE_B(zxx^*z))
\end{equation} 
for all $x \in M$. In particular, $\widehat \varphi (\, \cdot \, z^\circ)$ depends neither on the choice of the projection $q$ nor on the choice of the partial isometries $\{v_i\}_{i}$ as above.  We denote this unique trace by $\Tr_{\langle M,B \rangle z^\circ}$. 

\item For any projection $p\in \langle M,B\rangle$, we have
$$\dim_{(Bz,\Tr_{Bz})} p \rL^2(Mz,\varphi\circ \rE_B) = \dim_{(qBqz,\Tr_B(q\, \cdot \, qz))} p \rL^2(Mqz,\varphi\circ \rE_B)  = \Tr_{\langle M,B \rangle z^\circ}(pz^\circ).$$
\end{enumerate}
\end{prop}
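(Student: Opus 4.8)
The plan is to establish in part (1) the weight $\widehat\varphi(\,\cdot\, z^\circ)$ as the asserted trace, and then to deduce part (2) by recognizing the module dimension as a normal semifinite trace and invoking the uniqueness from part (1).

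For part (1), normality and semifiniteness of $\widehat\varphi(\,\cdot\, z^\circ)$ are immediate from the construction, since $\pi$ is a normal $\ast$-homomorphism. The first thing I would record is that $JzJ\,e_B = z\,e_B$ for the central projection $z\in\mathcal Z(B)$ (both sides act as $\Lambda(b)\mapsto\Lambda(bz)=\Lambda(zb)$ on $\rL^2(B)$ and kill its orthocomplement), whence $\rE_B(a\,z^\circ)=\rE_B(a)\,z$ and $\pi$ maps $\langle M,B\rangle z^\circ$ into $qBqz\ovt\B(\ell^2(I))$. On this corner the weight $\varphi\otimes\Tr_{\B(\ell^2(I))}$ is tracial, because $\varphi|_{Bz}$ is tracial by hypothesis; as the composition of a $\ast$-homomorphism with a trace on its image is again a trace, it follows at once that $\widehat\varphi(\,\cdot\, z^\circ)=(\varphi\otimes\Tr)\circ\pi|_{z^\circ}$ is tracial. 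This route sidesteps any direct KMS computation.

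To verify the defining identity \eqref{eq-unique-trace}, I would use the two elementary facts $v_i=v_ie_B$ (from $v_i^*v_i\le qe_B$) and $e_B x v_i=\rE_B(xv_i)\,e_B$ for $x\in M$. Writing $\eta_i:=\rE_B(xv_i)\in qBq$, a short manipulation gives $\rE_B(v_i^*x^*e_Bxv_i)=\eta_i^*\eta_i$, so $\widehat\varphi(x^*e_Bx\,z^\circ)=\sum_i\varphi(z\,\eta_i^*\eta_i)$. Since $z\eta_i^*=z\eta_i^*z\in Bz$, traciality of $\varphi$ on $Bz$ lets me replace $\eta_i^*\eta_i$ by $\eta_i\eta_i^*$; then from $\sum_i \eta_i e_B\eta_i^*=e_Bx\bigl(\sum_i v_iv_i^*\bigr)x^*e_B=\rE_B(xx^*)e_B$, applying the canonical operator valued weight $\rT_M$ of Lemma \ref{operator valued weight1}(2) termwise yields $\sum_i\eta_i\eta_i^*=\rE_B(xx^*)$. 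Hence $\widehat\varphi(x^*e_Bx\,z^\circ)=\varphi(z\rE_B(xx^*)z)=\Tr_{Bz}(\rE_B(zxx^*z))$, which is \eqref{eq-unique-trace} and is visibly independent of $q$ and $\{v_i\}$. The traciality swap is precisely what produces $xx^*$ rather than $x^*x$. For uniqueness, any faithful normal semifinite tracial weight $\tau$ satisfying \eqref{eq-unique-trace} agrees with $\widehat\varphi(\,\cdot\, z^\circ)$ on every $x^*e_Bx\,z^\circ$, hence by polarization on the $\sigma$-weakly dense $\ast$-subalgebra $\spn\{xe_By\,z^\circ:x,y\in M\}$, which is a common core; the standard uniqueness statement for normal semifinite traces then forces $\tau=\widehat\varphi(\,\cdot\, z^\circ)$.

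For part (2), the first equality is the change-of-trace identity \eqref{eq-dimension} applied to the right $Bz$-module $z^\circ\rL^2(M,\varphi\circ\rE_B)$ with $q$ of central support $1$. For the second equality, I would observe that $p\mapsto\dim_{(Bz,\Tr_{Bz})}(p\,z^\circ\rL^2(M,\varphi\circ\rE_B))$ is a faithful normal semifinite trace on $\langle M,B\rangle z^\circ=(JBzJ)'|_{z^\circ\rL^2(M,\varphi\circ\rE_B)}$, namely the commutant (coupling) trace of the semifinite algebra $Bz$ in its amplified standard representation provided by \cite[Theorem IV.5.5]{Ta02}. Computing it on generators via traciality gives $\dim(x^*e_Bx\,z^\circ\,\cdots)=\dim(\rE_B(xx^*)e_B\,z^\circ\,\cdots)$, and the base identity $\dim(b\,e_B\,z^\circ\,\cdots)=\Tr_{Bz}(zbz)$ for $b\in B^+$ (the dimension of $\rL^2(Bz)$-type modules) shows that this dimension trace satisfies \eqref{eq-unique-trace}. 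By the uniqueness just proved, it must coincide with $\Tr_{\langle M,B\rangle z^\circ}(\,\cdot\, z^\circ)$, yielding the claim. The main obstacle is exactly this last step: identifying the module-dimension function as a normal semifinite trace on the commutant and carrying out the base computation $\dim(b\,e_B\,z^\circ\,\cdots)=\Tr_{Bz}(zbz)$, i.e. the coupling duality; by contrast the trace-formula manipulation and the traciality-via-$\pi$ argument are comparatively routine.
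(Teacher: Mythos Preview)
Your proof of part (1) is essentially the same as the paper's: both establish traciality by observing that $\pi$ carries $\langle M,B\rangle z^\circ$ into the tracial corner $qBqz\ovt\B(\ell^2(I))$, then verify \eqref{eq-unique-trace} by a direct computation, and finally deduce uniqueness from a density argument. Your organization via $\eta_i:=\rE_B(xv_i)$ and the identity $\sum_i\eta_i\eta_i^*=\rE_B(xx^*)$ (obtained by applying $\rT_M$ to $\sum_i\eta_ie_B\eta_i^*=\rE_B(xx^*)e_B$) is a clean variant of the paper's line-by-line calculation; the only point where the paper is more careful is uniqueness, where it explicitly produces finite-trace projections $p_n\in Bz$ so that $\bigcup_n Mp_ne_Bp_nM$ is a $\sigma$-weakly dense subset on which the trace is finite, and then invokes \cite[Proposition VIII.3.15]{Ta03}. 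Your polarization sketch is morally the same but would need this finiteness input to be made precise.

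For part (2) you take a genuinely different route. The paper argues \emph{directly}: given a projection $p\le z^\circ$, it chooses partial isometries $(u_{i_1})$ with $u_{i_1}^*u_{i_1}\le qe_B$ and $\sum u_{i_1}u_{i_1}^*=p$ (completing to a full system with further $(w_{i_2})$ summing to $1-p$), then uses this particular system to define $\pi$. By part (1) the resulting $\widehat\varphi$ is the canonical trace, and on the nose one gets $\Tr_{\langle M,B\rangle z^\circ}(p)=\sum_{i_1}\varphi(p_{i_1})$ where $p_{i_1}e_B=u_{i_1}^*u_{i_1}$; simultaneously the decomposition $p\rL^2(Mqz)\cong\bigoplus_{i_1}p_{i_1}\rL^2(qBqz)$ as right $qBqz$-modules yields $\dim_{(qBqz,\ldots)}p\rL^2(Mqz)=\sum_{i_1}\varphi(p_{i_1})$, and the $q=1$ case gives the remaining equality. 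Your approach instead recognizes $p\mapsto\dim_{(Bz,\Tr_{Bz})}(pz^\circ\rL^2(M))$ as a faithful normal semifinite trace on $\langle M,B\rangle z^\circ$ (the commutant/coupling trace), checks it satisfies \eqref{eq-unique-trace} via the base case $be_Bz^\circ\mapsto\Tr_{Bz}(bz)$, and then invokes uniqueness from (1). This is more conceptual and avoids any bookkeeping with partial isometries, but it leans on the general coupling-trace machinery (normality, semifiniteness, faithfulness of the induced trace on the commutant) that you flag as the ``main obstacle'' without fully spelling out. The paper's direct computation is more self-contained and makes the equality visible at the level of explicit module decompositions; your argument, once the coupling-trace facts are supplied, is shorter and explains \emph{why} the formula must hold rather than verifying it by hand.
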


In the case when $z=1_B$, we obtain the following corollary. 

\begin{cor}\label{semifinite dimension lemma}
Assume that $B$ is semifinite with semifinite trace $\Tr_B$. Let $q\in B$ be a projection with central support $1_B$ in $B$. Then for any projection $p\in \langle M,B\rangle$, we have
	$$\dim_{(B,\Tr_B)} p \rL^2(M,\Tr\circ \rE_B) = \dim_{(qBq,\Tr_B(q\, \cdot \, q))} p \rL^2(Mq,\Tr\circ \rE_B)  = \Tr_{\langle M,B\rangle}(p) ,$$
where $\Tr_{\langle M,B\rangle}$ is the unique trace on $\langle M,B \rangle$ satisfying $\Tr_{\langle M,B\rangle}(x^*e_Bx) = \Tr_{B}(\rE_B(xx^*))$ for all $x \in M$.
\end{cor}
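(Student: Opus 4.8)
The plan is to obtain Corollary \ref{semifinite dimension lemma} as the special case $z = 1_B$ of Proposition \ref{general dimension lemma}, so that essentially no new argument is needed beyond checking that the hypotheses of the proposition are met and that the notation specializes correctly. Throughout I read the weight $\Tr \circ \rE_B$ appearing in the corollary as $\Tr_B \circ \rE_B$, consistently with the subscripts $\Tr_B$ in the dimension symbols.

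First I would set $\varphi := \Tr_B$ and $z := 1_B$ in Proposition \ref{general dimension lemma}. Since $B$ is assumed semifinite with semifinite trace $\Tr_B$, the central projection $z = 1_B \in \mathcal Z(B)$ indeed satisfies the requirement that $Bz = B$ is semifinite, and $\varphi = \Tr_B$ is tracial on $Bz = B$; moreover $\Tr_{Bz} = \Tr_B(\,\cdot\, z)|_{Bz} = \Tr_B$. The projection $q \in B$ with central support $1_B$ is exactly the $q$ appearing in the proposition.

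The key simplification is the identity $z^\circ = J 1_B J = 1_M$: since the inclusion $B \subset M$ is unital we have $1_B = 1_M$, and the modular conjugation $J$ on $\rL^2(M, \Tr_B \circ \rE_B)$ fixes $1_M$. Consequently $\langle M, B\rangle z^\circ = \langle M, B\rangle$, we have $p z^\circ = p$ for every projection $p \in \langle M, B\rangle$, and $\rL^2(Mz, \Tr_B \circ \rE_B) = \rL^2(M, \Tr_B \circ \rE_B)$. With these substitutions, part (1) of Proposition \ref{general dimension lemma} asserts that $\widehat\varphi = \Tr_{\langle M, B\rangle z^\circ} = \Tr_{\langle M, B\rangle}$ is the unique faithful normal semifinite tracial weight on $\langle M, B\rangle$ satisfying $\widehat\varphi(x^* e_B x) = \Tr_B(\rE_B(xx^*))$ for all $x \in M$, which is precisely the characterization of $\Tr_{\langle M, B\rangle}$ stated in the corollary. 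Likewise, part (2) specializes verbatim to the chain of equalities $\dim_{(B, \Tr_B)} p\rL^2(M, \Tr_B\circ\rE_B) = \dim_{(qBq, \Tr_B(q\,\cdot\,q))} p\rL^2(Mq, \Tr_B\circ\rE_B) = \Tr_{\langle M, B\rangle}(p)$, which is the assertion of the corollary.

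Since everything is a direct specialization, I do not expect a genuine obstacle; the only point warranting care is the bookkeeping identification $z^\circ = 1_M$ (forcing $pz^\circ = p$ and collapsing $\langle M,B\rangle z^\circ$ to $\langle M,B\rangle$) together with the matching of $\Tr_{Bz}$ with $\Tr_B$, after which both the uniqueness characterization of the trace and the dimension equalities read off immediately from Proposition \ref{general dimension lemma}.
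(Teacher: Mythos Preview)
Your proposal is correct and matches the paper's approach exactly: the paper simply records that the corollary is the special case $z = 1_B$ of Proposition \ref{general dimension lemma}, with no additional argument. Your bookkeeping verification that $z^\circ = J1_BJ = 1_M$ collapses $\langle M,B\rangle z^\circ$ to $\langle M,B\rangle$ and $\Tr_{Bz}$ to $\Tr_B$ is precisely the only check needed.
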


\begin{rem}
In Corollary \ref{semifinite dimension lemma}, if we further assume that $\Tr_B(q)< +\infty$, then the dimension with respect to a semifinite trace $\Tr_B$ can be given by the one with respect to the finite valued trace $\Tr_B(q\, \cdot \, q)$. Thus, we can use dimension theory for finite von Neumann algebras.
\end{rem}

\begin{rem}\label{remark-semifinite}
We mention that if $B$ is not semifinite but possesses a nonzero semifinite direct summand, we may replace $B \subset M$ by a semifinite von Neumann subalgebra $\widetilde B \subset M$ with expectation. Indeed, let $z\in\mathcal{Z}(B)$ be a nonzero projection such that $Bz$ semifinite. Then put $\widetilde{B}:=Bz \oplus \C(1_M-z)$ and observe that $\widetilde{B} \subset M$ is a semifinite von Neumann subsalgebra with expectation $\rE_{\widetilde{B}}$ that coincides with $\rE_B$ on $zMz$  (here we assume that $M(1_M - z)$ is $\sigma$-finite). Observe that $e_B JzJ = e_Bz = e_{\widetilde{B}}z = e_{\widetilde{B}}JzJ$ and
$$\langle M,\widetilde{B}\rangle JzJ=(J\widetilde{B}J)'JzJ=(JBJ)'JzJ=\langle M,B\rangle JzJ.$$
It holds that $\Tr_{\langle M,\widetilde{B}\rangle JzJ}=\Tr_{\langle M,B\rangle JzJ}$ on $\langle M,\widetilde{B}\rangle JzJ$ by Proposition \ref{general dimension lemma}(1). 
\end{rem}

\begin{proof}[Proof of Proposition \ref{general dimension lemma}]
$(1)$ It is easy to see that $\pi(z^\circ)=\pi(1)(z\otimes 1)$ and hence $\pi(\langle M,B \rangle z^\circ)$ is contained in $qBqz\ovt \B(\ell^2(I))$. Thus the weight $\widehat \varphi (\, \cdot\, z^\circ) = (\varphi\otimes \Tr_{\B(\ell^2(I))})\circ \pi(\, \cdot\, z^\circ)$ is tracial on $\langle M,B \rangle z^\circ$. Observe that $z^\circ qe_B=qze_B$ and $\rE_B(z^\circ xe_By)=z\rE_B(x)\rE_B(y)$ for all $x,y\in \langle M, B \rangle$. Then for all $x\in M$, we have 
\begin{align*}
\widehat\varphi((x^*e_Bx)z^\circ)
&=\sum_{i\in I}\varphi(q\rE_B(z^\circ v_i^*x^*e_Bxv_i)q)\\
&=\sum_{i\in I}\Tr_{Bz}(zq\rE_B(v_i^*x^*)\rE_B(xv_i)q)\\
&=\sum_{i\in I}\Tr_{Bz}(z\rE_B(xv_i)q\rE_B(v_i^*x^*))\\
&=\sum_{i\in I}\varphi(\rE_B(z^\circ xv_iqe_Bv_i^*x^*))\\
&=\varphi(\rE_B(z^\circ x\sum_{i\in I}(v_ie_Bqv_i^*)x^*))\\
&=\varphi(\rE_B(z^\circ xx^*))\\
&=\Tr_{Bz}( \rE_B(zxx^*z)).
\end{align*}
This shows that $\widehat{\varphi}$ satisfies \eqref{eq-unique-trace} for all $x \in M$. Let now $p_n\in Bz$ be an increasing sequence of projections converging to $1_M$ $\sigma$-strongly and such that $\Tr_{Bz}(p_n)<+\infty$ for all $n \in \N$. Then the trace $\widehat \varphi(\, \cdot\,  z^\circ)$ takes finite values on the $\sigma$-weakly dense subset $\bigcup_{n \in \N} Mp_ne_Bp_nM $ in $\langle M,B \rangle z^\circ$ and hence, by \cite[Proposition VIII 3.15]{Ta03}, $\Tr_{\langle M,B \rangle z^\circ} := \widehat \varphi(\, \cdot\, z^\circ)$ is the unique faithful normal semifinite trace on $\langle M,B \rangle z^\circ$ which satisfies 
$$\Tr_{\langle M,B \rangle z^\circ}((x^*e_Bx) z^\circ) = \Tr_{Bz}(\rE_B(zxx^*z))$$ 
for all $x \in \bigcup_n p_nM$. 

$(2)$ Let $p \in \langle M,B\rangle$ be any projection. Up to replacing $p$ by $pz^\circ$ if necessary, we may assume that $p\leq z^\circ$. Since $qe_B$ has central support $1_M$, there exists families of partial isometries $(u_{i_1})_{i_1\in  I_1}$, $(w_{i_2})_{i_2\in I_2}$ in $\langle M,B\rangle$ such that $u_{i_1}^*u_{i_1}, w_{i_2}^* w_{i_2}\leq qe_B$ for all $i_1\in I_1, i_2\in I_2$, $\sum_{i_1\in I_1}u_{i_1}u_{i_1}^*=p$ and $\sum_{i_2\in I_2} w_{i_2}w_{i_2}^*=1_M-p$. Put $I:=I_1\cup I_2$ and in this case, the $\ast$-homomorphism $\pi$ above is given by
$$\pi : \langle M,B\rangle \to qBq\ovt \B(\ell^2(I)):x\mapsto \sum_{i_1,j_1\in I_1}\rE_B(u_{i_1}^*xu_{j_1})\otimes e_{i_1,j_1} + \sum_{i_2,j_2\in I_2}\rE_B(w_{i_2}^*xw_{j_2})\otimes e_{i_2,j_2}. $$ 
Since $\widehat{\varphi}(\, \cdot \, z^\circ)$ does not depend on the choices of $q$ and $\{v_i\}_{i\in I}$, one has 
$$\Tr_{\langle M,B \rangle z^\circ}(pz^\circ) = \widehat{\varphi}(p z^\circ) = \sum_{i_1\in I_1}\varphi(q\rE_B(u_{i_1}^* u_{i_1})q) = \sum_{i_1\in I_1}\varphi(p_{i_1}),$$
where $p_{i_1}:=q\rE_B(u_{i_1}^* u_{i_1})q = \rE_B(u_{i_1}^* u_{i_1})\in qBq$. We claim that $p_{i_1}$ is a projection in $qBqz$. For this, recall that $u_{i_1}^* u_{i_1}\leq qe_B$ and $u_{i_1} u_{i_1}^* \leq p \leq z^\circ$. Since $z^\circ$ is in the center of $\langle M,B \rangle$, one has $u_{i_1}^* u_{i_1} = u_{i_1}^*z^\circ u_{i_1} \leq qe_Bz^\circ = qze_B$. Then observe that
$$p_{i_1}e_B = \rE_B(u_{i_1}^* u_{i_1}) e_B = e_B u_{i_1}^* u_{i_1}e_B = u_{i_1}^* u_{i_1}$$
which shows that $p_{i_1}$ is a projection in $qBqz$. 

We next show that $\dim_{(qBqz,\Tr_B(q\, \cdot \, qz))} p JqzJ\rL^2(M,\varphi\circ \rE_B)  = \sum_{i_1\in I_1}\varphi(p_{i_1})$. Since the right hand side coincides with $\Tr_{\langle M,B \rangle z^\circ}(pz^\circ)$ and is independent of the choice of $q$, we also have $\dim_{(Bz,\Tr_{Bz})} p \rL^2(Mz,\varphi\circ \rE_B) = \Tr_{\langle M,B \rangle z^\circ}(pz^\circ)$ by considering $q = 1_B$. This will therefore complete the proof. 

Since $p=\sum_{i_1} u_{i_1}u_{i_1}^* \in \langle M,B\rangle = (JB J)'$, we have a right $B$-module isomorphism 
$$p\rL^2(M,\varphi\circ \rE_B) = \sum_{i_1} u_{i_1}u_{i_1}^* \rL^2(M,\varphi\circ \rE_B) \xrightarrow{\oplus_{i_1} u_{i_1}^*} \bigoplus_{i_1} u_{i_1}^* \rL^2(M,\varphi\circ \rE_B),$$
where $B$ acts by right multiplication diagonally on the right hand side. We have $u_{i_1}^* \rL^2(M,\varphi\circ \rE_B)=u_{i_1}^*u_{i_1} \rL^2(M,\varphi\circ \rE_B)$, since $u_{i_1}$ is a partial isometry. Observe then that
$$u_{i_1}^*u_{i_1} \rL^2(M,\varphi\circ \rE_B) = p_{i_1}e_B \rL^2(M,\varphi\circ \rE_B) = p_{i_1}\rL^2(B,\varphi) = p_{i_1}\rL^2(qzB,\varphi)$$ 
and hence we have the following right $qBqz$-module isomorphism
$$p JqzJ\rL^2(M,\varphi\circ \rE_B) \cong \bigoplus_{i_1} p_{i_1}JqzJ \rL^2(qzB,\varphi)=\bigoplus_{i_1} p_{i_1} \rL^2(qBqz,\varphi).$$ 
This shows that $\dim_{(qBqz,\Tr_B(q\, \cdot \, qz))} p JqzJ\rL^2(M,\varphi\circ \rE_B)  = \sum_{i_1\in I_1}\varphi(p_{i_1})$.
\end{proof}

\section{Free Araki-Woods factors satisfy the strong condition (AO)}

It was proved in \cite[Chapter 4]{Ho07} that all the free Araki-Woods factors satisfy Ozawa's condition (AO). In this appendix, we strengthen this result by showing that all the free Araki-Woods factors satisfy the strong condition (AO) from Definition \ref{AO^++}.

Let $H_{\R}$ be any real Hilbert space and $U : \R \to \mathcal O(H_\R)$ any orthogonal representation. Denote by $H = H_{\R} \otimes_{\R} \C = H_\R \oplus {\rm i} H_\R$ the complexified Hilbert space, by $I : H \to H : \xi + {\rm i} \eta \mapsto \xi - {\rm i} \eta$ the canonical anti-unitary involution on $H$ and by $A$ the infinitesimal generator of $U : \R \to \mathcal U(H)$, that is, $U_t = A^{{\rm i}t}$ for all $t \in \R$. Observe that $j : H_{\R} \to H :\zeta \mapsto (\frac{2}{A^{-1} + 1})^{1/2}\zeta$ defines an isometric embedding of $H_{\R}$ into $H$. Moreover, we have $IAI = A^{-1}$. Put $K_{\R} := j(H_{\R})$. It is easy to see that $K_\R \cap {\rm i} K_\R = \{0\}$ and that $K_\R + {\rm i} K_\R$ is dense in $H$. Write $T = I A^{-1/2}$. Then $T$ is a conjugate-linear closed invertible operator on $H$ satisfying $T = T^{-1}$ and $T^*T = A^{-1}$. Such an operator is called an {\it involution} on $H$. Moreover, we have $\dom(T) = \dom(A^{-1/2})$ and $K_\R = \{ \xi \in \dom(T) : T \xi = \xi \}$. In what follows, we will simply write
$$\overline{\xi + {\rm i} \eta} := T(\xi + {\rm i} \eta) = \xi - {\rm i} \eta, \forall \xi, \eta \in K_\R.$$

We introduce the \emph{full Fock space} of $H$:
\begin{equation*}
\mathcal{F}(H) =\C\Omega \oplus \bigoplus_{n = 1}^{\infty} H^{\otimes n}.
\end{equation*}
The unit vector $\Omega$ is called the \emph{vacuum vector}. For all $\xi \in H$, define the {\it left creation} operator $\ell(\xi) : \mathcal{F}(H) \to \mathcal{F}(H)$ by
\begin{equation*}
\left\{ 
{\begin{array}{l} \ell(\xi)\Omega = \xi, \\ 
\ell(\xi)(\xi_1 \otimes \cdots \otimes \xi_n) = \xi \otimes \xi_1 \otimes \cdots \otimes \xi_n.
\end{array}} \right.
\end{equation*}
We have $\|\ell(\xi)\|_\infty = \|\xi\|$ and $\ell(\xi)$ is an isometry if $\|\xi\| = 1$. For all $\xi \in K_\R$, put $W(\xi) := \ell(\xi) + \ell(\xi)^*$. The crucial result of Voiculescu \cite[Lemma 2.6.3]{VDN92} is that the distribution of the self-adjoint operator $W(\xi)$ with respect to the vector state $\varphi_U = \langle \, \cdot \,\Omega, \Omega\rangle$ is the semicircular law of Wigner supported on the interval $[-2\|\xi\|, 2\|\xi\|]$. 

\begin{df}[Shlyakhtenko, \cite{Sh96}]
Let $H_\R$ be any real Hilbert space and $U : \R \to \mathcal O(H_\R)$ any orthogonal representation. The \emph{free Araki-Woods} von Neumann algebra associated with $(H_\R, U_t)$, denoted by $\Gamma(H_{\R}, U_t)\dpr$, is defined by
\begin{equation*}
\Gamma(H_{\R}, U_t)\dpr := \{W(\xi) : \xi \in K_{\R}\}\dpr.
\end{equation*}
We will denote by $\Gamma(H_{\R}, U_t)$ the unital C$^*$-algebra generated by $1$ and by all the elements $W(\xi)$ for $\xi \in K_\R$.
\end{df}

The vector state $\varphi_U = \langle \, \cdot \,\Omega, \Omega\rangle$ is called the {\it free quasi-free state} and is faithful on $\Gamma(H_\R, U_t)\dpr$. Let $\xi, \eta \in K_\R$ and write $\zeta = \xi + {\rm i} \eta$. Put
\begin{equation*}
W(\zeta) := W(\xi) +  {\rm i} W(\eta) = \ell(\zeta) + \ell(\overline \zeta)^*.
\end{equation*}
Note that the modular automorphism  group $(\sigma_t^{\varphi_U})$ of the free quasi-free state $\varphi_U$ is given by $\sigma^{\varphi_U}_{t} = \Ad(\mathcal{F}(U_t))$, where $\mathcal{F}(U_t) = 1_{\C \Omega} \oplus \bigoplus_{n \geq 1} U_t^{\otimes n}$. In particular, it satisfies
\begin{equation*}
\sigma_{t}^{\varphi_U}(W(\zeta))  =  W(U_t \zeta), \forall \zeta \in K_\R +{\rm i} K_\R, \forall t \in \R. 
\end{equation*}

It is easy to see that for all $n \geq 1$ and all $\zeta_1, \dots, \zeta_n \in K_\R + {\rm i} K_\R$, $\zeta_1 \otimes \cdots \otimes \zeta_n \in \Gamma(H_\R, U_t) \Omega$. We will denote by $W(\zeta_1 \otimes \cdots \otimes  \zeta_n) \in \Gamma(H_\R, U_t)$ the unique element such that 
$$\zeta_1 \otimes \cdots \otimes \zeta_n = W(\zeta_1 \otimes \cdots \otimes \zeta_n) \Omega.$$
We refer to \cite[Section 2]{HR14} for further details. Note that since inner products are assumed to be linear in the first variable, we have $\ell(\xi)^*\ell(\eta) = \overline{\langle \xi, \eta\rangle} 1 = \langle \eta, \xi \rangle 1$ for all $\xi, \eta \in H$. In particular, the Wick formula from \cite[Proposition 2.1]{HR14} is 
\begin{align*}
& W(\xi_1 \otimes \cdots \otimes \xi_r) W(\eta_1 \otimes \cdots \otimes \eta_s) \\
&= W(\xi_1 \otimes \cdots \otimes \xi_r \otimes \eta_1 \otimes \cdots \otimes \eta_s) + 
      \overline{\langle \overline \xi_r, \eta_1\rangle} \, W(\xi_1 \otimes \cdots \otimes \xi_{r - 1}) W(\eta_2 \otimes \cdots \otimes \eta_s)
\end{align*}
for all $\xi_1, \dots, \xi_r, \eta_1, \dots, \eta_s \in K_\R + {\rm i} K_\R$.

The main result of this appendix is the following theorem.

\begin{thm}\label{thm-FAW-AO}
Let $U : \R \to \mathcal O(H_\R)$ be any orthogonal representation on a separable real Hilbert space. Then the von Neumann algebra $\Gamma(H_\R, U_t)\dpr$ satisfies the strong condition (AO).
\end{thm}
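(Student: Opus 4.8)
The plan is to verify Definition \ref{AO^++} directly on the full Fock space, which realizes the standard form. Since $\varphi_U = \langle\,\cdot\,\Omega,\Omega\rangle$ is faithful on $\mathcal M := \Gamma(H_\R,U_t)\dpr$ with cyclic and separating vacuum vector $\Omega$, the quadruple $(\mathcal M, \mathcal{F}(H), J, \mathfrak P)$ is a standard form, where $J$ is the modular conjugation of $\varphi_U$. I would take the dense exact C$^*$-algebra of Definition \ref{AO^++} to be $\mathcal A := \Gamma(H_\R, U_t)$, the unital C$^*$-algebra generated by the operators $W(\xi) = \ell(\xi) + \ell(\xi)^*$, $\xi \in K_\R$; it is $\sigma$-weakly dense in $\mathcal M$ by definition. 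For the nuclear envelope I would take
$$\mathcal C := C^*\!\left(\{\ell(\xi) : \xi \in H\}\right) \subset \B(\mathcal{F}(H)),$$
the Cuntz--Toeplitz algebra of the one-particle space $H$, governed by the relations $\ell(\xi)^*\ell(\eta) = \langle\eta,\xi\rangle 1$.

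\emph{First two conditions.} Since $\ell$ is complex-linear with $\|\ell(\xi)\|_\infty = \|\xi\|$, the map $\xi\mapsto\ell(\xi)$ is norm-continuous, so $\mathcal C$ coincides with $C^*(\ell(e_i):i)$ for any orthonormal basis $(e_i)$ of $H$, i.e.\ the Cuntz--Toeplitz algebra on $\dim H$ generators. This algebra is nuclear (as the Toeplitz--Pimsner algebra of a correspondence over $\C$; equivalently it is an extension of a Cuntz algebra by $\K(\mathcal{F}(H))$ when $\dim H<\infty$, and the infinite-dimensional case is nuclear as well). Because $W(\xi) = \ell(\xi) + \ell(\xi)^* \in \mathcal C$ for every $\xi \in K_\R$, we have $\mathcal A \subset \mathcal C$, and since a C$^*$-subalgebra of a nuclear C$^*$-algebra is exact, $\mathcal A$ is exact. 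This settles the first two bullets of Definition \ref{AO^++}.

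\emph{Commutator condition.} Here the modular theory enters. Write $\rho(\xi)$ for the right creation operator, $\rho(\xi)(\xi_1\otimes\cdots\otimes\xi_n) = \xi_1\otimes\cdots\otimes\xi_n\otimes\xi$ and $\rho(\xi)\Omega = \xi$. Using the explicit form of $J$ (it reverses the order of tensors and applies the involution $T$, so that $J(\xi_1\otimes\cdots\otimes\xi_n) = \overline{\xi_n}\otimes\cdots\otimes\overline{\xi_1}$ on the relevant core, as in \cite[Section 2]{HR14}), I would establish the identity
$$J\,\ell(\xi)\,J = \rho(\overline\xi), \qquad \xi\in H.$$
Consequently $JW(\xi)J = \rho(\xi) + \rho(\xi)^*$ for $\xi \in K_\R$, so $J\mathcal A J \subset \mathcal C_r := C^*(\{\rho(\xi):\xi\in H\})$. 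A direct computation on tensors then gives $[\ell(\eta),\rho(\xi)] = 0$, while $[\ell(\eta),\rho(\xi)^*]$ is a scalar multiple of the rank-one projection onto $\C\Omega$, hence compact. Passing to the Calkin algebra $\B(\mathcal{F}(H))/\K(\mathcal{F}(H))$, the images of $\ell(\eta),\ell(\eta)^*$ therefore commute with the images of $\rho(\xi),\rho(\xi)^*$; as these generate $\mathcal C$ and $\mathcal C_r$ respectively, and the relative commutant of a set in the Calkin algebra is a C$^*$-subalgebra, the images of $\mathcal C$ and $\mathcal C_r$ commute. This means exactly that $[\mathcal C,\mathcal C_r]\subset\K(\mathcal{F}(H))$, whence $[\mathcal C, J\mathcal A J]\subset\K(\mathcal{F}(H))$, which is the third bullet. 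Combining the three points shows that $\mathcal M$ satisfies the strong condition (AO).

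\emph{Main obstacle.} The genuinely delicate step is the modular identity $J\ell(\xi)J = \rho(\overline\xi)$: one must justify the explicit form of $J$ through $S = J\Delta^{1/2}$ together with the adjoint Wick formula for $W(\zeta_1\otimes\cdots\otimes\zeta_n)^*$, while carefully tracking the unbounded involution $T$ and its domain $\dom(T)$, since $\overline\xi = T\xi$ makes sense only for $\xi$ in the appropriate core. Everything else is either a standard structural fact (nuclearity of the Cuntz--Toeplitz algebra) or an elementary computation on Fock space; once $J$ is pinned down, the compactness of the commutators is immediate.
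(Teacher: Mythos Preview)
Your overall architecture is exactly the paper's: take the Cuntz--Toeplitz algebra generated by the left creation operators as the nuclear $\mathcal C$, observe that $\Gamma(H_\R,U_t)\subset\mathcal C$ is exact, and check that commutators of left creations with the operators coming from $J\mathcal A J$ are rank one on $\C\Omega$. So the proposal is not a different route, but there is a genuine slip at the point you yourself flag as the main obstacle.

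The formula $J(\xi_1\otimes\cdots\otimes\xi_n)=\overline{\xi_n}\otimes\cdots\otimes\overline{\xi_1}$ with $\overline{\,\cdot\,}=T=IA^{-1/2}$ is the formula for the Tomita operator $S$, not for $J$: one has $S(\xi_1\otimes\cdots\otimes\xi_n)=W(\xi_1\otimes\cdots\otimes\xi_n)^*\Omega=\overline{\xi_n}\otimes\cdots\otimes\overline{\xi_1}$, and $J=S\Delta^{-1/2}$ differs from $S$ by the (unbounded) factor $\Delta^{-1/2}$. Consequently the identity $J\ell(\xi)J=\rho(\overline\xi)$ cannot be right as stated; it would force an unbounded dependence on $\xi$ through $T$, which is precisely the domain difficulty you anticipate but cannot overcome with that formula. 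The paper sidesteps the pointwise description of $J$ entirely: it shrinks the exact algebra to the unital C$^*$-algebra $A$ generated by $W(\eta)$ for $\eta$ in the \emph{analytic} subspace $K_{\an}=\bigcup_{\lambda>1}\mathbf 1_{[\lambda^{-1},\lambda]}(A)(H)$, and then uses only the standard identity $JxJ\cdot y\Omega = y\,\sigma^{\varphi_U}_{-\mathrm i/2}(x^*)\Omega$ for $\sigma^{\varphi_U}$-analytic $x$, together with the Wick product, to compute $JW(\eta)J$ on simple tensors. This gives directly $[\ell(\xi),JW(\eta)J]=c\,P_{\C\Omega}$ for a scalar $c$, without ever writing $J$ on vectors. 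If you want to salvage your version, the correct replacement is to identify $J$ with the bounded anti-unitary ``reverse and apply $I$'' (not $T$) on simple tensors, which yields $J\ell(\xi)J=\rho(I\xi)$ for all $\xi\in H$; then your commutator computation goes through with no domain issues. Either way, the restriction to analytic one-particle vectors (or the correct bounded formula for $J$) is the missing ingredient.
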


\begin{proof}
Put $M := \Gamma(H_\R, U_t)\dpr$ and denote by $(M, \mathcal H, J, \mathfrak P)$ a standard form for $M$. We will use the identification $\mathcal H = \overline {M  \Omega} = \mathcal F(H)$. Put $\widetilde A := \Gamma(H_\R, U_t)$ and $\mathcal C := {\rm C}^*(\ell(\xi) : \xi \in K_\R)$. Observe that the unital C$^*$-algebra $\mathcal C$ is always nuclear. Indeed, if $\dim H_\R < +\infty$, then $\mathcal C$ is an extension of $\mathcal O_{\dim H_\R}$ by the compact operators (see \cite[Proposition 3.1]{Cu77}) and hence is nuclear by \cite[Proposition IV.3.1.3]{Bl06}. If $\dim H_\R = +\infty$, then $\mathcal C \cong \mathcal O_\infty$ and hence is nuclear. By construction, the unital C$^*$-algebra $\widetilde A \subset M \cap \mathcal C$ is exact and $\sigma$-weakly dense in $M$.

Put $K_{\an} := \bigcup_{\lambda > 1} \mathbf 1_{[\lambda^{-1}, \lambda]}(A)(H) \subset K_\R + {\rm i} K_\R$. Observe that $K_{\an} \subset K_\R + {\rm i} K_\R$ is a dense subspace of elements $\eta \in K_\R + {\rm i} K_\R$ for which the map $\R \to K_\R + {\rm i} K_\R : t \mapsto U_t \eta$ extends to an $(K_\R + {\rm i} K_\R)$-valued entire analytic map and that $\overline{K_{\an}} = K_{\an}$. For all $\eta \in K_{\an}$, the element $W(\eta)$ is analytic with respect to the modular automorphism group $(\sigma_t^{\varphi_U})$ and we have $\sigma_z^{\varphi_U}(W(\eta)) = W(A^{{\rm i}z}\eta)$ for all $z \in \C$. Denote by $A \subset \widetilde A$ the unital C$^*$-algebra generated by $1$ and by all the elements $W(\zeta)$ for $\zeta \in K_{\an}$. Since $A$ is uniformly dense in $\widetilde A$, it follows that $A$ is $\sigma$-weakly dense in $M$ and exact. Moreover, for all $\eta \in K_{\an}$, all $n \geq 1$ and all $\xi_1, \dots, \xi_n \in K_\R + {\rm i} K_\R$, using \cite[Proposition 2.1]{HR14} and \cite[Lemma VIII.3.10]{Ta03}, we have
\begin{align*}
J W(\eta) J \,(\xi_1 \otimes \cdots \otimes \xi_n)  &= J W(\eta) J \, W(\xi_1 \otimes \cdots \otimes \xi_n)  \Omega \\
&= W(\xi_1 \otimes \cdots \otimes \xi_n) \, \sigma^{\varphi_U}_{-{\rm i}/2}(W( \eta)^*)  \Omega \\
&= W(\xi_1 \otimes \cdots \otimes \xi_n) \, \sigma^{\varphi_U}_{-{\rm i}/2}(W(\overline \eta))  \Omega \\
&= W(\xi_1 \otimes \cdots \otimes \xi_n)  W(A^{1/2}\overline \eta)  \Omega \\
&= W(\xi_1 \otimes \cdots \otimes \xi_n \otimes A^{1/2}\overline \eta)  \Omega + \overline{\langle \overline \xi_n, A^{1/2} \overline\eta\rangle} \, W(\xi_1 \otimes \cdots \otimes \xi_{n - 1}) \Omega \\
&= \xi_1 \otimes \cdots \otimes \xi_n \otimes A^{1/2}\overline \eta + \overline{\langle \overline\xi_n, A^{1/2} \overline\eta\rangle} \, \xi_1 \otimes \cdots \otimes \xi_{n - 1}.
\end{align*}

Let $\xi \in K_\R$ and $\eta \in K_{\an}$. Using the above equation, we have
\begin{align*}
\ell(\xi) \, J W(\eta) J \, \Omega &= \ell(\xi) \, W(A^{1/2} \overline \eta)  \Omega \\
&= \ell(\xi) \, A^{1/2} \overline \eta \\
&= \xi \otimes A^{1/2} \overline\eta \\
J W(\eta) J \, \ell(\xi) \,  \Omega &= J W(\eta) J \, \xi \\
&= \xi \otimes A^{1/2}\overline\eta + \overline{\langle \overline \xi, A^{1/2}\overline\eta\rangle} \, \Omega.
\end{align*}
For all $n \geq 1$ and all $\xi_1, \dots, \xi_n \in K_\R + {\rm i} K_\R$, using the above equation, we moreover have
\begin{align*}
\ell(\xi) \, J W(\eta) J \, (\xi_1 \otimes \cdots \otimes \xi_n) &= \ell(\xi)  \left(\xi_1 \otimes \cdots \otimes \xi_n \otimes A^{1/2}\overline \eta + \overline{\langle \overline \xi_n, A^{1/2} \overline\eta\rangle} \, \xi_1 \otimes \cdots \otimes \xi_{n - 1} \right) \\
&= \xi \otimes \xi_1 \otimes \cdots \otimes \xi_n \otimes A^{1/2}\overline \eta + \overline{\langle \overline \xi_n, A^{1/2} \overline\eta\rangle} \, \xi \otimes \xi_1 \otimes \cdots \otimes \xi_{n - 1} \\
J W(\eta) J \, \ell(\xi) \,  (\xi_1 \otimes \cdots \otimes \xi_n) &= J W(\eta) J \, (\xi \otimes \xi_1 \otimes \cdots \otimes \xi_n) \\
&= \xi \otimes \xi_1 \otimes \cdots \otimes \xi_n \otimes A^{1/2}\overline \eta + \overline{\langle \overline \xi_n, A^{1/2} \overline\eta\rangle} \, \xi \otimes \xi_1 \otimes \cdots \otimes \xi_{n - 1}.
\end{align*}
This shows that $[\ell(\xi), JW(\eta)J] = \overline{\langle \overline \xi, A^{1/2}\overline\eta\rangle} \,  P_{\C \Omega}$ and hence $[\ell(\xi), JW(\eta)J] \in \mathbf K(H)$. Therefore, we obtain $[\mathcal C, JAJ] \subset \mathbf K(H)$ and hence $M$ satisfies the strong condition (AO).
\end{proof}

\end{document}